\theoremstyle{plain}
\newtheorem{theorem}{Theorem}[section]
\newtheorem{lemma}[theorem]{Lemma}
\newtheorem{corollary}[theorem]{Corollary}
\newtheorem{proposition}[theorem]{Proposition}
\theoremstyle{definition}
\newtheorem{definition}[theorem]{Definition}
\newtheorem{example}[theorem]{Example}
\newtheorem{remark}[theorem]{Remark}
\newtheorem{question}[theorem]{Question}
\theoremstyle{remark}
\newcommand{\bR}{\mathbb{R}}
\newcommand{\R}{\bR}
\newcommand{\bQ}{\mathbb{Q}}
\newcommand{\Q}{\bQ}
\newcommand{\Z}{\mathcal{Z}}
\newcommand{\cA}{\mathcal{A}}
\newcommand{\cB}{\mathcal{B}}
\newcommand{\cC}{\mathcal{C}}
\newcommand{\cI}{\mathcal{I}}
\newcommand{\I}{\cI}
\newcommand{\cJ}{\mathcal{J}}
\newcommand{\J}{\cJ}
\newcommand{\cM}{\mathcal{M}}
\newcommand{\cP}{\mathcal{P}}
\newcommand{\continuum}{\mathfrak{c}}
\newcommand{\fin}{\mathrm{Fin}}
\newcommand{\Fin}{\fin}
\newcommand{\conv}{\mathrm{conv}}
\begin{document}


\title{Borel complexity of sets of ideal limit points}


\author[R.~Filip\'{o}w]{Rafa\l{} Filip\'{o}w}
\address[Rafa\l{}~Filip\'{o}w]{Institute of Mathematics\\ Faculty of Mathematics, Physics and Informatics\\ University of Gda\'{n}sk\\ ul.~Wita Stwosza 57\\ 80-308 Gda\'{n}sk\\ Poland}
\email{Rafal.Filipow@ug.edu.pl}
\urladdr{\url{http://mat.ug.edu.pl/~rfilipow}}

\author[A.~Kwela]{Adam Kwela}
\address[Adam Kwela]{Institute of Mathematics\\ Faculty of Mathematics\\ Physics and Informatics\\ University of Gda\'{n}sk\\ ul.~Wita  Stwosza 57\\ 80-308 Gda\'{n}sk\\ Poland}
\email{Adam.Kwela@ug.edu.pl}
\urladdr{\url{https://mat.ug.edu.pl/~akwela}}

\author[P.~Leonetti]{Paolo Leonetti}
\address[Paolo Leonetti]{Department of Economics\\ Universit\'{a} degli Studi dell’Insubria\\ via Monte Generoso 71 \\ Varese 21100\\ Italy}
\email{Leonetti.Paolo@gmail.com}
\urladdr{\url{https://sites.google.com/site/leonettipaolo}}

\thanks{The second-listed author was supported by the Polish National Science Centre project OPUS No. 2024/53/B/ST1/02494.}


\date{\today}


\subjclass[2020]{Primary: 28A05, 54A20. Secondary: 03E15, 03E75, 40A05, 40A35.}




\keywords{Ideal limit points; 
Borel and analytic ideals; 
$P^+$-property; 
Rudin--Blass order; 
(hereditary) Baire property;
ideal schemes.
}


\begin{abstract}
Let $X$ be an uncountable Polish space and let $\mathcal{I}$ be an ideal on $\omega$. A point $\eta \in X$ is an $\mathcal{I}$-limit point of a sequence $(x_n)$ taking values in $X$ if there exists a subsequence $(x_{k_n})$ convergent to $\eta$ such that the set of indexes $\{k_n: n \in \omega\}\notin \mathcal{I}$. Denote by $\mathscr{L}(\mathcal{I})$ the family of subsets $S\subseteq X$ such that $S$ is the set of $\mathcal{I}$-limit points of some sequence taking values in $X$ or $S$ is empty. 
In this paper, we study 
the relationships between the topological complexity of ideals $\mathcal{I}$, their combinatorial properties, and the families of sets $\mathscr{L}(\mathcal{I})$ which can be attained. 

On the positive side, we provide several purely combinatorial (not depending on the space $X$) characterizations of ideals $\mathcal{I}$ for the inclusions and the equalities between $\mathscr{L}(\mathcal{I})$ and the Borel classes $\Pi^0_1$, $\Sigma^0_2$, and $\Pi^0_3$. 
As a consequence, we prove that if $\mathcal{I}$ is a $\Pi^0_4$ ideal then exactly one of the following cases holds: $\mathscr{L}(\mathcal{I})=\Pi^0_1$ or $\mathscr{L}(\mathcal{I})=\Sigma^0_2$ or $\mathscr{L}(\mathcal{I})=\Sigma^1_1$ (however we do not have an example of a $\Pi^0_4$ ideal with $\mathscr{L}(\mathcal{I})=\Sigma^1_1$). In addition, we provide an explicit example of a coanalytic ideal $\mathcal{I}$ for which $\mathscr{L}(\mathcal{I})=\Sigma^1_1$.

On the negative side, since $\mathscr{L}(\mathcal{I})$ contains all singletons, it is immediate that there are no ideals $\mathcal{I}$ such that $\mathscr{L}(\mathcal{I})=\Sigma^0_1$. On the same direction, we show that there are no ideals $\mathcal{I}$ such that $\mathscr{L}(\mathcal{I})=\Pi^0_2$ or $\mathscr{L}(\mathcal{I})=\Sigma^0_3$. In fact, for instance, if $\mathcal{I}$ is a Borel ideal and $\mathscr{L}(\mathcal{I})$ contains a non $\Sigma^0_2$ set, then it contains all $\Pi^0_3$ sets. We conclude with several open questions. 
\end{abstract}


\maketitle




\section{Introduction}\label{sec:intro}

In this paper, we study systematically the family of subsets of a given topological space $X$ which can be realized as sets of $\mathcal{I}$-limit points of some sequences taking values in $X$, where $\mathcal{I}$ is an ideal on the nonnegative integers $\omega$. 
To be more precise, let us recall some definitions and known results from the literature. 


\subsection{Ideals} 

A family $\mathcal{I}\subseteq \mathcal{P}(\omega)$ is an \emph{ideal} if it is closed under subsets and finite unions. Unless otherwise stated, it is also assumed that $\mathcal{I}$ is admissible, that is, it contains $\mathrm{Fin}=[\omega]^{<\omega}$ while $\omega\notin \I$. 
Let $\I^+=\mathcal{P}(\omega)\setminus \I$ be the family of $\I$-positive sets and denote by $\mathcal{I}^\star=\{S\subseteq \omega: \omega\setminus S \in \I\}$ the dual filter of $\I$. 
An ideal $\mathcal{I}$ is a \emph{P-ideal} if for every sequence $(A_n)$ with values in $\mathcal{I}$ there exists $A \in \mathcal{I}$ such that $A_n\setminus A$ is finite for all $n \in \omega$, see e.g. \cite{MR1711328}. 
Informally, ideals represent the family of \textquotedblleft small subsets\textquotedblright\, of $\omega$. 
We will also consider ideals on countably infinite sets, which are defined analogously. 
Identifying each subsets of $\omega$ with its characteristic function, ideals can be regarded as subsets of the Cantor space $2^{\omega}$, hence we can speak about their topological complexity. For instance, 
$\mathrm{Fin}$ and the summable ideal 
$$
\mathcal{I}_{1/n}=\left\{S\subseteq \omega: \sum_{n\in S} \frac{1}{n+1}<\infty\right\}
$$
are $F_\sigma$ $P$-ideals, while the family of asymptotic density zero sets 
$$
\mathcal{Z}=\left\{S\subseteq \omega: \lim_{n\to \infty}\frac{|S\cap [0,n)|}{n}=0\right\}
$$
is an $F_{\sigma\delta}$ $P$-ideal which is not $F_\sigma$. 
An another example, the family of Banach density zero sets
\begin{equation}\label{eq:Bideal}
\mathcal{B}=\left\{S\subseteq \omega: \lim_{n\to \infty}\max_{k \in \omega}\,\frac{|S\cap [k,k+n)|}{n}=0\right\}
\end{equation}
is an $F_{\sigma\delta}$-ideal which is not a $P$-ideal. 
Finally, maximal ideals, namely, the complements of free ultrafilters on $\omega$, are not analytic, hence not Borel. 


\subsection{\texorpdfstring{$\I$}{I}-limit points} 

Given a sequence $  x =(x_n: n \in \omega)$ taking values in a topological space $X$ and an ideal $\I$ on $\omega$, we say that $\eta\in X$ is an $\mathcal{I}$\emph{-limit point of} $  x $ if there exists $S \in \I^+$ such that the subsequence $  x \upharpoonright S=(x_n: n \in S)$ is convergent to $\eta$. Accordingly, define 
$$
\Lambda_{  x }(\I)=\left\{\eta \in X: \eta \text{ is an }\I\text{-limit point of }  x \right\}.
$$
In the literature, $\mathcal{Z}$-limit points of real sequences have been usually called \textquotedblleft statistical limit points,\textquotedblright\, see e.g. \cite{MR1372186, MR1181163, MR1838788, MR1260176}. 
The topological nature of the sets of $\mathcal{I}$-limits points $\Lambda_{  x }(\I)$ has been studied in \cite{MSP24, MR3883171,  MR2923430, MR2463821, MR4393937, MR4505549}. 
Here, we continue along this line of research. Following \cite{MSP24}, we introduce our main definition:

\begin{definition}\label{defi:familyIlimitpoints}
Let $X$ be a topological space and $\mathcal{I}$ be an ideal on $\omega$. Denote by $\mathscr{L}_X(\mathcal{I})$ the family of sets of $\mathcal{I}$-limit points of sequences ${  x }$ taking values in $X$ together with the empty set, that is, 
\begin{equation}\label{eq:LXI}
\mathscr{L}_X(\mathcal{I})=\left\{A\subseteq X: A=\Lambda_{  x }(\mathcal{I}) \text{ for some sequence }{  x } \in X^\omega\right\}\cup \{\emptyset\}.
\end{equation}
If the topological space $X$ is understood, we write simply $\mathscr{L}(\mathcal{I})$. 
\end{definition}

As remarked in \cite{MSP24}, the addition of $\{\emptyset\}$ in \eqref{eq:LXI} avoids the repetition of known results in the literature and to add further subcases based on the topological structure of the underlying space $X$. As we will see in our main results, if one avoids the empty set case, then it is possible to characterize $\mathscr{L}(\mathcal{I})$ for a large class of topological spaces. In any case, we will study in Section \ref{sec:emptysetcase} conditions on $X$ and $\I$ which ensure the existence of a sequence $  x $ such that $\Lambda_{  x }(\I)=\emptyset$. 


\subsection{\texorpdfstring{$\I$}{I}-cluster points} 

Along a similar direction, given a sequence $  x $ taking values in a topological space $X$ and an ideal $\I$ on $\omega$, we say that $\eta\in X$ is an $\mathcal{I}$\emph{-cluster point of} $  x $ if $\{n \in \omega: x_n \in U\}\notin \I$ for all open neighborhoods $U$ of $\eta$. Then, we write
$$
\Gamma_{  x }(\I)=\left\{\eta \in X: \eta \text{ is an }\I\text{-cluster point of }  x \right\}.
$$
We are going to use throughout the well-known fact that every $\I$-limit point is necessarily an $\I$-cluster point, namely, $\Lambda_{  x }(\I)\subseteq \Gamma_{  x }(\I)$ for every sequence $  x $; moreover, if $X$ is first countable, then $\Lambda_{  x }(\mathrm{Fin})=\Gamma_{  x }(\mathrm{Fin})$, see e.g. \cite[Lemma 3.1]{MR3920799}. 
On the other hand, it is important to note that, even in the case of real bounded sequences, $\mathcal{I}$-cluster points and $\mathcal{I}$-limit points may behave really differently. For instance, it is known 
(\cite[Example 4]{MR1181163})
that if ${  x }$ is an equidistributed sequence with values in $[0,1]$, then 
$$
\Lambda_{  x }(\mathcal{Z})=\emptyset 
\quad \text{ and }\quad 
\Gamma_{  x }(\mathcal{Z})=[0,1].
$$

In analogy with Definition \ref{defi:familyIlimitpoints}, we introduce also the following: 

\begin{definition}\label{defi:familyIclusterpoints}
Let $X$ be a topological space and $\mathcal{I}$ be an ideal on $\omega$. Denote by $\mathscr{C}_X(\mathcal{I})$ the family of sets of $\mathcal{I}$-cluster points of sequences ${  x }$ taking values in $X$ together with the empty set, that is, 
\begin{equation}\label{eq:CXI}
\mathscr{C}_X(\mathcal{I})=\left\{B\subseteq X: B=\Gamma_{  x }(\mathcal{I}) \text{ for some sequence }{  x } \in X^\omega\right\}\cup \{\emptyset\}.
\end{equation}
If the topological space $X$ is understood, we write simply $\mathscr{C}(\mathcal{I})$. 
\end{definition}

It is known that $\mathcal{I}$-cluster points behave better than $\mathcal{I}$-limit points, see e.g. Lemma \ref{lem:clusterclosed}, letting their study be substantially easier. Several characterizations of sets of $\mathcal{I}$-cluster points may be found in \cite{MR3920799}. Regarding the addition of the empty set in \eqref{eq:CXI}, it turns out that there exists a sequence $  x $ such that $\Gamma_{  x }(\I)=\emptyset$ if and only if $X$ is not compact (the \textsc{if} part being a folklore result), see Proposition \ref{prop:emptysetcluster} below.


\subsection{Notations} 

Given a set $X$, denote by $[X]^{\kappa}$ the family of all subsets of $X$ which have the cardinality $\kappa$. Moreover, we write 
$
[X]^{<\kappa}=\bigcup \{[X]^\mu : \mu<\kappa\}$ and $[X]^{\le \kappa}=\bigcup \{[X]^\mu: \mu\le \kappa\}$.
In particular, $[X]^{\leq \omega}$ stands for the family of all countable (finite or infinite) subsets of $X$. 
We write $s^\frown t$ to denote the concatenation of two sequences $s\in X^{<\omega}$ and $t\in X^{<\omega}\cup X^\omega$ (and $s^\frown a$ in place of $s^\frown (a)$ for $a\in X$).
We also use the notation 
$
(\ldots,b,a^k,c,\ldots) = (\ldots,b,a,\dots,a,c,\ldots)
$ 
where $a$ appears $k$ times 
and $a^\infty = (a,a,\ldots)$, i.e., $a^\infty$ denotes the infinite constant sequence with the value $a$. Also, $a^0$ is the empty sequence.  
We denote by $\Q(2^\omega)$ the set of all $x\in 2^\omega$ such that $x_n=0$ for all but finitely many $n$. 

Given a topological space $X$ and an ordinal $1\le \alpha <\omega_1$, we use the standard Borel pointclasses notations $\Sigma^0_\alpha(X)$ and $\Pi^0_\alpha(X)$, so that $\Sigma^0_1(X)$ stands for the open sets of $X$, $\Pi^0_1(X)$ for the closed sets, $\Sigma_2^0(X)$ for the $F_\sigma$-sets, $\Pi^0_3(X)$ for the $F_{\sigma\delta}$-sets etc.; 
we denote by $\Delta^0_\alpha(X)=\Sigma^0_\alpha(X) \cap \Pi^0_\alpha(X)$ the ambiguous classes; 
also, if $X$ is a Polish space, $\Sigma^1_1(X)$ stands for the analytic sets, $\Pi^1_1(X)$ for the coanalytic sets, etc.; see e.g. \cite[Section 11.B]{MR1321597}. 
Again, we suppress the reference to the underlying space $X$ if it is clear from the context. 

For a topological space $X$, we consider  $X^\omega$ as a topological space endowed with the product topology. For instance, taking $X=2=\{0,1\}$ or $X=\omega$ with the discrete topology, we obtain the \emph{Cantor space} $2^\omega = \{0,1\}^\omega$ or the \emph{Baire space} $\omega^\omega$. (As usual, a positive integer $n$ is identified with $\{0,1,\ldots,n-1\}$.) For a finite sequence $s\in 2^{<\omega}$ [$s\in\omega^{<\omega}$, resp.] we denote by $|s|$ its length and by $[s]$ the set of all $x\in 2^\omega$ [$x\in\omega^\omega$, resp.] such that $x\restriction |s|=s$: hence, if $s=(s_0,\ldots,s_k)$ is a finite sequence then $s \in 2^{k+1}$, $|s|=k+1$, and $[s]$ is the set of all $x \in 2^\omega$ such that $x\restriction |s|=(x_0,\ldots,x_k)=s$; in addition, $x\restriction 0=\emptyset$. 
Then sets of the form $[s]$ for $s\in 2^{<\omega}$ [$s\in\omega^{<\omega}$, resp.] form a basis of the topology of $2^\omega$ [$\omega^\omega$, resp.]. We will use facts that the Cantor space is \emph{homogeneous}, i.e.,~for every points $x,y\in 2^\omega$ there is a homeomorphism $f:2^\omega\to 2^\omega$ such that $f(x)=y$, and \emph{countable dense homogeneous}, i.e.,~for every countable dense sets $A,B\subseteq 2^\omega$ there is a homeomorphism $f:2^\omega\to 2^\omega$ such that $f[A]=B$ (see e.g.~\cite[Problem~30A(2)]{MR0264581} and \cite[Exercise~4.3.H(e)]{MR1039321}), respectively). 

A topological space $X$, or any of its subsets, is \emph{discrete} if it contains only isolated points. Given a set $A\subseteq X$, we denote by $A^{\,|}$ the set of its accumulation points, that is, the set of all $\eta \in X$ such that every neighborhood $U$ of $\eta$ contains a point of $A$ which is different from $\eta$. 
The family of isolated points of $A$ is denoted by $\mathrm{iso}(A)$.

We denote by $B(\eta,r)$ and $\overline{B}(\eta,r)$ the open and closed balls of radius $r>0$ centered at a point $\eta$ in a metric space $X$, respectively. 
Lastly, for a set $A\subseteq X$ and $r>0$, we write $B(A,r)$ for  the open ball of radius $r$ around $A$, i.e., $B(A,r) = \bigcup\{B(\eta ,r):\eta \in A\}$. 


\subsection{Literature results} 

We list below several known results about the families $\mathscr{L}(\I)$ and $\mathscr{C}(\I)$ introduced in Definitions \ref{defi:familyIlimitpoints} and \ref{defi:familyIclusterpoints}. 

\begin{lemma}\label{lem:clusterclosed}
Let $X$ be a topological space and $\mathcal{I}$ be an ideal on $\omega$. Then $\mathscr{C}(\I)\subseteq \Pi^0_1$. 
\end{lemma}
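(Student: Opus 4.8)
The plan is to show directly that for every sequence $x \in X^\omega$ the set $\Gamma_x(\I)$ of its $\I$-cluster points is closed in $X$; since the empty set is closed, this establishes the inclusion $\mathscr{C}(\I) \subseteq \Pi^0_1$. Rather than working with $\Gamma_x(\I)$ itself, I would prove that its complement $X \setminus \Gamma_x(\I)$ is open, which is the more natural formulation once the defining quantifiers are unwound.

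Concretely, fix a sequence $x$ and a point $\eta \in X \setminus \Gamma_x(\I)$. By the definition of $\I$-cluster point, the failure of $\eta$ to be one yields a \emph{witness}: there exists an open neighborhood $U$ of $\eta$ such that $\{n \in \omega : x_n \in U\} \in \I$. The key observation is that this single set $U$ simultaneously witnesses non-membership for every point it contains. Indeed, if $\eta' \in U$, then $U$ is itself an open neighborhood of $\eta'$, and the associated index set $\{n \in \omega : x_n \in U\} \in \I$ is literally the same as before; hence $\eta'$ is not an $\I$-cluster point either. This gives $\eta \in U \subseteq X \setminus \Gamma_x(\I)$, so the complement is a union of open sets and therefore open.

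I do not expect a genuine obstacle here: the argument is a direct manipulation of the definition, and the only point requiring care is the correct handling of the quantifiers (an \emph{existential} over neighborhoods in the complement, dual to the \emph{universal} over neighborhoods in the definition of a cluster point). It is worth noting that the proof invokes neither the downward closure of $\I$ nor its closure under finite unions; the sole property used is that the witnessing set $U$ can be reused verbatim as a neighborhood of each of its own points. Consequently the statement holds for any $\I \subseteq \mathcal{P}(\omega)$ whatsoever and for an arbitrary topological space $X$, with no appeal to separation axioms or metrizability.
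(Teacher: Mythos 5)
Your proof is correct and is exactly the standard argument: the complement of $\Gamma_{x}(\I)$ is open because any witnessing neighborhood $U$ of a non-cluster-point serves as a witness for every point of $U$. The paper itself only cites \cite[Lemma 3.1(iv)]{MR3920799} for this fact, and your direct verification (including the observation that no properties of $\I$ or of $X$ beyond the bare definitions are needed) is precisely what that reference establishes.
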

\begin{proof}
See \cite[Lemma 3.1(iv)]{MR3920799}; cf. also \cite[Theorems~4.1(i)]{MR1844385} for metric spaces.
\end{proof}
Conversely, a characterization of the equality $\mathscr{C}(\I)=\Pi^0_1$ will be given in Proposition \ref{prop:Gamma-vs-closed}. It is worth noting that it is possible that the family $\mathscr{C}(\I)$ is a proper subset of $\Pi^0_1$, even if $X$ is a compact Polish space: indeed, this is always the case, e.g., if $X$ is Hausdorff with $|X|\ge 2$ and $\I$ is maximal, see Proposition \ref{prop:Imaximal} below.

Roughly, Lemma \ref{lem:clusterclosed} explains why the families $\mathscr{C}(\I)$ are easier than the families $\mathscr{L}(\I)$, hence we will focus mainly on the latter ones. 

At this point, we recall the following results: 
\begin{theorem}\label{thm:oldthmballeo}
Let $X$ be a 
first countable Hausdorff space 
where all closed sets are separable, and let $\I$ be an ideal on $\omega$. Then the following hold: 
\begin{enumerate}[label={\rm (\roman{*})}]
\item \label{item:BL1} If $\mathcal{I}$ is an $F_\sigma$-ideal then 
$\Lambda_{  x }(\I)=\Gamma_{  x }(\I)$ for every sequence $  x $;
\item \label{item:BL2} If $\mathcal{I}$ is an $F_\sigma$-ideal then 
$\mathscr{L}(\mathcal{I})=\mathscr{C}(\I)=\Pi_1^0$;
\item \label{item:BL3} If $\mathcal{I}$ is an analytic P-ideal and $X$ is nondiscrete then $\Lambda_{  x }(\I)=\Gamma_{  x }(\I)$ for every sequence $  x $ if and only if $\I$ is an $F_\sigma$-ideal;
\item \label{item:BL4} If $\mathcal{I}$ is an analytic P-ideal and $X$ is locally compact then each isolated point in $\Gamma_{  x }(\I)$ belongs to $\Lambda_{  x }(\I)$ for every sequence $  x $; 
\item \label{item:BL5} If $\mathcal{I}$ is an analytic P-ideal which is not $F_\sigma$ then $\mathscr{L}(\mathcal{I})=\Sigma_2^0$.
\end{enumerate}
\end{theorem}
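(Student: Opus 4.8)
The plan is to treat these as known results assembled from the literature on $\I$-limit and $\I$-cluster points, the common engine being the submeasure representations: by Mazur's theorem an ideal is $F_\sigma$ iff $\I = \Fin(\phi) = \{A : \phi(A) < \infty\}$ for some lower semicontinuous submeasure $\phi$, and by Solecki's theorem $\I$ is an analytic P-ideal iff $\I = \Exh(\phi) = \{A : \lim_n \phi(A \setminus n) = 0\}$; moreover $\Exh(\phi)$ is $F_\sigma$ exactly when $\inf\{\phi(A) : A \in \I^+\} > 0$. For part (i), I would fix $\eta \in \Gamma_x(\I)$ and, using first countability, a decreasing neighborhood base $(U_m)$ at $\eta$, so that the sets $A_m = \{n : x_n \in U_m\}$ are decreasing and $\I$-positive, whence $\phi(A_m \setminus j) = \infty$ for every $j$. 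Writing $\I = \Fin(\phi)$, I would build finite blocks $F_m \subseteq A_m$ with $\min F_m$ strictly increasing so that the partial unions $G_m = \bigcup_{k \le m} F_k$ satisfy $\phi(G_m) \ge \phi(G_{m-1}) + 1$; such a block exists, since otherwise lower semicontinuity of $\phi$ would force $\phi(A_m \setminus j) < \infty$. Then $S = \bigcup_m F_m$ has $\phi(S) = \infty$, so $S \in \I^+$, while $S \setminus A_m$ is finite for each $m$, so $x \restriction S \to \eta$ and $\eta \in \Lambda_x(\I)$; together with the always-valid inclusion $\Lambda_x(\I) \subseteq \Gamma_x(\I)$ this gives equality (only first countability is used here).

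For part (ii), part (i) yields $\mathscr{L}(\I) = \mathscr{C}(\I)$, and Lemma \ref{lem:clusterclosed} gives $\mathscr{C}(\I) \subseteq \Pi^0_1$, so it remains to realize an arbitrary closed set $F$. Separability of closed sets provides a countable dense $D = \{d_i\} \subseteq F$; since an $F_\sigma$ ideal is meager, $\omega$ splits into infinitely many pairwise disjoint $\I$-positive sets $(B_i)$, and putting $x_n = d_i$ for $n \in B_i$ makes each $d_i$ an $\I$-cluster point, so (the cluster set being closed and contained in $\overline D = F$) one gets $\Gamma_x(\I) = F$; the empty set is included by convention. Part (iii): the backward implication is part (i). For the forward implication I would show that a non-$F_\sigma$ analytic P-ideal together with nondiscreteness forces a sequence with $\Lambda_x(\I) \subsetneq \Gamma_x(\I)$: picking a nontrivial convergent sequence $y_j \to \eta$ with $y_j \neq \eta$ and exploiting $\inf\{\phi(A) : A \in \I^+\} = 0$, one distributes the $y_j$ so that the preimages of shrinking neighborhoods of $\eta$ remain $\I$-positive (making $\eta$ an $\I$-cluster point) while their submeasure on tails decays fast enough that no single $\I$-positive set converges to $\eta$ (so $\eta \notin \Lambda_x(\I)$).

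For part (iv), if $\eta$ is isolated in $\Gamma_x(\I)$, local compactness and Hausdorffness give an open $U \ni \eta$ with $\overline U$ compact and $\overline U \cap \Gamma_x(\I) = \{\eta\}$. For any neighborhood $W$ of $\eta$ the compact set $\overline U \setminus W$ carries no $\I$-cluster point, and a positive preimage of a compact set would produce an $\I$-cluster point inside it, so $\{n : x_n \in \overline U \setminus W\} \in \I$; hence, along $A = \{n : x_n \in U\} \in \I^+$, the bad sets $D_m = \{n \in A : x_n \notin W_m\}$ lie in $\I$ for a decreasing neighborhood base $(W_m)$ of $\eta$. The P-ideal property yields $D \in \I$ with $D_m \setminus D$ finite for all $m$, and then $S = A \setminus D \in \I^+$ satisfies $x \restriction S \to \eta$, so $\eta \in \Lambda_x(\I)$.

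Finally, part (v). The inclusion $\mathscr{L}(\I) \subseteq \Sigma^0_2$ rests on the fact that $\Lambda_x(\I)$ is always $F_\sigma$, which I would obtain by writing $\Lambda_x(\I) = \bigcup_k \Lambda^{(k)}$, where $\Lambda^{(k)}$ collects the points admitting a convergent subsequence along a set whose $\phi$-tail is at least $1/k$, and checking via a diagonal/exhaustivity argument that each $\Lambda^{(k)}$ is closed. The reverse inclusion $\Sigma^0_2 \subseteq \mathscr{L}(\I)$ is where the failure of $\I$ to be $F_\sigma$ is indispensable: given $S = \bigcup_k F_k$ with $F_k$ closed and increasing, one must build a single sequence whose set of $\I$-limit points is exactly $\bigcup_k F_k$ (possibly a non-closed $F_\sigma$), realized by interleaving dense sequences of the $F_k$ at carefully chosen multiplicities so that points of $S$ become $\I$-limit points while the boundary points in $\overline S \setminus S$ remain mere $\I$-cluster points, using the same decay mechanism as in part (iii). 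I expect this simultaneous construction in part (v) to be the main obstacle: parts (i)--(ii) are routine once Mazur's theorem is available, and parts (iii)--(iv) reduce to the exhaustivity manipulation above, whereas (v) requires controlling exactly which points are limit points and which are only cluster points along one sequence.
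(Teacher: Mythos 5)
The paper does not prove this theorem at all: its ``proof'' is a pointer to \cite[Theorems 2.2, 2.3, 2.5, 2.7, 2.8, and 3.4]{MR3883171}, so the only meaningful comparison is with the arguments in that reference, which your sketch essentially reconstructs via the Mazur and Solecki representations $\I=\Fin(\phi)$ and $\I=\Exh(\phi)$. Your arguments for (i), (ii) and (iv) are correct and essentially complete: (i) is the standard ``greedy finite blocks'' proof that $F_\sigma$ ideals are $P^+$ combined with the trivial inclusion $\Lambda_{x}(\I)\subseteq\Gamma_{x}(\I)$ (and it indeed only needs first countability); (ii) correctly reduces to realizing every closed set as a $\Gamma$-set via an infinite partition into $\I$-positive sets, which is where separability of closed sets and meagerness of $F_\sigma$ ideals enter; (iv) is exactly the argument the paper later generalizes in Proposition \ref{prop:Pminuscharacterization}, resting on the fact that an $\I$-positive preimage of a compact set yields an $\I$-cluster point inside it. Two soft spots deserve mention. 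First, in (iii) the relevant quantity is the tail submeasure $\|A\|_\phi=\lim_n\phi(A\setminus n)$, not $\phi(A)$: the non-$F_\sigma$ criterion is $\inf\{\|A\|_\phi:A\in\I^+\}=0$, and the counterexample sequence needs positive sets $C_j$ with $\phi(C_j)$ (not merely $\|C_j\|_\phi$) summably small, obtained by trimming finite initial segments, so that countable subadditivity forces any candidate limit-witness $S$ to satisfy $\|S\|_\phi=0$ and hence $S\in\I$. Second, and more substantially, the reverse inclusion $\Sigma^0_2\subseteq\mathscr{L}(\I)$ in (v) — which you rightly identify as the main obstacle — is left entirely at the level of a plan; the interleaving construction that makes every point of $\bigcup_k F_k$ an $\I$-limit point while keeping the points of $\overline{\bigcup_k F_k}\setminus\bigcup_k F_k$ mere cluster points is the genuinely delicate part of \cite[Theorem 3.4]{MR3883171} and is not recovered by your sketch. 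As written, your proposal is a faithful and largely sound reconstruction of the cited proofs for (i), (ii), (iv), a correct strategy with a fixable imprecision for (iii), and an honest but incomplete outline for (v).
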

\begin{proof}
It follows by \cite[Theorems 2.2, 
2.3, 
2,5, 
2.7, 
2.8, and 3.4]{MR3883171}.
\end{proof}

It is worth to recall that there are no $G_\delta$-ideals \cite[Proposition 1.2.1]{alcantara-phd-thesis}, and that analytic $P$-ideals are necessarily $F_{\sigma\delta}$, see e.g. \cite[Lemma 1.2.2 and Theorem 1.2.5]{MR1711328}. Hence Theorem \ref{thm:oldthmballeo} studies the relationship between $\I$-cluster points and $\I$-limit points for ideals $\I$ with low topological complexity. 

In the special case of the ideal $\Z$, 
finer connections are known: if $A,B$ are nonempty subsets of a Polish space $X$ such that $A\subseteq B$, $A$ is an $F_\sigma$-set, $B$ is closed, and $A$ contains the isolated points of $B$, then there exists a sequence $  x $ such that $\Lambda_{  x }(\Z)=A$ and $\Gamma_{  x }(\Z)=B$, see \cite[Corollary 3.3]{MR3883171}. In particular, in Polish spaces, we have
$$
\mathscr{L}(\Z)=\Sigma^0_2 
\quad \text{ and }\quad 
\mathscr{C}(\Z)=\Pi^0_1.
$$
Note that the first equality can also be obtained by Theorem \ref{thm:oldthmballeo}\ref{item:BL5} since $\Z$ is an analytic $P$-ideal which is not $F_\sigma$.

To state the next result, we recall some further definitions. 
An ideal $\mathcal{I}$ has the \emph{hereditary Baire property} if the restriction $\mathcal{I}\upharpoonright A=\{S\cap A: S \in \mathcal{I}\}$ has the Baire property for every $A \in \mathcal{I}^+$, see Section \ref{subsec:baire} below for details. 
Also, an ideal $\mathcal{I}$ on $\omega$ is said to be a $P^+$\emph{-ideal} if, for every decreasing sequence $(A_n)$ with values in $\mathcal{I}^+$, there exists $A \in \mathcal{I}^+$ such that $A\setminus A_n$ is finite for all $n \in \omega$. 
This notion will be studied, together with two weaker variants, in Section \ref{sec:Plike}. 
It is known that all $F_\sigma$-ideals are $P^+$-ideals, see 
\cite{MR748847}. 
On the other hand, as remarked in \cite[p. 2031]{MR3692233}, $P^+$-ideals may have arbitrarily high Borel complexity; cf. Theorem \ref{thm:highdimensionalhighBorel} and its proof below. 

Moreover, an ideal $\mathcal{I}$ is called a \emph{Farah ideal} if there exists a sequence $(K_n)$ of hereditary (i.e., closed under taking subsets of its elements) compact subsets in $\mathcal{P}(\omega)$ such that $S \in \mathcal{I}$ if and only if for all $n \in \omega$ there exists $k \in \omega$ such that $S\setminus k \in K_n$. It is known that all analytic P-ideals are Farah and that all Farah ideals are $F_{\sigma\delta}$, see \cite[pp. 199--201]{MR2849045}. In addition, the ideal $\mathcal{B}$ defined in \eqref{eq:Bideal} and 
$$
\mathrm{nwd}=\left\{S\subseteq \mathbb{Q}: S \text{ is nowhere dense in }\mathbb{R}\right\}
$$
are Farah ideals which are not analytic $P$-ideals. 

\begin{theorem}\label{thm:xi}
Let $X$ be a first countable Hausdorff space, and let $\I$ be an ideal on $\omega$. Then the following hold:
\begin{enumerate}[label={\rm (\roman{*})}]
\item \label{item:1xi} If $\mathcal{I}$ is $P^+$-ideal then $\mathscr{L}(\mathcal{I}) \subseteq \Pi^0_1$;
\item \label{item:2xi} If $\mathcal{I}$ has the hereditary Baire property and $X$ is nondiscrete metrizable then $\mathcal{I}$ is a $P^+$-ideal if and only if $\mathscr{L}(\mathcal{I}) \subseteq \Pi^0_1$;
\item \label{item:3xi} If $\mathcal{I}$ is a Farah ideal then $\mathscr{L}(\mathcal{I}) \subseteq \Sigma^0_2$.
\end{enumerate}
If, in addition, $X$ is second countable, then:
\begin{enumerate}[label={\rm (\roman{*})}]
\setcounter{enumi}{3}
\item \label{item:4xi} If $\mathcal{I}$ has the hereditary Baire property then $\Pi^0_1 \subseteq \mathscr{L}(\mathcal{I})$;
\item \label{item:5xi} If $\mathcal{I}$ has the hereditary Baire property and is not a $P^+$-ideal then $\Sigma^0_2 \subseteq \mathscr{L}(\mathcal{I})$.
\end{enumerate}
\end{theorem}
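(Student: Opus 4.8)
The plan is to recast everything through \emph{pseudo-intersections}. Fix a sequence $x$ and, for $\eta\in X$ with a countable neighbourhood basis $U_0^\eta\supseteq U_1^\eta\supseteq\cdots$, put $C_m(\eta)=\{n: x_n\in U_m^\eta\}$, a decreasing sequence in $\mathcal{P}(\omega)$. By first countability, $\eta\in\Gamma_x(\I)$ iff $C_m(\eta)\in\I^+$ for all $m$, while $\eta\in\Lambda_x(\I)$ iff $(C_m(\eta))_m$ has a pseudo-intersection in $\I^+$, i.e. some $S\in\I^+$ with $S\subseteq^* C_m(\eta)$ for every $m$. Item (i) is then immediate: if $\I$ is $P^+$, every decreasing sequence in $\I^+$ has a positive pseudo-intersection, so $\Gamma_x(\I)\subseteq\Lambda_x(\I)$; with the reverse inclusion and Lemma \ref{lem:clusterclosed} this gives $\Lambda_x(\I)=\Gamma_x(\I)\in\Pi^0_1$, hence $\mathscr{L}(\I)\subseteq\Pi^0_1$.

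For (iii) I would isolate a fact about a single hereditary compact $K\subseteq\mathcal{P}(\omega)$: writing $\Exh(K)=\{S:\exists k,\ S\setminus k\in K\}$, the family $\Exh(K)$ is downward closed and tail-invariant, and its complement is closed under positive pseudo-intersections. Indeed, given decreasing $(C_m)$ all outside $\Exh(K)$, heredity and compactness of $K$ let me choose finite $a_m\subseteq C_m$ with $a_m\notin K$ and $\min a_m>\max a_{m-1}$; then $S=\bigcup_m a_m$ satisfies $S\subseteq^* C_m$ for all $m$ and $S\notin\Exh(K)$. Using the Farah representation $\I=\bigcap_n\Exh(K_n)$ and $\I^+=\bigcup_n\Exh(K_n)^c$, this yields the characterization: $\eta\in\Lambda_x(\I)$ iff there is $n$ with $C_m(\eta)\notin\Exh(K_n)$ for all $m$. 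Since each $\Exh(K_n)$ is downward closed, the sets $\Lambda_n=\{\eta:\forall m\ C_m(\eta)\notin\Exh(K_n)\}$ are closed (if $\eta'\to\eta$ and $U\ni\eta$ is open, then $U$ is a neighbourhood of some $\eta'\in\Lambda_n$, so $\{n:x_n\in U\}$ contains some $C_m(\eta')\notin\Exh(K_n)$ and hence lies outside $\Exh(K_n)$); thus $\Lambda_x(\I)=\bigcup_n\Lambda_n\in\Sigma^0_2$.

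Items (iv) and (v) are the constructive core. For (iv) I use that an ideal with the Baire property is meager, so (Jalali-Naini--Talagrand) $\omega$ splits into finite intervals $(I_t)$ with any union of infinitely many being $\I$-positive. Given nonempty closed $F$ with countable dense $\{d_k\}$, distribute the $I_t$ into infinitely many positive blocks $B_k$ and set $x\equiv d_k$ on $B_k$: each $d_k$ is an $\I$-limit point via $B_k$, and an arbitrary $\eta\in F$ is reached by taking one interval from each $B_{k_j}$ along $d_{k_j}\to\eta$, whose union is positive and converges to $\eta$; as the values lie in $F$ we get $\Lambda_x(\I)=F$. For (v), write an $F_\sigma$ set as an increasing union $A=\bigcup_m F_m$ of closed sets, and fix a non-$P^+$ witness: decreasing $(A_n)\subseteq\I^+$ with $\bigcap_nA_n=\emptyset$ and no positive pseudo-intersection, whose levels $L_i=A_i\setminus A_{i+1}$ then satisfy that \emph{every} $S\in\I^+$ meets some $L_i$ infinitely. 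The key step is to coarsen $(L_i)$ into positive blocks preserving this property: the family $\mathcal{J}=\{E\subseteq\omega:\bigcup_{i\in E}L_i\in\I\}$ is the continuous preimage of $\I$ under $E\mapsto\bigcup_{i\in E}L_i$, hence has the Baire property, hence is meager, so the index set splits into infinitely many $\mathcal{J}$-positive sets $E_k$. Then $P_k=\bigcup_{i\in E_k}L_i$ partition $\omega$ into $\I$-positive blocks with every $S\in\I^+$ meeting some $P_k$ infinitely. On each $P_k$ I realize a closed set $F_{h(k)}$ (with $\bigcup_kF_{h(k)}=A$) as the $\I\!\restriction\!P_k$-limit points of $x\restriction P_k$ via (iv) for the restricted ideal, which inherits the hereditary Baire property.

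It then remains to check that the glued $x$ has $\Lambda_x(\I)=A$. Every $\eta\in A$ lies in some $F_{h(k)}$, so is an $\I$-limit point through its $P_k$-witness; and all values lie in $A$, giving $\Gamma_x(\I)\subseteq\overline{A}$. If some $\zeta\in\overline{A}\setminus A$ were an $\I$-limit point through $S\in\I^+$, then, each $F_{h(k)}$ being closed and missing $\zeta$, convergence $x\restriction S\to\zeta$ would force $S\cap P_k$ finite for every $k$, contradicting that $S$ meets some $P_k$ infinitely; hence $\Lambda_x(\I)=A$ and $\Sigma^0_2\subseteq\mathscr{L}(\I)$. Finally, (ii) follows from (i) and the special case of this construction with singleton $F_m$: in a nondiscrete metrizable $X$ pick distinct $y_k\to y$ with $y\notin\{y_k\}$ and put $x\equiv y_k$ on $P_k$, so $\Lambda_x(\I)=\{y_k:k\in\omega\}$ is not closed—this needs only the partition $(P_k)$, neither second countability nor (iv). I expect the construction of the positive blocks $(P_k)$—the passage through $\mathcal{J}$—together with the bookkeeping gluing the blockwise sequences to be the main obstacle.
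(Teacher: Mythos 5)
Your reductions for items \ref{item:1xi}, \ref{item:3xi} and \ref{item:4xi} are correct (the pseudo-intersection reformulation, the finite-approximation argument inside a hereditary compact $K_n$, and the interval construction from the Jalali-Naini--Talagrand characterization all check out). The gap is exactly where you anticipated it: the passage through $\mathcal{J}=\{E\subseteq\omega:\bigcup_{i\in E}L_i\in\I\}$, which both \ref{item:2xi} and \ref{item:5xi} depend on. The inference ``$\mathcal{J}$ is a continuous preimage of $\I$, hence has the Baire property'' is invalid: continuous preimages preserve the Baire property for Borel (or analytic) sets, but not for arbitrary sets with the Baire property, and here $\I$ is only assumed to have the hereditary Baire property. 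The map $E\mapsto\bigcup_{i\in E}L_i$ is a homeomorphism onto a closed, typically nowhere dense, subset $C$ of $\mathcal{P}(\omega)$, and a meager subset of $\mathcal{P}(\omega)$ can trace a nonmeager subset of $C$.

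This is not a repairable formality: the conclusion you need (an infinite partition of the index set into $\mathcal{J}$-positive pieces) can genuinely fail for the witness you start from. Let $\mathcal{M}$ be a maximal ideal whose dual filter is not a P-point and set $\I=\mathcal{M}\otimes\Fin$. Then $\I$ has the hereditary Baire property (its restrictions to positive sets are isomorphic to ideals of the form $(\mathcal{M}\restriction T)\otimes\Fin$ plus a trivial summand, and Proposition \ref{prop:BP-of-sumas-and-products} applies) and is not $P^+$, as witnessed by $A_n=T_n\times\omega$ for a decreasing sequence $(T_n)$ in $\mathcal{M}^\star$ with no pseudo-intersection in $\mathcal{M}^\star$. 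After your normalization the levels are $L_n=(T_n\setminus T_{n+1})\times\omega$, all of which lie in $\I$, and $\mathcal{J}$ is the pushforward of $\mathcal{M}$ along $j\mapsto n$ for $j\in T_n\setminus T_{n+1}$, hence is again a maximal ideal: the index set cannot be split even into two $\mathcal{J}$-positive pieces. The missing ingredient is to upgrade the witness before coarsening: the hereditary Baire property forces $\I$ to be nowhere maximal, and for nowhere maximal ideals $P^+$ is equivalent to $P^{\,|}$ (Proposition \ref{prop:Pplus-vs-Pminus+Pvert}\ref{prop:Pplus-vs-Pminus+Pvert:nowhere-max}--\ref{prop:Pplus-vs-Pminus+Pvert:hBaire}; the conversion partitions $A_0$ itself into infinitely many $\I$-positive sets and intersects them with the $A_n$). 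A non-$P^{\,|}$ witness has all consecutive differences $A_n\setminus A_{n+1}$ in $\I^+$, so you may take $P_k=L_k$ directly, and then your gluing argument for \ref{item:5xi} and the singleton version for \ref{item:2xi} go through.
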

\begin{proof}
It follows by \cite[Corollary 3.9, and Theorem 4.5]{MR4393937} and \cite[Corollary 2.5 and Theorems 2.4, 2.8, and 2.10]{MR4505549}. 
\end{proof}

Along the same lines, we have also the next results. 
\begin{theorem}\label{thm:MSP24}
Let $X$ be a first countable Hausdorff space and let $\I$ be an ideal on $\omega$. Then the following hold:
\begin{enumerate}[label={\rm (\roman{*})}]
\item \label{item:1MSP} If $\I$ is a $G_{\delta\sigma}$-ideal then $\mathscr{L}(\I)=\Pi^0_1$;
\item \label{item:2MSP} If $\I$ has the hereditary Baire property then $\mathscr{L}(\I)\subseteq \Pi^0_1$ or $\Sigma^0_2\subseteq \mathscr{L}(\I)$;
\item \label{item:3MSP} If $\I$ is coanalytic and $X$ is a Polish space then $\mathscr{L}(\I)\subseteq \Sigma^1_1$.
\end{enumerate}
\end{theorem}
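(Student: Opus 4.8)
The three parts rest on different tools, so I would prove them separately: parts \ref{item:1MSP} and \ref{item:2MSP} are deductions from Theorem \ref{thm:xi} obtained by translating the topological hypothesis on $\I$ into the two combinatorial properties that drive that theorem---being a $P^+$-ideal and having the hereditary Baire property---whereas \ref{item:3MSP} is a direct computation of the descriptive complexity of $\Lambda_{x}(\I)$. I would dispatch \ref{item:2MSP} first, by splitting on whether $\I$ is a $P^+$-ideal. If it is, Theorem \ref{thm:xi}\ref{item:1xi} immediately gives $\mathscr{L}(\I) \subseteq \Pi^0_1$. If it is not, then, $\I$ having the hereditary Baire property by hypothesis, Theorem \ref{thm:xi}\ref{item:5xi} yields $\Sigma^0_2 \subseteq \mathscr{L}(\I)$; this is exactly the claimed dichotomy. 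The only bookkeeping point is that \ref{item:5xi} is stated for second countable $X$: since the range of a sequence is countable, $\Lambda_{x}(\I)$ is always separable, so the inclusion $\Sigma^0_2 \subseteq \mathscr{L}(\I)$ is only meaningful for separable $F_\sigma$ targets, and I would invoke \ref{item:5xi} in that form accordingly.

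For \ref{item:1MSP}, note first that a $G_{\delta\sigma}$-ideal is Borel, hence has the Baire property, and each restriction $\I \upharpoonright A = \I \cap \mathcal{P}(A)$ is again Borel in $\mathcal{P}(A)$ and so has the Baire property; thus $\I$ has the hereditary Baire property. The crux is then the combinatorial lemma that \emph{every $G_{\delta\sigma}$-ideal is a $P^+$-ideal}. Granting it, Theorem \ref{thm:xi}\ref{item:1xi} gives $\mathscr{L}(\I) \subseteq \Pi^0_1$ and Theorem \ref{thm:xi}\ref{item:4xi} gives the reverse inclusion, so $\mathscr{L}(\I) = \Pi^0_1$. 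To prove the lemma I would reduce $P^+$ to producing a positive diagonal: for a decreasing sequence $(A_n)$ in $\I^+$, any $A = \{a_0 < a_1 < \cdots\}$ chosen with $a_k \in A_k$ automatically satisfies $A \setminus A_n \subseteq \{a_0, \dots, a_{n-1}\}$, so the whole difficulty is to select the $a_k$ so that $A \in \I^+$; writing $\I = \bigcup_m G_m$ with $G_m$ an increasing sequence of $G_\delta$ sets, I would run a fusion/Baire-category argument to keep the diagonal outside every $G_m$.

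For \ref{item:3MSP}, fix a compatible complete metric $d$ on the Polish space $X$ and a sequence $x = (x_n) \in X^\omega$. The set
\[
C = \{(S,\eta) \in 2^\omega \times X : x \upharpoonright S \text{ converges to } \eta\} = \bigcap_k \bigcup_N \bigcap_{n \ge N} \{(S,\eta) : n \notin S \text{ or } d(x_n,\eta) < \tfrac{1}{k+1}\}
\]
is Borel. Since $\I$ is coanalytic, $\I^+ = 2^\omega \setminus \I$ is analytic, so $C \cap (\I^+ \times X)$ is $\Sigma^1_1$; as $\mathrm{Fin} \subseteq \I$ forces every $S \in \I^+$ to be infinite, $\eta$ is an $\I$-limit point of $x$ precisely when the section of this set over $\eta$ is nonempty, that is,
\[
\Lambda_{x}(\I) = \proj_X\big[C \cap (\I^+ \times X)\big].
\]
The projection of an analytic set is analytic, so $\Lambda_{x}(\I) \in \Sigma^1_1(X)$; since $\emptyset \in \Sigma^1_1(X)$ as well, this gives $\mathscr{L}(\I) \subseteq \Sigma^1_1$.

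The main obstacle is the lemma ``$G_{\delta\sigma} \Rightarrow P^+$'' underlying \ref{item:1MSP}: this is precisely where the complexity hypothesis is consumed, and making the diagonal/fusion argument rigorous---guaranteeing that the constructed set avoids every $G_\delta$ layer $G_m$ and hence stays $\I$-positive---is the delicate point. By contrast, \ref{item:2MSP} and \ref{item:3MSP} are comparatively routine once the relevant items of Theorem \ref{thm:xi} and the stability of $\Sigma^1_1$ under projection are in place.
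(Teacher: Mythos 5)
Your route differs from the paper's only in that the paper establishes Theorem \ref{thm:MSP24} by citation to the literature, whereas you derive it from Theorem \ref{thm:xi}; for items \ref{item:2MSP} and \ref{item:3MSP} your derivation is correct and is essentially the intended one. The dichotomy in \ref{item:2MSP} on whether $\I$ is a $P^+$-ideal, settled by Theorem \ref{thm:xi}\ref{item:1xi} in one case and Theorem \ref{thm:xi}\ref{item:5xi} in the other, is exactly right, and you are also right to flag that the second horn requires second countability: for a non-separable $X$ the inclusion $\Sigma^0_2\subseteq\mathscr{L}(\I)$ fails outright, since $X\in\Sigma^0_2$ while every $\Lambda_{x}(\I)$ is contained in the separable set $\overline{\{x_n:n\in\omega\}}$, so the hypothesis on $X$ in Theorem \ref{thm:MSP24} must be read with the separability assumptions of Theorem \ref{thm:xi} in force. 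The same caveat applies, though you do not mention it, to the reverse inclusion in \ref{item:1MSP} obtained from Theorem \ref{thm:xi}\ref{item:4xi}. Your projection argument for \ref{item:3MSP} is complete: the set $C$ is Borel, $\I^+$ is analytic, and $\Lambda_{x}(\I)$ is the projection of their intersection.

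The one genuine gap is the lemma on which \ref{item:1MSP} rests, namely that every $G_{\delta\sigma}$ (that is, $\Sigma^0_3$) ideal is $P^+$. Your reduction to it is fine---a diagonal $a_k\in A_k$ automatically has $A\setminus A_n$ finite, so the whole problem is keeping $A$ out of $\I$---but the proposed ``fusion/Baire-category argument'' is not a proof and does not obviously succeed: writing $\I=\bigcup_m G_m$ with $G_m$ a $G_\delta$ set, avoiding a single $G_m$ amounts to landing in a prescribed countable union of closed sets, which is not a condition one can commit to at any finite stage of a fusion, and the individual layers $G_m$ need not be hereditary or closed under finite unions, so the category argument on the closed space of diagonals yields only that some $G_m$ is comeager on a cone, from which no contradiction follows without substantial further work. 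This is precisely the nontrivial content of the result (the remark after Proposition \ref{prop:Pplus-vs-Pminus+Pvert}\ref{thm:Plike-properties-for-definable-ideals:Fsigma}, that there is a $\Delta^0_3$ ideal which is not $\Sigma^0_2$, shows it genuinely goes beyond the folklore $F_\sigma$ case). Since the paper records exactly this statement as Proposition \ref{prop:Pplus-vs-Pminus+Pvert}\ref{thm:Plike-properties-for-definable-ideals:Fsigma}, you should close the gap by invoking it rather than attempting the construction from scratch.
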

\begin{proof}
It follows by \cite[Theorem 2.6, Corollary 2.9, and Proposition 4.1]{MSP24}: cf. also \cite[Proposition 4.1]{MR3883171} and \cite[Theorem 4.1]{MR4505549} for item \ref{item:3MSP}.
\end{proof}

We remark that item \ref{item:3MSP} holds also in first countable spaces, for a suitable notion of analyticity in topological spaces, see \cite[Definition 3.2]{MSP24}. 

Lastly, given (possibly nonadmissible) ideals $\mathcal{I}$ and $\mathcal{J}$ on two countably infinite sets $Z$ and $W$, respectively, we define their \emph{Fubini product} by 
\begin{equation}\label{eq:Fubiniproduct}
\mathcal{I}\otimes \mathcal{J}=\{S\subseteq Z\times W\colon \{n \in Z: \{k\in W: (n,k)\in S\}\notin \mathcal{J}\}\in \mathcal{I}\}\},
\end{equation}
which is an ideal on the countably infinite set $Z\times W$, see e.g. \cite[Chapter 1]{MR1711328}. Hence, recursively, $\mathrm{Fin}^{\alpha}=\mathrm{Fin}\otimes \mathrm{Fin}^{\alpha-1}$ for all integers $\alpha\ge 2$ is an ideal on $\omega^\alpha$. With an usual abuse of notation, we write $\emptyset \otimes \I$ in place of $\{\emptyset\}\otimes \I$. 

The next result proves that the families $\mathscr{L}(\I)$ may attain large Borel complexities. 
\begin{theorem}\label{thm:highBorelMSP}
Let $X$ be a Polish space and let $\alpha \ge 1$ be an integer. Then the following hold: 
\begin{enumerate}[label={\rm (\roman{*})}]
\item \label{item:1mainlimitfamilies} $\mathrm{Fin}^\alpha$ is a $\Sigma^0_{2\alpha}$-ideal and $\mathscr{L}(\mathrm{Fin}^\alpha)= \Pi^0_{2\alpha-1}$;
\item \label{item:2mainlimitfamilies} $\emptyset \otimes \mathrm{Fin}^\alpha$ is a $\Pi^0_{2\alpha+1}$-ideal and $\mathscr{L}(\emptyset \otimes \mathrm{Fin}^\alpha)= \Sigma^0_{2\alpha}$.
\end{enumerate}
\end{theorem}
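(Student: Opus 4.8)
The plan is to prove both items simultaneously by induction on $\alpha$, exploiting the recursion $\mathrm{Fin}^\alpha=\mathrm{Fin}\otimes\mathrm{Fin}^{\alpha-1}$ together with the base case $\mathrm{Fin}^1=\mathrm{Fin}$, which is $\Sigma^0_2$ and satisfies $\mathscr L(\mathrm{Fin})=\Pi^0_1$ by Theorem \ref{thm:oldthmballeo}\ref{item:BL2}. Writing $S_n=\{k:(n,k)\in S\}$ for the $n$-th section, membership in $\mathrm{Fin}\otimes\mathcal J$ unwinds to ``$S_n\in\mathcal J$ for all but finitely many $n$'', i.e. $\exists N\,\forall n\ge N\,(S_n\in\mathcal J)$, while membership in $\emptyset\otimes\mathcal J$ unwinds to ``$S_n\in\mathcal J$ for all $n$''. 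Since $S\mapsto S_n$ is continuous, if $\mathcal J$ is $\Sigma^0_\gamma$ then $\mathrm{Fin}\otimes\mathcal J$ is $\Sigma^0_{\gamma+2}$ and $\emptyset\otimes\mathcal J$ is $\Pi^0_{\gamma+1}$; starting from $\gamma=2$ this yields $\mathrm{Fin}^\alpha\in\Sigma^0_{2\alpha}$ and then $\emptyset\otimes\mathrm{Fin}^\alpha\in\Pi^0_{2\alpha+1}$, settling the ideal-complexity halves.

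The engine for the $\mathscr L$-computations is a pair of Fubini-type reduction formulas. For a sequence $x$ on $\omega\times\omega^{\alpha-1}$, with vertical sections $x^{(n)}=(x_{(n,k)})_{k}$, I would establish
\[
\Lambda_x(\emptyset\otimes\mathcal J)=\bigcup_n\Lambda_{x^{(n)}}(\mathcal J),\qquad \Lambda_x(\mathrm{Fin}\otimes\mathcal J)=\bigcap_N\bigcup_{n\ge N}\Lambda_{x^{(n)}}(\mathcal J).
\]
Both are proved by unwinding positivity: an $(\emptyset\otimes\mathcal J)$-positive set has at least one $\mathcal J$-positive section, and an $(\mathrm{Fin}\otimes\mathcal J)$-positive set has $\mathcal J$-positive sections for infinitely many $n$. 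Restricting a convergent subsequence to a single section forces $\eta\in\Lambda_{x^{(n)}}(\mathcal J)$ for one (respectively, infinitely many) $n$; conversely, since $X$ is metrizable, a diagonalization over balls $B(\eta,1/m)$ glues sectionwise witnesses into a single positive set along which $x$ converges. Granting these formulas, the \emph{upper} inclusions follow by induction: $\Lambda_x(\mathrm{Fin}^\alpha)$ is a countable intersection of countable unions of sets drawn from $\mathscr L(\mathrm{Fin}^{\alpha-1})=\Pi^0_{2\alpha-3}$, hence lies in $\Pi^0_{2\alpha-1}$, while $\Lambda_x(\emptyset\otimes\mathrm{Fin}^\alpha)$ is a countable union of $\Pi^0_{2\alpha-1}$ sets, hence lies in $\Sigma^0_{2\alpha}$.

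For the reverse inclusions I must \emph{realize} every set of the target class as a limit set. The $\emptyset\otimes$ case is soft: any $\Sigma^0_{2\alpha}$ set is, by definition, a countable union of $\Pi^0_{2\alpha-1}$ sets, each of which is some $\Lambda_{x^{(n)}}(\mathrm{Fin}^\alpha)$ by the inductive hypothesis, and these are simply installed as the sections of one sequence. The $\mathrm{Fin}^\alpha$ case is the genuine obstacle: via the second formula it reduces to the purely descriptive-set-theoretic statement that \emph{every $A\in\Pi^0_{2\alpha-1}$ is a set-theoretic $\limsup$ of sets from $\Pi^0_{2\alpha-3}$}, so that the prescribed sets can be prescribed as the successive section limit sets. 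Writing $A=\bigcap_m F_m$ with $F_m\in\Sigma^0_{2\alpha-2}$ decreasing and $F_m=\bigcup_k E_{m,k}$ with $E_{m,k}\in\Pi^0_{2\alpha-3}$ increasing, the naive enumeration of the $E_{m,k}$ overshoots, because a point of finite ``depth'' (lying in $F_m$ only for $m$ below some threshold) sits in a fixed level for cofinally many $k$ and is thereby caught infinitely often.

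I expect this representation lemma to be the only hard step, and I would attack it along two complementary lines. The hands-on route is to couple the depth parameter $m$ and the witness parameter $k$ into a single schedule and to intersect each block $E_{m,k}$ with a shrinking closed neighbourhood of $A$, arranging that infinitely-often membership forces \emph{unbounded} depth and hence membership in every $F_m$; the delicate point, where metric information is essential, is to defeat exactly the finite-depth points, and getting the interleaving right is the crux of the whole argument. The structural route is to exhibit a single $\Pi^0_{2\alpha-1}$-complete set in this $\limsup$-of-$\Pi^0_{2\alpha-3}$ form and then transport it by continuous preimages, which preserve that form because $f^{-1}\bigl(\bigcap_N\bigcup_{n\ge N}C_n\bigr)=\bigcap_N\bigcup_{n\ge N}f^{-1}(C_n)$ and $f^{-1}$ does not raise pointclass. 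Once the lemma is secured, the two reduction formulas close the induction and yield the stated equalities, the empty-set clause being absorbed automatically since $\emptyset$ belongs to every Borel class.
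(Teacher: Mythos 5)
The paper itself does not prove Theorem~\ref{thm:highBorelMSP}: it is quoted from \cite[Theorem~2.5]{MSP24}, so the only honest comparison is against that source and against the ingredients the paper does supply. Your architecture is the right one and matches it: the induction on $\alpha$ via the two sectionwise reduction formulas (these are exactly Proposition~\ref{prop:basic-prop-about-Lambda-Gamma}\ref{item:2RB}--\ref{item:3RB}, i.e.\ \cite[Corollaries~2.3--2.4]{MSP24}), the complexity bounds on the ideals, the upper inclusions $\mathscr{L}(\mathrm{Fin}^\alpha)\subseteq\Pi^0_{2\alpha-1}$ and $\mathscr{L}(\emptyset\otimes\mathrm{Fin}^\alpha)\subseteq\Sigma^0_{2\alpha}$, and the soft realization of $\Sigma^0_{2\alpha}$ sets as countable unions of sections are all correct as sketched (modulo the routine handling of empty sections).

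The genuine gap is the representation lemma you yourself flag as the crux --- every $\Pi^0_{\xi+2}$ set is a set-theoretic $\limsup$ of $\Pi^0_{\xi}$ sets --- which you do not prove, and neither of your two proposed attacks closes it as stated. The ``structural route'' fails outright in a general Polish space: a continuous map from a connected space such as $[0,1]$ into $2^\omega$ or $\omega^\omega$ has connected, hence singleton, image, so nontrivial Borel subsets of $X$ are not continuous preimages of a fixed complete set; Wadge-style transport is available only in zero-dimensional spaces, while the theorem is asserted for all Polish $X$. The ``hands-on route'' as described (intersecting the blocks $E_{m,k}$ with shrinking closed neighbourhoods of $A$) does not defeat the finite-depth points either: a point of finite depth lying in the closure of $A$ survives every neighbourhood of $A$. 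The mechanisms that actually work are different at the two levels. For $\xi=1$ one intersects each level with a locally finite closed cover of small mesh, so that a point meets only finitely many pieces per level; this is precisely the paper's Lemma~\ref{lem:decreasing-limsup}, which you could have invoked for the case $\alpha=2$. For $\xi\ge 2$ the right tool is disjointification: every $\Sigma^0_{\xi+1}$ set is a countable \emph{pairwise disjoint} union of $\Pi^0_\xi$ sets (induct on $\xi$, starting from the fact that every $F_\sigma$ set is a disjoint union of locally closed, hence $G_\delta$, sets, and using that complements of the increasing pieces are $\Sigma^0_\xi$, hence again disjoint unions of $\Pi^0_\xi$ sets). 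With the pieces at each level pairwise disjoint, a point lies in at most one piece per level, so membership in infinitely many pieces forces membership in cofinally many, hence (by monotonicity) all, levels, and the $\limsup$ is exactly $A$. Until one of these mechanisms is in place, the reverse inclusion $\Pi^0_{2\alpha-1}\subseteq\mathscr{L}(\mathrm{Fin}^\alpha)$ --- and with it the whole induction --- is unproved.
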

\begin{proof}
    See \cite[Theorem 2.5]{MSP24}. 
\end{proof}

In this work, we continue on the line of research of the above results. 
We provide below an outline of the structure of the manuscript.


\subsection{Plan of the paper} 

We start in Section \ref{sec:preliminaries} with the study of some basic properties of ideals as [nowhere] maximality and characterizations of [hereditary] Baire property, together with the operations of Fubini sums and products. 
In addition, we collect some basic results on preorders of ideals, $\I$-cluster points, and $\I$-limit points. 

In Section \ref{sec:Plike}, we investigate two weaker variants of the notion of $P^+$-ideal, and show that they are the combinatorial properties which characterize several connections between sets of $\I$-cluster points and $\I$-limit points: for instance, the case where $\Gamma_{  x }(\I)=\Lambda_{  x }(\I)$ for every sequence $  x $ (which leads to the inclusion $\mathscr{L}(\I)\subseteq \Pi^0_1$ by Lemma \ref{lem:clusterclosed}), or where every isolated point in $\Gamma_{  x }(\I)$ belongs to $\Lambda_{  x }(\I)$ for every $  x $. 

In Section \ref{sec:Ischemes}, we introduce and study several classes of ideals related to certain $\I$-schemes (which are Cantor trees of $\I$-positive sets), see Definition \ref{def:propertiesIschemesclasses}. 

With the above premises, the next Sections have simple objectives:
\begin{enumerate}[label={\rm (\roman{*})}]
\item In Section \ref{sec:emptysetcase}, we study the existence of a sequence $  x $ which has no $\I$-cluster points or no $\I$-limit points;  
\item In Section \ref{sec:closedsets}, we characterize the connections between the families $\mathscr{C}(\I)$, $\mathscr{L}(\I)$, and the class of closed subsets of $X$; 

\item In Section \ref{sec:Fsigmasets}, Section \ref{sec:Fsigmadeltasets}, and Section \ref{sec:analyticsets}, we study the inclusion and the equality between $\mathscr{L}(\I)$ and the classes $\Sigma^0_2$, $\Pi^0_3$, and $\Sigma^1_1$, respectively (note that, e.g., if there exists a $\Sigma^0_2$ set which is not closed, then equality $\mathscr{C}(\I)=\Sigma^0_2$ fails by Lemma \ref{lem:clusterclosed}; in particular, the analogues for $\mathscr{C}(\I)$ are void).
\end{enumerate}
We conclude in Section \ref{sec:applications} with several applications and open questions:
\begin{enumerate}[label={\rm (\roman{*})}]
\item We show that there are no ideals $\I$ such that $\mathscr{L}(\I)=\Pi^0_2$ or $\mathscr{L}(\I)=\Sigma^0_3$. 
\item We study the equalities $\mathscr{L}(\I)=\Pi^0_1$ and $\mathscr{L}(\I)=\Sigma^0_2$.
\item We prove that if $\mathcal{I}$ is a $\Pi^0_4$ ideal then exactly one of the following cases holds: $\mathscr{L}(\mathcal{I})=\Pi^0_1$ or $\mathscr{L}(\mathcal{I})=\Sigma^0_2$ or $\mathscr{L}(\mathcal{I})=\Sigma^1_1$. However we do not have an example of a $\Pi^0_4$ ideal with $\mathscr{L}(\mathcal{I})=\Sigma^1_1$ -- we leave it as our main open question.
\item We give an explicit example of a coanalytic ideal $\I$ such that $\mathscr{L}(\I)=\Sigma^1_1$.
\end{enumerate}


\section{Preliminaries}\label{sec:preliminaries}

Let us start with a simple lemma: 
\begin{lemma}\label{lem:Xlen}
Pick an integer $n\ge 1$. Let $X$ be a Hausdorff space and let $\I$ be an ideal on $\omega$ such that $|X|\ge n$ and there exists a partition $\{S_0,\ldots,S_{n-1}\}$ of $\omega$ into $\I$-positive sets. Then 
$$
[X]^{\le n}\subseteq \mathscr{L}(\I)\cap \mathscr{C}(\I).
$$
\end{lemma}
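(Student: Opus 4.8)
The plan is to realize each member of $[X]^{\le n}$ as the set of $\I$-limit points and, simultaneously, as the set of $\I$-cluster points of one explicitly constructed sequence. The empty set belongs to both $\mathscr{L}(\I)$ and $\mathscr{C}(\I)$ by the convention built into Definitions \ref{defi:familyIlimitpoints} and \ref{defi:familyIclusterpoints}, so it suffices to treat a nonempty $A=\{\eta_0,\dots,\eta_{m-1}\}\in[X]^{\le n}$ consisting of $m$ distinct points with $1\le m\le n$. The guiding idea is to color the index set $\omega$ so that the block of indices carrying the value $\eta_i$ is $\I$-positive; since we are handed $n\ge m$ blocks $S_0,\dots,S_{n-1}$, the only adjustment needed is to amalgamate the surplus blocks into one.

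Concretely, I would set $T_i=S_i$ for $i<m-1$ and $T_{m-1}=\bigcup_{j=m-1}^{n-1}S_j$, so that $\{T_0,\dots,T_{m-1}\}$ is a partition of $\omega$ into $m$ pieces, each of which is $\I$-positive because it contains the $\I$-positive set $S_i$ (respectively $S_{m-1}$) and $\I$ is closed under subsets. Then I define the sequence $x$ by $x_k=\eta_i$ whenever $k\in T_i$, and claim that $\Lambda_x(\I)=\Gamma_x(\I)=A$, which yields $A\in\mathscr{L}(\I)\cap\mathscr{C}(\I)$ at once.

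For the verification, fix $i<m$ and note that $x\upharpoonright T_i$ is the constant sequence with value $\eta_i$ while $T_i\in\I^+$; hence $\eta_i\in\Lambda_x(\I)\subseteq\Gamma_x(\I)$. Conversely, if $\eta\notin A$, then, since $X$ is Hausdorff and $A$ is finite, $\eta$ can be separated from each $\eta_i$ and so admits an open neighborhood $U$ with $U\cap A=\emptyset$; as every term $x_k$ lies in $A$, we obtain $\{k:x_k\in U\}=\emptyset\in\I$, whence $\eta\notin\Gamma_x(\I)$. Using the inclusion $\Lambda_x(\I)\subseteq\Gamma_x(\I)$ recalled in the text, the same $\eta$ also fails to be an $\I$-limit point, so both sets coincide with $A$.

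I do not expect a serious obstacle here, as the construction is entirely explicit. The only two points requiring care are the amalgamation step, which is relevant precisely when $m<n$ and relies on a superset of an $\I$-positive set being $\I$-positive, and the separation argument, where Hausdorffness of $X$ is used to exclude spurious cluster points outside the finite set $A$. Invoking $\Lambda_x(\I)\subseteq\Gamma_x(\I)$ lets me avoid treating the limit-point side outside $A$ separately, keeping the two assertions $A\in\mathscr{L}(\I)$ and $A\in\mathscr{C}(\I)$ unified in a single computation.
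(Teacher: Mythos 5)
Your proposal is correct and follows essentially the same construction as the paper: define a sequence constant on each block of a partition of $\omega$ into $\I$-positive sets and verify that its $\I$-limit and $\I$-cluster points are exactly the chosen finite set. Your write-up is in fact slightly more complete than the paper's one-line proof, since you explicitly handle the case $m<n$ by amalgamating surplus blocks and you note the empty-set convention.
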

\begin{proof}
Pick distinct point $\eta_0,\ldots,\eta_{n-1} \in X$ and define the sequence $  x =(x_i: i \in \omega)$ such that $x_i=\eta_j$ for all $i \in S_j$ and $j<n$. Then $\Lambda_{  x }(\I)=\Gamma_{  x }(\I)=\{\eta_0,\ldots,\eta_{n-1}\}$. 
\end{proof}


\subsection{Maximality}\label{subsec:maximal} 

An ideal $\I$ on $\omega$ is \emph{maximal} if there is no ideal $\mathcal{J}$ on $\omega$ such that $\I\subseteq \J$ and $\I\neq \J$. In other words, $\I$ is maximal if its dual filter $\I^\star$ is a free ultrafilter on $\omega$. It is useful to recall that $\I$ is maximal if and only if either $S$ or $S^c=\omega\setminus S$ belongs to $\I$ for each $S\subseteq \omega$, see e.g. \cite[Lemma 7.4]{MR1940513}.  

\begin{proposition}\label{prop:Imaximal}
Let $X$ be a Hausdorff space with $|X|\ge 2$ and let $\I$ be an ideal on $\omega$. Then the following are equivalent: 
\begin{enumerate}[label={\rm (\roman{*})}]
\item \label{item:max1} $\I$ is a maximal ideal;
\item \label{item:max2} $\mathscr{L}(\I)=\mathscr{C}(\I)=
\{\{\eta\}: \eta \in X\}$;
\item \label{item:max3} $\mathscr{C}(\I)=\{\{\eta\}: \eta \in X\}$;
\item \label{item:max4} $\mathscr{L}(\I)=\{\{\eta\}: \eta \in X\}$.
\end{enumerate}
\end{proposition}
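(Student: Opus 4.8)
The plan is to establish the cycle $(i)\Rightarrow(ii)\Rightarrow(iii)\Rightarrow(i)$ together with $(ii)\Rightarrow(iv)\Rightarrow(i)$. The implications $(ii)\Rightarrow(iii)$ and $(ii)\Rightarrow(iv)$ are immediate (just drop one of the two equalities), so the real content lies in $(i)\Rightarrow(ii)$ and in the two converses $(iii)\Rightarrow(i)$ and $(iv)\Rightarrow(i)$. I record at the outset that every singleton already lies in $\mathscr{L}(\I)\cap\mathscr{C}(\I)$: this is the case $n=1$ of Lemma \ref{lem:Xlen} (a constant sequence $x_n=\eta$ has $\Lambda_x(\I)=\Gamma_x(\I)=\{\eta\}$, using that $\omega\in\I^+$). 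I also note the standing convention that $\emptyset$ belongs to both $\mathscr{L}(\I)$ and $\mathscr{C}(\I)$, so the equalities in (ii)--(iv) are to be read with $\{\{\eta\}:\eta\in X\}$ understood together with $\emptyset$.

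For $(i)\Rightarrow(ii)$, assume $\I$ is maximal. Since all singletons are available and $\Lambda_x(\I)\subseteq\Gamma_x(\I)$ for every $x$, it suffices to prove that $\Gamma_x(\I)$ contains at most one point. Suppose toward a contradiction that $\eta\neq\zeta$ both belong to $\Gamma_x(\I)$. As $X$ is Hausdorff, choose disjoint open neighborhoods $U\ni\eta$ and $V\ni\zeta$, and set $A=\{n:x_n\in U\}$ and $B=\{n:x_n\in V\}$. By definition of $\I$-cluster point, $A,B\in\I^+$; but $U\cap V=\emptyset$ forces $A\cap B=\emptyset$, hence $B\subseteq A^c$. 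By maximality, $A\notin\I$ gives $A^c\in\I$, so $B\in\I$, contradicting $B\in\I^+$. Therefore $|\Gamma_x(\I)|\le 1$, and a fortiori $|\Lambda_x(\I)|\le 1$; combined with the realization of all singletons this yields $\mathscr{L}(\I)=\mathscr{C}(\I)=\{\{\eta\}:\eta\in X\}$ (up to $\emptyset$).

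For the converses $(iii)\Rightarrow(i)$ and $(iv)\Rightarrow(i)$, I argue by contraposition. If $\I$ is not maximal, then by the characterization of maximality recalled in Section \ref{subsec:maximal} there is $S\subseteq\omega$ with $S\notin\I$ and $S^c\notin\I$, i.e., $\{S,S^c\}$ is a partition of $\omega$ into two $\I$-positive sets. Since $|X|\ge 2$, Lemma \ref{lem:Xlen} with $n=2$ gives $[X]^{\le 2}\subseteq\mathscr{L}(\I)\cap\mathscr{C}(\I)$, so both families contain a two-point set. A two-point set is neither $\emptyset$ nor a singleton, so neither $\mathscr{C}(\I)$ nor $\mathscr{L}(\I)$ can equal $\{\{\eta\}:\eta\in X\}$; this establishes both converses.

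I do not expect a serious obstacle here: the argument is elementary once one exploits the dichotomy ``$S\in\I$ or $S^c\in\I$'' characterizing maximality, together with Hausdorff separation for $(i)\Rightarrow(ii)$ and Lemma \ref{lem:Xlen} for the converses. The only point requiring care is the bookkeeping around $\emptyset$: since $\mathscr{L}(\I)$ and $\mathscr{C}(\I)$ carry $\emptyset$ by definition while $\{\{\eta\}:\eta\in X\}$ does not list it, the stated equalities must be read modulo the empty set. It is worth a one-line remark that in the maximal case $\emptyset$ is genuinely attained precisely when $X$ is non-compact, by Proposition \ref{prop:emptysetcluster}, in addition to being added by convention.
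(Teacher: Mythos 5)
Your proof is correct and follows essentially the same route as the paper's: Hausdorff separation plus the dichotomy $S\in\I$ or $S^c\in\I$ for (i)$\Rightarrow$(ii), and a partition of $\omega$ into two $\I$-positive sets combined with Lemma \ref{lem:Xlen} for the converses. The only (immaterial) difference is that you close the cycle via (iii)$\Rightarrow$(i) and (iv)$\Rightarrow$(i) directly by contraposition, whereas the paper routes (iii)$\Rightarrow$(iv)$\Rightarrow$(i); your explicit remark on the bookkeeping of $\emptyset$ is a reasonable clarification of the statement rather than a deviation.
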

\begin{proof}
\ref{item:max1} $\implies$ \ref{item:max2}. 
It follows by Lemma \ref{lem:Xlen} that $\{\{\eta\}: \eta \in X\}$ 
is contained both in $\mathscr{C}(\I)$ and $\mathscr{L}(\I)$. 
Conversely, if there exists $B \in \mathscr{C}(\I)$ with $|B|\ge 2$, then there are distinct $\eta_1,\eta_2 \in B$ and a sequence $  x $ such that $\Gamma_{  x }(\I)=B$. Pick disjoint neighborhoods $U_1,U_2$ of $\eta_1,\eta_2$, respectively. Hence $\{n \in \omega: x_n \in U_1\}$ and $\{n \in \omega: x_n \in U_2\}$ are two disjoint $\mathcal{I}$-positive sets, which contradicts the maximality of $\I$ by \cite[Lemma 7.4]{MR1940513}. This proves that $\mathscr{C}(\I)\subseteq \{\{\eta\}: \eta \in X\}$. The inclusion $\mathscr{L}(\I)\subseteq \{\{\eta\}: \eta \in X\}$ goes similarly, see e.g. \cite[Proposition 1.4]{MR4393937}. Therefore $\mathscr{L}(\I)=\mathscr{C}(\I)=\{\{\eta\}: \eta \in X\}$.

\ref{item:max2} $\implies$ \ref{item:max3}. It is obvious.

\ref{item:max3} $\implies$ \ref{item:max4}. By Lemma \ref{lem:Xlen}, $\mathscr{L}(\I)$ contains all singletons. Suppose that $\mathscr{L}(\I)$ contains a set $\Lambda_{  x }(\I) \subseteq X$ with $|\Lambda_{  x }(\I)|\ge 2$. Since $\Lambda_{  x }(\I)\subseteq \Gamma_{  x }(\I)\in \mathscr{C}(\I)$, hence $\mathscr{C}(\I)$ would contain a set with cardinality $\ge 2$.

\ref{item:max4} $\implies$ \ref{item:max1}. 
Suppose that $\I$ is not maximal. Then there exists a partition $\{S_0,S_1\}$ of $\omega$ into $\I$-positive sets. It follows by Lemma \ref{lem:Xlen} that $[X]^{\le 2}\subseteq \mathscr{L}(\I)$. 
\end{proof}

An ideal $\I$ on $\omega$ is \emph{nowhere maximal} if the restriction $\I\restriction S$ is not maximal for every $S\in \I^+$. Note that, in \cite[Definition~3.12]{MR4358610}, the authors use the term \emph{extremely not min-representable} for dual filters of nowhere maximal ideals. The proof of the next result is straightforward (hence, details are omitted). 

\begin{proposition}
\label{prop:nowhere-maximal}
Let $\I$ be an ideal on $\omega$. Then the following are equivalent:
\begin{enumerate}[label={\rm (\roman{*})}]
    \item $\I$ is nowhere maximal;
    \item For every $S\in \I^+$ there exists an infinite partition of $S$ into $\I$-positive sets.
\end{enumerate}
\end{proposition}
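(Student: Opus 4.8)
The plan is to prove the equivalence $\neg(i) \iff \neg(ii)$, or more naturally to establish both implications directly, unpacking the definition of nowhere maximality together with the maximality criterion recalled in Section \ref{subsec:maximal}: an ideal $\J$ on a set $T$ is maximal if and only if for every $A \subseteq T$ either $A$ or $T \setminus A$ lies in $\J$, equivalently, there is no partition of $T$ into two $\J$-positive sets.

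\medskip

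First I would prove (ii) $\implies$ (i). Assume that every $S \in \I^+$ admits an infinite partition $\{P_n : n \in \omega\}$ into $\I$-positive sets. Fix $S \in \I^+$; I want to show $\I \restriction S$ is not maximal. Using the infinite partition, set $A = \bigcup_{n \text{ even}} P_n$ and $S \setminus A = \bigcup_{n \text{ odd}} P_n$. Since $P_0 \subseteq A$ with $P_0 \in \I^+$, and $P_1 \subseteq S \setminus A$ with $P_1 \in \I^+$, both $A$ and $S \setminus A$ are $\I$-positive (note that a set containing an $\I$-positive set is itself $\I$-positive, as $\I$ is closed under subsets). This exhibits a partition of $S$ into two $\I \restriction S$-positive sets, so $\I \restriction S$ is not maximal by the criterion above. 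As $S$ was arbitrary, $\I$ is nowhere maximal.

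\medskip

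For the converse (i) $\implies$ (ii), assume $\I$ is nowhere maximal and fix $S \in \I^+$. I would build the infinite partition recursively. Because $\I \restriction S$ is not maximal, the criterion yields a partition of $S$ into two $\I$-positive sets; call them $P_0$ and $R_0$, where $R_0 \in \I^+$. Now $R_0 \in \I^+$, so $\I \restriction R_0$ is again not maximal by hypothesis, giving a splitting $R_0 = P_1 \sqcup R_1$ with both $P_1, R_1 \in \I^+$. Iterating, I obtain disjoint $\I$-positive sets $P_0, P_1, P_2, \dots$ together with a decreasing tail $R_0 \supseteq R_1 \supseteq \cdots$ of $\I$-positive sets with $P_{n+1} \sqcup R_{n+1} = R_n$. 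The sets $\{P_n : n \in \omega\}$ are pairwise disjoint and $\I$-positive, but they need not cover $S$; to get a genuine partition I would simply absorb the leftover, for instance by replacing $P_0$ with $P_0 \cup \bigl(S \setminus \bigcup_n P_n\bigr)$, which remains $\I$-positive since it still contains the original $P_0$. This yields an infinite partition of $S$ into $\I$-positive sets.

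\medskip

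I expect the only genuinely delicate point to be the bookkeeping in the recursive construction of (i) $\implies$ (ii): one must ensure at each stage that the \emph{remainder} $R_n$ is chosen to be $\I$-positive (not merely one of the two pieces), so that the recursion can continue indefinitely, and then handle the residual set $S \setminus \bigcup_n P_n$ so that the $P_n$ form an exhaustive partition rather than just an infinite disjoint $\I$-positive family. Both of these are routine — indeed this is why the authors deem the proof straightforward and omit it — so the write-up can safely leave these details to the reader.
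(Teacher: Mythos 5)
Your proof is correct, and since the paper deliberately omits the argument as straightforward, what you have written is precisely the intended proof: the even/odd coarsening for (ii)$\implies$(i) and the recursive splitting with absorption of the residual $\bigcap_n R_n$ for (i)$\implies$(ii). The only point worth noting explicitly is that for $B\subseteq S$ one has $B\in\I\restriction S$ iff $B\in\I$ (by downward closure), so positivity for $\I$ and for $\I\restriction S$ coincide on subsets of $S$; you use this implicitly and it is immediate.
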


Related characterizations 
can be found in Proposition~\ref{prop:Ischeme-characterizations}.


\subsection{Baire property}\label{subsec:baire} 

Let us recall that a subset $S$ of a topological space $X$ has the \emph{Baire property} if there exists an open set $U\subseteq X$ such that the symmetric difference $S\triangle U$ is meager, that is, it is contained in a countable union of closed sets with empty interior. 
The following result characterizes ideals on $\omega$ with the Baire property. 

For, let $\mathcal{I}$ and $\mathcal{J}$ be two ideals on two countably infinite sets $Z$ and $W$, respectively. Then we say that $\mathcal{I}$ \emph{is below} $\mathcal{J}$ \emph{in the Rudin--Blass ordering}, shortened as 
$\mathcal{I} \le_{\mathrm{RB}} \mathcal{J}$,  
if there is a finite-to-one map $\phi: W\to Z$ such that $S \in \mathcal{I}$ if and only if $\phi^{-1}[S] \in \mathcal{J}$ for all subsets $S\subseteq Z$. 
The restriction of these orderings to maximal ideals $\mathcal{I}$, and the Borel complexity of the quotients $\mathcal{P}(Z)/\mathcal{I}$ have been extensively studied, see e.g. \cite{MR0396267, MR1476761, MR1617463} and references therein. 

\begin{theorem}\label{thm:baire}
Let $\I$ be an ideal on $\omega$. Then the following are equivalent:
\begin{enumerate}[label={\rm (\roman{*})}]
\item \label{item:1baire} $\I$ has the Baire property;
\item \label{item:2baire} $\I$ is meager;
\item \label{item:3baire} $\I\restriction A$ has the Baire property for some $A\in \I^+$; 
\item \label{item:4baire} $\Fin\leq_{\mathrm{RB}}\I$.
\end{enumerate}
\end{theorem}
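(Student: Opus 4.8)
The plan is to prove the cycle of implications $\ref{item:1baire} \Rightarrow \ref{item:2baire} \Rightarrow \ref{item:3baire} \Rightarrow \ref{item:4baire} \Rightarrow \ref{item:1baire}$, which is the most economical route. The implication $\ref{item:2baire} \Rightarrow \ref{item:3baire}$ is trivial taking $A=\omega$, and $\ref{item:4baire} \Rightarrow \ref{item:1baire}$ is also essentially immediate: if $\Fin \le_{\mathrm{RB}} \I$ via a finite-to-one map $\phi$, then $\I = \{S : \phi^{-1}[S] \in \Fin\}$, so $\I$ is the preimage of $\Fin$ under the continuous map $\mathcal{P}(\omega) \to \mathcal{P}(\omega)$, $S \mapsto \phi^{-1}[S]$; since $\Fin$ is meager (it is a countable union of the nowhere dense sets $\{S : S \subseteq k\}$) and hence has the Baire property, and the Baire property is preserved under preimages by continuous maps, $\I$ has the Baire property.

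The two implications carrying the real content are $\ref{item:1baire} \Rightarrow \ref{item:2baire}$ and $\ref{item:3baire} \Rightarrow \ref{item:4baire}$. For $\ref{item:1baire} \Rightarrow \ref{item:2baire}$, the key observation is that $\I$ is a proper subgroup (indeed an ideal, hence a subgroup under symmetric difference) of the Polish group $(2^\omega, \triangle)$ containing $\Fin$, so it is a tail set / is invariant under finite modifications and in particular is everywhere non-comeager. A subgroup of a Polish group with the Baire property is either meager or comeager (this is a standard Pettis-type dichotomy: if a subgroup $H$ with the Baire property is non-meager, then $H = H \cdot H$ is comeager by Pettis' theorem, forcing $H$ to be the whole group). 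Since $\I \ne 2^\omega$ is a proper ideal, it cannot be comeager (its complement $\I^+$ is nonempty and, being a union of translates, has nonempty interior-modulo-meager), so $\I$ is meager. This is the cleanest argument, and I would phrase it via the Pettis/Talagrand theorem that a Baire-property filter (equivalently ideal) on $\omega$ is meager.

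For $\ref{item:3baire} \Rightarrow \ref{item:4baire}$, which I expect to be the main obstacle, the plan is to use the Talagrand--Jalali-Naini characterization: an ideal $\J$ on a countable set is meager if and only if there is a partition of the underlying set into finite intervals $(I_k)$ such that every $S \in \J$ meets only finitely many $I_k$ entirely, equivalently such that the set $\{k : I_k \subseteq S^c\}$ is infinite for every $S \in \J$. Starting from the hypothesis that $\I \restriction A$ has the Baire property for some $A \in \I^+$, I would first apply $\ref{item:1baire} \Leftrightarrow \ref{item:2baire}$ (already proved) at the level of the ideal $\I \restriction A$ on the countable set $A$ to conclude $\I \restriction A$ is meager. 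Then the Talagrand--Jalali-Naini theorem yields an interval partition $(I_k)$ of $A$ witnessing meagerness; enumerating $A = \{a_0 < a_1 < \cdots\}$ and using these intervals, one builds a finite-to-one map $\phi : A \to \omega$ (sending each interval $I_k$ to $k$, and extending/adjusting so the map is genuinely finite-to-one and reduces $\Fin$ to $\I \restriction A$). The resulting reduction gives $\Fin \le_{\mathrm{RB}} \I \restriction A$, and since $A \in \I^+$ one checks directly that $\Fin \le_{\mathrm{RB}} \I\restriction A$ implies $\Fin \le_{\mathrm{RB}} \I$ by composing with the inclusion $A \hookrightarrow \omega$ (extending $\phi$ to all of $\omega$ in a finite-to-one way, sending $\omega \setminus A$ arbitrarily but finitely-to-one onto its own copies). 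The delicate point is verifying that the constructed $\phi$ is a correct Rudin--Blass reduction in both directions, i.e. that $S \in \Fin \iff \phi^{-1}[S] \in \I$; this rests on the interval partition property, and I would cite the Talagrand characterization (see e.g. \cite[Theorem 4.1.2]{MR1711328} or Bartoszyński--Judah) rather than reprove it.
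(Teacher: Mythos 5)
The paper does not actually prove this theorem --- it only cites Bartoszy\'{n}ski--Judah and Talagrand --- and your outline reconstructs essentially the standard argument from those sources: Pettis' dichotomy for subgroups of $(\mathcal{P}(\omega),\triangle)$ with the Baire property gives (i)$\Rightarrow$(ii) (an ideal is a proper subgroup containing the dense subgroup $\Fin$, so it cannot be open, hence must be meager), and the Jalali-Naini--Talagrand interval-partition characterization gives (iii)$\Rightarrow$(iv). Those parts, and the reduction from $A$ back to $\omega$, are fine.

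There is, however, a genuine error in your step (iv)$\Rightarrow$(i). With the paper's convention, $\Fin\le_{\mathrm{RB}}\I$ via $\phi$ means $S\in\Fin$ iff $\phi^{-1}[S]\in\I$, i.e.\ $\Fin=\Phi^{-1}[\I]$ for the continuous map $\Phi(S)=\phi^{-1}[S]$ --- not $\I=\{S:\phi^{-1}[S]\in\Fin\}$ as you wrote. So $\I$ is \emph{not} exhibited as a continuous preimage of $\Fin$, and continuity transfers the Baire property in the wrong direction (it would only tell you that $\Fin$ has it, which is vacuous). The implication is still true, but it needs the easy converse direction of the Talagrand characterization: the nonempty fibers $\phi^{-1}[\{n\}]$ form a partition of $\omega$ into finite sets with $\phi^{-1}[M]\notin\I$ for every infinite $M$; the sets $E_k=\{T\subseteq\omega:\exists n\ge k\ (\emptyset\neq\phi^{-1}[\{n\}]\subseteq T)\}$ are dense open and disjoint from $\I$ in their intersection, so $\I$ is meager. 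A second, harmless, slip: in your statement of the Talagrand characterization, ``$\{k:I_k\subseteq S\}$ is finite for every $S\in\J$'' is \emph{not} equivalent to ``$\{k:I_k\subseteq S^c\}$ is infinite for every $S\in\J$'' (the latter is satisfied by the singleton partition for any proper ideal containing $\Fin$, so it characterizes nothing); only the first form is correct, and it is the one your construction of $\phi$ actually uses, so that part of the argument is unaffected.
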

\begin{proof}
It follows by \cite[Theorem~4.1.1]{MR1350295} and \cite[Th\'{e}or\`{e}me 21, Proposition~22]{MR579439}. 
\end{proof}

Using the above characterization one can easily show that if ideals $\I$ and $\J$ are \emph{isomorphic} (that is, there exists a bijection $\phi:\omega\to\omega$ such that $S\in\I$ if and only if $\phi^{-1}[S]\in\J$ for all $S\subseteq\omega$), then $\I$ has the Baire property if and only if $\J$ has the Baire property. 
We refer to \cite{MR4566746} for further characterizations of meager ideals based on $\mathcal{I}$-limit points and $\I$-cluster points of sequences. 

Recall that an ideal $\I$ on $\omega$ has the \emph{hereditary Baire property} if the restriction $\I\restriction A$ has the Baire property for each $A\in \I^+$. Ideals with the hereditary Baire property were considered in \cite{MR3624786} in the context of ideal quasi-normal convergence of sequences of functions. It is not difficult to show that all Borel ideals, and more generally all analytic and coanalytic ideals, have the hereditary Baire property; cf. \cite[Theorem 3.13]{MR4358610} and \cite[Proposition~2.3]{MR3624786}. 
In addition, there exist ideals with the Baire property but without the hereditary Baire property, see e.g. \cite[Proposition 2.1]{MR3624786}. 

\begin{proposition}
\label{prop:BP-is-nowhere-maximal}
Every ideal with the hereditary Baire property is nowhere maximal. In particular, analytic ideals and coanalytic ideals are nowhere maximal. 
\end{proposition}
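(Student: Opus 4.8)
The plan is to reduce everything to the Rudin--Blass characterization of the Baire property provided by Theorem \ref{thm:baire}, together with the standard criterion for maximality recalled in Section \ref{subsec:maximal}. First I would fix an arbitrary $S \in \I^+$ and consider the restriction $\J := \I \restriction S$, noting that it is an admissible ideal on the countably infinite ground set $S$ (it contains all finite subsets of $S$ since $\Fin \subseteq \I$, and it omits $S$ since $S \in \I^+$). The goal is then to show that $\J$ is not maximal.

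By the hereditary Baire property, $\J$ has the Baire property, so Theorem \ref{thm:baire} (applied to $\J$, after identifying the ground set $S$ with $\omega$ via a bijection) yields $\Fin \leq_{\mathrm{RB}} \J$, witnessed by a finite-to-one map $\phi\colon S \to \omega$ with the property that $\phi^{-1}[T] \in \J$ if and only if $T \in \Fin$, for every $T \subseteq \omega$. Next I would pick any partition $\omega = T \sqcup (\omega \setminus T)$ into two infinite pieces; pulling back, $A := \phi^{-1}[T]$ and $S \setminus A = \phi^{-1}[\omega \setminus T]$ partition $S$, and since neither $T$ nor $\omega \setminus T$ is finite, neither $A$ nor $S \setminus A$ lies in $\J$. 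By the maximality criterion (see \cite[Lemma 7.4]{MR1940513}), this exhibits $\J = \I \restriction S$ as non-maximal. As $S \in \I^+$ was arbitrary, $\I$ is nowhere maximal.

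For the ``in particular'' clause I would simply invoke the fact, recalled immediately before the statement, that every analytic and every coanalytic ideal has the hereditary Baire property; the conclusion then follows from the first part.

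There is no real obstacle here: the entire content is packaged in the equivalence \ref{item:1baire} $\Leftrightarrow$ \ref{item:4baire} of Theorem \ref{thm:baire}, which converts ``Baire property'' into the combinatorial statement $\Fin \leq_{\mathrm{RB}} \J$ that manifestly forbids maximality, since the pullback of any infinite, co-infinite $T$ splits $S$ into two $\J$-positive halves. The one point worth double-checking is the bookkeeping in applying Theorem \ref{thm:baire} to the ideal $\J$ living on $S$ rather than on $\omega$: one should fix a bijection $S \cong \omega$ and observe that both the Baire property and maximality are preserved under this identification (as noted for isomorphic ideals after Theorem \ref{thm:baire}), so that the theorem applies verbatim.
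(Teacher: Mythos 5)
Your proof is correct and follows the same reduction as the paper: restrict to an arbitrary $S\in\I^+$, observe that $\I\restriction S$ has the Baire property, and conclude it cannot be maximal. The only difference is that the paper cites this last step as folklore (cf.\ Sierpi\'nski), whereas you derive it explicitly from the equivalence with $\Fin\leq_{\mathrm{RB}}\I\restriction S$ in Theorem~\ref{thm:baire} by pulling back an infinite co-infinite set, which is a valid and self-contained way to supply the cited fact.
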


\begin{proof}
For each $A\in\I^+$, the ideal $\I\restriction A$ has the Baire property. Hence, it is folklore that it cannot be maximal, cf. \cite{Sierpinski1938}. 
\end{proof}

To distinguish between some properties of ideals considered in the paper, we will use examples constructed with the aid of Fubini products and Fubini sums. For, given (possibly nonadmissible) ideals $\mathcal{I}$ and $\mathcal{J}$ on two countably infinite sets $Z$ and $W$, respectively, their \emph{Fubini product} $\I \otimes \J$ is an ideal on $Z\times W$ which has already been defined in \eqref{eq:Fubiniproduct}. Moreover, their \emph{Fubini sum} $\I\oplus \J$ is an ideal on the countably infinite set $(\{0\}\times Z) \cup (\{1\}\times W)$ defined by $S \in \I\oplus \J$ if and only if 
$$
\{z \in Z: (0,z) \in S\} \in \I \,\,\text{ and }\,\,\{w \in W: (1,w) \in S\} \in \J,
$$
see e.g. \cite[Chapter 1]{MR1711328}. For instance, if $\I$ is an ideal on $\omega$, then $\I \oplus \mathcal{P}(\omega)$ is an ideal on $2\times\omega$. 
The next result will be used to verify the Baire property of ideals in Example~\ref{example:Pplus-vs-Pminus+Pvert}. 

\begin{proposition}
\label{prop:BP-of-sumas-and-products}
    Let $\I,\J$ be ideals on $\omega$. Then the following hold:
    \begin{enumerate}[label={\rm (\roman{*})}]
        \item \label{prop:BP-of-sumas-and-products:sum} $\I\oplus\J$ has the Baire property if and only if $\I$ or $\J$ has the Baire property;
        \item \label{prop:BP-of-sumas-and-products:product-I-times-empty} $\I\otimes \emptyset$ and $\I\otimes \Fin$ have the Baire property;
        \item \label{prop:BP-of-sumas-and-products:product-emptys-times-I} $\emptyset\otimes \I$ has the Baire property if and only if $\Fin\otimes \I$ has the Baire property if and only if $\I$ has the Baire property.
    \end{enumerate}
\end{proposition}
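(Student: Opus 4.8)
The three items all rest on the chain Baire property $\Leftrightarrow$ meager $\Leftrightarrow$ $\Fin\le_{\mathrm{RB}}$ of Theorem~\ref{thm:baire}, and the plan is to reformulate it combinatorially. For an ideal $\cK$ on a countable set $Z$, unwinding a finite-to-one witness into the partition of $Z$ by its fibres, $\cK$ has the Baire property if and only if there is a partition $\{B_k : k\in\omega\}$ of $Z$ into finite sets with $\bigcup_{k\in S}B_k\in\cK^+$ for every infinite $S\subseteq\omega$; equivalently, $\cK$ is \emph{non-meager} exactly when every such partition admits an infinite $S$ with $\bigcup_{k\in S}B_k\in\cK$. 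I will also use a \emph{padding remark}, immediate from downward-closedness: if $\cK$ is non-meager and $\{A_k:k\in I\}$ is a partition of any subset of $Z$ into finite sets with $I$ infinite, then every infinite $I_0\subseteq I$ contains an infinite $T$ with $\bigcup_{k\in T}A_k\in\cK$ (distribute the missing points among the blocks indexed by $I_0$, apply non-meagerness, and pass to the subset).

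Item \ref{prop:BP-of-sumas-and-products:product-I-times-empty} is the quickest. I would take the anti-diagonal partition $D_k=\{(n,m)\in\omega\times\omega : n+m=k\}$ into finite blocks. For every infinite $S$, the $n$-th column of $\bigcup_{k\in S}D_k$ is $\{t-n : t\in S,\ t\ge n\}$, which is infinite for every $n$; hence all columns are infinite (so the union is $(\I\otimes\Fin)$-positive) and in particular nonempty (so the union is $(\I\otimes\emptyset)$-positive). Thus $\{D_k\}$ witnesses the Baire property of both ideals.

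For item \ref{prop:BP-of-sumas-and-products:sum}, the direction requiring only a restriction is clear: $(\I\oplus\J)\restriction(\{0\}\times\omega)\cong\I$ and $(\I\oplus\J)\restriction(\{1\}\times\omega)\cong\J$ over $(\I\oplus\J)$-positive sets, so if $\I$ or $\J$ has the Baire property then so does $\I\oplus\J$ by Theorem~\ref{thm:baire}\ref{item:3baire} and isomorphism-invariance. For the converse I would argue by contradiction: assume $\I\oplus\J$ meager with a witnessing partition $\{G_k\}$, and let $F_k,H_k$ be the projections of the two halves of $G_k$, so that positivity of $\bigcup_{k\in S}G_k$ gives $\bigcup_{k\in S}F_k\in\I^+$ or $\bigcup_{k\in S}H_k\in\J^+$ for every infinite $S$. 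If both $\I$ and $\J$ were non-meager, then applying non-meagerness to the partition $\{F_k\}$ of $\omega$ yields an infinite $T_1$ with $\bigcup_{k\in T_1}F_k\in\I$, and the padding remark applied to $\J$ inside $T_1$ yields an infinite $T_2\subseteq T_1$ with $\bigcup_{k\in T_2}H_k\in\J$; by monotonicity $\bigcup_{k\in T_2}F_k\in\I$ too, contradicting positivity of $\bigcup_{k\in T_2}G_k$. Hence $\I$ or $\J$ is meager.

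Item \ref{prop:BP-of-sumas-and-products:product-emptys-times-I} is where I expect the real work. The direction $\I$ meager $\Rightarrow\emptyset\otimes\I,\ \Fin\otimes\I$ meager I would obtain from one explicit map: if $\{F_k\}$ witnesses meagerness of $\I$ and $\kappa(m)$ is the block index of $m$, then $\Psi(n,m)=\max(n,\kappa(m))$ is finite-to-one on $\omega\times\omega$, and for infinite $S$ each $n\in S$ produces a column of $\Psi^{-1}[S]$ containing $\bigcup_{k\in S,\,k\ge n}F_k\in\I^+$, so infinitely many columns are $\I$-positive; this witnesses meagerness of $\Fin\otimes\I$. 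Since $(\Fin\otimes\I)^+\subseteq(\emptyset\otimes\I)^+$, the same partition witnesses meagerness of $\emptyset\otimes\I$, so it remains to prove $\emptyset\otimes\I$ meager $\Rightarrow\I$ meager, which I would do by contraposition. Assuming $\I$ non-meager, let $\{G_k\}$ be any partition of $\omega\times\omega$ into finite sets; the crucial structural point is that each finite $G_k$ meets only finitely many columns, so for each $n$ the column-sections $F^n_k$ are finite and indexed by an infinite $T_n=\{k:F^n_k\ne\emptyset\}$. Using the padding remark column-by-column I would build a decreasing sequence $\omega=T^{(0)}\supseteq T^{(1)}\supseteq\cdots$ with $\bigcup_{k\in T^{(n+1)}}F^n_k\in\I$ for each $n$ (automatic when $T^{(n)}\cap T_n$ is finite; otherwise an application of the padding remark to column $n$), and then take a pseudo-intersection $T$ with $T\subseteq^{*}T^{(n)}$ for all $n$. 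Since $T\setminus T^{(n+1)}$ is finite and each $F^n_k$ is finite, $\bigcup_{k\in T}F^n_k\in\I$ for every $n$, i.e.\ $\bigcup_{k\in T}G_k\in\emptyset\otimes\I$ with $T$ infinite; so no partition witnesses meagerness and $\emptyset\otimes\I$ is non-meager. The fusion and pseudo-intersection step is the main obstacle, and the input that makes it succeed is precisely the finiteness of the blocks $G_k$ (each touching only finitely many columns), reflecting the product structure $\emptyset\otimes\I=\prod_n\I$.
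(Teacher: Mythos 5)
Your proof is correct, and although it establishes the same statement it runs on a different engine than the paper's. Both ultimately rest on Theorem~\ref{thm:baire}, but you first translate $\Fin\le_{\mathrm{RB}}{}\cdot{}$ into the Jalali-Naini--Talagrand block criterion (meagerness is witnessed by a partition into finite blocks all of whose infinite unions are positive) and then work purely combinatorially with blocks, whereas the paper manipulates witnessing finite-to-one functions and restrictions. In item \ref{prop:BP-of-sumas-and-products:sum} the paper splits a witnessing map for $\Fin\le_{\mathrm{RB}}\I\oplus\J$ into its two halves and, when the first half fails, restricts the second to a suitable $\J$-positive set and invokes Theorem~\ref{thm:baire}\ref{item:3baire}; your symmetric ``both non-meager yields a contradiction'' argument via the two projected block systems, together with your padding remark, is an equivalent but more self-contained route (the paper cites an external reference for the \textsc{If} direction that you prove by restriction). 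In item \ref{prop:BP-of-sumas-and-products:product-I-times-empty} the two proofs coincide up to the choice of finite blocks (your anti-diagonals versus the sets $\{0,1,\ldots,n\}\times\{n\}$). The genuine divergence is in item \ref{prop:BP-of-sumas-and-products:product-emptys-times-I}: for ``$\I$ non-meager $\Rightarrow\emptyset\otimes\I$ non-meager'' the paper writes $\emptyset\otimes\I=\bigcap_n\I_{(n)}$ and outsources the work to Talagrand's theorem that countable intersections of non-meager ideals are non-meager, while you re-prove exactly the instance needed by a fusion/pseudo-intersection argument on the columns --- and your argument is sound, since each finite block $G_k$ meets only finitely many columns, so the decreasing sequence $T^{(0)}\supseteq T^{(1)}\supseteq\cdots$ is well defined and the finitely many blocks discarded when passing to the pseudo-intersection contribute only finite sets per column. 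The paper's version is shorter but leans on an external result; yours is self-contained and makes visible that the finiteness of the blocks relative to the column structure is what drives the fusion. Your remaining implications ($\I$ meager $\Rightarrow\Fin\otimes\I$ meager via the explicit finite-to-one map $\Psi(n,m)=\max(n,\kappa(m))$, and $\Fin\otimes\I$ meager $\Rightarrow\emptyset\otimes\I$ meager via $(\Fin\otimes\I)^+\subseteq(\emptyset\otimes\I)^+$) match the paper's in content, the paper instead using the representation of $\Fin\otimes\I$ as a countable union and countable additivity of meagerness.
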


\begin{proof}
\ref{prop:BP-of-sumas-and-products:sum} The \textsc{If part} follows by \cite[Proposition~2.1]{MR3624786}. To prove the \textsc{Only If part}, suppose that $\I \oplus \J$ has the Baire property. Thanks to Theorem~\ref{thm:baire} we have that $\mathrm{Fin}\le_{\mathrm{RB}} \I \oplus \J$, hence there exists a finite-to-one function $f:\{0,1\}\times\omega \to\omega$ such that $A\in \Fin$ if and only if $f^{-1}[A]\in \I\oplus\J$ for every $A\subseteq \omega$. For each $i\in \{0,1\}$, define the map $f_i:\omega\to\omega$ by 
$  f_i(n)=f(i,n) $
for every  $n \in \omega$.
By construction, both $f_0$ and $f_1$ are finite-to-one. If $f_0$ is a witnessing map for $\Fin\leq_{\mathrm{RB}}\I$, then $\I$ has the Baire property by Theorem~\ref{thm:baire}, so the claim holds.
In the opposite, there exists an infinite set $B\subseteq\omega$ such that $f_0^{-1}[B]\in \I$. 
Since $B$ is an infinite set, we have that
$$
f^{-1}[B] = (\{0\}\times f_0^{-1}[B])\cup (\{1\}\times f_1^{-1}[B])\notin \I\oplus\J,
$$
which implies that $A=f_1^{-1}[B]\notin \J$. 
To conclude the proof, it would be enough to show that $g=f_1\restriction A$ is a witnessing map for $\Fin\leq_{\mathrm{RB}} \J\restriction A$, which would imply that $\J\restriction A$ has the Baire property and, hence, $\J$ as well by Theorem \ref{thm:baire}. Note that $g$ is also finite-to-one. 
Suppose for the sake of contradiction that there is an infinite set  $C\subseteq B$ such that $g^{-1}[C]\in \J\restriction A$. Since $A=f_1^{-1}[B]$ and $C\subseteq B$, we get $f^{-1}_1[C]=g^{-1}[C]$. It follows that 
$$
f^{-1}[C] \subseteq (\{0\}\times f^{-1}_0[B])\cup(\{1\}\times g^{-1}[C])\in \I\oplus\J, 
$$
which is the required contradiction.

\ref{prop:BP-of-sumas-and-products:product-I-times-empty}
Note that $\I\otimes \emptyset\subseteq \I\otimes \Fin$. 
Since an ideal has the Baire property if and only if it is meager by Theorem \ref{thm:baire} and the family of meager sets is closed under subsets, it is enough to show that $\I\otimes \Fin$ has the Baire property. For, we proceed as in the proof of \cite[Proposition 3.15]{MR4358610}: 
Set $A_n=\{0,1,\ldots,n\} \times \{n\}$ for each $n \in \omega$, so that $(A_n)$ is a sequence of finite disjoint subsets of $\omega^2$. At this point, it would be enough to show that $A_M=\bigcup_{n \in M}A_n \notin \I\otimes \Fin$ for each infinite $M\subseteq \omega$, which would imply that $\I\otimes \Fin$ is meager by Theorem \ref{thm:baire}. To this aim, recall that
$$
\I\otimes \Fin=\{S\subseteq \omega^2: \{n\in \omega: S \cap (\{n\}\times \omega)\in \Fin^+\} \in \I\}.
$$
To conclude, fix $M \in \mathrm{Fin}^+$. Hence $A_M \cap (\{n\}\times \omega)=\{(n,m): m \in M\cap [n,\infty)\}$ for all $n \in \omega$. It follows that $A_M \notin \I\otimes \Fin$, completing the proof.

\ref{prop:BP-of-sumas-and-products:product-emptys-times-I} 
Let $\I$ be an ideal on $\omega$. For each $n\in \omega$, define the ideal 
$$
\I_{(n)}=\{A\subseteq \omega^2: \{k\in \omega:(n,k)\in A\}\in \I\}.
$$
Since the ideals $\I_{(n)}\restriction (\{n\}\times\omega)$ and $\I$ are isomorphic, 
then  
$\I$ has the Baire property if and only if $\I_{(n)}$ has the Baire property for some $n$. Now, observe that 
\begin{equation}\label{eq:representationeemptysettimesI}
\emptyset\otimes\I = \bigcap\nolimits_{n\in \omega} \I_{(n)}
\quad \text{ and }\quad 
\Fin\otimes\I = \bigcup\nolimits_{n\in \omega}\bigcap\nolimits_{k\geq n} \I_{(k)}.
\end{equation}

First, suppose that $\I$ does not have the Baire property (or, equivalently, it is not meager by Theorem \ref{thm:baire}). 
In \cite[Proposition~23]{MR579439}, Talagrand proved that the intersection of countably many nonmeager ideals is not meager. 
It follows by \eqref{eq:representationeemptysettimesI} that $\emptyset\otimes\I$ is not meager. 

Second, since
$\emptyset\otimes \I\subseteq \Fin\otimes \I$, if $\emptyset\otimes \I$ is not meager then $\Fin\otimes \I$ is not meager.

Third, since union of countably many meager sets is meager, we obtain by \eqref{eq:representationeemptysettimesI} that if $\I$ is meager, then $\Fin\otimes\I$ is meager as well. This concludes the proof of the equivalences. 
\end{proof}

 \begin{remark}\label{label:Finxmaximal}
As a special case of item \ref{prop:BP-of-sumas-and-products:product-emptys-times-I}, we recover \cite[Proposition~2.11]{MR4358610} where it is shown that, if $\I$ is a maximal ideal, then $\Fin\otimes\I$ does not have the Baire property. In particular, ideals with the hereditary Baire property are not the only ones which are nowhere maximal. A stronger claim on the ideal $\Fin\otimes\I$ (namely, $\Fin\otimes\I \notin P(\Pi^0_1)$, see details in Section \ref{sec:Ischemes}) can be found in Proposition \ref{prop:P-for-closed2}\ref{item:1Ppi05} below. 

On the other hand, with the same arguments, $\emptyset\otimes \I$ is an ideal without the Baire property which is not nowhere maximal. 
\end{remark}


\subsection{Some basic properties} 

We conclude this preliminary section with some standard properties of families $\mathscr{C}(\I)$ and $\mathscr{L}(\I)$ related to Fubini sums, products, and Rudin--Blass orderings. (Hereafter, $\limsup_n A_n=\bigcap_{n\ge 0} \bigcup_{k\ge n}A_k$.)
\begin{proposition}\label{prop:basic-prop-about-Lambda-Gamma}
Let $X$ be a topological space, let $\I, \J$ be ideals on $\omega$, and fix $\alpha<\omega_1$. Then the following hold:
\begin{enumerate}[label={\rm (\roman{*})}]
\item \label{prop:RB-for-Lambda:Lambda-sets}
     If $\I\leq_{\mathrm{RB}}\J\restriction A$ for some $A\in \J^+$, then 
    $\mathscr{L}(\I) \subseteq \mathscr{L}(\J\restriction A)\subseteq \mathscr{L}(\J)$;

\item \label{item:0RB} If $\I\le_{\mathrm{RB}} \J$ and $\J$ is $\Sigma^0_\alpha$ \textup{[}resp., $\Pi^0_\alpha$\textup{]}, then $\I$ is $\Sigma^0_2$ \textup{[}resp., $\Pi^0_\alpha$\textup{]};

\item \label{item:2RB} $\mathscr{L}(\emptyset \otimes \I)=\left\{\bigcup_n A_n: A_0,A_1,\ldots \in \mathscr{L}(\I)\right\}$;

\item \label{item:3RB} If $X$ is first countable, $\mathscr{L}(\Fin \otimes \I)=\left\{\limsup_n A_n: A_0,A_1,\ldots \in \mathscr{L}(\I)\right\}$;

\item \label{prop:basic-prop-about-Lambda-Gamma:direct-sum} 
$\mathscr{L}(\I\oplus \J)=\{A\cup B: A \in \mathscr{L}(\I), B \in \mathscr{L}(\J)\}$.
\end{enumerate}
\end{proposition}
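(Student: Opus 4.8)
The plan is to handle each item by first proving a \emph{pointwise} identity that expresses the transported limit-point set of a single sequence in terms of the limit-point sets of its components, and then to lift this identity to the level of the families $\mathscr{L}(\cdot)$ by realizing prescribed component sets with suitable sequences. The recurring elementary facts are that a positive set for each of these products must have a component that is positive for a factor ideal, and that restricting a convergent subsequence to an infinite subset preserves its limit.

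For \ref{prop:RB-for-Lambda:Lambda-sets} I would fix a finite-to-one witness $\phi\colon A\to\omega$ for $\I\le_{\mathrm{RB}}\J\restriction A$ and, given $x$ on $\omega$, define $y$ on $A$ by $y_m=x_{\phi(m)}$. One checks $\Lambda_y(\J\restriction A)=\Lambda_x(\I)$: if $x\restriction S\to\eta$ with $S\in\I^+$, then $\phi^{-1}[S]\in(\J\restriction A)^+$ and, since $\phi$ is finite-to-one, $y\restriction\phi^{-1}[S]\to\eta$; conversely, if $y\restriction T\to\eta$ with $T\in(\J\restriction A)^+$, then $S:=\phi[T]\in\I^+$ because $T\subseteq\phi^{-1}[S]\notin\J$, and $x\restriction S\to\eta$ since every bad index of $S$ lifts to a bad index of $T$. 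This gives $\mathscr{L}(\I)\subseteq\mathscr{L}(\J\restriction A)$. For $\mathscr{L}(\J\restriction A)\subseteq\mathscr{L}(\J)$, given $y$ on $A$ with $\Lambda_y(\J\restriction A)=L$, extend it to $z$ on $\omega$ by $z_n=\eta_0$ for $n\notin A$, where $\eta_0\in L$ is fixed (the case $L=\emptyset$ being trivial, as $\emptyset\in\mathscr{L}(\J)$ by definition); splitting a witnessing $S\in\J^+$ into $S\cap A$ and $S\setminus A$, one of which is $\J$-positive, yields $\Lambda_z(\J)=L$. For \ref{item:0RB} the substitution map $\Phi\colon2^\omega\to2^\omega$ given by $\Phi(x)_n=x_{\phi(n)}$ is continuous, since its $n$-th coordinate depends on a single input coordinate, and $\I=\Phi^{-1}[\J]$ by the definition of $\le_{\mathrm{RB}}$; as continuous preimages preserve the additive and multiplicative Borel classes, $\I$ inherits the pointclass of $\J$.

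Items \ref{item:2RB} and \ref{prop:basic-prop-about-Lambda-Gamma:direct-sum} rest on the identities
\[
\Lambda_x(\emptyset\otimes\I)=\bigcup\nolimits_n\Lambda_{x^{(n)}}(\I)
\qquad\text{and}\qquad
\Lambda_x(\I\oplus\J)=\Lambda_{x^0}(\I)\cup\Lambda_{x^1}(\J),
\]
where $x^{(n)}$ is the $n$-th column of $x$ and $x^0,x^1$ are the two blocks of $x$. Both follow directly from the description of the positive sets (some column is $\I$-positive; $S^{(0)}\in\I^+$ or $S^{(1)}\in\J^+$) together with the restriction fact above. To lift to families, one realizes each prescribed component $A_n$ (resp.\ $A,B$) by a sequence; a component equal to the \emph{formal} empty set that cannot be realized is absorbed either by reindexing to the nonempty components (for $\bigcup$ the union is unchanged) or by filling the corresponding block with a constant sequence valued at an already-present point (for $\oplus$).

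The delicate part is \ref{item:3RB}, the identity $\Lambda_x(\Fin\otimes\I)=\limsup_n\Lambda_{x^{(n)}}(\I)$ for first countable $X$. The inclusion $\subseteq$ is routine: a positive set for $\Fin\otimes\I$ has infinitely many $\I$-positive columns, each producing $\eta$ as an $\I$-limit point of $x^{(n)}$. The reverse inclusion is the main obstacle, because if $\eta\in\Lambda_{x^{(n)}}(\I)$ for all $n$ in an infinite set $M$, the naive set $S=\bigcup_{n\in M}(\{n\}\times B_n)$ need not converge to $\eta$ along $S$: each column may contribute finitely many points outside a neighborhood of $\eta$, summing to infinitely many. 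Here first countability is essential: I would fix a decreasing neighborhood base $(U_j)_j$ at $\eta$, enumerate $M=\{n_1<n_2<\cdots\}$, and thin each $B_{n_i}$ to an $\I$-positive set $B_{n_i}'$ with $x^{(n_i)}\restriction B_{n_i}'\subseteq U_i$, which is possible since deleting finitely many indices preserves $\I$-positivity. Then $S=\bigcup_i(\{n_i\}\times B_{n_i}')$ still has infinitely many $\I$-positive columns, and for each $j$ the points of $S$ outside $U_j$ come only from the columns $n_1,\dots,n_{j-1}$, each contributing finitely many, so $x\restriction S\to\eta$ and $\eta\in\Lambda_x(\Fin\otimes\I)$. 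The family-level equality follows by the same realization argument, using that deleting the components equal to the formal empty set leaves $\limsup_n A_n$ unchanged (a point of the $\limsup$ lies in infinitely many $A_n$, necessarily nonempty ones), so one may assume all realized components are nonempty.
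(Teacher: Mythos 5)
Your proposal is correct. For the items the paper proves in-house --- the second inclusion in (i), item (ii), and item (v) --- your arguments coincide with the paper's: the same extension-by-a-constant-point trick for $\mathscr{L}(\J\restriction A)\subseteq\mathscr{L}(\J)$, the same continuous substitution map $S\mapsto\phi^{-1}[S]$ (whose preimage-invariance of the pointclasses gives $\Sigma^0_\alpha$, which is clearly what the statement's ``$\Sigma^0_2$'' is a typo for), and the same block decomposition for the Fubini sum. The only real difference is that the paper outsources the first inclusion of (i) to \cite{MR4393937} and items (iii) and (iv) to \cite{MSP24}, whereas you supply complete direct proofs via the pointwise identities $\Lambda_{y}(\J\restriction A)=\Lambda_{x}(\I)$, $\Lambda_{x}(\emptyset\otimes\I)=\bigcup_n\Lambda_{x^{(n)}}(\I)$, and $\Lambda_{x}(\Fin\otimes\I)=\limsup_n\Lambda_{x^{(n)}}(\I)$. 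These are sound, and you correctly isolate the one delicate step: in the $\supseteq$ direction of the last identity the naive union of column witnesses need not converge, and one must thin the $i$-th witness into the $i$-th member of a decreasing neighborhood base so that only finitely many indices of the assembled positive set escape any fixed basic neighborhood --- which is precisely where the first-countability hypothesis of (iv) enters. Your handling of the formal empty components (reindexing for $\bigcup$ and $\limsup$, constant filling at an already-present point for $\oplus$) is also adequate, since sets of $\I$-limit points are closed under specialization, so the constant blocks contribute nothing new.
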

\begin{proof}
\ref{prop:RB-for-Lambda:Lambda-sets} 
The inclusion $\mathscr{L}(\I) \subseteq \mathscr{L}(\J\restriction A)$ is shown in \cite[Proposition~3.8]{MR4393937} (note that although in the statement of this result the space $X$ is assumed to be first countable, the proof actually works in every topological space). For the second inclusion: if $  x =(x_n: n \in A)$ is a sequence with nonempty $W=\Lambda_{  x }(\J \restriction A)$ and $\eta \in W$, then the sequence $ y =(y_n: n \in \omega)$ defined by $y_n=x_n$ if $n \in A$ and $y_n=\eta$ if $n\notin A$ satisfies $\Lambda_{ y }(\J)=W$. Therefore $\mathscr{L}(\J\restriction A)\subseteq \mathscr{L}(\mathcal{J})$.

\ref{item:0RB} Let $f:\omega\to\omega$ be a finite-to-one function such that $A\in \I$ if and only if $f^{-1}[A]\in \J$. 
Define the map $\phi:\cP(\omega)\to\cP(\omega)$ by $\phi(A)= f^{-1}[A]$ for all $A\subseteq \omega$. 
Observe that $\phi$ is continuous because $f$ is finite-to-one. 
Since $\I = \phi^{-1}[\J]$, the Borel complexity of $\I$ is the same as the complexity of $\J$.

\ref{item:2RB}  See \cite[Corollary 2.3]{MSP24}.

\ref{item:3RB} See \cite[Corollary 2.4]{MSP24}.

\ref{prop:basic-prop-about-Lambda-Gamma:direct-sum} 
Pick a nonempty set $S \in \mathscr{L}(\I\oplus \J)$. Pick a sequence $  x =(x_{(k,n)}: (k,n) \in \{0,1\}\times \omega)$ such that $\Lambda_{  x }(\I\oplus \J)=S$. For each $k \in \{0,1\}$, define the sequence $x^{(k)}=(x_n^{(k)}: n \in\omega)$ by
\begin{equation}\label{eq:identityfubinisum}
x^{(k)}_n=x_{(k,n)} \quad  \text{ for each $n\in \omega$.}
\end{equation}
By the definition of $\I\oplus \J$, it is clear that $S=\Lambda_{x^{(0)}}(\I) \cup \Lambda_{x^{(1)}}(\J)$. Hence $\mathscr{L}(\I\oplus \J)$ is contained in $\{A\cup B: A \in \mathscr{L}(\I), B \in \mathscr{L}(\J)\}$. For the opposite inclusion: given two sequences $x^{(0)}$ and $x^{(1)}$, define $  x =(x_{(k,n)}: (k,n) \in \{0,1\}\times \omega)$ as the unique sequence which satisfies \eqref{eq:identityfubinisum} for each $k \in \{0,1\}$. The conclusion goes similarly. 
\end{proof}

\begin{corollary}\label{cor:Lemptysetmaximal}
Let $X$ be a first countable Hausdorff space, and let $\I$ be an ideal on $\omega$. Then the following hold:
\begin{enumerate}[label={\rm (\roman{*})}]
\item \label{item:1emptysetIfinI} $[X]^{\le \omega} \subseteq \mathscr{L}(\emptyset \otimes \I)\subseteq \mathscr{L}(\Fin \otimes \I)$;
\item \label{item:2emptysetIfinI} If $\I$ is maximal then 
$$
\mathscr{L}(\emptyset \otimes \I)=\mathscr{L}(\mathrm{Fin}\otimes \I)=[X]^{\le \omega}.
$$
\end{enumerate}
\end{corollary}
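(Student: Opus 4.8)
The plan is to deduce everything from the two combinatorial descriptions supplied by Proposition~\ref{prop:basic-prop-about-Lambda-Gamma}: by \ref{item:2RB} we have $\mathscr{L}(\emptyset\otimes\I)=\{\bigcup_n A_n:A_0,A_1,\ldots\in\mathscr{L}(\I)\}$, and, since $X$ is first countable, by \ref{item:3RB} we have $\mathscr{L}(\Fin\otimes\I)=\{\limsup_n A_n:A_0,A_1,\ldots\in\mathscr{L}(\I)\}$. Because Lemma~\ref{lem:Xlen} (applied with $n=1$, using only $\omega\in\I^+$) shows that every singleton belongs to $\mathscr{L}(\I)$, the whole statement reduces to elementary manipulations of these two descriptions.

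For item~\ref{item:1emptysetIfinI}, I would first note that an arbitrary countable set $C=\{\eta_k:k\in\omega\}$ is the countable union $\bigcup_k\{\eta_k\}$ of singletons, each lying in $\mathscr{L}(\I)$; hence $C\in\mathscr{L}(\emptyset\otimes\I)$ by \ref{item:2RB}, while $\emptyset$ is included by the convention in the definition of $\mathscr{L}$. This gives $[X]^{\le\omega}\subseteq\mathscr{L}(\emptyset\otimes\I)$. For the second inclusion I would take $S=\bigcup_k A_k$ with each $A_k\in\mathscr{L}(\I)$ and re-index: fixing a surjection $\sigma\colon\omega\to\omega$ each of whose fibres is infinite and setting $B_n=A_{\sigma(n)}$, every $A_k$ occurs infinitely often among the $B_n$, so $\limsup_n B_n=\bigcup_k A_k=S$. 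As each $B_n\in\mathscr{L}(\I)$, \ref{item:3RB} yields $S\in\mathscr{L}(\Fin\otimes\I)$, establishing $\mathscr{L}(\emptyset\otimes\I)\subseteq\mathscr{L}(\Fin\otimes\I)$.

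For item~\ref{item:2emptysetIfinI}, I would combine item~\ref{item:1emptysetIfinI} with the reverse inclusion $\mathscr{L}(\Fin\otimes\I)\subseteq[X]^{\le\omega}$. When $\I$ is maximal and $|X|\ge 2$, Proposition~\ref{prop:Imaximal} tells us that every member of $\mathscr{L}(\I)$ is a singleton; hence any $S=\limsup_n A_n\in\mathscr{L}(\Fin\otimes\I)$ satisfies $S\subseteq\bigcup_n A_n$, a countable union of sets of size at most one, and is therefore countable. This gives $\mathscr{L}(\Fin\otimes\I)\subseteq[X]^{\le\omega}$, and together with the chain of item~\ref{item:1emptysetIfinI} all three families coincide. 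The degenerate cases $|X|\le 1$ (including $X=\emptyset$) are handled trivially, since then $[X]^{\le\omega}=\mathcal{P}(X)$ and every family in sight is a subfamily of $\mathcal{P}(X)$.

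I expect no serious obstacle here, as the corollary is essentially a repackaging of Proposition~\ref{prop:basic-prop-about-Lambda-Gamma}. The only step requiring genuine care is the re-indexing in item~\ref{item:1emptysetIfinI}: one must arrange that each $A_k$ is repeated \emph{infinitely often} so that the $\limsup$ recovers the full union $\bigcup_k A_k$ rather than merely its eventual part. The remaining subtleties are bookkeeping, namely tracking the empty-set convention in the definition of $\mathscr{L}$ and dispatching the trivial cases $|X|\le 1$.
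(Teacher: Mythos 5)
Your proposal is correct and follows essentially the same route as the paper: both parts are deduced from the union/limsup descriptions in Proposition~\ref{prop:basic-prop-about-Lambda-Gamma}\ref{item:2RB}--\ref{item:3RB}, with the same re-indexing via a surjection with infinite fibres for the second inclusion of \ref{item:1emptysetIfinI}, and Proposition~\ref{prop:Imaximal} reducing $\mathscr{L}(\I)$ to singletons for \ref{item:2emptysetIfinI}. The only (harmless) difference is that you explicitly dispatch the degenerate cases $|X|\le 1$, which the paper leaves implicit.
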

\begin{proof}
\ref{item:1emptysetIfinI} First, it is clear that $\{\{\eta\}: \eta \in X\}\subseteq \mathscr{L}(\I)$. The first inclusion follows by Proposition \ref{prop:basic-prop-about-Lambda-Gamma}\ref{item:2RB}. For the second inclusion, pick a nonempty $A \in \mathscr{L}(\emptyset \otimes \I)$. Again by Proposition \ref{prop:basic-prop-about-Lambda-Gamma}\ref{item:2RB}, there exist $A_0,A_1,\ldots \in \mathscr{L}(\I)$ such that $A=\bigcup_nA_n$. Let also $f: \omega \to \omega$ be a surjective map such that $f^{-1}(\{n\})$ is infinite for all $n \in \omega$. Note that the sequence $(A_{f(n)}: n \in \omega)$ satisfies $A=\limsup_n A_{f(n)}$. Therefore $A \in \mathscr{L}(\Fin\otimes \I)$ by Proposition \ref{prop:basic-prop-about-Lambda-Gamma}\ref{item:3RB}.

\ref{item:2emptysetIfinI} By the previous item, it is enough to show that $\mathscr{L}(\Fin\otimes \I)\subseteq [X]^{\le \omega}$. For, thanks to Proposition \ref{prop:Imaximal} and Proposition \ref{prop:basic-prop-about-Lambda-Gamma}\ref{item:3RB}, we have
$$
\mathscr{L}(\Fin \otimes \I)=\left\{\limsup_{n\to \infty}\, \{\eta_n\}: \eta_0,\eta_1,\ldots \in X\right\},
$$
and the latter set is precisely $[X]^{\le \omega}$. 
\end{proof}

We remark that all inclusions in item \ref{item:1emptysetIfinI} can be strict: indeed, if $X$ is Polish and $\I=\Fin$ then $\mathscr{L}(\emptyset\otimes \Fin)=\Sigma^0_2$ and $\mathscr{L}(\Fin^2)=\Pi^0_3$ by Theorem \ref{thm:highBorelMSP}. 

\begin{corollary}\label{cor:emptysettimesmaximalplusfin}
Let $X$ be a first countable Hausdorff space where all closed sets are separable. Also, let $\I$ be a maximal ideal and $\J$ be 
an $F_\sigma$-ideal. Then 
$$
\mathscr{L}((\emptyset \otimes \I)\oplus \J)=\mathscr{L}((\fin \otimes \I)\oplus \J)=\{A\cup B: A \in [X]^{\le \omega}, B \in \Pi^0_1\}.$$
\end{corollary}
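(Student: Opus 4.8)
The plan is to combine three results already established in the excerpt, with no new construction required: the description of $\mathscr{L}$ of a Fubini sum, the computation of $\mathscr{L}(\emptyset\otimes\I)$ and $\mathscr{L}(\fin\otimes\I)$ for maximal $\I$, and the identification $\mathscr{L}(\J)=\Pi^0_1$ for $F_\sigma$-ideals on the spaces under consideration.

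First I would apply Proposition \ref{prop:basic-prop-about-Lambda-Gamma}\ref{prop:basic-prop-about-Lambda-Gamma:direct-sum} with the first summand taken to be $\emptyset\otimes\I$ and then $\fin\otimes\I$, which gives
$$
\mathscr{L}((\emptyset\otimes\I)\oplus\J)=\{A\cup B: A\in\mathscr{L}(\emptyset\otimes\I),\ B\in\mathscr{L}(\J)\}
$$
and
$$
\mathscr{L}((\fin\otimes\I)\oplus\J)=\{A\cup B: A\in\mathscr{L}(\fin\otimes\I),\ B\in\mathscr{L}(\J)\}.
$$
Thus the whole statement reduces to identifying the two families of admissible ``first coordinates'' and the common family of admissible ``second coordinates.''

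Next I would substitute the known values. Since $X$ is first countable Hausdorff and $\I$ is maximal, Corollary \ref{cor:Lemptysetmaximal}\ref{item:2emptysetIfinI} yields $\mathscr{L}(\emptyset\otimes\I)=\mathscr{L}(\fin\otimes\I)=[X]^{\le \omega}$, so both right-hand sides draw their first coordinate from the \emph{same} family $[X]^{\le \omega}$ of countable subsets of $X$; this is what forces the two outer families to coincide. For the second coordinate I would invoke Theorem \ref{thm:oldthmballeo}\ref{item:BL2}: the standing hypothesis that $X$ is a first countable Hausdorff space in which all closed sets are separable is precisely the hypothesis of that theorem, and since $\J$ is an $F_\sigma$-ideal we get $\mathscr{L}(\J)=\Pi^0_1$. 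Plugging these into the two displays above shows that both families equal $\{A\cup B: A\in[X]^{\le \omega},\ B\in\Pi^0_1\}$, as claimed.

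Because every step is a direct citation, there is essentially no obstacle beyond verifying that the standing assumptions on $X$ match those of each cited result: the separability-of-closed-sets hypothesis is needed only through Theorem \ref{thm:oldthmballeo}\ref{item:BL2} (to obtain $\mathscr{L}(\J)=\Pi^0_1$), whereas the weaker ``first countable Hausdorff'' assumption already suffices for both Proposition \ref{prop:basic-prop-about-Lambda-Gamma}\ref{prop:basic-prop-about-Lambda-Gamma:direct-sum} and Corollary \ref{cor:Lemptysetmaximal}\ref{item:2emptysetIfinI}.
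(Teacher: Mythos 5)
Your proof is correct and follows exactly the same route as the paper, which cites precisely the same three ingredients: Proposition \ref{prop:basic-prop-about-Lambda-Gamma}\ref{prop:basic-prop-about-Lambda-Gamma:direct-sum} for the Fubini-sum decomposition, Corollary \ref{cor:Lemptysetmaximal}\ref{item:2emptysetIfinI} for $\mathscr{L}(\emptyset\otimes\I)=\mathscr{L}(\fin\otimes\I)=[X]^{\le\omega}$, and Theorem \ref{thm:oldthmballeo} for $\mathscr{L}(\J)=\Pi^0_1$. You merely spell out the substitutions that the paper leaves implicit.
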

\begin{proof}
It follows by 
Theorem \ref{thm:oldthmballeo}, 
Proposition \ref{prop:basic-prop-about-Lambda-Gamma}\ref{prop:basic-prop-about-Lambda-Gamma:direct-sum}, and 
Corollary \ref{cor:Lemptysetmaximal}\ref{item:2emptysetIfinI}. 
\end{proof}

Lastly, we conclude with a slight extension of Proposition \ref{prop:Imaximal}:
\begin{proposition}\label{prop:fubinimaximal}
  Pick an integer $n\ge 1$. Let $X$ be a Hausdorff space with $|X|\ge n+1$ and let $\I$ be an ideal on $\omega$. Then the following are equivalent: 
\begin{enumerate}[label={\rm (\roman{*})}]
\item \label{item:max1BB} $\I=\I_0\oplus \cdots \oplus \I_{n-1}$ for some maximal ideals $\I_0,\ldots,\I_{n-1}$;
\item \label{item:max2BB} $\mathscr{L}(\I)=\mathscr{C}(\I)=
[X]^{\le n}$;
\item \label{item:max3BB} $\mathscr{C}(\I)=[X]^{\le n}$;
\item \label{item:max4BB} $\mathscr{L}(\I)=[X]^{\le n}$.
\end{enumerate}
\end{proposition}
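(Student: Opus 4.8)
The plan is to prove the four conditions equivalent cyclically, as $\ref{item:max1BB}\Rightarrow\ref{item:max2BB}\Rightarrow\ref{item:max3BB}\Rightarrow\ref{item:max4BB}\Rightarrow\ref{item:max1BB}$, organizing everything around a single combinatorial invariant: the largest size $d(\I)$ of a family of pairwise disjoint $\I$-positive sets. The guiding picture, which I will make precise along the way, is that $d(\I)=n$ is finite exactly when $\I$ is, up to isomorphism, a Fubini sum $\I_0\oplus\cdots\oplus\I_{n-1}$ of $n$ maximal ideals, with the pieces of an optimal partition playing the role of the atoms of $\mathcal{P}(\omega)/\I$. Two elementary facts drive the argument. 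First, a maximal ideal admits no two disjoint positive sets (immediate from the characterization, recalled in Subsection~\ref{subsec:maximal}, that $S$ or $\omega\setminus S$ lies in a maximal ideal). Second, in a Hausdorff space $X$ a sequence taking only finitely many distinct values $\eta_0,\ldots,\eta_m$ has $\Lambda_{x}(\I)=\Gamma_{x}(\I)=\{\eta_i : \{k : x_k=\eta_i\}\in\I^+\}$, since finitely many points admit pairwise disjoint neighbourhoods.

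For $\ref{item:max1BB}\Rightarrow\ref{item:max2BB}$, write $\I=\bigoplus_{j<n}\I_j$ with each $\I_j$ maximal, and let $\{P_0,\ldots,P_{n-1}\}$ be the associated partition of $\omega$ into $\I$-positive sets. Since $|X|\ge n$, Lemma~\ref{lem:Xlen} gives $[X]^{\le n}\subseteq\mathscr{L}(\I)\cap\mathscr{C}(\I)$. For the reverse inclusions I show $\I$ has no $n+1$ pairwise disjoint positive sets: given such $T_0,\ldots,T_n$, each $T_i\notin\I$ forces $T_i\cap P_{j(i)}\in(\I\restriction P_{j(i)})^+$ for some $j(i)$, so by pigeonhole two of them meet a common $P_j$ in disjoint positive subsets, contradicting the maximality of $\I\restriction P_j$. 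Hence, for every sequence $x$, had $\Gamma_{x}(\I)$ contained $n+1$ points their disjoint neighbourhoods would produce $n+1$ disjoint positive sets; so $\mathscr{C}(\I)\subseteq[X]^{\le n}$, and $\mathscr{L}(\I)\subseteq[X]^{\le n}$ follows from $\Lambda_{x}(\I)\subseteq\Gamma_{x}(\I)$. (The equality for $\mathscr{L}$ can alternatively be read off from the iterated form of Proposition~\ref{prop:basic-prop-about-Lambda-Gamma}\ref{prop:basic-prop-about-Lambda-Gamma:direct-sum} together with Proposition~\ref{prop:Imaximal}.) The step $\ref{item:max2BB}\Rightarrow\ref{item:max3BB}$ is trivial. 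For $\ref{item:max3BB}\Rightarrow\ref{item:max4BB}$, the hypothesis $\mathscr{C}(\I)=[X]^{\le n}$ provides, as $|X|\ge n$, a sequence with $n$ cluster points; disjoint neighbourhoods of them yield $n$ pairwise disjoint positive sets, hence a partition of $\omega$ into $n$ positive pieces, so Lemma~\ref{lem:Xlen} gives $[X]^{\le n}\subseteq\mathscr{L}(\I)$, while $\mathscr{L}(\I)\subseteq[X]^{\le n}$ is again immediate from $\Lambda_{x}(\I)\subseteq\Gamma_{x}(\I)\in\mathscr{C}(\I)$.

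The crux is $\ref{item:max4BB}\Rightarrow\ref{item:max1BB}$. From $[X]^{\le n}\subseteq\mathscr{L}(\I)$, realizing a set of exactly $n$ points as some $\Lambda_{x}(\I)$ and separating these points gives $\I$-positive sets containing (modulo finite sets) the witnessing sets, hence $n$ pairwise disjoint positive sets, which I extend to a partition $\{P_0,\ldots,P_{n-1}\}$ by absorbing the leftover indices into one piece. The essential use of $|X|\ge n+1$ is in converting $\mathscr{L}(\I)\subseteq[X]^{\le n}$ into the statement that there are \emph{no} $n+1$ pairwise disjoint positive sets: were $T_0,\ldots,T_n$ such a family, I would fix $n+1$ distinct points $\eta_0,\ldots,\eta_n$, let $x$ take value $\eta_i$ on $T_i$ and $\eta_0$ on the leftover, and the finite-valued Hausdorff computation above would give $|\Lambda_{x}(\I)|=n+1$, a contradiction. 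Thus $d(\I)=n$ exactly.

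It remains to upgrade the partition into the algebraic decomposition. Each $\I\restriction P_j$ must be maximal, for otherwise $P_j$ would split into two $\I$-positive sets, producing $n+1$ pairwise disjoint positive sets and contradicting $d(\I)=n$. Finally, since $\I\restriction P_j=\{A\subseteq P_j : A\in\I\}$ and every $S\subseteq\omega$ equals $\bigcup_{j<n}(S\cap P_j)$, closure under finite unions yields $S\in\I$ if and only if $S\cap P_j\in\I$ for all $j$; under the identification of $\omega$ with $\bigsqcup_{j<n}P_j$ this says precisely that $\I=\bigoplus_{j<n}(\I\restriction P_j)$, a Fubini sum of $n$ maximal ideals. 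The main obstacle throughout is exactly this last passage, from the cardinality/complexity datum $\mathscr{L}(\I)=[X]^{\le n}$ to the precise algebraic shape of $\I$; once the translation into disjoint positive sets is in place (where $|X|\ge n+1$ and Hausdorff separation of finite value sets are used), the remaining verification that $\I$ equals the Fubini sum of its restrictions, and not merely that these restrictions are maximal, is routine bookkeeping.
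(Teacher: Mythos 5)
Your proof is correct and follows essentially the same route as the paper: both reduce everything to the maximal number of pairwise disjoint $\I$-positive sets, use Lemma \ref{lem:Xlen} together with Hausdorff separation of finitely many points, and in \ref{item:max4BB}$\implies$\ref{item:max1BB} identify $\I$ with the Fubini sum of its (maximal) restrictions to the pieces of an optimal partition. The only cosmetic difference is that the paper rules out the case of fewer than $n$ pieces by invoking the already-proved implication \ref{item:max1BB}$\implies$\ref{item:max2BB}, whereas you obtain the lower bound $d(\I)\ge n$ directly from an $n$-point set realized as $\Lambda_{x}(\I)$; both are fine.
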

\begin{proof}
\ref{item:max1BB} $\implies$ \ref{item:max2BB}. Suppose that $\I=\I_0\oplus \cdots \oplus \I_{n-1}$ for some maximal ideals $\I_0,\ldots,\I_{n-1}$. It follows by induction using Proposition \ref{prop:Imaximal} and Proposition \ref{prop:basic-prop-about-Lambda-Gamma}\ref{prop:basic-prop-about-Lambda-Gamma:direct-sum} that $\mathscr{L}(\I)=[X]^{\le n}$. The equality $\mathcal{C}(\I)=[X]^{\le n}$ follows similarly as in the proof of Proposition \ref{prop:Imaximal}, considering that there are no $n+1$ pairwise disjoint $\I$-positive sets.

\ref{item:max2BB} $\implies$ \ref{item:max3BB} $\implies$ \ref{item:max4BB}. It goes as in the proof of Proposition \ref{prop:Imaximal}.

\ref{item:max4BB} $\implies$ \ref{item:max1BB}. Assume that $\mathscr{L}(\I)=[X]^{\le n}$. 
    Let $k \in \omega \cup \{\infty\}$ be the maximal cardinality of a partition of $\omega$ into $\I$-positive sets. 
    
    First, suppose that $k\le n-1$. Let $\{S_0,\ldots,S_{k-1}\}\subseteq \I^+$ be a partition of $\omega$. By the maximality of $k$, each $S_i$ cannot be partitioned into two $\I$-positive sets. This implies that each $\I\restriction S_i$ is maximal, hence $\I$ is the Fubini sum of $k$ maximal ideals. It follows by the proof of the first implication above that $\mathscr{L}(\I)=[X]^{\le k}$, which is different from $[X]^{\le n}$ since $X$ has at least $n$ points. 
    
    Second, suppose that $k \ge n+1$. Hence there exists a partition $\{S_0,\ldots,S_{n}\}$ of $\omega$ into $n+1$ $\I$-positive sets. It follows by Lemma \ref{lem:Xlen} that $[X]^{\le n+1}\subseteq \mathscr{L}(\I)$, which contradicts the standing hypothesis $\mathscr{L}(\I)=[X]^{\le n}$. 
    
    The above cases imply that $k=n$. Hence there exists a partition $\{S_0,\ldots,S_{n-1}\}$ of $\omega$ into $\I$-positive sets, and each $S_i$ cannot be partitioned into two $\I$-positive sets. Therefore $\I$ is the Fubini sum of $k=n$ maximal ideals.
\end{proof}


\section{P-like properties}\label{sec:Plike}

Let us start recalling the definition of $P^+$-ideal, together with two weaker variants. 

\begin{definition}\label{def:Pplusandweakers}
Let $\I$ be an ideal on $\omega$. We say that $\I$ is a:

\begin{enumerate}[label={\rm (\roman{*})}]
\item  $P^+$\emph{-ideal} if for every decreasing sequence $(A_n:n \in \omega)$ of $\I$-positive sets there exists $A\in \I^+$ such that $A\setminus A_n$ is finite for every $n \in \omega$;

\item  $P^-$\emph{-ideal} if for every decreasing sequence $(A_n:n \in \omega)$ of $\I$-positive sets with $A_n\setminus A_{n+1}\in \I$ for every $n$ there exists $A\in \I^+$ such that $A\setminus A_n$ is finite for every $n \in \omega$;

\item $P^{\,|}$\emph{-ideal} if for every decreasing sequence $(A_n:n \in \omega)$ of $\I$-positive sets with $A_n\setminus A_{n+1}\in \I^+$ for every $n$ there exists $A\in \I^+$ such that $A\setminus A_n$ is finite for every $n \in \omega$.
\end{enumerate}
\end{definition}

It is worth noting that $P^-$-ideals and $P^{\,|}$-ideals are also called \textquotedblleft hereditary weak $P$-ideals\textquotedblright\, and \textquotedblleft weakly $P^+$-ideals\textquotedblright,\, respectively, see e.g. \cite{MR4584767} and \cite[Definition~2.1]{MR4505549}. 

The following theorem and examples summarize the relationships between $P$-like properties and the topological complexity of ideals. These properties and examples are graphically summarized in Figure~\ref{fig:P-vs-P} below.

For, recall that an ideal $\I$ is \emph{countably generated} if there is a countable family $\cC\subseteq\cP(\omega)$ such that $\I=\{A\subseteq\omega: A\subseteq\bigcup\cC'\text{ for some }\cC'\in[\cC]^{<\omega}\}$. Moreover, if $\mathcal{I}$ and $\mathcal{J}$ are two ideals on two countably infinite sets $Z$ and $W$, respectively, then we say that $\mathcal{I}$ \emph{is below} $\mathcal{J}$ \emph{in the Kat\v{e}tov ordering}, shortened as 
$\mathcal{I} \le_{\mathrm{K}} \mathcal{J}$,  
if there is a map $\phi: W\to Z$ such that $\phi^{-1}[S] \in \mathcal{J}$ for all $S \in \mathcal{I}$. (Of course, this notion is weaker than Rudin--Blass ordering.)

\begin{proposition}\label{prop:Pplus-vs-Pminus+Pvert}
Let $\I$ be an ideal on $\omega$. Then the following hold:
\begin{enumerate}[label={\rm (\roman{*})}]
    \item $\I$ is $P^+$ if and only if it is both $P^{\,|}$ and $P^-$; \label{prop:Pplus-vs-Pminus+Pvert:item}

\item Each countably generated ideal is $\Sigma^0_2$;\label{prop:Pplus-vs-Pminus+Pvert:countably-generated}

\item Each $\Sigma^0_3$ ideal is $P^+$; \label{thm:Plike-properties-for-definable-ideals:Fsigma}

\item Each analytic $P$-ideal is $\Pi^0_3$;\label{thm:Plike-properties-for-definable-ideals:analyticP}

\item Each $\Pi^0_4$ ideal is $P^-$;\label{thm:Plike-properties-for-definable-ideals:Pi-zero-four-is-Pminus}

\item  If $\I$ is an analytic ideal, then it can be extended to a $P^+$-ideal if and only if it can be extended to a $\Sigma^0_2$ ideal; \label{thm:Plike-properties-for-definable-ideals:exension-to-Fsigma}

\item If $\I$ is nowhere maximal, then 
$\I$ is $P^+$ if and only if it is $P^{\,|}$;\label{prop:Pplus-vs-Pminus+Pvert:nowhere-max}

 \item If $\I$ has the hereditary Baire property, then 
    $\I$ is $P^+$ if and only if it is $P^{\,|}$;\label{prop:Pplus-vs-Pminus+Pvert:hBaire}

 \item If $\I$ has the hereditary Baire property and $\I$ is not $ P^{\,|}$, then 
    $\emptyset\otimes\fin \leq_{\mathrm{RB}} \I$;\label{prop:Pplus-vs-Pminus+Pvert:hBaire-Pvert}

\item  $\I$ is $P^-$ if and only if $\Fin^2\not\leq_{\mathrm{K}} \I\restriction A$ for every $A\in \I^+$. In particular, every ideal extending $\Fin^2$ is not $P^-$.\label{prop:Pplus-vs-Pminus+Pvert:FinSQRD} 
\end{enumerate}
\end{proposition}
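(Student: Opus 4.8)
The plan is to prove the two implications of the equivalence separately and then read off the ``in particular'' clause from the forward direction applied to the identity map. The conceptual core, which I would isolate first, is a dictionary between a decreasing tower $(A_n)$ with $A_n\setminus A_{n+1}\in\I$ and the column structure of $\Fin^2$: the ``layers'' $D_n=A_n\setminus A_{n+1}$ play the role of columns $\{n\}\times\omega$, an $\I$-positive pseudo-union corresponds to an $\I$-positive set meeting every layer finitely, and a set in $\Fin^2$ corresponds to one whose nonfinite layers are only finitely many.

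First I would prove the right-to-left implication by contraposition: assuming $\Fin^2\leq_{\mathrm{K}}\I\restriction A$ for some $A\in\I^+$, I produce a tower witnessing that $\I$ is not $P^-$. Let $\phi\colon A\to\omega^2$ satisfy $\phi^{-1}[S]\in\I$ for all $S\in\Fin^2$, put $T_n=\{(i,j)\in\omega^2:i\geq n\}$, and set $A_n=\phi^{-1}[T_n]$. These are decreasing. Positivity of $A_n$ does \emph{not} come for free from a Katětov map, so here is the key maneuver: since $\{0,\ldots,n-1\}\times\omega\in\Fin^2$ its preimage lies in $\I$, and $A=A_n\cup\phi^{-1}[\{0,\ldots,n-1\}\times\omega]$ with $A\in\I^+$ forces $A_n\in\I^+$. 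Likewise $A_n\setminus A_{n+1}=\phi^{-1}[\{n\}\times\omega]\in\I$ because $\{n\}\times\omega\in\Fin^2$. Finally, if some $A^\star\in\I^+$ had $A^\star\setminus A_n\in\Fin$ for all $n$, then (discarding the finite set $A^\star\setminus A_0=A^\star\setminus A$) $A^\star\cap\phi^{-1}[\{m\}\times\omega]$ would be finite for every $m$, whence every column of $\phi[A^\star]$ is finite, $\phi[A^\star]\in\Fin^2$, and $A^\star\subseteq\phi^{-1}[\phi[A^\star]]\in\I$, contradicting $A^\star\in\I^+$. So no pseudo-union exists and $\I$ is not $P^-$.

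For the converse, suppose $\I$ is not $P^-$, witnessed by a decreasing $(A_n)$ of $\I$-positive sets with $D_n=A_n\setminus A_{n+1}\in\I$ and no $\I$-positive pseudo-union. Setting $A_\infty=\bigcap_n A_n$, I would first note $A_\infty\in\I$ (otherwise $A_\infty$ is itself a pseudo-union), so $B=A_0\setminus A_\infty=\bigsqcup_n D_n\in\I^+$ is partitioned into the $\I$-sets $D_n$; the reformulation is that no $\I$-positive $R\subseteq B$ meets every $D_n$ finitely, since such an $R$ would satisfy $R\setminus A_n=R\cap\bigcup_{m<n}D_m\in\Fin$ and hence be a pseudo-union. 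I then define $\phi\colon B\to\omega^2$ to be injective on each $D_n$ with $\phi[D_n]\subseteq\{n\}\times\omega$, and verify $\Fin^2\leq_{\mathrm{K}}\I\restriction B$: for $S\in\Fin^2$ only finitely many columns $S_n$ are infinite, so $\phi^{-1}[S]$ is the union of finitely many sets $\phi^{-1}[S]\cap D_n\subseteq D_n\in\I$ together with a subset of $B$ meeting each $D_n$ finitely, which lies in $\I$ by the reformulation; thus $\phi^{-1}[S]\in\I$. This gives $\Fin^2\leq_{\mathrm{K}}\I\restriction B$ for $B\in\I^+$, as required. The ``in particular'' clause is then immediate: if $\J\supseteq\Fin^2$ then the identity witnesses $\Fin^2\leq_{\mathrm{K}}\J=\J\restriction\omega^2$ with $\omega^2\in\J^+$, so $\J$ is not $P^-$ by the implication just proved.

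The main obstacle I expect is precisely the bookkeeping around positivity: because Katětov reductions preserve membership in the ideal but not $\I$-positivity, both directions must recover the needed positive sets by complementing against $\Fin^2$-sets (as in the step deducing $A_n\in\I^+$), and one must state cleanly the equivalence between ``admitting a pseudo-union'' and ``admitting an $\I$-positive transversal meeting each layer $D_n$ finitely.'' Once this dictionary is fixed, everything reduces to the single decomposition of $\phi^{-1}[S]$, for $S\in\Fin^2$, into a finite union of full layers plus a finitely-meeting remainder.
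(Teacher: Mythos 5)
Your proposal addresses only item \ref{prop:Pplus-vs-Pminus+Pvert:FinSQRD} of the ten-item proposition; nothing is said about the other nine. This is a genuine gap in coverage, because several of those items are not mere citations in the paper: item \ref{prop:Pplus-vs-Pminus+Pvert:item} is proved by splitting a decreasing tower according to whether $\{n: A_n\setminus A_{n+1}\in\I^+\}$ is finite or infinite; item \ref{prop:Pplus-vs-Pminus+Pvert:countably-generated} by exhibiting a countably generated ideal as a countable union of closed sets; item \ref{prop:Pplus-vs-Pminus+Pvert:nowhere-max} by refining a tower with $\I$-small differences into one with $\I$-positive differences via an infinite partition of $A_0$ into positive sets (using Proposition \ref{prop:nowhere-maximal}); item \ref{prop:Pplus-vs-Pminus+Pvert:hBaire} is deduced from \ref{prop:Pplus-vs-Pminus+Pvert:nowhere-max}; and item \ref{prop:Pplus-vs-Pminus+Pvert:hBaire-Pvert} glues finite-to-one witnesses of the Baire property on the pieces $A_n\setminus A_{n+1}$ into a single Rudin--Blass reduction. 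None of these arguments, nor the citations for items \ref{thm:Plike-properties-for-definable-ideals:Fsigma}--\ref{thm:Plike-properties-for-definable-ideals:exension-to-Fsigma}, appear in your write-up, so as a proof of the stated proposition it is incomplete.

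What you do prove is correct, and in fact goes beyond the paper, which disposes of item \ref{prop:Pplus-vs-Pminus+Pvert:FinSQRD} with a citation to \cite[Theorem~3.8]{MR3692233}. Both of your implications check out: assuming $\Fin^2\le_{\mathrm{K}}\I\restriction A$, you correctly recover $\I$-positivity of $A_n=\phi^{-1}[\{(i,j):i\ge n\}]$ by complementing against the $\Fin^2$-set $\{0,\dots,n-1\}\times\omega$ (positivity is indeed not preserved under Kat\v{e}tov preimages, so this step is necessary), and the pseudo-union is killed by observing $\phi[A^\star]\in\Fin^2$; conversely, the reformulation of ``no pseudo-union'' as ``no $\I$-positive subset of $\bigsqcup_n D_n$ meeting each $D_n$ finitely,'' together with the decomposition of $\phi^{-1}[S]$ into finitely many full layers plus a finitely-meeting remainder, is exactly what is needed, and the ``in particular'' clause follows from the first implication applied to the identity map. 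Only a small labelling slip: what you call the right-to-left direction proved by contraposition is really the contrapositive of the left-to-right one (and it is that implication, not ``the one just proved,'' that yields the ``in particular'' clause); this does not affect correctness since both implications are established.
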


\begin{proof}
\ref{prop:Pplus-vs-Pminus+Pvert:item} 
The \textsc{Only If part} is clear from Definition \ref{def:Pplusandweakers}, hence let us prove the \textsc{If part}. 
Fix a decreasing sequence $(A_n: n\in \omega)$ of $\I$-positive sets and define $S=\{n \in \omega: A_n\setminus A_{n+1} \in \I^+\}$. 
First, suppose that $S$ is finite. Since $\I$ is $P^-$ and the sequence is decreasing, there exists $A \in \I^+$ such that $A\setminus A_n$ is finite for all $n\in \omega$. 
In the opposite, $S$ is infinite. Let $(s_n: n \in \omega)$ be the increasing enumeration of $S$, so that $A_{s_{n+1}}\setminus A_{s_n} \in \I^+$ for all $n \in \omega$. Since $\I$ is $P^{\,|}$, there exists $A \in \I^+$ such that $A\setminus A_{s_n}$ is finite for all $n\in \omega$. The stronger claim that $A\setminus A_{n}$ is finite for all $n\in \omega$ follows from the fact that $(A_n: n \in \omega)$ is decreasing.

\ref{prop:Pplus-vs-Pminus+Pvert:countably-generated}
Let $\mathcal{C}=\{C_n:n\in\omega\}\subseteq \mathcal{P}(\omega)$ be a countable family which generates the ideal $\I$, so that 
$$
\I=\bigcup_{n \in \omega} \, \left\{S\subseteq \omega:n \notin S \text{ for all }n \in \omega\setminus (C_0\cup \cdots \cup C_{n})\right\}.
$$
Therefore $\I$ is a countable union of closed sets. More precisely, an ideal on $\omega$ is countably generated if and only if it is isomorphic to $\mathrm{Fin}$ or $\Fin \otimes \emptyset$ or $\Fin \oplus \mathcal{P}(\omega)$, see \cite[Proposition 1.2.8]{MR1711328} and \cite[Remark 2.16]{Leo24unbounded}, and the latter ones are $\Sigma^0_2$.

\ref{thm:Plike-properties-for-definable-ideals:Fsigma} 
See \cite[Corollary 2.7]{MSP24}; cf. also \cite[Lemma 1.2]{MR748847} for the folklore case of $\Sigma^0_2$ ideals (however, there exists a $\Delta^0_3$-ideal on $\omega$ which is not $\Sigma^0_2$, see \cite[p. 345]{MR1416872} and \cite{MR1321463}).

\ref{thm:Plike-properties-for-definable-ideals:analyticP} 
See \cite[Lemma 1.2.2 and Theorem 1.2.5]{MR1711328}.

\ref{thm:Plike-properties-for-definable-ideals:Pi-zero-four-is-Pminus} 
See \cite[Proposition~4.9]{MR4584767}.

\ref{thm:Plike-properties-for-definable-ideals:exension-to-Fsigma}
See \cite[Theorem~3.7]{MR3692233}.

\ref{prop:Pplus-vs-Pminus+Pvert:nowhere-max} 
By item \ref{prop:Pplus-vs-Pminus+Pvert:item}, every $P^+$-ideal is $P^{\,|}$. 
Conversely, 
let $\I$ be a nowhere maximal ideal which is $P^{\,|}$. We will show that $\I$ is $P^-$, which will complete the proof by item \ref{prop:Pplus-vs-Pminus+Pvert:item}. Take a decreasing sequence $(A_n: n \in \omega)$ of $\I$-positive sets such that $A_n\setminus A_{n+1}\in \I$ for each $n\in \omega$. 
Thanks to Proposition~\ref{prop:nowhere-maximal}, there is an infinite partition $\{B_n:n \in \omega\}$ of $A_0$ into $\I$-positive sets. 
Define $C_n=A_n\cap B_n$ and $D_n=\bigcup_{k\ge n}C_k$ for each $n \in \omega$. 
Since $A_0\setminus A_n\in \I$ for each $n$, we have $C_n=B_n\setminus (A_0\setminus A_n)\in \I^+$. Then $(D_n: n \in \omega)_{n\in\omega}$ is a decreasing sequence of $\I$-positive sets such that $D_{n}\setminus D_{n+1}=C_n\in\I^+$. 
Since $\I$ is $P^{\,|}$, there exists $D\in\I^+$ such that $D\setminus D_n$ is finite for each $n\in \omega$. 
Considering that $D_n=\bigcup_{k\ge n}(A_k\cap B_k) \subseteq \bigcup_{k\ge n}A_k = A_n$, we obtain that $D\setminus A_n\subseteq D\setminus D_n$. Hence $D\setminus A_n$ is finite for each $n\in \omega$. This shows that $\I$ is $P^-$.

\ref{prop:Pplus-vs-Pminus+Pvert:hBaire} 
It follows from Proposition~\ref{prop:BP-is-nowhere-maximal} and item \ref{prop:Pplus-vs-Pminus+Pvert:nowhere-max}; for a different (direct) proof, see also \cite[Theorem~2.8]{MR4505549}.

\ref{prop:Pplus-vs-Pminus+Pvert:hBaire-Pvert} 
Since $\I$ is not a  $P^{\,|}$-ideal, there exists a decreasing sequence $(A_n:n\in \omega)$ of subsets of $\omega$ such that $A_n\setminus A_{n+1}\in \I^+$ for each $n \in \omega$ and, for every $B\in \I^+$, there is $k\in\omega$ with $B\setminus A_k \notin \mathrm{Fin}$. Accordingly, define 
$$
B_0 = (A_0\setminus A_1)\cup (\omega\setminus A_0)\cup \bigcap_{n\in \omega}A_n
\quad \text{ and } \quad 
B_n = A_n\setminus A_{n+1} \text{ for all }n\ge 1.
$$
Then $\{B_n: n\in \omega\}$ is a partition of $\omega$ into $\I$-positive sets. 
Thanks to Theorem~\ref{thm:baire}, for each $n\in \omega$ there exists a finite-to-one map $\phi_n:B_n\to\omega$ such that $C\in \fin$ if and only if $\phi_n^{-1}[C]\in \I\restriction B_n$ for each $C\subseteq\omega$.
We define a function $f:\omega\to\omega^2$ by 
$f(k)= (n,\phi_n(k))$ for every $k\in B_n$ and $n\in \omega$.
It is no difficult to see that the function $f$ is a witnessing function for $\emptyset\otimes \fin\leq_{\mathrm{RB}} \I$, so the proof is finished.

\ref{prop:Pplus-vs-Pminus+Pvert:FinSQRD} 
See \cite[Theorem~3.8]{MR3692233}.
\end{proof}

In the following examples, all considered properties either follow from Proposition \ref{prop:Pplus-vs-Pminus+Pvert} or we provide an appropriate reference, or otherwise they  are  straightforward.

\begin{example}
\label{example:P-like}
\begin{enumerate}[label={\rm (\roman{*})}]
\item $\Fin$ is a $\Sigma^0_2$ ideal which is 
$P^+$.

\item $\Z$ is a $\Pi^0_3$ ideal, see \cite[Example 1.2.3]{MR1711328}, hence it is $P^-$ by Proposition \ref{prop:Pplus-vs-Pminus+Pvert}\ref{thm:Plike-properties-for-definable-ideals:Pi-zero-four-is-Pminus}. But it is not $P^{\,|}$: for, it is enough to consider the witnessing sequence $(S_k: k \in \omega)$, with $S_k=\{n \in \omega: 2^{k+1} \text{ divides } n-2^k\}$ for each $k \in \omega$.\label{example:P-like:Id}

\item $\Fin^2$ is a $\Sigma^0_4$ ideal, see 
Theorem \ref{thm:highBorelMSP}, 
but it is neither $P^-$ nor $P^{\,|}$. It follows by Proposition \ref{prop:Pplus-vs-Pminus+Pvert}\ref{thm:Plike-properties-for-definable-ideals:Pi-zero-four-is-Pminus} that $\Fin^2$ is not $\Pi^0_4$. 
    \end{enumerate}
\end{example}

\begin{example}
\label{example:maximal-vs-P-like}
\begin{enumerate}[label={\rm (\roman{*})}]

        \item Every maximal ideal is $P^{\,|}$, see \cite[Proposition~2.6]{MR4505549}. In particular, there is no maximal ideal which is $P^-$ but not $P^+$.\label{example:maximal-vs-P-like:max-is-P-prime}

\item There exists a maximal ideal which is not $P^+$ (hence $P^{\,|}$ but not $P^-$): for instance, any maximal ideal extending $\Fin^2$ is not $P^-$, see e.g.~\cite[p.~76]{MR1940513}.

\item \label{item:ZFCexample:maximal-vs-P-like} It is consistent (e.g.,~assuming the Continuum Hypothesis) that there exists a maximal ideal which is $P^+$, see \cite{MR0080902}. However, it is also consistent that there are no maximal ideals which are $P^+$, see \cite{MR0728877}. 

    \end{enumerate}
\end{example}

In the following example, the Baire property of ideals is verified with the aid of 
Proposition~\ref {prop:BP-of-sumas-and-products}, and the other properties are straightforward.

\begin{example}
\label{example:Pplus-vs-Pminus+Pvert}
Let $\cM,\cM_p,\cM_{n}$ be maximal ideals on $\omega$ such that $\cM_p$ is $P^+$ and $\cM_{n}$ is not $P^+$ (the same notation is used in Figure~\ref{fig:P-vs-P} below). Note that, by Example \ref{example:maximal-vs-P-like}\ref{item:ZFCexample:maximal-vs-P-like}, the existence of $\cM_p$ is independent of ZFC axioms. 
\begin{enumerate}[label={\rm (\roman{*})}]

    \item $\cM_p\oplus \cM_p$ is a $P^+$-ideal, is not maximal, and does not have the Baire property. 
    
    \item $\cM\oplus \cM_{n}$ is $P^{\,|}$, is not $P^-$, is not maximal, and does not have the Baire property. 

    \item $\cM_p\oplus \Fin$ is $P^+$, has the Baire property, does not have the hereditary Baire property. In addition, it is immediate that it is not nowhere maximal. \label{item:MplusFin}

    \item $\cM_{n}\oplus \Fin$ is $P^{\,|}$, is not $P^-$, has the Baire property, and does not have the hereditary Baire property.

    \item $\emptyset\otimes \cM_p$ is $P^-$, is not $P^{\,|}$, and does not have the Baire property.

    \item $\fin\oplus(\emptyset\otimes \cM_p)$ is $P^-$, is not $P^{\,|}$, and has the Baire property.

\item $\emptyset\otimes \cM_{n}$ is not $P^-$, is not $P^{\,|}$, and does not have the Baire property.

\item $\Fin^2\oplus  \cM_{n}$ is not $P^-$, is not $P^{\,|}$, has the Baire property, and does not have the hereditary Baire property.
\end{enumerate}
\end{example}

\begin{figure}
\centering
\begin{tikzpicture}[scale =0.8] 

\draw (0,0) ellipse (7.4cm and 5.3cm);

\draw ({3/2},{3*sqrt(3)/3}) circle (3cm);
\draw ({-3/2},{3*sqrt(3)/3}) circle (3cm);

\draw
(-0.5,{-3*sqrt(3)/3}) ellipse (5.2cm and 3.2cm);
\draw (1.8,2.5) ellipse (1.8cm and 1cm);

\draw (-1.2,-1.5) ellipse (1.2cm and 2.5cm);

\node at (2,4.2) {\Large $\mathbf{P}^{\,|}$};
\node at (0,3.7) {\Large $\mathbf{P}^+$};
\node at (-2,4.2) {\Large $\mathbf{P}^-$};

\node at (-.4,-4.5) {\textbf{\textsc{Baire}}};

\node at (1.7,1.95) {\textbf{\textsc{Maximal}}};

\node at (-1.15,-3) {\small \textbf{\textsc{Hered.}}};
\node at (-1.15,-3.3) {\small \textbf{\textsc{Baire}}};

\draw[fill] (-0.5,0) circle (0.06cm);
\node at (-0.7,0.3) {$\fin$};

\draw[fill] (-1.5,-0.9) circle (0.06cm);
\node at (-1.2,-0.9) {$\Z$};

\draw[fill] (-2.6,.3) circle (0.06cm);
\node 
[rotate=20] 
at (-2.8,.7) {\small $\Fin\oplus (\emptyset\otimes  \mathcal{M}_p)$};

\draw[fill] (-1.25,-2) circle (0.06cm);
\node at (-0.7,-2) {\small $\fin^2$};

\draw[fill] (0.7,-2.5) circle (0.06cm);
\node at (1.8,-2.5) {\small $\fin^2\oplus \mathcal{M}_n$};

\draw[fill] (0.5,0.5) circle (0.06cm);
\node at (0.4,0.8) {\small $\fin\oplus \mathcal{M}_p$};

\draw[fill] (-3.6,2) circle (0.06cm);
\node at (-2.8,2) {\small $\emptyset\otimes  \mathcal{M}_p$};

\draw[fill] (0.5,2.8) circle (0.06cm);
\node at (0.7,2.4) {\small $\mathcal{M}_p$};

\draw[fill] (2.4,2.8) circle (0.06cm);
\node at (2.6,2.4) {\small $\mathcal{M}_n$};

\draw[fill] (-0.8,2.2) circle (0.06cm);
\node at (-0.6,1.8) {\small $\mathcal{M}_p\oplus \mathcal{M}_p$};

\draw[fill] (3.5,0.7) circle (0.06cm);
\node at (3.4,1.1) {\small $\mathcal{M}\oplus \mathcal{M}_n$};

\draw[fill] (1.7,-.2) circle (0.06cm);
\node at (1.7,-0.5) {\small $\fin\oplus \mathcal{M}_n$};

\draw[fill] (5.2,0) circle (0.06cm);
\node at (6,0) {\small $\emptyset\otimes  \mathcal{M}_n$};

\end{tikzpicture}
\caption{P-like properties and the Baire property among ideals on $\omega$.}
    \label{fig:P-vs-P}
\end{figure}

Finally, we show how the combinatorial properties given in Definition \ref{def:Pplusandweakers} allow to characterize several relationships between $\I$-cluster points and $\I$-limit points. 
For the sake of simplicity, most of the next results are formulated in the setting of metric spaces, even though some of them can be easily generalized to first countable spaces. 
(Recall that $A^{\,|}$ stands for the set of accumulation points of a set $A$.)

\begin{proposition}
\label{prop:Pprimecharacterization}
Let $X$ be a nondiscrete metric space and let $\I$ be an ideal on $\omega$. Then the following conditions are equivalent:

\begin{enumerate}[label={\rm (\roman{*})}]

\item $\I$ is $P^{\,|}$;\label{prop:P-like_for_spaces:Pbar-nonisolated-in-Gamma-are-in-Lambda:Pbar}

\item $\Gamma_{  x }(\I)^{\,|}\subseteq \Lambda_{  x }(\I)$ for all sequences $  x $;\label{prop:P-like_for_spaces:Pbar-nonisolated-in-Gamma-are-in-Lambda:derived-set}

\item $\Lambda_{  x }(\I)$ is closed for all sequences $  x $, i.e., $\mathscr{L}(\I)\subseteq \Pi^0_1$.\label{item:3pprime}

\end{enumerate}
\end{proposition}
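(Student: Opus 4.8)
The plan is to prove the cyclic chain of implications (i) $\Rightarrow$ (ii) $\Rightarrow$ (iii) $\Rightarrow$ (i), which avoids the less transparent direct implication (iii) $\Rightarrow$ (ii). The step (ii) $\Rightarrow$ (iii) is immediate: since $\Lambda_{  x }(\I)\subseteq\Gamma_{  x }(\I)$ for every sequence $  x $, the accumulation-point operator is monotone, so $\Lambda_{  x }(\I)^{\,|}\subseteq\Gamma_{  x }(\I)^{\,|}\subseteq\Lambda_{  x }(\I)$ by (ii); thus $\Lambda_{  x }(\I)$ contains all of its accumulation points and is therefore closed.

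For (i) $\Rightarrow$ (ii), fix a sequence $  x $ and a point $\eta\in\Gamma_{  x }(\I)^{\,|}$ (note that $\eta\in\Gamma_{  x }(\I)$ itself, since this set is closed by Lemma~\ref{lem:clusterclosed}, although only nearby cluster points are used). Using that $X$ is metric, I would extract distinct $\eta_k\in\Gamma_{  x }(\I)\setminus\{\eta\}$ with $\delta_k:=d(\eta_k,\eta)$ strictly decreasing to $0$ and, after passing to a subsequence, $\delta_{k+1}<\delta_k/3$, so that the balls $B_k:=B(\eta_k,\delta_k/2)$ are pairwise disjoint and satisfy $B_k\subseteq B(\eta,3\delta_k/2)$. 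Because each $\eta_k$ is an $\I$-cluster point, the index sets $C_k:=\{n:x_n\in B_k\}$ are pairwise disjoint and $\I$-positive, so $A_m:=\bigcup_{k\ge m}C_k$ form a decreasing sequence of $\I$-positive sets with $A_m\setminus A_{m+1}=C_m\in\I^+$. This is exactly the situation covered by the $P^{\,|}$-property, which yields $A\in\I^+$ with $A\setminus A_m$ finite for every $m$. Finally, every $n\in A\cap A_m$ satisfies $x_n\in B_k$ for some $k\ge m$, hence $d(x_n,\eta)<3\delta_m/2$; since $A\setminus A_m$ is finite for each $m$ and $\delta_m\to 0$, the subsequence $  x \restriction A$ converges to $\eta$, so $\eta\in\Lambda_{  x }(\I)$.

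For (iii) $\Rightarrow$ (i) I would argue by contraposition. Assume $\I$ is not $P^{\,|}$ and fix a witnessing decreasing sequence $(A_n)$ of $\I$-positive sets with $D_n:=A_n\setminus A_{n+1}\in\I^+$ for every $n$ such that no $A\in\I^+$ has $A\setminus A_n$ finite for all $n$. Since $X$ is nondiscrete, choose a non-isolated point $\eta$ and distinct points $\eta_k\to\eta$ with $\delta_k:=d(\eta_k,\eta)$ strictly decreasing to $0$. As $\{D_k:k\in\omega\}\cup\{\bigcap_nA_n\}\cup\{\omega\setminus A_0\}$ partitions $\omega$, I would define $  x $ by $x_n=\eta_k$ for $n\in D_k$ and $x_n=\eta_0$ for the remaining indices. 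Each $\eta_k$ lies in $\Lambda_{  x }(\I)$, since $  x $ is constantly $\eta_k$ on $D_k\in\I^+$; hence $\eta\in\Lambda_{  x }(\I)^{\,|}$. The crux is $\eta\notin\Lambda_{  x }(\I)$: if some $S\in\I^+$ had $  x \restriction S\to\eta$, then for each $m\ge 1$ all but finitely many $n\in S$ would satisfy $d(x_n,\eta)<\delta_{m-1}$, forcing $x_n\in\{\eta_k:k\ge m\}$ and thus $n\in\bigcup_{k\ge m}D_k\subseteq A_m$; so $S\setminus A_m$ would be finite for every $m$, contradicting the choice of $(A_n)$. Therefore $\Lambda_{  x }(\I)$ is not closed, as required.

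The main obstacle is the two-way translation between the purely combinatorial $P^{\,|}$-condition and the metric statement about limit points. In (i) $\Rightarrow$ (ii) the delicate part is packaging the cluster points accumulating at $\eta$ into a decreasing tower of $\I$-positive sets whose successive differences are again $\I$-positive, so that $P^{\,|}$ becomes applicable; the disjointness of the balls $B_k$ (hence of the sets $C_k$) is what guarantees $A_m\setminus A_{m+1}\in\I^+$. In (iii) $\Rightarrow$ (i) the delicate part is verifying that any $S\in\I^+$ along which $  x $ converges to $\eta$ would necessarily satisfy $S\setminus A_m$ finite for all $m$, i.e.\ would witness $P^{\,|}$ for the chosen tower, which is exactly what the failure of $P^{\,|}$ forbids.
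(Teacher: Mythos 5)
Your proof is correct. The implication (i) $\Rightarrow$ (ii) is essentially identical to the paper's argument (disjoint balls around cluster points accumulating at $\eta$, the resulting decreasing tower $A_m=\bigcup_{k\ge m}C_k$ with $\I$-positive differences, and an application of $P^{\,|}$). Where you diverge is in how the third condition is handled: the paper proves (i) $\Leftrightarrow$ (ii) in both directions and then simply cites an external result (\cite[Theorem 2.4]{MR4505549}) for the equivalence (i) $\Leftrightarrow$ (iii), whereas you close a self-contained cycle (i) $\Rightarrow$ (ii) $\Rightarrow$ (iii) $\Rightarrow$ (i). Your (ii) $\Rightarrow$ (iii) via monotonicity of the derived-set operator is a clean one-line observation the paper does not need. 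Your contrapositive (iii) $\Rightarrow$ (i) uses essentially the same sequence construction as the paper's (ii) $\Rightarrow$ (i) — values $\eta_k$ on the tower differences $D_k=A_n\setminus A_{n+1}$ of a witnessing non-$P^{\,|}$ tower — but you conclude by exhibiting $\eta$ as an accumulation point of $\Lambda_{x}(\I)$ outside $\Lambda_{x}(\I)$ (each $\eta_k$ is trivially an $\I$-limit point since $x$ is constant on $D_k\in\I^+$), rather than by arguing about cluster points; this also sidesteps the paper's separate treatment of the case $\bigcap_nA_n\notin\I$, which cannot occur for a genuine witness to the failure of $P^{\,|}$. The net gain of your route is that the proposition becomes fully self-contained; the cost is that you do not obtain the direct equivalence of (ii) with (i) independently of (iii), though of course it follows from the cycle.
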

\begin{proof}
\ref{prop:P-like_for_spaces:Pbar-nonisolated-in-Gamma-are-in-Lambda:Pbar} $\implies$ \ref{prop:P-like_for_spaces:Pbar-nonisolated-in-Gamma-are-in-Lambda:derived-set}. 
Let $  x =(x_i)_{i\in\omega}$ be a sequence with values in $X$ and take a one-to-one 
sequence $ y =(y_n)_{n\in\omega}$ in $\Gamma_{  x }(\I)$ which converges to a point $\eta \in X$. We need to show that $\eta \in \Lambda_{  x }(\I)$. 
Let $(r_n: n \in \omega)$ be a sequence of positive reals such that $\lim_n r_n=0$ and, in the addition, the open balls $B(y_n,r_n)$ are pairwise disjoint. Define 
$$
A_n= \{i\in \omega: x_i\in B (y_n,r_n)\} 
\quad \text{ and }\quad 
B_n=\bigcup_{k\ge n}A_k
$$
for all $n \in \omega$. 
It follows by construction that $\{A_n: n \in \omega\}$ is a family of pairwise disjoint $\I$-positive sets, and that $(B_n: n \in \omega)$ is a decreasing sequence such that $B_n\setminus B_{n+1}\notin\I$ for every $n\in \omega$. 
Since $\I$ is $P^{\,|}$, there exists $B\in \I^+$ such that $B\setminus B_n$ is finite for each $n\in \omega$.
This implies that the subsequence $(x_i: i\in B)$ converges to the point $\eta$, therefore $\eta$ is an $\I$-limit point of $  x $. 

\ref{prop:P-like_for_spaces:Pbar-nonisolated-in-Gamma-are-in-Lambda:derived-set}
$\implies$ \ref{prop:P-like_for_spaces:Pbar-nonisolated-in-Gamma-are-in-Lambda:Pbar}. 
For each sequence $  x $, the set $\Gamma_{  x }(\I)$ is closed by Lemma \ref{lem:clusterclosed}, hence $\{\Gamma_{  x }(\I)^{\,|}, \mathrm{iso}(\Gamma_{  x }(\I))\}$ is a partition of $\Gamma_{  x }(\I)$. Hence item \ref{prop:P-like_for_spaces:Pbar-nonisolated-in-Gamma-are-in-Lambda:derived-set} can be rewritten equivalently as $\Gamma_{  x }(\I) \setminus \mathrm{iso}((\Gamma_{  x }(\I)) \subseteq \Lambda_{  x }(\I)$ for all sequences $  x $. 

At this point, fix a decreasing sequence $(A_n: n\in \omega)$ of subsets of $\omega$ such that $A_n\setminus A_{n+1}\in \I^+$ for all $n \in \omega$. 
If $A=\bigcap_n A_n \notin \I$, we are done (because $A$ satisfies the $P^{\,|}$-property for the sequence $(A_n)$). 
Otherwise, assume hereafter that $A\in \I$. 

Since $X$ has a nonisolated point, there is a one-to-one sequence  $ y =(y_n)_{n\in\omega}$ which converges to some $\eta \in X\setminus \{y_n:n\in \omega\}$. Define the sequence $  x =(x_i: i \in \omega)$ by
$$
x_i = \begin{cases}
\,y_0 &\text{if $i\in \omega\setminus A_0$,}\\
\,\eta &\text{if $i\in A$,}\\
\,y_n&\text{if $i\in A_n\setminus A_{n+1}$.}
\end{cases}
$$

Notice that each $y_n$ is an $\I$-cluster point of $  x $:  
indeed, if $U$ is an open neighborhood of $y_n$, then $\{i\in \omega:x_i\in U\}\supseteq A_n\setminus A_{n+1}\in \I^+$. In addition, it follows by Lemma \ref{lem:clusterclosed} that $\eta$ is an $\I$-cluster point as well. 
Consequently, $\eta$ is a nonisolated 
point of $\Gamma_{  x }(\I)$, hence $\eta \in \Lambda_{  x }(\I)$. 
This means that there exists $A\in \I^+$ such that the subsequence $(x_i: i\in A)$ converges to $\eta$, therefore $A\setminus A_n$ is finite for every $n\in \omega$.

\ref{prop:P-like_for_spaces:Pbar-nonisolated-in-Gamma-are-in-Lambda:Pbar} $\Longleftrightarrow$ \ref{item:3pprime}. See \cite[Theorem 2.4]{MR4505549}. 
\end{proof}

An additional characterization of 
the inclusion 
$\mathscr{L}(\I)\subseteq \Pi^0_1$, 
in the case of ideals with the hereditary Baire property, 
will be given in 
Corollary \ref{cor:inclusionclosedsets} below.

Similarly, we characterize the $P^-$-property as the case where every isolated $\I$-cluster point is necessarily an $\I$-limit point: 
\begin{proposition}
\label{prop:Pminuscharacterization}
Let $X$ be a locally compact nondiscrete metric space and let $\I$ be an ideal on $\omega$. Then the following conditions are equivalent:

\begin{enumerate}[label={\rm (\roman{*})}]

\item $\I$ is $P^-$;\label{prop:P-like_for_spaces:Pminus-isolated-in-Gamma-are-in-Lambda-for-locally-compact:IFF:Pminus}

\item $\mathrm{iso}((\Gamma_{  x }(\I)) \subseteq \Lambda_{  x }(\I)$ for all sequences $  x $.
\label{prop:P-like_for_spaces:Pminus-isolated-in-Gamma-are-in-Lambda-for-locally-compact:IFF:isolated-points}

\end{enumerate}
\end{proposition}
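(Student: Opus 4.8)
The plan is to follow closely the structure of the proof of Proposition~\ref{prop:Pprimecharacterization}, replacing the derived set $\Gamma_{x}(\I)^{\,|}$ by the set of isolated points $\mathrm{iso}(\Gamma_{x}(\I))$ and the $P^{\,|}$-property by the $P^-$-property; the genuinely new ingredient is that local compactness of $X$ is used in an essential way in one direction.

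\emph{From $P^-$ to the inclusion.} Assume $\I$ is $P^-$, fix a sequence $x$ and a point $\eta\in\mathrm{iso}(\Gamma_{x}(\I))$; we must show $\eta\in\Lambda_{x}(\I)$. Using isolation together with local compactness, I would first fix a radius $\rho>0$ such that $\overline{B}(\eta,\rho)$ is compact and $\Gamma_{x}(\I)\cap\overline{B}(\eta,\rho)=\{\eta\}$ (shrinking an isolating open neighborhood and invoking local compactness at $\eta$). Then choose radii $r_n\downarrow 0$ with $r_0<\rho$, and set $A_n=\{i\in\omega: x_i\in B(\eta,r_n)\}$; since each $B(\eta,r_n)$ is a neighborhood of the cluster point $\eta$, the sets $A_n$ are $\I$-positive and decreasing. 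The key claim is that $A_n\setminus A_{n+1}\in\I$ for every $n$. Indeed, if some $A_n\setminus A_{n+1}\in\I^+$, the subsequence $(x_i: i\in A_n\setminus A_{n+1})$ takes values in the compact annulus $\{z: r_{n+1}\le d(z,\eta)\le r_n\}$, which by the folklore result recorded in Proposition~\ref{prop:emptysetcluster} has a nonempty set of $(\I\restriction(A_n\setminus A_{n+1}))$-cluster points; any such point $\zeta$ is an $\I$-cluster point of $x$ with $d(\zeta,\eta)\ge r_{n+1}>0$ and $\zeta\in\overline{B}(\eta,\rho)$, contradicting $\Gamma_{x}(\I)\cap\overline{B}(\eta,\rho)=\{\eta\}$. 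Having established the claim, $(A_n)$ is a decreasing sequence of $\I$-positive sets with $A_n\setminus A_{n+1}\in\I$, so the $P^-$-property yields $A\in\I^+$ with $A\setminus A_n$ finite for all $n$; this forces $(x_i: i\in A)\to\eta$, i.e.\ $\eta\in\Lambda_{x}(\I)$.

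\emph{From the inclusion to $P^-$.} Conversely, assume $\mathrm{iso}(\Gamma_{x}(\I))\subseteq\Lambda_{x}(\I)$ for all $x$, and take a decreasing sequence $(A_n)$ of $\I$-positive sets with $A_n\setminus A_{n+1}\in\I$ for every $n$. Since $X$ is nondiscrete, fix a nonisolated point $\eta$, a one-to-one sequence $(y_n)$ converging to $\eta$ with $y_n\neq\eta$ and $d(y_n,\eta)$ strictly decreasing, and a point $z_0\neq\eta$. Define $x$ by $x_i=\eta$ for $i\in\bigcap_m A_m$, $x_i=y_n$ for $i\in A_n\setminus A_{n+1}$, and $x_i=z_0$ for $i\notin A_0$. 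I would then check that $\eta\in\mathrm{iso}(\Gamma_{x}(\I))$: any neighborhood of $\eta$ contains $\{y_m:m\ge N\}$ and $\eta$ for some $N$, so its preimage under $x$ contains $A_N\in\I^+$, giving $\eta\in\Gamma_{x}(\I)$; whereas each $y_n$ fails to be an $\I$-cluster point because $\{i: x_i=y_n\}=A_n\setminus A_{n+1}\in\I$ (this is exactly where the hypothesis $A_n\setminus A_{n+1}\in\I$ is used, in contrast with the $P^{\,|}$ case). As the closure of the range of $x$ is $\{y_m:m\in\omega\}\cup\{\eta,z_0\}$, a small ball around $\eta$ (of radius below $d(\eta,z_0)$) meets $\Gamma_{x}(\I)$ only in $\eta$. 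The hypothesis then gives $\eta\in\Lambda_{x}(\I)$, i.e.\ some $A\in\I^+$ with $(x_i:i\in A)\to\eta$; translating this convergence through the radii $d(y_{n-1},\eta)$ shows $A\setminus A_n$ is finite for each $n$, so $A$ witnesses the $P^-$-property.

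The main obstacle I anticipate is the claim $A_n\setminus A_{n+1}\in\I$ in the first direction. The delicate points are ensuring that the cluster point produced in the annulus is genuinely distinct from $\eta$ and still lies inside the isolating neighborhood; this forces one to select \emph{all} radii $r_n$ below $\rho$ from the outset, rather than only for large $n$, and it is precisely here that local compactness is indispensable, since on a merely nondiscrete metric space closed balls and annuli need not be compact and the cluster-point argument collapses. The remaining steps are routine once the construction and the choice of radii are set up correctly, and they parallel the corresponding parts of Proposition~\ref{prop:Pprimecharacterization}.
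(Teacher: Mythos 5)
Your proposal is correct and follows essentially the same route as the paper's proof: the forward direction uses local compactness to place the indices falling in a compact annulus, invokes the compactness cluster-point theorem to force $A_n\setminus A_{n+1}\in\I$, and then applies the $P^-$-property; the converse builds the same witness sequence as in Proposition~\ref{prop:Pprimecharacterization}, with the point $\eta$ now isolated in $\Gamma_{x}(\I)$ because the $y_n$ fail to be $\I$-cluster points. The details you flag (keeping all radii below the isolating compact ball, and checking that the annulus cluster point is distinct from $\eta$) are exactly the points the paper's argument handles, so no gap remains.
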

\begin{proof}
\ref{prop:P-like_for_spaces:Pminus-isolated-in-Gamma-are-in-Lambda-for-locally-compact:IFF:Pminus} $\implies$ 
\ref{prop:P-like_for_spaces:Pminus-isolated-in-Gamma-are-in-Lambda-for-locally-compact:IFF:isolated-points}. 
Let $\eta$ be an isolated $\I$-cluster point of a sequence $  x $.
Then there exists $n_0\in \omega$ such that $\overline{B}(\eta,2^{-n_0})\cap \Gamma_{  x }(\I)=\{\eta\}$ and $\overline{B}(\eta,2^{-n_0})$ is compact (since $X$ is locally compact).
For each $n\geq n_0$, define
$$
A_n= \{i\in \omega: x_i\in \overline{B}(\eta,2^{-n})\}.
$$
Then $A_n\in\I^+$. 
Moreover, $A_n\setminus A_{n+1}\in \I$ for every $n\geq n_0$: 
indeed, suppose for the sake of contradiction that there is $n\ge n_0$ with $A_n\setminus A_{n+1}\in \I^+$.
Then $K= \overline{B}(\eta,2^{-n})\setminus B(\eta,2^{-n-1})$ is   compact and  
$\{i\in \omega:x_i\in K\} 
\supseteq 
A_n\setminus A_{n+1}\in\I^+$.
Now, we can use \cite[Theorem~6]{MR2923430} to obtain the existence of some $\I$-cluster point $\eta^\prime$ of $  x $ which belongs to $K$. By the definition of $K$, we have $\eta^\prime \neq \eta$ and $\eta^\prime \in K\subseteq \overline{B}(\eta,2^{-n})\subseteq \overline{B}(\eta,2^{-n_0})$. This is the required contradiction. 

At this point, since $\I$ is $P^-$, there exists $A\in \I^+$ such that $A\setminus A_n$ is finite for every $n\geq n_0$.
Hence the subsequence $(x_i: i \in A)$ is convergent to $\eta$, i.e., $\eta\in \Lambda_{  x }(I)$.

\ref{prop:P-like_for_spaces:Pminus-isolated-in-Gamma-are-in-Lambda-for-locally-compact:IFF:isolated-points} $\implies$ \ref{prop:P-like_for_spaces:Pminus-isolated-in-Gamma-are-in-Lambda-for-locally-compact:IFF:Pminus}. 
The proof goes on the same lines of the implication \ref{prop:P-like_for_spaces:Pbar-nonisolated-in-Gamma-are-in-Lambda:derived-set}
$\implies$ \ref{prop:P-like_for_spaces:Pbar-nonisolated-in-Gamma-are-in-Lambda:Pbar} of Proposition \ref{prop:Pprimecharacterization}, the unique difference being that each $y_n$ is \emph{not} an $\I$-cluster point of $  x $, while $\eta \in \Gamma_{  x }(\I)$ (since every neighborhood of $\eta$ contains $y_n$ for all large $n$). Hence $\eta$ is an isolated $\I$-cluster point, and the conclusion follows similarly. 
\end{proof}

Note that, in the proof above, the assumption of local compactness is used only in the implication \ref{prop:P-like_for_spaces:Pminus-isolated-in-Gamma-are-in-Lambda-for-locally-compact:IFF:Pminus} $\implies$ 
\ref{prop:P-like_for_spaces:Pminus-isolated-in-Gamma-are-in-Lambda-for-locally-compact:IFF:isolated-points}. However, it cannot be completely dropped from the statement as it is shown in the following example. 

\begin{example}
\label{example:local-compacness-Pminus}
Let $(r_n: n \in \omega)$ be a strictly decreasing sequence of positive reals with $\lim_n r_n=0$. Define $I_n=(r_{n+1},r_n)$ for every $n\in \omega$ and consider $X=\{0\}\cup \bigcup_{n\in\omega} I_n$ endowed with the Euclidean metric. Note that $X$ is not locally compact as witnessed by the point $0$. 

Pick an ideal $\I$ which is $P^-$ but not $P^{\,|}$ (for instance,~$\I=\Z$ by Example~\ref{example:P-like}\ref{example:P-like:Id}). Hence there exists a decreasing sequence $(A_n: n \in \omega)$ such that $A_0=\omega$, $A_n\setminus A_{n+1}\in \I^+$ for each $n$ and there is no $A\in \I^+$ with $A\setminus A_n$ finite for each $n$ (in the case $\I=\Z$, one can choose $A_n=\{2^nm: m \in \omega\}$ for each $n$). Note that $\bigcap_n A_n \in \I$.

For each $n$, pick a strictly decreasing sequence $x^{(n)}=(x^{(n)}_i: i\in A_n\setminus A_{n+1})$ with values in the interval $I_n$ which has the limit $r_{n+1}$ (note that $r_{n+1}\notin X$, hence this sequence is not convergent in $X$). Now define a sequence $  x =(x_i: i\in \omega)$ by $x_i=x^{(n)}_i$ for each $i\in A_n\setminus A_{n+1}$ and $n\in \omega$, and $x_i=0$ for each $i\in \bigcap_n A_n$. 

It follows that $\Gamma_{  x }(\I)=\{0\}$ and, in particular, $0$ is an isolated point of $\Gamma_{  x }(\I)$. To conclude, we show that $0\notin \Lambda_{  x }(\I)$.
Suppose that there exists $A\in \I^+$ such that $(x_i: i\in A)$ is convergent to $0$. Then for every $n$ we have $x_i\notin I_n$ for all but finitely many $i\in A$. Consequently, $A\setminus A_n$ is finite for every $n$, which is the claimed contradiction. 
\end{example}

As an immediate consequence of the previous results, we have the following:
\begin{corollary}\label{cor:analyticPgeneral}
    Let $X$ be a locally compact nondiscrete metric space and let $\I$ be a $\Pi^0_4$ ideal on $\omega$. Then $\mathrm{iso}((\Gamma_{  x }(\I)) \subseteq \Lambda_{  x }(\I)$ for all sequences $  x $.
\end{corollary}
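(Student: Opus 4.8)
The plan is to recognize that this corollary sits directly atop the two results immediately preceding it, so the entire proof amounts to chaining them together; no genuinely new argument is required. Concretely, I would first extract from the topological complexity hypothesis the relevant combinatorial property, and then feed that property into the dynamical characterization established just above.

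First I would invoke Proposition~\ref{prop:Pplus-vs-Pminus+Pvert}\ref{thm:Plike-properties-for-definable-ideals:Pi-zero-four-is-Pminus}, which asserts that every $\Pi^0_4$ ideal is $P^-$. Since $\I$ is assumed to be $\Pi^0_4$, this immediately yields that $\I$ is a $P^-$-ideal. This is the step that converts the descriptive-set-theoretic assumption into the purely combinatorial hypothesis needed downstream.

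Next I would apply Proposition~\ref{prop:Pminuscharacterization}. Its hypotheses require $X$ to be a locally compact nondiscrete metric space and $\I$ to be an ideal on $\omega$ --- both of which hold here by assumption --- and its equivalence \ref{prop:P-like_for_spaces:Pminus-isolated-in-Gamma-are-in-Lambda-for-locally-compact:IFF:Pminus} $\Leftrightarrow$ \ref{prop:P-like_for_spaces:Pminus-isolated-in-Gamma-are-in-Lambda-for-locally-compact:IFF:isolated-points} states that $\I$ being $P^-$ is equivalent to the inclusion $\mathrm{iso}(\Gamma_{x}(\I)) \subseteq \Lambda_{x}(\I)$ holding for all sequences $x$. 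Combining the two implications, the $P^-$-property of $\I$ obtained in the previous step delivers exactly the desired conclusion.

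I do not expect any real obstacle, since all the work has already been done in the cited propositions; the only point demanding any care is a routine verification that the standing hypotheses (local compactness, nondiscreteness, and metrizability of $X$, together with the $\Pi^0_4$ complexity of $\I$) match the hypotheses of both results invoked, which they do verbatim. Hence the corollary follows at once.
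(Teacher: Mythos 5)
Your proposal is correct and coincides with the paper's own proof, which likewise cites Proposition~\ref{prop:Pplus-vs-Pminus+Pvert}\ref{thm:Plike-properties-for-definable-ideals:Pi-zero-four-is-Pminus} to get the $P^-$-property and then applies Proposition~\ref{prop:Pminuscharacterization}. Nothing further is needed.
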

\begin{proof}
It follows by Proposition \ref{prop:Pplus-vs-Pminus+Pvert}\ref{thm:Plike-properties-for-definable-ideals:Pi-zero-four-is-Pminus} 
and Proposition \ref{prop:Pminuscharacterization}.
\end{proof}

In particular, Corollary \ref{cor:analyticPgeneral} holds for analytic $P$-ideals (since the latter are $\Pi^0_3$ by Proposition \ref{prop:Pplus-vs-Pminus+Pvert}\ref{thm:Plike-properties-for-definable-ideals:analyticP}), hence recovering \cite[Theorem~2.8]{MR3883171}. 

\begin{proposition}
\label{prop:Ppluscharacterization}
Let $X$ be a nondiscrete metric space and let $\I$ be an ideal on $\omega$. Then the following conditions are equivalent:

\begin{enumerate}[label={\rm (\roman{*})}] 
\item  $\I$ is $P^+$;\label{prop:P-like_for_spaces:Pplus-Lambda-equals-Gamma:IFF:Pplus}

\item $\Gamma_{  x }(\I)=\Lambda_{  x }(\I)$ for all sequences $  x $.
\label{prop:P-like_for_spaces:Pplus-Lambda-equals-Gamma:IFF:Lambda-Gamma}

\end{enumerate}
\end{proposition}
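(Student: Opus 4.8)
The plan is to prove the two implications separately, exploiting two structural facts that hold for every sequence $x$: first, $\Lambda_{x}(\I)\subseteq \Gamma_{x}(\I)$ always; second, $\Gamma_{x}(\I)$ is closed by Lemma \ref{lem:clusterclosed}, so it splits as the disjoint union of its accumulation points $\Gamma_{x}(\I)^{\,|}$ and its isolated points $\mathrm{iso}(\Gamma_{x}(\I))$. Consequently, the equality $\Gamma_{x}(\I)=\Lambda_{x}(\I)$ is equivalent to $\Gamma_{x}(\I)\subseteq\Lambda_{x}(\I)$, and hence to the conjunction of the two inclusions $\Gamma_{x}(\I)^{\,|}\subseteq\Lambda_{x}(\I)$ and $\mathrm{iso}(\Gamma_{x}(\I))\subseteq\Lambda_{x}(\I)$. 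These are precisely the conditions appearing in Propositions \ref{prop:Pprimecharacterization} and \ref{prop:Pminuscharacterization}, while $P^+$ decomposes as $P^{\,|}$ together with $P^-$ by Proposition \ref{prop:Pplus-vs-Pminus+Pvert}\ref{prop:Pplus-vs-Pminus+Pvert:item}; so the statement should follow by gluing these facts, subject to the caveat discussed at the end.

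For the implication \ref{prop:P-like_for_spaces:Pplus-Lambda-equals-Gamma:IFF:Pplus} $\implies$ \ref{prop:P-like_for_spaces:Pplus-Lambda-equals-Gamma:IFF:Lambda-Gamma}, I would argue directly rather than through the auxiliary characterizations, since Proposition \ref{prop:Pminuscharacterization} is only available under local compactness. Assuming $\I$ is $P^+$, fix a sequence $x$ and an $\I$-cluster point $\eta\in\Gamma_{x}(\I)$; set $A_n=\{i\in\omega: x_i\in B(\eta,2^{-n})\}$ for each $n\in\omega$. Each $A_n$ is $\I$-positive because $\eta$ is an $\I$-cluster point, and $(A_n)$ is decreasing because the balls shrink. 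The $P^+$-property yields $A\in\I^+$ with $A\setminus A_n$ finite for all $n$, and then $(x_i:i\in A)$ converges to $\eta$ (given $\varepsilon>0$, choose $n$ with $2^{-n}<\varepsilon$, so $x_i\in B(\eta,\varepsilon)$ for all but finitely many $i\in A$). Hence $\eta\in\Lambda_{x}(\I)$, and combined with $\Lambda_{x}(\I)\subseteq\Gamma_{x}(\I)$ this gives the desired equality.

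For the converse \ref{prop:P-like_for_spaces:Pplus-Lambda-equals-Gamma:IFF:Lambda-Gamma} $\implies$ \ref{prop:P-like_for_spaces:Pplus-Lambda-equals-Gamma:IFF:Pplus}, I would assume $\Gamma_{x}(\I)=\Lambda_{x}(\I)$ for every sequence $x$ and read off the two inclusions above. The inclusion $\Gamma_{x}(\I)^{\,|}\subseteq\Lambda_{x}(\I)$ holding for all $x$ gives that $\I$ is $P^{\,|}$ by Proposition \ref{prop:Pprimecharacterization}. The inclusion $\mathrm{iso}(\Gamma_{x}(\I))\subseteq\Lambda_{x}(\I)$ holding for all $x$ gives that $\I$ is $P^-$; crucially, here I would invoke only the implication \ref{prop:P-like_for_spaces:Pminus-isolated-in-Gamma-are-in-Lambda-for-locally-compact:IFF:isolated-points} $\implies$ \ref{prop:P-like_for_spaces:Pminus-isolated-in-Gamma-are-in-Lambda-for-locally-compact:IFF:Pminus} of Proposition \ref{prop:Pminuscharacterization}, which by the remark following it does not require local compactness. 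Finally, $P^{\,|}$ together with $P^-$ yields $P^+$ by Proposition \ref{prop:Pplus-vs-Pminus+Pvert}\ref{prop:Pplus-vs-Pminus+Pvert:item}.

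The main subtlety, and the reason the forward implication is proved by hand, is that the clean symmetric route through Propositions \ref{prop:Pprimecharacterization} and \ref{prop:Pminuscharacterization} is blocked in the direction $P^-\implies \mathrm{iso}(\Gamma_{x}(\I))\subseteq\Lambda_{x}(\I)$, which genuinely needs local compactness (cf.\ Example \ref{example:local-compacness-Pminus}). The point to get right is therefore to use each of those two propositions only in the direction valid for an arbitrary nondiscrete metric space, handling the $P^+\implies$ half directly via the shrinking-balls construction.
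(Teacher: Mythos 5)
Your proof is correct. It is, however, a genuinely different presentation from the paper's: the paper disposes of Proposition \ref{prop:Ppluscharacterization} by citing \cite[Theorem~3.4]{MR4393937} (where the claim is proved for $X=\mathbb{R}$) and asserting that the argument transfers verbatim to general nondiscrete metric spaces, whereas you assemble a self-contained proof from the paper's own machinery. Your backward direction correctly splits the closed set $\Gamma_{x}(\I)$ into accumulation and isolated points, feeds the two resulting inclusions into the implications of Propositions \ref{prop:Pprimecharacterization} and \ref{prop:Pminuscharacterization} that go from the topological statement to the combinatorial one (neither of which needs local compactness), and then recombines via the decomposition $P^+ = P^{\,|}\wedge P^-$ of Proposition \ref{prop:Pplus-vs-Pminus+Pvert}. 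You are also right to prove the forward direction by the direct shrinking-balls argument rather than symmetrically through Proposition \ref{prop:Pminuscharacterization}, since the implication from $P^-$ to ``isolated cluster points are limit points'' does require local compactness (as Example \ref{example:local-compacness-Pminus} shows), and your direct argument sidesteps that entirely; note it does not even use nondiscreteness. What your route buys is an explicit verification, internal to the paper, that the ``verbatim'' claim is safe in arbitrary nondiscrete metric spaces; what the paper's route buys is brevity. The only stylistic caveat is that your use of Proposition \ref{prop:Pminuscharacterization} in the non-locally-compact setting rests on the remark following its proof rather than on its stated hypotheses, which you correctly flag.
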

\begin{proof}
    In \cite[Theorem~3.4]{MR4393937}, the authors prove the claim for $X=\R$. The proof in the general case goes verbatim (we omit details).  
\end{proof}

In particular, since $\Z$ is not a $P^+$-ideal, there exists a sequence $  x $ such that $\Gamma_{  x }(\I)\neq \Lambda_{  x }(\I)$. This recovers \cite[Example 3]{MR1181163} in the real case. 

\begin{corollary}\label{cor:sigma03pplus}
    Let $X$ be a nondiscrete metric space and let $\I$ be a $\Sigma^0_3$ ideal on $\omega$. Then $\Gamma_{  x }(\I)=\Lambda_{  x }(\I)$ for all sequences $  x $. 
\end{corollary}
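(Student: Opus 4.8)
The plan is to chain together two results that have already been established in this section, since the statement is a corollary whose content is entirely packaged into the preceding propositions. First I would invoke Proposition \ref{prop:Pplus-vs-Pminus+Pvert}\ref{thm:Plike-properties-for-definable-ideals:Fsigma}, which asserts that every $\Sigma^0_3$ ideal is $P^+$. Applying this to the hypothesis that $\I$ is a $\Sigma^0_3$ ideal immediately yields that $\I$ is a $P^+$-ideal, with no further work required; the topological complexity assumption is used only here, and the rest of the argument is insensitive to it.

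Second, I would feed this into Proposition \ref{prop:Ppluscharacterization}. Since $X$ is a nondiscrete metric space (exactly the standing hypothesis of that proposition) and $\I$ has now been shown to be $P^+$, the equivalence \ref{prop:P-like_for_spaces:Pplus-Lambda-equals-Gamma:IFF:Pplus} $\Longleftrightarrow$ \ref{prop:P-like_for_spaces:Pplus-Lambda-equals-Gamma:IFF:Lambda-Gamma} of that proposition gives that $\Gamma_{  x }(\I)=\Lambda_{  x }(\I)$ for every sequence $  x $, which is precisely the desired conclusion.

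There is no genuine obstacle: the corollary is a formal consequence of combining the combinatorial implication (\,$\Sigma^0_3 \Rightarrow P^+$\,) with the space-level characterization (\,$P^+ \Leftrightarrow \Gamma=\Lambda$\,). The only point worth double-checking is that the hypotheses line up exactly—namely that Proposition \ref{prop:Ppluscharacterization} requires $X$ to be a nondiscrete metric space, which is guaranteed by the statement of the corollary, so no additional separability or local compactness assumption is needed. Accordingly, the proof can be written in a single sentence citing the two results in sequence.

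\begin{proof}
Since $\I$ is a $\Sigma^0_3$ ideal, it is $P^+$ by Proposition \ref{prop:Pplus-vs-Pminus+Pvert}\ref{thm:Plike-properties-for-definable-ideals:Fsigma}. As $X$ is a nondiscrete metric space, the conclusion $\Gamma_{  x }(\I)=\Lambda_{  x }(\I)$ for all sequences $  x $ follows from Proposition \ref{prop:Ppluscharacterization}.
\end{proof}
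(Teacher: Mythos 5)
Your proof is correct and matches the paper's own argument exactly: the paper also derives the corollary by combining Proposition \ref{prop:Pplus-vs-Pminus+Pvert}\ref{thm:Plike-properties-for-definable-ideals:Fsigma} (every $\Sigma^0_3$ ideal is $P^+$) with Proposition \ref{prop:Ppluscharacterization}. The hypotheses line up as you checked, so nothing further is needed.
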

\begin{proof}
    It follows by Proposition \ref{prop:Pplus-vs-Pminus+Pvert}\ref{thm:Plike-properties-for-definable-ideals:Fsigma}
    and Proposition \ref{prop:Ppluscharacterization}
\end{proof}

A direct proof of Corollary \ref{cor:sigma03pplus} for $\Sigma^0_2$ ideals can be found in \cite[Theorem~2.3]{MR3883171}.


\section{Ideal schemes}\label{sec:Ischemes}

\begin{definition}
Let $\I$ be an ideal on $\omega$. A family $\{A_s:s\in 2^{<\omega}\}$ of subsets of $\omega$ is called an \emph{$\I$-scheme} 
if the following properties hold for every $s\in 2^{<\omega}$:
\begin{enumerate}[label={\rm (\roman{*})}] 
    \item $A_s\in \I^+$;
    \item $A_{s^\frown 0}\cap A_{s^\frown 1}=\emptyset$; 
    \item $A_{s^\frown 0}\cup A_{s^\frown 1}\subseteq A_s$.
\end{enumerate}
In addition, it is said to be 
\emph{full} if $A_\emptyset=\omega$
and 
$A_{s^\frown 0}\cup A_{s^\frown 1} = A_s$ for every $s\in 2^{<\omega}$.
\end{definition}

\begin{example}
\label{example:full-Fin-scheme}
It is not difficult to see that full $\Fin$-schemes exist. For instance, one can define the family $\{A_s:s\in 2^{<\omega}\}$ as it follows: 
$A_\emptyset=\omega$ and
$$
A_s=\left\{n \in \omega: 2^{k+1} \,\text{ divides }\, n-\sum_{i=0}^k 2^is_i\right\}
$$
for each nonempty $s=(s_0,\ldots,s_k) \in 2^{<\omega}$ (hence $|s|=k+1)$. 
\end{example}

It is also immediate to see that, if $\I$ is a nowhere maximal ideal, then it admits an $\I$-scheme. 
We remark that full $\I$-schemes have been used in \cite[Theorem~1]{MR0480934} to characterize  
quotient Boolean algebras $\cP(\omega)/\I$ which carry 
a non-zero positive bounded non-atomic finitely additive measures on it, see also \cite[Theorem~5.3.2]{MR0751777}.

\begin{proposition}
\label{prop:Ischeme-characterizations}
Let $\I$ be an ideal on $\omega$. 
Then the following are equivalent:
\begin{enumerate}[label={\rm (\roman{*})}] 
    \item 
There exists a full $\I$-scheme;\label{prop:Ischeme-characterizations:full-Ischeme}

    \item 
There exists an $\I$-scheme;\label{prop:Ischeme-characterizations:Ischeme}

\item 
There exists an infinite  partition of $\omega$ into $\I$-positive sets;\label{prop:Ischeme-characterizations:partition}

\item There exists an infinite family of pairwise disjoint $\I$-positive sets;\label{prop:Ischeme-characterizations:pairwise-disjoint}

\item $\I$ is \emph{not} a Fubini sum of finitely many maximal ideals;\label{prop:Ischeme-characterizations:maximal-ideal}

\item There exists an infinite Hausdorff space $X$ such that 
$\mathscr{L}_{X}(\I)\neq [X]^{\le n}$ for all $n \in \omega$;\label{prop:fubinisummaximal}

\item For all infinite Hausdorff spaces $X$, it holds  
$\mathscr{L}_{X}(\I)\neq [X]^{\le n}$ for all $n \in \omega$.\label{prop:fubinisummaximal2}
\end{enumerate}
\end{proposition}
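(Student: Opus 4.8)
The plan is to close the cycle $\ref{prop:Ischeme-characterizations:full-Ischeme}\Rightarrow\ref{prop:Ischeme-characterizations:Ischeme}\Rightarrow\ref{prop:Ischeme-characterizations:pairwise-disjoint}\Rightarrow\ref{prop:Ischeme-characterizations:partition}\Rightarrow\ref{prop:Ischeme-characterizations:full-Ischeme}$ among the four purely combinatorial conditions, then to establish the equivalence $\ref{prop:Ischeme-characterizations:pairwise-disjoint}\Leftrightarrow\ref{prop:Ischeme-characterizations:maximal-ideal}$, and finally to glue the three statements about $\mathscr{L}_X(\I)$ onto this block through Proposition~\ref{prop:fubinimaximal}.

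The combinatorial cycle is mostly bookkeeping. The implication $\ref{prop:Ischeme-characterizations:full-Ischeme}\Rightarrow\ref{prop:Ischeme-characterizations:Ischeme}$ is immediate. For $\ref{prop:Ischeme-characterizations:Ischeme}\Rightarrow\ref{prop:Ischeme-characterizations:pairwise-disjoint}$, given an $\I$-scheme $\{A_s:s\in 2^{<\omega}\}$ I would follow the leftmost branch and set $B_n=A_{t_n}$, where $t_n=(0^n)^\frown 1$ is the node consisting of $n$ zeros followed by a single $1$. Each $B_n\in \I^+$ by the first scheme axiom, and for $m<n$ the inclusion $A_{t_n}\subseteq A_{0^{m+1}}=A_{(0^m)^\frown 0}$ (from iterating the third axiom) together with $A_{(0^m)^\frown 0}\cap A_{(0^m)^\frown 1}=\emptyset$ gives $B_n\cap B_m=\emptyset$. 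For $\ref{prop:Ischeme-characterizations:pairwise-disjoint}\Rightarrow\ref{prop:Ischeme-characterizations:partition}$, I would take an infinite pairwise disjoint family $\{B_n:n\in\omega\}\subseteq\I^+$ and absorb the leftover into the first piece, replacing $B_0$ by $B_0\cup(\omega\setminus\bigcup_n B_n)$, obtaining an infinite partition of $\omega$ into $\I$-positive sets. For $\ref{prop:Ischeme-characterizations:partition}\Rightarrow\ref{prop:Ischeme-characterizations:full-Ischeme}$, starting from an infinite partition $\{P_k:k\in\omega\}$ of $\omega$ into $\I$-positive sets, I would build a tree of \emph{index sets}: put $I_\emptyset=\omega$ and recursively split each infinite $I_s\subseteq\omega$ into two infinite halves $I_{s^\frown 0},I_{s^\frown 1}$ (say even and odd positions in the increasing enumeration), then set $A_s=\bigcup_{k\in I_s}P_k$. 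Each $A_s$ contains some $P_k\in\I^+$, hence lies in $\I^+$; disjointness and fullness of children are inherited from the index sets, and $A_\emptyset=\omega$.

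The heart of the argument is $\ref{prop:Ischeme-characterizations:pairwise-disjoint}\Leftrightarrow\ref{prop:Ischeme-characterizations:maximal-ideal}$. The direction $\ref{prop:Ischeme-characterizations:pairwise-disjoint}\Rightarrow\ref{prop:Ischeme-characterizations:maximal-ideal}$ is easy by contraposition: if $\I=\I_0\oplus\cdots\oplus\I_{n-1}$ with each $\I_j$ maximal, then any $n+1$ pairwise disjoint $\I$-positive sets would, by pigeonhole on the $n$ summands, produce two disjoint positive sets inside the same block, contradicting the maximality of the corresponding $\I_j$; hence there is no infinite pairwise disjoint positive family. For the converse I would argue $\neg\ref{prop:Ischeme-characterizations:pairwise-disjoint}\Rightarrow\neg\ref{prop:Ischeme-characterizations:maximal-ideal}$: the absence of an infinite pairwise disjoint positive family says exactly that the quotient Boolean algebra $\cP(\omega)/\I$ has no infinite antichain, whence it is \emph{finite}. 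A finite $\cP(\omega)/\I\cong 2^k$ then has $k$ atoms, i.e.\ a partition $\{S_0,\ldots,S_{k-1}\}$ of $\omega$ with each $\I\restriction S_j$ maximal, so that $\I$ is a Fubini sum of $k$ maximal ideals. I expect this last passage to be the main obstacle, since it must rule out the pathological possibility of arbitrarily large \emph{finite} antichains with no infinite one; the clean way around it is the folklore fact that every infinite Boolean algebra carries an infinite antichain, which is morally the same bookkeeping as the ``maximal cardinality of a positive partition'' analysis already used in the proof of Proposition~\ref{prop:fubinimaximal}.

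Finally, the three $\mathscr{L}$-statements are pinned down by Proposition~\ref{prop:fubinimaximal}. The implication $\ref{prop:fubinisummaximal2}\Rightarrow\ref{prop:fubinisummaximal}$ is trivial, as any infinite Hausdorff space witnesses the existential statement. For $\ref{prop:fubinisummaximal}\Rightarrow\ref{prop:Ischeme-characterizations:maximal-ideal}$ I would argue contrapositively: if $\I$ is a Fubini sum of $n$ maximal ideals, then Proposition~\ref{prop:fubinimaximal} gives $\mathscr{L}_X(\I)=[X]^{\le n}$ for every Hausdorff $X$ with $|X|\ge n+1$, in particular for every infinite $X$, contradicting $\ref{prop:fubinisummaximal}$. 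For $\ref{prop:Ischeme-characterizations:maximal-ideal}\Rightarrow\ref{prop:fubinisummaximal2}$, fix any infinite Hausdorff $X$ and any $n$; an equality $\mathscr{L}_X(\I)=[X]^{\le n}$ would force $\I$ to be a Fubini sum of $n$ maximal ideals by Proposition~\ref{prop:fubinimaximal} (its hypothesis $|X|\ge n+1$ being automatic since $X$ is infinite), contradicting $\ref{prop:Ischeme-characterizations:maximal-ideal}$; hence $\mathscr{L}_X(\I)\neq[X]^{\le n}$ for all $n\in\omega$. This closes all the equivalences.
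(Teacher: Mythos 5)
Your proposal is correct, and for most of the implications it coincides with the paper's proof: the cycle among the combinatorial conditions (taking a single branch of the scheme to extract a pairwise disjoint family, absorbing the leftover to get a partition, and re-indexing a partition along a full $\Fin$-scheme of index sets to get a full $\I$-scheme), the pigeonhole contraposition for \ref{prop:Ischeme-characterizations:pairwise-disjoint}$\Rightarrow$\ref{prop:Ischeme-characterizations:maximal-ideal}, and the reduction of \ref{prop:fubinisummaximal} and \ref{prop:fubinisummaximal2} to Proposition~\ref{prop:fubinimaximal} are all the same. The one place where you genuinely diverge is the converse \ref{prop:Ischeme-characterizations:maximal-ideal}$\Rightarrow$\ref{prop:Ischeme-characterizations:pairwise-disjoint}: the paper argues directly that, as long as $\I$ is not a finite Fubini sum of maximal ideals, any finite pairwise disjoint family of positive sets can be refined by splitting one member into two positive pieces, and iterates; you instead pass to the quotient $\cP(\omega)/\I$, invoke the folklore fact that an infinite Boolean algebra contains an infinite antichain, and read off the maximal decomposition from the atoms of the resulting finite algebra. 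The two arguments are morally identical (the folklore fact is proved by exactly the splitting induction the paper performs by hand), but your version makes visible the point you correctly flagged as the only delicate one — ruling out arbitrarily large finite antichains without an infinite one — and it also requires the small extra step, which you elide but which is routine, of replacing atom representatives by literal pairwise disjoint representatives partitioning $\omega$ (and noting that an antichain in the quotient can be disjointified by subtracting earlier members). The only other nitpick is the case $n=0$ in items \ref{prop:fubinisummaximal}--\ref{prop:fubinisummaximal2}, where Proposition~\ref{prop:fubinimaximal} does not apply; it is handled trivially since $\mathscr{L}_X(\I)$ always contains the singletons by Lemma~\ref{lem:Xlen}, so it can never equal $[X]^{\le 0}=\{\emptyset\}$.
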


\begin{proof}
It is clear that 
\ref{prop:Ischeme-characterizations:full-Ischeme} $\iff$ \ref{prop:Ischeme-characterizations:Ischeme}
and
\ref{prop:Ischeme-characterizations:partition} $\iff$ \ref{prop:Ischeme-characterizations:pairwise-disjoint}.

\ref{prop:Ischeme-characterizations:Ischeme} $\implies$ \ref{prop:Ischeme-characterizations:pairwise-disjoint}. 
Let $\{A_s:s\in 2^{<\omega}\}\subseteq\I^+$ be an $\I$-scheme. Define $B_{n}=A_{(1^n)^\frown 0}$ for all $n\geq 1$. Then $\{B_n: n\geq 1\}$ is an infinite family of pairwise disjoint $\I$-positive sets.

\ref{prop:Ischeme-characterizations:partition} $\implies$ \ref{prop:Ischeme-characterizations:full-Ischeme}. 
Let $\{B_n:n\in \omega\}$ be an infinite partition of $\omega$ into $\I$-positive sets.
Pick a full $\Fin$-scheme $\{A_s:s\in 2^{<\omega}\}\subseteq\Fin^+$ (for instance,~the one given in Example~\ref{example:full-Fin-scheme}), and define $B_s=\bigcup_{n\in A_s}B_n$. Then $\{B_s: s\in 2^{<\omega}\}\subseteq\I^+$ is a full $\I$-scheme.

\ref{prop:Ischeme-characterizations:pairwise-disjoint} $\implies$ \ref{prop:Ischeme-characterizations:maximal-ideal}. 
Let $\I_0,\ldots,\I_{n-1}$ be maximal ideals for some $n\ge 1$ and suppose for the sake of contradiction that $\I=\I_0\oplus \cdots \oplus \I_{n-1}$, i.e., 
$$
\I=
\left\{A\subseteq n\times\omega: \{i\in\omega: (k,i)\in A\}\in\I_k \text{ for all }k \in n\right\}.
$$
Suppose also that there exist $n+1$ disjoint $\I$-positive sets $A_0,\dots,A_{n}$. 
Then there is $k_0\in n$ and distinct $m,m^\prime \in n+1$ such that 
$$
S=\{i\in\omega: (k_0,i)\in A_{m}\} \in \I_{k_0}^+
\quad \text{ and }\quad 
S^\prime=\{i\in\omega: (k_0,i)\in A_{m^\prime}\} \in \I_{k_0}^+.
$$
But $S\cap S^\prime=\emptyset$ and $\I_{k_0}$ is maximal, which is a contradiction.

\ref{prop:Ischeme-characterizations:maximal-ideal} $\implies$ \ref{prop:Ischeme-characterizations:pairwise-disjoint}. 
Note that, if we have finitely many pairwise disjoint $\I$-positive sets, then one of them can be splitted into two $\I$-positive sets (since $\I$ is not a Fubini sum of finitely many maximal ideals).
Hence, we can construct inductively an infinite sequence of pairwise disjoint $\I$-positive sets.

Lastly, the equivalences \ref{prop:Ischeme-characterizations:maximal-ideal} $\iff$ \ref{prop:fubinisummaximal} $\iff$ \ref{prop:fubinisummaximal2} 
follow by Proposition \ref{prop:fubinimaximal}.
\end{proof}

Further characterization of $\I$-schemes will be given later in Theorem \ref{prop:Gamma-vs-closed}. 

For each $\I$-scheme $\mathcal{A}$, it will be useful to define certain subsets of $2^\omega$: 
\begin{definition}\label{defi:BACA}
Given an ideal $\I$ on $\omega$ and an $\I$-scheme $\mathcal{A}=\{A_s:s\in 2^{<\omega}\}$, define 
$$
B_\I(\mathcal{A})=
\left\{x\in 2^\omega: \forall C\in\I^+ \, \exists  n\in\omega \,( C\setminus A_{x\restriction n}\in \mathrm{Fin}^+)\right\}
$$
and 
$$
C_\I(\mathcal{A})=
\left\{x\in 2^\omega: \forall C\in\I^+ \, \exists  n\in\omega \, (C\setminus A_{x\restriction n}\in \I^+)\right\}.
$$
\end{definition}

Of course, it follows by definition that $C_\I(\mathcal{A})\subseteq B_\I(\mathcal{A})$. 
The following result will be used repeatedly in the next sections.
\begin{lemma}
\label{lem:I-scheme-with-a-given-B}
Let $X$ be a nondiscrete metric space and $\I$ be an ideal on $\omega$.
\begin{enumerate}[label={\rm (\roman{*})}] 
    \item \label{item:1Ischeme}
 Let $H\subseteq 2^\omega$ be a set with empty interior.
 Suppose that there exist
 a set $P \in \mathscr{L}(\I)$
 and 
 a continuous injection 
 $f:2^\omega\to X$ 
 such that 
\begin{equation}\label{eq:inclusionIschemeequation}
  f[H]\subseteq X\setminus P
 \quad \text{ and } \quad 
 f[2^\omega\setminus H] \subseteq P.
\end{equation}
 Then there exists an $\I$-scheme $\cA$ such that $B_\I(\cA)=H$.

\item \label{item:2Ischeme} Suppose that there exist
 a set $P \in \mathscr{L}(\I)$ 
 and an embedding $f:2^\omega\to X$ such that 
 $f^{-1}[P]$ is dense in $2^\omega$. 
Then there exists an $\I$-scheme $\cA$ such that 
$$
f[B_\I(\cA)]=X\setminus P.
$$

\item \label{item:3Ischeme} Suppose that there exist
 a set $P \in \mathscr{L}(\I)$
 and a homeomorphic copy of the Cantor space $C\subseteq X$  such that 
 $C\cap P$ is dense in $C$. 
Then there exists an $\I$-scheme $\cA$ such that $B_\I(\cA)$ is homeomorphic to $C\setminus P$.

\end{enumerate}
\end{lemma}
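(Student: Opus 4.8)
The plan is to prove \ref{item:1Ischeme} by an explicit recursive construction of the scheme and then to obtain \ref{item:2Ischeme} and \ref{item:3Ischeme} as formal consequences. For \ref{item:1Ischeme}, first observe that since $H$ has empty interior we have $2^\omega\setminus H\neq\emptyset$, so $P\supseteq f[2^\omega\setminus H]\neq\emptyset$; I would fix a sequence $y=(y_n)$ with $\Lambda_y(\I)=P$. As $f$ is a continuous injection on the compact space $2^\omega$, it is an embedding, so each $f[[s]]$ is compact, the $2^m$ sets $\{f[[s]]:|s|=m\}$ are pairwise disjoint with mutual distance $\rho_m>0$, and $\diam f[[s]]\to 0$ along every branch. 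The idea is to route each index $i$ down the tree toward the clopen piece nearest to $y_i$, but measured through $X$ rather than forcing the values into $f[2^\omega]$.

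Concretely, I would define $A_s$ for $s\in 2^{<\omega}$ recursively, declaring $i\in A_s$ if and only if the greedy path of $i$ reaches $s$: from a node $s$ with $d(y_i,f[[s]])<2^{-|s|}$ the index $i$ passes to the child $s^\frown j$ minimizing $d(y_i,f[[s^\frown j]])$ (ties broken in favour of $j=0$), provided $d(y_i,f[[s^\frown j]])<2^{-|s|-1}$; the root is entered when $d(y_i,f[2^\omega])<1$. By construction the greedy choice is consistent down the tree, so $A_{s^\frown 0}\cap A_{s^\frown 1}=\emptyset$ and $A_{s^\frown 0}\cup A_{s^\frown 1}\subseteq A_s$, and moreover $A_s\subseteq\{i:d(y_i,f[[s]])<2^{-|s|}\}$ for every $s$.

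It remains to check that every $A_s\in\I^+$ and that $B_\I(\cA)=H$, and this is the main obstacle: positivity must hold at \emph{every} node simultaneously, while the scheme axioms are preserved. The resolution is that the empty interior of $H$ gives $[s]\cap(2^\omega\setminus H)\neq\emptyset$ for every $s$, hence a point $\eta_s\in P\cap f[[s]]$; choosing $S_s\in\I^+$ with $(y_i:i\in S_s)\to\eta_s$, a short estimate using $\rho_{|s|}$ shows that for all large $i\in S_s$ the greedy path of $i$ follows $f^{-1}(\eta_s)$ and in particular reaches $s$, so $S_s\setminus A_s$ is finite and $A_s\in\I^+$. For the equality $B_\I(\cA)=H$: if $x\notin H$ then $f(x)\in P$ admits a witness $S\in\I^+$ with $(y_i:i\in S)\to f(x)$, and the same greedy analysis yields that $S\setminus A_{x\restriction n}$ is finite for every $n$, so $S$ is an $\I$-positive pseudo-intersection of the branch and $x\notin B_\I(\cA)$; conversely, if some $C\in\I^+$ satisfies $C\setminus A_{x\restriction n}\in\Fin$ for all $n$, then $A_{x\restriction n}\subseteq\{i:d(y_i,f[[x\restriction n]])<2^{-n}\}$ together with $\diam f[[x\restriction n]]\to 0$ forces $(y_i:i\in C)\to f(x)$, whence $f(x)\in\Lambda_y(\I)=P$ and $x\notin H$. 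Thus $x\in B_\I(\cA)\iff f(x)\notin P\iff x\in H$. The delicate points I expect to handle carefully are the consistency of the tie-breaking (to keep the nesting $A_{s^\frown j}\subseteq A_s$ exact) and the uniform interplay of the gaps $\rho_m$ with the thresholds $2^{-|s|}$ in the two convergence arguments.

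Finally I would deduce the remaining items. For \ref{item:2Ischeme}, apply \ref{item:1Ischeme} to $H=2^\omega\setminus f^{-1}[P]$, which has empty interior exactly because $f^{-1}[P]$ is dense; injectivity of $f$ gives $f[H]=f[2^\omega]\setminus P\subseteq X\setminus P$ and $f[2^\omega\setminus H]=f[f^{-1}[P]]\subseteq P$, so the resulting scheme satisfies $f[B_\I(\cA)]=f[H]=f[2^\omega]\setminus P$, the asserted set. For \ref{item:3Ischeme}, fix a homeomorphism $f:2^\omega\to C$; then $f^{-1}[P]$ is dense, the same application of \ref{item:1Ischeme} gives $B_\I(\cA)=2^\omega\setminus f^{-1}[P]=f^{-1}[C\setminus P]$, and $f$ restricts to a homeomorphism of this set onto $C\setminus P$, as required.
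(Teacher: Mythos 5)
Your proposal is correct and follows essentially the same strategy as the paper's proof: fix a sequence $y$ with $\Lambda_{y}(\I)=P$, define $A_s$ as the set of indices $i$ whose terms $y_i$ lie near $f[[s]]$, use the empty interior of $H$ to extract a point of $P\cap f[[s]]$ witnessing $A_s\in\I^+$, run the same two-directional convergence argument for $B_\I(\cA)=H$, and deduce items (ii) and (iii) formally from (i). The only difference is in the implementation: where you route each index greedily down the tree with thresholds $2^{-|s|}$ and tie-breaking, the paper simply sets $A_s=\{i: y_i\in B(f[[s]],r_{|s|})\}$ for radii $r_n\downarrow 0$ chosen so that same-level balls are pairwise disjoint, which yields the scheme axioms and both inclusions of $B_\I(\cA)=H$ immediately, without the ``stuck path'' and consistency bookkeeping your version must carry.
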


\begin{proof}
\ref{item:1Ischeme} Let $H$, $f$, and $P$ be as in the statement. Then there is a sequence  
 $ y =(y_i)_{i\in \omega}$ with values in $X$ such that  $\Lambda_{ y }(\I)=P$. 
 Note that, for each $n\in \omega$, the family $\{f[[s]]: s\in 2^n\}$ consists of pairwise disjoint nonempty compact subsets of $X$. Thus there is $r_n>0$ such that the balls $B(f[[s]],r_n)$, for $s\in 2^n$, are pairwise disjoint.
Without loss of generality, we can assume that the sequence $(r_n: n \in \omega)$ is decreasing and convergent to zero. 
At this point, define 
$$
A_s=\{i\in\omega: y_i\in B(f[[s]], r_{|s|})\}.
$$
for each $s\in 2^{<\omega}$.
We claim that $\mathcal{A}=\{A_s: s \in 2^{<\omega}\}$ is an $\I$-scheme. For, it is easy to see that $A_{s^\frown 0}\cap A_{s^\frown 1}=\emptyset$ and  
$A_{s^\frown 0}\cup A_{s^\frown 1}\subseteq A_s$ for each $s \in 2^{<\omega}$. 
Now, fix $s\in 2^{<\omega}$ and let us check that $A_s\in \I^+$.  Since $H$ has empty interior, there exists $x^\star \in [s]\setminus H$. 
Since $x^\star \notin H$, it follows by \eqref{eq:inclusionIschemeequation} that $f(x^\star)\in P$, hence there is $D\in\I^+$ such that the subsequence $(y_i: i\in D)$ converges to $f(x^\star)$. 
Moreover, $x^\star \in [s]$ implies $f(x^\star) \in  B(f[[s]], r_{|s|})$. Since the latter set is open, we get $y_i\in  B(f[[s]], r_{|s|})$ for all sufficiently large $i\in D$. Thus $D\setminus F \subseteq A_s$ for some finite set $F\subseteq \omega$, which implies that $A_s\in \I^+$. 

To complete the proof, we need to show that $B_\I(\mathcal{A})=H$. With the same reasoning as above, if $x\in 2^\omega\setminus H$ then there exists $D_x \in \I^+$ such that for every $n\in \omega$ we have $y_i \in B(f(x),r_n)$ for all large $i \in D_x$. In particular, for every $n \in \omega$, we have 
$$
y_i\in B(f[[x\restriction n]],r_n)
$$
for all large $i \in D_x$. Therefore $x\notin B_\I(\mathcal{A})$, which proves that $B_\I(\mathcal{A})\subseteq H$. 

Conversely, suppose that $x\in 2^\omega \setminus B_\I(\mathcal{A})$. Then there is $D\in \I^+$ such that 
$D\setminus A_{x\restriction n}$ is finite for every $n\in \omega$.
Consequently, $y_i\in B(f[[x\restriction n]],r_{n})$    
for all but finitely many $i\in D$.
Since $\lim_n r_n=0$, we obtain that 
$$\bigcap_{n\in \omega} B(f[[x\restriction n]],r_{n}) = \{f(x)\},$$ 
hence $f(x)$ is the limit of the subsequence $(y_i: i\in D)$.
Thus $f(x) \in \Lambda_{ y }(\I) = P$. It follows by \eqref{eq:inclusionIschemeequation} that $x\notin  H$, which proves the converse inclusion $H\subseteq B_\I(\mathcal{A})$.

\ref{item:2Ischeme} Set $H=f^{-1}[X\setminus P]$. Since $f^{-1}[P]$ is dense, $H$ has empty interior. In addition, both inclusions \eqref{eq:inclusionIschemeequation} hold with equality. Hence the claim follows by item \ref{item:1Ischeme}.

\ref{item:3Ischeme} Set $H=f^{-1}[C\setminus P]$, where $f: 2^\omega \to C$ is a homeomorphism. Since $C$ is relatively dense in $C$, $H$ has empty interior. Moreover, $f[H]=C\setminus P$ and $f[2^\omega\setminus P]=C\cap P$, hence inclusions \eqref{eq:inclusionIschemeequation} hold. The claim follows by item \ref{item:1Ischeme}. 
\end{proof}

Let us define now four classes of ideals through the families $B_\I(\mathcal{A})$ and $C_\I(\mathcal{A})$ introduced in Definition \ref{defi:BACA}. Recall that $\Q(2^\omega)=\{x\in 2^\omega: \exists n_0\in \omega \, \forall n\ge n_0\, (x_n=0)\}$.
\begin{definition}\label{def:propertiesIschemesclasses}
Let $\I$ be an ideal on $\omega$. We write:
\begin{enumerate}[label={\rm (\roman{*})}] 
    \item $\I\in P(\Pi^0_1)$ if there is an $\I$-scheme $\mathcal{A}$ with $B_\I(\mathcal{A})=\emptyset$;
    \item $\I\in P(\Sigma^0_2)$ if there is an $\I$-scheme $\mathcal{A}$ with $B_\I(\mathcal{A})=\{0^\infty\}$;
    \item $\I\in P(\Pi^0_3)$ if there is an $\I$-scheme $\mathcal{A}$ with $B_\I(\mathcal{A})=\Q(2^\omega)$;
    
    \item $\I\in P^?(\Sigma^1_1)$ if there is an $\I$-scheme $\mathcal{A}$ with $B_\I(\mathcal{A})=C_\I(\mathcal{A})=\Q(2^\omega)$.
\end{enumerate}
\end{definition} 

We anticipate here that the reason for the notation $P^?(\Sigma^1_1)$ is that, if $\I$ belongs to this class and $X$ is sufficiently nice, then $\Sigma^1_1 \subseteq \mathscr{L}(\I)$, see Theorem \ref{thm:analytic} below; on the other hand, we do not know whether the converse holds, hence  we leave as an open question whether $\I \in P^?(\Sigma^1_1)$ is equivalent to the inclusion $\Sigma^1_1 \subseteq \mathscr{L}(\I)$.

We start below characterizing the first three classes of Definition \ref{def:propertiesIschemesclasses}, and proving that each class is contained into the other. (Hereafter, the addition of two sequences in $2^\omega$ or in $2^{<\omega}$ is meant coordinate-wise modulo $2$, provided they have the same lenght.)
\begin{proposition}
\label{prop:properties-of-P-like}
Let $\I$ be an ideal on $\omega$. Then:
\begin{enumerate}[label={\rm (\roman{*})}] 

\item $\I\in P(\Pi^0_1)$ if and only if there is a full  $\I$-scheme $\cA$ with $B_\I(\cA)=\emptyset$;
\label{prop:properties-of-B-empty-for-Ischeme:full}

\item $\I\in P(\Sigma^0_2)$ if and only if there is an $\I$-scheme $\mathcal{A}$ such that  $|B_\I(\mathcal{A})|=1$;\label{prop:properties-of-P-like:Sigma-2}

\item $\I\in P(\Pi^0_3)$ if and only if there is an $\I$-scheme $\mathcal{A}$ such that  $B_\I(\mathcal{A})$ is countable and dense; 
\label{prop:properties-of-P-like:Pi-3} 

\item $P^?(\Sigma^1_1)\subseteq P(\Pi^0_3)\subseteq P(\Sigma^0_2)\subseteq P(\Pi^0_1)$. 
\label{prop:properties-of-P-like:implications}
\end{enumerate}
\end{proposition}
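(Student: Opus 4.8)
The plan is to isolate two \emph{reparametrization} operations on $\I$-schemes that transport the set $B_\I(\cA)$ along maps of $2^\omega$, and then to feed each of the four items the appropriate map. Throughout I will use two elementary facts. First, for every $\I$-scheme $\{A_s\}$, if $u,v\in 2^{<\omega}$ are incompatible then $A_u\cap A_v=\emptyset$ (split at the longest common initial segment of $u,v$ and use disjointness of the two siblings there). Second, the \emph{monotonicity principle}: if $\{A_s\}$ and $\{A'_s\}$ are $\I$-schemes with $A_s\subseteq A'_s$ for all $s$, then $B_\I(\{A'_s\})\subseteq B_\I(\{A_s\})$, since an $\I$-positive set almost contained in all $A_{x\restriction n}$ is also almost contained in all $A'_{x\restriction n}$.

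The \emph{first reparametrization} is a subtree restriction. Given a monotone map $\theta\colon 2^{<\omega}\to 2^{<\omega}$ with $\theta(s^\frown 0)\perp\theta(s^\frown 1)$, with $\theta(s)\sqsubseteq\theta(s^\frown i)$, and with $|\theta(s)|\to\infty$ along each branch, the family $\cA^\theta=\{A_{\theta(s)}\}$ is again an $\I$-scheme, and I claim $B_\I(\cA^\theta)=\widehat\theta^{-1}[B_\I(\cA)]$, where $\widehat\theta(x)=\bigcup_n\theta(x\restriction n)$ is the induced continuous injection. The key is that $(A_{\theta(x\restriction n)})_n$ is a cofinal subsequence of the decreasing sequence $(A_{\widehat\theta(x)\restriction m})_m$, so the two branches admit an $\I$-positive pseudo-intersection simultaneously. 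The \emph{second reparametrization} is along a homeomorphism $\Phi\colon 2^\omega\to 2^\omega$: writing $\mathcal{F}_s$ for the finite set of $\sqsubseteq$-minimal $t$ with $[t]\subseteq\Phi[[s]]$ (so that $\Phi[[s]]=\bigsqcup_{t\in\mathcal{F}_s}[t]$), I set $A^\Phi_s=\bigcup_{t\in\mathcal{F}_s}A_t$. Injectivity of $\Phi$ gives disjointness of siblings (via the incompatibility fact), and the fact that $\mathcal{F}_{s^\frown 0}\cup\mathcal{F}_{s^\frown 1}$ refines $\mathcal{F}_s$ gives the nesting, so $\cA^\Phi$ is an $\I$-scheme; a continuity argument then yields $B_\I(\cA^\Phi)=\Phi^{-1}[B_\I(\cA)]$ with the same $\I$-positive witness on both sides.

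With these tools the items follow. For \ref{prop:properties-of-B-empty-for-Ischeme:full}, starting from a scheme with $B_\I(\cA)=\emptyset$ I would make it full by pushing the missing mass $\omega\setminus A_\emptyset$ together with each remainder $A_s\setminus(A_{s^\frown 0}\cup A_{s^\frown 1})$ down the constant-$0$ branch, namely $A'_\emptyset=\omega$, $A'_{s^\frown 0}=A_{s^\frown 0}\cup(A'_s\setminus A_s)\cup(A_s\setminus(A_{s^\frown 0}\cup A_{s^\frown 1}))$ and $A'_{s^\frown 1}=A_{s^\frown 1}$; this gives a full scheme with $A'_s\supseteq A_s$, so by the monotonicity principle $B_\I$ stays empty. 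For \ref{prop:properties-of-P-like:Sigma-2}, one direction is trivial, and if $B_\I(\cA)=\{p\}$ I apply the first reparametrization to the length-preserving swap map $\theta(s)=s+(p\restriction|s|)$, whose induced homeomorphism is $x\mapsto x+p$, so that $B_\I=\{x:x+p=p\}=\{0^\infty\}$. For \ref{prop:properties-of-P-like:Pi-3}, one direction is immediate since $\Q(2^\omega)$ is countable and dense; conversely, I invoke countable dense homogeneity of $2^\omega$ to obtain a homeomorphism $\Phi$ with $\Phi[\Q(2^\omega)]=B_\I(\cA)$ and apply the second reparametrization, producing a scheme with $B_\I=\Phi^{-1}[B_\I(\cA)]=\Q(2^\omega)$.

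Finally, for the chain \ref{prop:properties-of-P-like:implications} the inclusion $P^?(\Sigma^1_1)\subseteq P(\Pi^0_3)$ is definitional. The other two I realize through the first reparametrization, using subtrees rather than automorphisms: from a scheme with $B_\I=\Q(2^\omega)$ I restrict along a $\theta$ whose image is a perfect subtree meeting $\Q(2^\omega)$ only in $0^\infty$ (keep the $0$-branch, and after each first $1$ insert an extra $1$ at every subsequent level, so that all other branches carry infinitely many ones), obtaining $B_\I=\{0^\infty\}$ and hence $\I\in P(\Sigma^0_2)$; and from a scheme with $B_\I=\{0^\infty\}$ I simply restrict below the node $1$ (taking $\theta(s)=1^\frown s$), which gives $B_\I=\emptyset$ and hence $\I\in P(\Pi^0_1)$. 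The main obstacle is the second reparametrization: one must check both that the union-of-cells family $A^\Phi_s=\bigcup_{t\in\mathcal{F}_s}A_t$ is genuinely an $\I$-scheme and, crucially, that $B_\I(\cA^\Phi)=\Phi^{-1}[B_\I(\cA)]$. This is exactly the place where a homeomorphism (not a mere continuous map) is needed, since both the existence of a subcylinder of $\Phi[[x\restriction n]]$ around $\Phi(x)$ and the squeeze of $\Phi[[x\restriction n]]$ between cylinders of the form $[\Phi(x)\restriction m]$ depend on $\Phi[[s]]$ being clopen.
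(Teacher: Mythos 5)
Your proof is correct. Items \ref{prop:properties-of-B-empty-for-Ischeme:full}, \ref{prop:properties-of-P-like:Sigma-2} and \ref{prop:properties-of-P-like:implications} follow essentially the paper's own route: your ``fill the $0$-child'' construction $A'_{s^\frown 0}=A'_s\setminus A_{s^\frown 1}$ is literally the paper's full scheme, your swap map $s\mapsto s+(x^\star\restriction|s|)$ is the paper's translation, and your two subtree restrictions for the chain $P(\Pi^0_3)\subseteq P(\Sigma^0_2)\subseteq P(\Pi^0_1)$ (doubling levels after the first $1$, and passing below the node $1$) coincide with the paper's maps $t_s$ and $\{A_{(1)^\frown t}\}$; your ``monotonicity principle'' and ``incompatibility fact'' are the right elementary inputs and are easily verified. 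Where you genuinely diverge is item \ref{prop:properties-of-P-like:Pi-3}: the paper builds the scheme with $B_\I=\Q(2^\omega)$ by an explicit recursion choosing, at each node ending in $1$, the least enumerated point $x^{k_s}$ of $B_\I(\cA)$ lying in a prescribed cylinder and following it along the $0^n$-extensions, whereas you invoke countable dense homogeneity of $2^\omega$ to get a homeomorphism $\Phi$ carrying $\Q(2^\omega)$ onto $B_\I(\cA)$ and transport the scheme via the union-of-cells family $A^\Phi_s=\bigcup\{A_t:[t]\subseteq\Phi[[s]],\ t\ \sqsubseteq\text{-minimal}\}$. Your identity $B_\I(\cA^\Phi)=\Phi^{-1}[B_\I(\cA)]$ does hold, by exactly the two facts you flag: for each $x$ and $m$ there is $n$ with $\Phi[[x\restriction n]]\subseteq[\Phi(x)\restriction m]$ (so $A^\Phi_{x\restriction n}\subseteq A_{\Phi(x)\restriction m}$), and conversely each $\Phi[[x\restriction n]]$ contains a cylinder $[\Phi(x)\restriction m_n]$ around $\Phi(x)$ (so $A_{\Phi(x)\restriction m_n}\subseteq A^\Phi_{x\restriction n}$); the same $\I$-positive witness then serves both branches. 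Your route costs one extra lemma but is more modular and avoids the bookkeeping of the paper's recursion; it is also consonant with how the paper itself uses countable dense homogeneity later (in the proof of Theorem \ref{thm:F_sigmadelta-are-Lambda}), where the analogous transport is delegated to Lemma \ref{lem:I-scheme-with-a-given-B}\ref{item:2Ischeme} applied to a sequence realizing $P\in\mathscr{L}(\I)$ rather than proved combinatorially for schemes as you do.
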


\begin{proof}
\ref{prop:properties-of-B-empty-for-Ischeme:full} 
The \textsc{If part} is obvious, so let us show the \textsc{Only If part}.
Let 
$\mathcal{A}=\{A_s:s\in 2^{<\omega}\}$
be an $\I$-scheme with $B_{\I}(\cA)=\emptyset$.
Define $\cB=\{B_s:s\in 2^{<\omega}\}$ recursively as it follows: 
$B_\emptyset = \omega$, $B_s=A_s$
for every nonempty $s\in 2^{<\omega}$ ending with $1$, and 
$B_{s^\frown 0} = B_s\setminus B_{s^\frown 1}=B_s\setminus A_{s^\frown 1}$
for every $s\in 2^{<\omega}$. 
Then $\cB$ is a full $\I$-scheme and, in addition, $B_{\I}(\cB)=\emptyset$.

\ref{prop:properties-of-P-like:Sigma-2} 
The \textsc{Only If part} is obvious, so let us show the \textsc{If part}. 
For, we assume that $\cA=\{A_s:s\in 2^{<\omega}\}$ is an $\I$-scheme with $B_\I(\cA)=\{x^\star\}$, for some $x^\star \in 2^\omega$. Then define $B_s = A_{s+x^\star\restriction |s|}$ for each $s\in 2^{<\omega}$. Then $\cB=\{B_s:s\in 2^{<\omega}\}$ is an $\I$-scheme with $B_\I(\cB)=\{0^\infty\}$.

\ref{prop:properties-of-P-like:Pi-3}
The \textsc{Only If part} is obvious, so let us show the \textsc{If part}. Let $\mathcal{A}=\{A_s:s\in 2^{<\omega}\}$ be an $\I$-scheme with countable and dense $B_\I(\mathcal{A})$ and let $\{x^n:n\in\omega\}$ be an enumeration without repetitions of $B_\I(\mathcal{A})$. At this point, define the map $g:2^{<\omega}\setminus\{\emptyset\}\to 2^{<\omega}$ by 
$$
g(s)=(s_0,s_1,\ldots,s_{|s|-2},s_{|s|-1}+1)
$$
for each $s\in 2^{<\omega}\setminus\{\emptyset\}$, so that $g(s)$ is actually the sequence $s$ with the last digit changed. 
Also, for each $s\in 2^{<\omega}$, define recursively $t_s\in 2^{<\omega}$ so that: 
\begin{enumerate}[label=(\alph*)]
    \item $t_\emptyset=\emptyset$;
    \item $t_{\,0^n}=x^0\restriction n$ for all $n\geq 1$;
    \item if $s\in 2^{<\omega}\setminus\{\emptyset\}$ ends with $1$ (i.e., $s_{|s|-1}=1$) and $t_{g(s)}$ is already defined, let $k_s\in\omega$ be minimal integer such that $x^{k_s}\in [g(t_{g(s)})]$ and put 
    $$
    t_{s^\frown 0^n}=x^{k_s}\restriction (|s|+n)
    $$
\end{enumerate}
for all $n\in\omega$ (in particular, $t_s=x^{k_s}\restriction |s|$). 
It follows by construction that $\mathcal{B}=\{A_{t_s}:s\in 2^{<\omega}\}$ is an $\I$-scheme. In addition, it is not difficult to see that $B_\I(\mathcal{B})=\mathbb{Q}(2^\omega)$.

\ref{prop:properties-of-P-like:implications}
First, let us show the inclusion $P(\Sigma^0_2)\subseteq P(\Pi^0_1)$. For, pick an ideal $\I\in P(\Sigma^0_2)$ and an $\I$-scheme $\mathcal{A}=\{A_s:s\in 2^{<\omega}\}$ with $B_\I(\mathcal{A})=\{0^\infty\}$. Then $\mathcal{B}=\{A_{(1)^\frown t}: t \in 2^{<\omega}\}$ is an $\I$-scheme with $B_\I(\mathcal{B})=\emptyset$. Hence $\I\in P(\Pi^0_1)$. 

Second, let us prove the inclusion $P(\Pi^0_3)\subseteq P(\Sigma^0_2)$. 
Pick an ideal $\I \in P(\Pi^0_3)$ and let $\mathcal{A}=\{A_s:s\in 2^{<\omega}\}$ be an $\I$-scheme with $B_\I(\mathcal{A})=\mathbb{Q}(2^\omega)$.
For each sequence $s\in 2^{<\omega}$, define a sequence $t_s\in 2^{<\omega}$ so that: 
\begin{enumerate}[label=(\alph*)]
\item
$t_\emptyset=\emptyset$; 
\item $t_{0^n}=0^{2n}$ for all $n\ge 1$;
\item for each nonempty $s \in 2^{<\omega}\setminus \{0^n: n\ge 1\}$, set 
$$t_{s} = {t_{s\restriction (|s|-1)}}^\frown (s_{|s|-1},1).$$
\end{enumerate}
Note that $|t_s|=2|s|$ for all $s \in 2^{<\omega}$ and that if $s_i=1$ and $t_s=(a_0,a_1,\ldots,a_{k})$ then $a_j=1$ for all odd $j\in [i,k]$ (so that $k=2|s|-1$).  
At this point, define $C_s = A_{t_s}$ for each $s\in 2^{<\omega}$.
It is not difficult to see that $\cC=\{C_s:s\in 2^{<\omega}\}$ is an $\I$-scheme.
We need also to show $B_\I(\cC)=\{0^\infty\}$. 
Since $0^\infty\in B_\I(\cA)$ and $C_{0^n}=A_{0^{2n}}$ for each $n\ge 1$, we obtain that $0^\infty\in B_\I(\cC)$. 
Next, pick $x\in 2^\omega \setminus\{ 0^\infty\}$ and let $y\in 2^\omega$ be the unique sequence such that $y\restriction 2n= t_{x\restriction n}$ for all $n \in \omega$. 
Then $y \in 2^\omega\setminus\Q(2^\omega)$, so $y\notin B_\I(\cA)$.
Thus there is  $A\in \I^+$ such that $A\setminus A_{y\restriction n}$ is finite for each $n\in \omega$.
Since $C_{x\restriction n} = A_{t_{x\restriction n}} = A_{y\restriction 2n}$ for each $n\in \omega$, we obtain that $A\setminus C_{x\restriction n}$ is finite for each $n\in \omega$. Therefore $x\notin B_\I(\cC)$.

Lastly, the inclusion $P^?(\Sigma^1_1)\subseteq P(\Pi^0_3)$ is obvious. 
\end{proof}

In the following result, we study some properties of the class $P(\Pi^0_1)$:

\begin{proposition}
\label{prop:P-for-closed2}
Let $\I, \J$ be ideals on $\omega$. Then the following hold:
\begin{enumerate}[label={\rm (\roman{*})}] 
\item \label{item:1Ppi02} If $\I$ is a $P^+$-ideal and there exists an $\I$-scheme, then $\I \in P(\Pi^0_1)$;

\item \label{item:1Ppi07} If $\I \in P(\Pi^0_1)$ then there exists an $\I$-scheme;

\item \label{item:1Ppi03} If $\I\subseteq \J$ and $\J \in P(\Pi^0_1),$ then $\I \in P(\Pi^0_1)$;

\item \label{item:1Ppi04} If $\J\le_{\mathrm{RB}} \I$ and $\J \in P(\Pi^0_1),$ then $\I \in P(\Pi^0_1)$;

\item \label{item:1Ppi01} If $\I$ has the Baire property, then $\I \in P(\Pi^0_1)$;

\item \label{item:1Ppi05} If $\I$ is maximal, then the ideals $\J_1=\emptyset\otimes\I$ and $\J_2=\Fin\otimes \I$ are not in $P(\Pi^0_1)$, even though there are a $\J_1$-scheme and a $\J_2$-scheme;

\item \label{item:1Ppi06} Under the Continuum Hypothesis, there exists an ideal in $P(\Pi^0_1)$ without the Baire property.
\end{enumerate}
\end{proposition}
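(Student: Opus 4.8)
The plan is to read membership in $P(\Pi^0_1)$ as a statement about branches: for an $\I$-scheme $\cA=\{A_s:s\in 2^{<\omega}\}$ one has $x\notin B_\I(\cA)$ if and only if there is $C\in\I^+$ with $C\setminus A_{x\restriction n}\in\Fin$ for every $n$, i.e.\ $C$ is an $\I$-positive pseudo-intersection along the branch $x$. Thus $\I\in P(\Pi^0_1)$ exactly means: there is an $\I$-scheme along every branch of which sits an $\I$-positive pseudo-intersection. I will use this reformulation throughout.

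Items \ref{item:1Ppi02}--\ref{item:1Ppi01} are then direct. For \ref{item:1Ppi02}, take any $\I$-scheme $\cA$; for each $x\in 2^\omega$ the sets $(A_{x\restriction n})_n$ form a decreasing sequence in $\I^+$, so the $P^+$-property yields $C\in\I^+$ with $C\setminus A_{x\restriction n}$ finite for all $n$, whence $B_\I(\cA)=\emptyset$. Item \ref{item:1Ppi07} is immediate from the definition of the class. For \ref{item:1Ppi03}, note $\I\subseteq\J$ gives $\J^+\subseteq\I^+$, so a witnessing $\J$-scheme is automatically an $\I$-scheme and, comparing the two defining conditions over the smaller family $\J^+$, one gets $B_\I(\cA)\subseteq B_\J(\cA)=\emptyset$. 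For \ref{item:1Ppi04}, let $\phi$ be the finite-to-one Rudin--Blass map with $S\in\J\iff\phi^{-1}[S]\in\I$, and set $A'_s=\phi^{-1}[A_s]$ for a witnessing $\J$-scheme; preimages preserve the Boolean scheme conditions and, by the defining property of $\phi$, preserve positivity, while finite-to-one-ness sends finite sets to finite sets, so each branch's $\J$-pseudo-intersection $C$ produces the $\I$-pseudo-intersection $\phi^{-1}[C]$ and hence $B_\I(\cA')=\emptyset$. Finally \ref{item:1Ppi01} combines the two preceding items: by Theorem~\ref{thm:baire} the Baire property is equivalent to $\Fin\le_{\mathrm{RB}}\I$, and $\Fin$ is $P^+$ (Example~\ref{example:P-like}) and admits a full scheme (Example~\ref{example:full-Fin-scheme}), so $\Fin\in P(\Pi^0_1)$ by \ref{item:1Ppi02} and therefore $\I\in P(\Pi^0_1)$ by \ref{item:1Ppi04}.

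For \ref{item:1Ppi05}, the existence of a $\J_1$- and a $\J_2$-scheme follows from Proposition~\ref{prop:Ischeme-characterizations}, since neither $\emptyset\otimes\I$ nor $\Fin\otimes\I$ is a Fubini sum of finitely many maximal ideals (both carry infinitely many pairwise disjoint positive sets built from the columns $\{n\}\times\omega$). The substance is to show $B_{\J}(\cA)\neq\emptyset$ for every scheme. Write $\mathcal U=\I^\star$, a free ultrafilter because $\I$ is maximal, so that $\I^+=\mathcal U$, and for $S\subseteq\omega^2$ let $M_S=\{m:S^{(m)}\in\mathcal U\}$ record its ultrafilter-columns. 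The scheme axioms give $M_{A_{s^\frown i}}\subseteq M_{A_s}$ and $M_{A_{s^\frown 0}}\cap M_{A_{s^\frown 1}}=\emptyset$ (disjoint columns cannot both lie in $\mathcal U$), while $M_{A_s}\neq\emptyset$ because $A_s$ is $\J$-positive. Processing $m=0,1,2,\dots$, I build a branch $x$ by always extending the current node through the child whose $M$-set omits $m$ (possible since at most one child's $M$-set contains $m$); as $M$-sets decrease along the branch, each $m$ stays omitted, so $\bigcap_n M_{A_{x\restriction n}}=\emptyset$. If some $C\in\J^+$ were a pseudo-intersection along $x$, then for each $m\in M_C$ the inclusion $C^{(m)}\subseteq^* A_{x\restriction n}^{(m)}$ together with closure of $\mathcal U$ under finite changes and supersets forces $m\in M_{A_{x\restriction n}}$ for all $n$, i.e.\ $M_C\subseteq\bigcap_n M_{A_{x\restriction n}}=\emptyset$; but $M_C\neq\emptyset$ for positive $C$, a contradiction. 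Hence $x\in B_\J(\cA)$ and $\J\notin P(\Pi^0_1)$.

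For \ref{item:1Ppi06}, I would reduce to \ref{item:1Ppi02}: it suffices to construct, under CH, an ideal $\I$ that is $P^+$, admits an infinite partition into positive sets (hence a scheme, by Proposition~\ref{prop:Ischeme-characterizations}), and is nonmeager (equivalently, lacks the Baire property, by Theorem~\ref{thm:baire}). Note this cannot be obtained from a maximal $P^+$-ideal nor from Fubini combinations of maximal ideals, since those either fail to admit a scheme or become meager by Proposition~\ref{prop:BP-of-sumas-and-products}\ref{prop:BP-of-sumas-and-products:product-I-times-empty}. Instead I would fix a partition $\omega=\bigsqcup_k D_k$ into infinite sets to be kept positive and build $\I=\bigcup_{\alpha<\omega_1}\I_\alpha$ as an increasing union of countably generated ideals, together with a growing list of sets committed to the coideal; using CH to enumerate in order type $\omega_1$ all decreasing sequences of $\I$-positive sets and all partitions of $\omega$ into finite intervals, at successor stages I alternately adjoin a positive pseudo-intersection of a given decreasing positive sequence (securing $P^+$) and adjoin to $\I$ a set meeting cofinitely many blocks of a given interval partition (securing nonmeagerness via Talagrand's criterion, cf.\ Theorem~\ref{thm:baire}), maintaining the invariant that no committed-positive set, and in particular no $D_k$, is ever absorbed by the ideal. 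The main obstacle is precisely this simultaneous bookkeeping: one must verify at each stage that a meagerness-killing set can be chosen to avoid the finitely many committed-positive generators so that $P^+$-commitments, nonmeagerness, and the standing disjoint positive family remain jointly consistent. Items \ref{item:1Ppi02}--\ref{item:1Ppi01} are routine and \ref{item:1Ppi05} is a clean ultrafilter-column diagonalization, so it is in this genericity-style construction that the real work lies.
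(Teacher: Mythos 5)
Items \ref{item:1Ppi02}--\ref{item:1Ppi01} of your proposal are correct and match the paper's proofs essentially line for line. Your proof of \ref{item:1Ppi05} is also correct and takes a slightly different route at the key step: the paper picks a branch $x$ with $\bigcap_n M_{A_{x\restriction n}}=\emptyset$ (in your notation) by a cardinality argument, whereas you construct it by an explicit diagonalization over $m$; both work, and the derivation of a contradiction from a positive pseudo-intersection along $x$ is the same. One small slip there: the single columns $\{n\}\times\omega$ witness the existence of a $\J_1$-scheme but \emph{not} of a $\J_2$-scheme, since $\{n\}\times\omega\in\Fin\otimes\I$; for $\J_2$ take instead $E_k\times\omega$ for an infinite partition $\{E_k: k\in\omega\}$ of $\omega$ into infinite sets, or note that $\J_2$ is nowhere maximal and invoke Proposition \ref{prop:Ischeme-characterizations}.

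The genuine gap is in \ref{item:1Ppi06}. You correctly identify the shape of the argument (a length-$\omega_1$ induction under CH alternating between ``keep certain sets positive'' and ``kill the Baire property''), but you explicitly leave unresolved the one step that carries all the content, namely that at each stage a Baire-killing set can be chosen compatibly with the sets already committed to the coideal. The paper resolves exactly this with a concrete diagonalization: enumerating the committed positive sets as $\{B_i: i<\omega\}$ and the current ideal generators as $\{D_j: j<\omega\}$, it picks distinct points $y_{i,k}\in f_\alpha[B_i\setminus\bigcup\{D_j: j<k\}]$ and lets $C_\alpha$ consist of fresh points $z_{i,k}$ avoiding all the $y_{i,k}$, which guarantees that no $B_i$ is absorbed once $f_\alpha^{-1}[C_\alpha]$ is thrown into the ideal. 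Moreover, your target is stronger than necessary and would force a second layer of bookkeeping: you aim for a globally $P^+$ nonmeager ideal admitting a scheme (which cannot be a finite Fubini sum of maximal ideals, so the standard CH examples of nonmeager $P^+$-ideals such as $P$-points are of no use), whereas the paper only arranges $B_\I(\cA)=\emptyset$ for one fixed $\Fin$-scheme $\cA$: at stage $\alpha$ it uses the $P^+$-ness of the \emph{countably generated} approximation $\I_{\alpha+1}$ to find a positive pseudo-intersection $A_\alpha$ along the $\alpha$-th branch and commits $A_\alpha$ to the coideal, and the final ideal $\I_{\continuum}$ need not be $P^+$ at all. Without either this weaker target or an explicit choice of the sets $C_\alpha$, your sketch does not yet constitute a proof of \ref{item:1Ppi06}.
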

\begin{proof}
    \ref{item:1Ppi02} Suppose that $\I$ is a $P^+$-ideal and that there exists an $\I$-scheme $\mathcal{A}=\{A_s: s \in 2^{<\omega}\}$. Fix $x \in 2^\omega$ and note that $(A_{x\restriction n}: n \in\omega)$ is a decreasing sequence of $\I$-positive sets. Hence there exists $A \in \I^+$ such that $A\setminus A_{x\restriction n}$ is finite for all $n \in \omega$. Hence $x\notin B_\I(\mathcal{A})$. This implies that $B_\I(\mathcal{A})=\emptyset$, i.e., $\I \in P(\Pi^0_1)$.

    \ref{item:1Ppi07} This is obvious.

    \ref{item:1Ppi03} Let $\mathcal{A}=\{A_s: s \in 2^{<\omega}\}$ be a $\J$-scheme such that $B_\J(\mathcal{A})=\emptyset$. Since $\I\subseteq\J$, $\mathcal{A}$ is also an $\I$-scheme and $B_\I(\mathcal{A})\subseteq B_\J(\mathcal{A})=\emptyset$.

    \ref{item:1Ppi04} 
    Since $\J\leq_{RB}\I$, there is a finite-to-one function $f:\omega\to\omega$ such that $A\in \J$ if and only if $f^{-1}[A]\in \I$ for every $A\subseteq\omega$. Let $\cB=\{B_s:s\in 2^{<\omega}\}$ be a $\J$-scheme with $B_\J(\cB)=\emptyset$. Then it is not difficult to see that $\cA=\{f^{-1}[B_s] :s\in 2^{<\omega}\}$ is an $\I$-scheme with $B_\I(\cA)=\emptyset$.

    \ref{item:1Ppi01} Since $\mathrm{Fin}$ is a $P^+$-ideal which admits an $\I$-scheme (see Example \ref{example:full-Fin-scheme}), then $\mathrm{Fin} \in P(\Pi^0_1)$ by item \ref{item:1Ppi02}. Now, the Baire property of $\I$ is equivalent to $\mathrm{Fin}\le_{\mathrm{RB}} \I$ by Theorem \ref{thm:baire}. The claim follows by item \ref{item:1Ppi04}.

    \ref{item:1Ppi05} Define $A_n=\{n\}\times\omega$ for all $n \in \omega$ and note that $\{A_n: n \in \omega\}$ is a partition of $\omega^2$ into $\J_1$-positive sets, so there is an $\J_1$-scheme by Proposition~\ref{prop:Ischeme-characterizations}. 
    
    Now, let us show that $\J_1 \notin P(\Pi^1_0)$. Suppose for the sake of contradiction that there exists a $\J_1$-scheme $\cA=\{A_s:s\in 2^{<\omega}\}\subseteq \J_1^+$ such that $B_{\J_1}(\cA)=\emptyset$, and define 
    $$
    B_s=\{n \in \omega: \{k \in \omega: (n,k) \in A_s\} \in \I^+\}
    $$
  for each $ s \in 2^{<\omega}$.
  We claim that $\mathcal{B}=\{B_s: s \in 2^{<\omega}\}$ is a $\mathrm{Fin}$-scheme. Indeed, since $\I$ is maximal and $A_{s^\frown 0}\cap A_{s^\frown 1}=\emptyset$, we obtain $B_{s^\frown 0}\cap B_{s^\frown 1}=\emptyset$.
Since $A_s\in \J_1^+$, we obtain $B_s\neq\emptyset$ for each $s\in 2^{<\omega}$.
Moreover, since $B_t\subseteq B_s$ for all $t\subseteq s$, we obtain that each $B_s$ is infinite. 
Note that we can fix $x\in 2^\omega$ such that $\bigcap_n B_{x\restriction n}=\emptyset$: indeed, in the opposite, we could find continuum many elements in $\omega$. 

At this point, $B_{\J_1}(\cA)=\emptyset$ implies that there is $A\notin \J_1$ such that $A\setminus A_{x\restriction n}$ is finite for each $n\in \omega$.
Since $A\in \J_1^+$, there is $k_0\in \omega$ such that $S=\{n \in \omega: (k_0,n) \in A\}\in \I^+$.
Moreover, $\bigcap_n B_{x\restriction n}=\emptyset$ implies that there exists $n_0\in \omega$ such that $k_0\notin B_{x\restriction n_0}$, that is, 
$$
T=\{n \in \omega: (k_0,n) \in A_{x\restriction n_0}\}\in \I.
$$
It follows that $S\setminus T\in\I^+$. Therefore 
$A\setminus A_{x\restriction n_0}\in \J_1^+$; in particular, the latter set is infinite, which is the claimed contradiction.

As a consequence, we cover also the case of the ideal $\J_2$: indeed, $\J_1\subseteq \J_2$ and $\J_1 \notin P(\Pi^0_1)$, hence $\J_2 \notin P(\Pi^0_1)$ by item \ref{item:1Ppi03}; in addition, $\J_2$ is nowhere maximal (cf. Remark \ref{label:Finxmaximal}), hence it admits a $\J_2$-scheme by Proposition \ref{prop:Ischeme-characterizations}.

    \ref{item:1Ppi06} 
     Let  $\cA=\{A_s:s\in 2^{<\omega}\}$ be a $\Fin$-scheme (for instance, the one constructed in Example~\ref{example:full-Fin-scheme}). 
     Let also $\mathscr{F}$ be the family of finite-to-one functions $f : \omega\to \omega$. 
Considering that $\mathscr{F}\subseteq \omega^\omega$ and that $\mathscr{F}$ contains the subset of all functions $f$ with $f(n) \in \{n,n+1\}$ for all $n \in \omega$, 
        it follows that
    $|\mathscr{F}|=\mathfrak{c}$. 
Hence, we can pick enumerations $\{x_\alpha:\alpha<\continuum\}$ 
and
$\{f_\alpha:\alpha<\continuum\}$ of all elements of $2^\omega$
and $\mathscr{F}$, respectively.

We are going to construct by transfinite induction sets $\{A_\alpha: \alpha < \mathfrak{c}\}$ and $\{C_\alpha: \alpha < \mathfrak{c}\}$ such that the following conditions are satisfied for each $\alpha<\mathfrak{c}$:
\begin{enumerate}[label=(\alph*)]
   
    \item  \label{item:P1}
    $A_\alpha$ and $C_\alpha$ are infinite subsets of $\omega$;
    
    \item \label{item:P2}
    $A_\alpha\setminus A_{x_\alpha\restriction n}$ is finite for every $n\in \omega$;
    
    \item \label{item:P3} $\mathcal{I}_\alpha \cap \mathcal{A}_\alpha=\emptyset$, where $\I_\alpha$ stands for the smallest (admissible) ideal on $\omega$ containing $\{f_\beta^{-1}[C_\beta]: \beta<\alpha\}$ and $\cA_\alpha = \cA\cup \{A_\beta:\beta<\alpha\}$ for each $\alpha \le \mathfrak{c}$. 

\end{enumerate}

Assuming for the moment that the above sets are constructed, let us show that the ideal $\I_\continuum$ satisfies the required claim.

First, let us show that $\cA$ is an $\I_\continuum$-scheme. To this aim, since $\cA$ is a $\Fin$-scheme, we only need to show that $A_s\notin \I_\continuum$ for each $s\in 2^{<\omega}$.
Suppose for the sake of contradiction that $A_s\in \I_\continuum$ for some $s\in 2^{<\omega}$.
Then there exists a finite set $F\subseteq \continuum$ such that  $A_s\setminus \bigcup\{f^{-1}_{\beta}[C_{\beta}]:\beta\in F\}$ is finite.
It follows that $\max(F)<\mathfrak{c}$ and $A_s\in \I_{\max(F)+1}$, a contradiction with condition \ref{item:P3}. Notice that, with the same reasoning, we have that 
\begin{equation}\label{eq:contructionCH}
A_\alpha\notin \I_\continuum \text{ for each  $\alpha<\continuum$.}
\end{equation}

Second, let us show that $\I_\continuum \in P(\Pi^0_1)$. 
Fix $\alpha <\continuum$ and observe that $A_\alpha\notin\I_\continuum$ by \eqref{eq:contructionCH} and $A_\alpha\setminus A_{x_\alpha\restriction n}$ is finite for every $n\in \omega$ by condition \ref{item:P2}. Therefore $B_{\I_\continuum}(\cA)=\emptyset$.  

Third, let us show that $\I_\continuum$ does not have the Baire property. Thanks to Theorem \ref{thm:baire}, it is sufficient to show that for every function $f\in \mathscr{F}$ there is some infinite set $C\subseteq \omega$ such that $f^{-1}[C]\in\I_{\mathfrak{c}}$. For, pick $\alpha<\continuum$ and note that $C_\alpha$ is infinite by condition \ref{item:P1}, while $f_\alpha^{-1}[C_\alpha]\in \I_{\alpha+1}\subseteq\I_{\continuum}$ by condition \ref{item:P3}.

Lastly, we show how to construct sets $A_\alpha$ and $C_\alpha$ which satisfy conditions \ref{item:P1}--\ref{item:P3}.  
Suppose that $\alpha<\continuum$ and the sets $A_\beta$ and $C_\beta$ have been constructed for all $\beta<\alpha$. Under CH, it is possible to pick enumerations $\{B_i:i<\omega\}$ and $\{D_j:j<\omega\}$ of $\cA_\alpha$ and $\{f^{-1}_\beta[C_\beta]:\beta<\alpha\}\cup\Fin$, respectively.

Since $B_i\notin\I_\alpha$ by condition \ref{item:P3} and $D_j\in \I_\alpha$ for each $i,j<\omega$, the set 
$B_i\setminus \bigcup\{D_j:j<k\}$ is infinite for every $i,k<\omega$.
The function $f_\alpha$ is finite-to-one, so the set 
$f_\alpha[B_i\setminus \bigcup\{D_j:j<k\}]$ is infinite for every $i,k<\omega$ as well.
Thus, we can inductively pick pairwise distinct elements $y_{i,k}\in f_\alpha[B_i\setminus \bigcup\{D_j:j<k\}]$ and $z_{i,k}\in \omega$.

Define $C_\alpha=\{z_{i,k}:i,k<\omega\}$. 
Then $C_\alpha$ is infinite and we claim that 
$$\I_{\alpha+1}\cap \cA_\alpha=\emptyset.$$
Indeed, suppose for the sake of contradiction that $\I_{\alpha+1}\cap \cA_\alpha\neq \emptyset$.
Since $\I_{\alpha+1}$ is the smallest ideal containing $\{f^{-1}_\alpha[C_\alpha]\}\cup\{D_j:j<\omega\}$, there are $i,k<\omega$ such that 
$B_i\subseteq f^{-1}_\alpha[C_\alpha] \cup \bigcup\{D_j:j<k\}$.
Then 
$f_\alpha[B_i \setminus \bigcup\{D_j:j<k\}]\subseteq C_\alpha$. On the other hand,  $y_{i,k}\in f_\alpha[B_i \setminus \bigcup\{D_j:j<k\}]$ and $y_{i,k}\notin C_\alpha$ since $y_{i,k}\neq z_{j,l}$ for all $i,k,j,l\in \omega$, which is the required contradiction.

Now, we are going to define $A_\alpha$.
Since $\I_{\alpha+1}$ is countably generated (under CH), it is a $P^+$-ideal by Proposition~\ref{prop:Pplus-vs-Pminus+Pvert}.
Taking into account that $(A_{x_\alpha\restriction n}: n\in \omega)$ is a decreasing sequence of $\I_{\alpha+1}$-positive sets (because $\I_{\alpha+1}\cap \cA_\alpha=\emptyset$ and $\cA\subseteq \cA_\alpha$), there exists $A_\alpha\notin\I_{\alpha+1}$ such that $A_\alpha\setminus A_{x_\alpha\restriction n}$ is finite for every $n\in \omega$. 
It follows that $A_\alpha$ is infinite
and 
$\I_{\alpha+1}\cap \cA_{\alpha+1}=\emptyset$. This shows that $A_\alpha$ and $C_\alpha$ satisfy the required conditions \ref{item:P1}--\ref{item:P3}, completing the proof. 
\end{proof}

We summarize in Figure \ref{fig:P-closed} below the relationships related to the class $P(\Pi^0_1)$, Baire property, nowhere maximality, and existence of $\I$-schemes. Notice that all missing arrows are false: for instance, there exists an ideal $\I$ which admits an $\I$-scheme and it is not nowhere maximal (indeed, in the opposite, the Baire property implies $P(\Pi^0_1)$ which implies the existence of an $\I$-scheme which implies nowhere maximality, but we know that it cannot be true by Example \ref{example:Pplus-vs-Pminus+Pvert}\ref{item:MplusFin}).

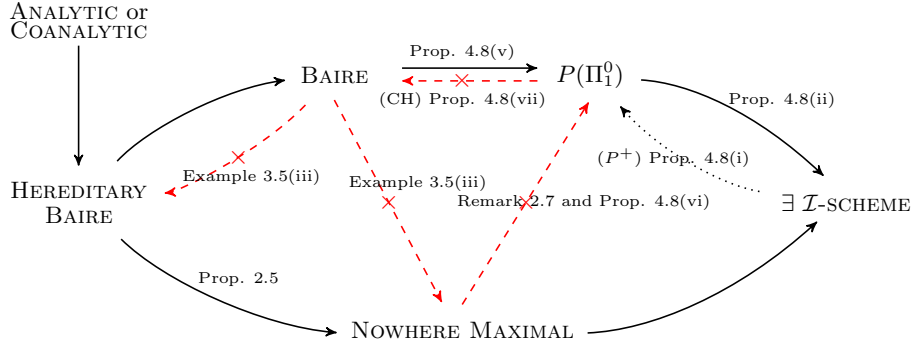
\begin{figure}
\centering
  \begin{tikzpicture}[scale=0.85,->, >=stealth',shorten >=1pt,auto,inner sep=2pt,semithick,bend angle=20]
    \tikzstyle{every state}=[draw=none]
    
    \node[state] (A) at (-6,0){};
    \node[state] (A1) at (-6,0.2){\small \textsc{Hereditary}};
    \node[state] (A2) at (-6,-0.2){\small \textsc{Baire}}; 

    \node[state] (B) at (-2,2){\small \textsc{Baire}};

    \node[state] (C) at (2,2){\small $P(\Pi^0_1)$};

    \node[state] (D) at (6,0){\small $\exists$ $\I$\textsc{-scheme}};

    \node[state] (E) at (0,-2){\small \textsc{Nowhere Maximal}};

    \path    (A) edge [bend left,looseness=.8, shorten <=3mm, shorten >=1mm]       node[above] {} (B);
    \path    (B) edge [bend left,looseness=.8, shorten <=0mm, shorten >=7mm, dashed, red]       node[above] {} (A);
    \node (X1) at (-3.5,.7){\textcolor{red}{$\times$}};
    \node (X11) at (-3.3,.4){\tiny Example \ref{example:Pplus-vs-Pminus+Pvert}\ref{item:MplusFin}};
       
    \path    (-2,2.1) edge [shorten <=9mm, shorten >=7mm]       node[above] {} (2,2.1); 
     \path    (2,1.9) edge [shorten <=7mm, shorten >=9mm, dashed, red]       node[below] 
     {}
     (-2,1.9);
      \node (X2) at (0,1.9){\textcolor{red}{$\times$}};
    \node (X22) at (0,1.6){\tiny (CH) Prop. \ref{prop:P-for-closed2}\ref{item:1Ppi06}};

     \node (X224) at (0,2.35){\tiny Prop. \ref{prop:P-for-closed2}\ref{item:1Ppi01}};

     \node (X223) at (3.3,.7){\tiny ($P^+$) Prop. \ref{prop:P-for-closed2}\ref{item:1Ppi02}};

      \node (X223) at (5,1.6){\tiny Prop. \ref{prop:P-for-closed2}\ref{item:1Ppi07}};

   \path    (A) edge [bend right, looseness=.8, shorten <=3mm, shorten >=1mm]       node[above] {} (E);
    
    \path    (0,-1.6) edge [bend left, looseness=0, dashed, red]       node[right] 
    {}
    (2,1.6);
     \node (X4) at (1,0){\textcolor{red}{$\times$}};
    \node (X44) at (1.9,0){\tiny Remark \ref{label:Finxmaximal} and Prop. \ref{prop:P-for-closed2}\ref{item:1Ppi05}};

\node (X2233) at (-3.5,-1.2){\tiny Prop. \ref{prop:BP-is-nowhere-maximal}};

    \path    (-2,1.6) edge [bend left, looseness=0, dashed, red]       node[above] 
    {}
    (-.3,-1.6) ;
     \node (X3) at (-1.15,0){\textcolor{red}{$\times$}};
    \node (X33) at (-.7,0.3){\tiny Example \ref{example:Pplus-vs-Pminus+Pvert}\ref{item:MplusFin}}; 

    \path    (C) edge [bend left,looseness=.8, shorten <=1mm, shorten >=-5mm]       node[above] {} (D);
    \path    (D) edge [bend left,looseness=.8, shorten <=2mm, shorten >=0mm, dotted]       node[above] {} (C);

    \path    (E) edge [bend right,looseness=.8, shorten <=1mm, shorten >=-5mm]       node[above] {} (D);
    
    \node[state] (W1) at (-6,3){\small \textsc{Analytic} or};
    \node[state] (W2) at (-6,2.7){\small \textsc{Coanalytic}};
    \path (W2) edge [shorten <=-8mm] node[above] {}(A);

  \end{tikzpicture}
  \caption{Relationships between Baire property, nowhere maximality, $P(\Pi^0_1)$, and $\I$-schemes. 
  Dashed arrows represent false implications.}
    \label{fig:P-closed}
\end{figure}

We conclude this section with additional properties of the remaining classes.
\begin{proposition}\label{prop:P^-}
    Let $\I$ be an ideal on $\omega$. Then the following hold:
\begin{enumerate}[label={\rm (\roman{*})}] 

    \item \label{item:1pminus}
    If $\I$ is a $P^+$-ideal, then $\I \notin P(\Sigma^0_2)$;

    \item \label{item:2pminus}  If $\I$ is a $P^+$-ideal with the Baire property, then $\I \in P(\Pi^0_1)\setminus P(\Sigma^0_2)$;

    \item \label{item:3pminus} 
    If $\I$ is a $P^-$-ideal, then $B_{\I}(\mathcal{A})=C_{\I}(\mathcal{A})$ for every $\I$-scheme $\mathcal{A}$;
    
    \item \label{item:4pminus} 
    If $\I$ is a $P^-$-ideal, then $\I \in P(\Pi^0_3)$ if and only if $\I \in P^?(\Sigma^1_1)$.
\end{enumerate}
\end{proposition}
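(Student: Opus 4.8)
The plan is to handle the four items in sequence, since the genuinely new content lies entirely in item \ref{item:3pminus}, and the remaining items follow by combining it with results already established in this section.

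For item \ref{item:1pminus}, I would point out that the computation in the proof of Proposition \ref{prop:P-for-closed2}\ref{item:1Ppi02} in fact yields $B_\I(\mathcal{A})=\emptyset$ for \emph{every} $\I$-scheme $\mathcal{A}$ when $\I$ is $P^+$: fixing $x\in 2^\omega$, the chain $(A_{x\restriction n})_{n\in\omega}$ is a decreasing sequence of $\I$-positive sets, so the $P^+$-property supplies $A\in\I^+$ with $A\setminus A_{x\restriction n}$ finite for all $n$, witnessing $x\notin B_\I(\mathcal{A})$. Hence no $\I$-scheme can satisfy $B_\I(\mathcal{A})=\{0^\infty\}$, and if $\I$ admits no $\I$-scheme at all then $\I\notin P(\Sigma^0_2)$ trivially; either way $\I\notin P(\Sigma^0_2)$. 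Item \ref{item:2pminus} is then immediate by assembling item \ref{item:1pminus} with Proposition \ref{prop:P-for-closed2}\ref{item:1Ppi01}, which gives $\I\in P(\Pi^0_1)$ from the Baire property alone.

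Item \ref{item:3pminus} is the core of the argument. Since the inclusion $C_\I(\mathcal{A})\subseteq B_\I(\mathcal{A})$ always holds, I only need to prove $B_\I(\mathcal{A})\subseteq C_\I(\mathcal{A})$, which I would establish contrapositively. Assume $x\notin C_\I(\mathcal{A})$, so there is $C\in\I^+$ with $C\setminus A_{x\restriction n}\in\I$ for every $n\in\omega$. Set $B_n=C\cap A_{x\restriction n}$. The key verifications are that $B_n=C\setminus(C\setminus A_{x\restriction n})\in\I^+$, that the sequence $(B_n)$ is decreasing, and that $B_n\setminus B_{n+1}\subseteq C\setminus A_{x\restriction n+1}\in\I$. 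These make $(B_n)$ an admissible input for the $P^-$-property, which then produces $D\in\I^+$ with $D\setminus B_n$ finite for all $n$; since $B_n\subseteq A_{x\restriction n}$, this forces $D\setminus A_{x\restriction n}$ finite for all $n$, i.e.\ $x\notin B_\I(\mathcal{A})$.

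Finally, item \ref{item:4pminus} combines item \ref{item:3pminus} with the trivial inclusion $P^?(\Sigma^1_1)\subseteq P(\Pi^0_3)$ recorded in Proposition \ref{prop:properties-of-P-like}\ref{prop:properties-of-P-like:implications}. For the nontrivial direction, if $\mathcal{A}$ witnesses $\I\in P(\Pi^0_3)$, meaning $B_\I(\mathcal{A})=\Q(2^\omega)$, then item \ref{item:3pminus} forces $C_\I(\mathcal{A})=B_\I(\mathcal{A})=\Q(2^\omega)$, so the same scheme $\mathcal{A}$ witnesses $\I\in P^?(\Sigma^1_1)$. The only delicate point in the whole argument is the membership $B_n\setminus B_{n+1}\in\I$ in item \ref{item:3pminus}: this is precisely where the witness $C$ and the hypothesis $C\setminus A_{x\restriction n}\in\I$ enter, and it is a short set-theoretic computation rather than a substantial obstacle.
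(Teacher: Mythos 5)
Your proposal is correct and follows essentially the same route as the paper's own proof: item (i) by re-running the $P^+$ argument from Proposition \ref{prop:P-for-closed2}\ref{item:1Ppi02} on an arbitrary scheme, item (ii) by combining with Proposition \ref{prop:P-for-closed2}\ref{item:1Ppi01}, item (iii) via the contrapositive using the decreasing sequence $C\cap A_{x\restriction n}$ fed into the $P^-$-property, and item (iv) by combining item (iii) with Proposition \ref{prop:properties-of-P-like}\ref{prop:properties-of-P-like:implications}. The only difference is cosmetic: the paper's text for item (iv) cites item (ii) where it clearly means item (iii), and your version gets that reference right.
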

\begin{proof}
    \ref{item:1pminus} Suppose that $\I$ is a $P^+$-ideal. If there is no $\I$-scheme, then claim is obvious. Otherwise, let $\mathcal{A}=\{A_s: s \in 2^{<\omega}\}$ be an $\I$-scheme. It follows from the proof of Proposition \ref{prop:P-for-closed2}\ref{item:1Ppi02} that $B_\I(\mathcal{A})=\emptyset$. Therefore $\I \notin P(\Sigma^0_2)$.

    \ref{item:2pminus} It follows by Proposition \ref{prop:P-for-closed2}\ref{item:1Ppi01} and item \ref{item:1pminus}.

    \ref{item:3pminus} Let $\I$ be a $P^-$-ideal and fix an $\I$-scheme $\mathcal{A}=\{A_s: s \in 2^{<\omega}\}$. It is obvious that $C_\I(\mathcal{A})\subseteq B_\I(\mathcal{A})$. 
    Conversely, fix $x \in  2^\omega\setminus C_\I(\mathcal{A})$. Then there is $C\in\I^+$ such that $C\setminus A_{x\restriction n}\in\I$ for all $n\in \omega$. Since $C\in\I^+$ and $C\setminus A_{x\restriction n}\in\I$, it follows that $(C\cap A_{x\restriction n}: n \in \omega)$ is decreasing sequence of $\I$-positive sets. In addition, $(C\cap A_{x\restriction n})\setminus(C\cap A_{x\restriction n+1})\subseteq C\setminus A_{x\restriction n+1}\in\I$ for all $n \in \omega$. Since $\I$ is $P^-$, there is $D\subseteq C$ such that $D\in\I^+$ and $D\setminus A_{x\restriction n}$ is finite for all $n\in \omega$. This implies that $x\notin B_\I(\mathcal{A})$. Therefore also the opposite inclusion $B_\I(\mathcal{A})\subseteq C_\I(\mathcal{A})$ holds.

    \ref{item:4pminus} On the one hand, $P^?(\Sigma^1_1)\subseteq P(\Pi^0_3)$ by Proposition \ref{prop:properties-of-P-like}\ref{prop:properties-of-P-like:implications}. On the other hand, let $\I$ be a $P^-$-ideal which belongs to $P(\Pi^0_3)$. Then then there exists an $\I$-scheme $\mathcal{A}$ such that $B_\I(\mathcal{A})=\mathbb{Q}(2^\omega)$. Hence $B_\I(\mathcal{A})=C_\I(\mathcal{A})=\mathbb{Q}(2^\omega)$ by item \ref{item:2pminus}, which implies that $\I \in P^?(\Sigma^1_1)$. 
\end{proof}

As a practical consequence of the previous results, we obtain: 
\begin{corollary}\label{cor:classesPborelcomplexity}
    Let $\I$ be an ideal on $\omega$. Then the following hold:
\begin{enumerate}[label={\rm (\roman{*})}] 
    \item \label{item:classesPborelcomplexity1} If $\I$ is $\Sigma^0_3$, then $\I \in P(\Pi^0_1)\setminus P(\Sigma^0_2)$;

    \item \label{item:classesPborelcomplexity2} If $\I$ is $\Pi^0_4$, then $\I \in P(\Pi^0_3)$ if and only if $\I \in P^?(\Sigma^1_1)$.
\end{enumerate}
\end{corollary}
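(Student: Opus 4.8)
The plan is to derive both items mechanically by chaining together the structural results on $P$-like properties recorded in Proposition~\ref{prop:Pplus-vs-Pminus+Pvert} with the characterizations of the $\I$-scheme classes established in Proposition~\ref{prop:P^-}; no new construction of schemes is needed, since the work has already been done in those two statements.

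For item~\ref{item:classesPborelcomplexity1}, I would first note that a $\Sigma^0_3$ ideal is automatically a $P^+$-ideal by Proposition~\ref{prop:Pplus-vs-Pminus+Pvert}\ref{thm:Plike-properties-for-definable-ideals:Fsigma}. The one additional hypothesis needed to apply the relevant class statement is the Baire property, and this comes for free: since $\I$ is $\Sigma^0_3$, it is in particular a Borel subset of $2^\omega$, and every Borel set has the Baire property (equivalently, $\I$ is meager, by Theorem~\ref{thm:baire}). Having verified that $\I$ is a $P^+$-ideal possessing the Baire property, the conclusion $\I \in P(\Pi^0_1)\setminus P(\Sigma^0_2)$ is then exactly the content of Proposition~\ref{prop:P^-}\ref{item:2pminus}.

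For item~\ref{item:classesPborelcomplexity2}, I would invoke Proposition~\ref{prop:Pplus-vs-Pminus+Pvert}\ref{thm:Plike-properties-for-definable-ideals:Pi-zero-four-is-Pminus} to conclude that every $\Pi^0_4$ ideal is a $P^-$-ideal. The desired equivalence $\I \in P(\Pi^0_3) \iff \I \in P^?(\Sigma^1_1)$ is then immediate from Proposition~\ref{prop:P^-}\ref{item:4pminus}, which establishes precisely this equivalence under the sole assumption that $\I$ is a $P^-$-ideal.

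Because the corollary amounts to composing implications that are already available, there is essentially no technical obstacle to overcome; the only point deserving a moment's explicit care is the remark, used in item~\ref{item:classesPborelcomplexity1}, that a $\Sigma^0_3$ ideal has the Baire property. I would justify this directly from the fact that Borel sets have the Baire property, so that one need not argue meagerness separately (although the two are equivalent for ideals by Theorem~\ref{thm:baire}).
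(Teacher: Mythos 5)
Your proposal is correct and follows essentially the same route as the paper, which proves item (i) by combining Proposition~\ref{prop:Pplus-vs-Pminus+Pvert}\ref{thm:Plike-properties-for-definable-ideals:Fsigma} with Proposition~\ref{prop:P^-}\ref{item:2pminus}, and item (ii) by combining Proposition~\ref{prop:Pplus-vs-Pminus+Pvert}\ref{thm:Plike-properties-for-definable-ideals:Pi-zero-four-is-Pminus} with Proposition~\ref{prop:P^-}\ref{item:4pminus}. Your explicit verification that a $\Sigma^0_3$ ideal has the Baire property (being Borel) is a point the paper leaves implicit, and it is correctly justified.
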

\begin{proof}
\ref{item:classesPborelcomplexity1} It follows by Proposition \ref{prop:P^-}\ref{item:2pminus} 
and Proposition \ref{prop:Pplus-vs-Pminus+Pvert}\ref{thm:Plike-properties-for-definable-ideals:Fsigma}.

\ref{item:classesPborelcomplexity2} It follows by Proposition \ref{prop:P^-}\ref{item:4pminus} 
and Proposition \ref{prop:Pplus-vs-Pminus+Pvert}\ref{thm:Plike-properties-for-definable-ideals:Pi-zero-four-is-Pminus}. 
\end{proof}


\section{Case: Empty set}\label{sec:emptysetcase}

In this section, we study the existence of a sequence $  x $ with no $\I$-cluster points or with no $\I$-limit points. In fact, the former case is straightforward: 
\begin{proposition}\label{prop:emptysetcluster}
Let $X$ be a 
metric 
space and let $\I$ be an ideal on $\omega$. Then there exists a sequence $  x $ such that $\Gamma_{  x }(\I)=\emptyset$ if and only if $X$ is not compact. 
\end{proposition}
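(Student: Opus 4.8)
The plan is to prove the two implications separately, in both cases reducing the statement about the arbitrary admissible ideal $\I$ to the special case of $\Fin$.

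For the \textsc{Only If part}, I would argue by contraposition, showing that if $X$ is compact then every sequence has an $\I$-cluster point. Suppose, for the sake of contradiction, that $\Gamma_{  x }(\I)=\emptyset$ for some sequence $  x $. Then for each $\eta\in X$ there is an open neighborhood $U_\eta\ni\eta$ with $\{n\in\omega:x_n\in U_\eta\}\in\I$. The family $\{U_\eta:\eta\in X\}$ is an open cover of $X$, so by compactness it admits a finite subcover $U_{\eta_0},\ldots,U_{\eta_{m-1}}$. Then
$$
\omega=\{n\in\omega:x_n\in X\}=\bigcup_{i<m}\{n\in\omega:x_n\in U_{\eta_i}\}\in\I,
$$
since $\I$ is closed under finite unions; this contradicts the admissibility of $\I$ (namely $\omega\notin\I$). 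Hence $\Gamma_{  x }(\I)\neq\emptyset$ for every $  x $, so no sequence with an empty set of $\I$-cluster points can exist when $X$ is compact.

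For the \textsc{If part}, the key observation is that $\Gamma_{  x }(\I)\subseteq\Gamma_{  x }(\Fin)$ for every sequence $  x $. Indeed, $\Fin\subseteq\I$ by admissibility, so if $\eta\in\Gamma_{  x }(\I)$ then for each open neighborhood $U$ of $\eta$ we have $\{n:x_n\in U\}\notin\I$, and a fortiori $\{n:x_n\in U\}\notin\Fin$, i.e.\ this set is infinite; this is exactly $\eta\in\Gamma_{  x }(\Fin)$. Thus it suffices to exhibit a sequence with $\Gamma_{  x }(\Fin)=\emptyset$. Since $X$ is a metric space which is not compact, it is not sequentially compact, so there exists a sequence $  x =(x_n)$ with no convergent subsequence. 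In a metric (hence first countable) space, $\eta\in\Gamma_{  x }(\Fin)$ holds if and only if $\eta$ is an accumulation point of $  x $, equivalently some subsequence of $  x $ converges to $\eta$; as no subsequence converges, $\Gamma_{  x }(\Fin)=\emptyset$, and therefore $\Gamma_{  x }(\I)=\emptyset$ as well.

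The argument is entirely elementary, which is why this direction is folklore; there is no genuine obstacle. The only points that require care are the use of admissibility of $\I$ in both directions (to guarantee $\omega\notin\I$ and $\Fin\subseteq\I$, and thereby the inclusion $\Gamma_{  x }(\I)\subseteq\Gamma_{  x }(\Fin)$), together with the standard metric-space equivalence between compactness and sequential compactness and the identification of $\Fin$-cluster points with accumulation points of the sequence.
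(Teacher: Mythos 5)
Your proposal is correct and follows essentially the same route as the paper: the \textsc{If} direction is verbatim the paper's argument (non-sequential-compactness yields a sequence with no convergent subsequence, then $\Gamma_{x}(\I)\subseteq\Gamma_{x}(\Fin)=\emptyset$), while for the \textsc{Only If} direction the paper simply cites an external reference, and your finite-subcover argument (using $\omega\notin\I$ and closure under finite unions) is the standard proof of that cited fact. No gaps.
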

\begin{proof}
\textsc{Only If part.} See \cite[Theorem~6]{MR2923430}.

\textsc{If part.} Suppose that $X$ is not compact. Since $X$ is metric, it is not sequentially compact, hence there is a sequence $  x $ with values in $X$ without a convergent subsequence. Therefore $\Gamma_{  x }(\I)\subseteq \Gamma_{  x }(\fin)=\Lambda_{  x }(\fin)=\emptyset$, cf. \cite[Lemma 3.1]{MR3920799}. 
\end{proof}

Considering that 
every $\I$-limit point is always an $\I$-cluster point, 
we get immediately the following corollary in the case of noncompact spaces:
\begin{corollary}\label{cor:noncompactIlimit}
Let $X$ be a noncompact metric space and let $\I$ be an ideal on $\omega$. Then there exists a sequence $  x $ such that $\Lambda_{  x }(\I)=\emptyset$. 
\end{corollary}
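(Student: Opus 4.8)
The plan is to derive the statement immediately from Proposition \ref{prop:emptysetcluster} together with the elementary inclusion between $\I$-limit points and $\I$-cluster points recorded in the introduction. The guiding observation is that producing a sequence with no $\I$-limit points is \emph{easier} than producing one with no $\I$-cluster points, since every $\I$-limit point is automatically an $\I$-cluster point.

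Concretely, first I would invoke the \textsc{If part} of Proposition \ref{prop:emptysetcluster}: because $X$ is a noncompact metric space, there exists a sequence $x=(x_n)$ taking values in $X$ such that $\Gamma_x(\I)=\emptyset$. (Recall that the proposition obtains such a sequence by passing to one with no convergent subsequence, which exists because a metric space is compact if and only if it is sequentially compact.) Then I would use the general fact that $\Lambda_x(\I)\subseteq \Gamma_x(\I)$ for every sequence $x$ and every ideal $\I$. Combining these, the very same sequence $x$ satisfies
$$
\Lambda_x(\I)\subseteq \Gamma_x(\I)=\emptyset,
$$
whence $\Lambda_x(\I)=\emptyset$, which is precisely the desired conclusion.

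There is essentially no obstacle to overcome: the corollary is a direct consequence of the preceding proposition and the inclusion $\Lambda_x(\I)\subseteq\Gamma_x(\I)$. All of the genuine content resides in Proposition \ref{prop:emptysetcluster}, whose nontrivial direction — the existence of a sequence without cluster points in a noncompact space — has already been established. Thus the only step to write down is the short deduction above.
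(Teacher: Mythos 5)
Your proof is correct and follows exactly the same route as the paper: the corollary is stated there as an immediate consequence of Proposition \ref{prop:emptysetcluster} combined with the inclusion $\Lambda_{x}(\I)\subseteq\Gamma_{x}(\I)$. Nothing further is needed.
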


Hence, the interesting cases for the study of the existence of a sequence $  x $ with no $\I$-limit points are the compact metric spaces. 

For, we will use the ideal 
$$
\conv = \{S\subseteq \Q\cap [0,1]: S \text{ has at most finitely many limit points}\}.
$$
Ideals $\I$ satisfying $\conv\leq_{K}\I$ have been extensively studied in \cite{alcantara-phd-thesis}. It is known that $\conv\leq_K\Fin^2$: in fact, there is a set $A\in\conv^+$ such that $\conv\restriction A$ is isomorphic to $\Fin^2$, see for instance \cite{MR3696069}.

We need to recall a weaker property than $P^-$: an ideal $\I$ on $\omega$ is a $P^-(\omega)$\emph{-ideal} if for every decreasing sequence $(A_n: n\in \omega)$ of $\I$-positive sets such that $A_0=\omega$ and $A_n\setminus A_{n+1}\in \I$ for every $n\in \omega$ there is $A\in \I^+$ for which $A\setminus A_n$ is finite for every $n\in \omega$. 
This property has been introduced in \cite[p.~2030]{MR3692233}. In \cite[Definition~1.2]{MR1367134}, see also \cite{MR3423409}, the author introduced the \emph{weak P} property, which coincides with $P^-(\omega)$, thanks to \cite[Proposition~6.1]{FKK23}. Moreover, the property $P^-(\omega)$ is equivalent to $\Fin^2\not\leq_K\I$ as was shown in \cite[Lemma 2]{MR2491780} and \cite[Example 4.1]{MR3034318}. 

\begin{proposition}
\label{thm:Gamma-Lambda-vs-emptyset-X}
Let $X$ be a nondiscrete compact metric space and let $\I$ be an ideal on $\omega$. Then the following hold:
\begin{enumerate}[label={\rm (\roman{*})}] 
\item \label{item:2emptyset} If $\mathrm{Fin}^2 \le_{\mathrm{K}} \I$ then $\Lambda_{  x }(\I)=\emptyset$ for some sequence $  x $;

\item \label{item:3emptyset} If $\Lambda_{  x }(\I)=\emptyset$ for some sequence $  x $ then $\mathrm{conv} \le_{\mathrm{K}} \I$;

\item \label{item:4emptyset} If $X$ is uncountable, $\mathrm{conv} \le_{\mathrm{K}} \I$ if and only if $\Lambda_{  x }(\I)=\emptyset$ for some sequence $  x $;

\item \label{item:5emptyset} If $\mathrm{conv} \le_{\mathrm{K}} \I$ then $\I$ is not $P^+$.
\end{enumerate}
\end{proposition}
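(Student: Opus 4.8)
The plan is to establish the four items in order, using the combinatorial fact recalled above that $\conv\le_{\mathrm{K}}\Fin^2$ together with Proposition~\ref{prop:emptysetcluster} (in a compact space every sequence has at least one $\I$-cluster point) and Proposition~\ref{prop:Ppluscharacterization}. The unifying observation is that under a Kat\v{e}tov map $\phi$ witnessing $\J\le_{\mathrm{K}}\I$ and a fixed array $y$, the composed sequence $x_m=y_{\phi(m)}$ satisfies $\Lambda_{x}(\I)\subseteq\Lambda_{y}(\J)$: if $\eta\in\Lambda_{x}(\I)$ is witnessed by $S\in\I^+$, then $T:=\phi[S]$ must be $\J$-positive (otherwise $\phi[S]\in\J$ forces $S\subseteq\phi^{-1}[\phi[S]]\in\I$), and convergence of $x\upharpoonright S$ to $\eta$ transfers to convergence of $y\upharpoonright T$ to $\eta$ (a point $t\in T$ with $y_t$ far from $\eta$ forces every $m\in S$ with $\phi(m)=t$ to be far, so the bad indices in $T$ are contained in the image of the finitely many bad indices in $S$).

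For \emph{(i)}, since $X$ is nondiscrete compact metric it is infinite, so I fix distinct $w_0,w_1,\dots\to w_\star$ with $w_\star\neq w_n$ and define the constant-column array $y_{(n,k)}=w_n$ on $\omega^2$. A direct check gives $\Lambda_{y}(\Fin^2)=\emptyset$: a $\Fin^2$-positive $T$ has infinitely many infinite columns, and along any infinite column the values are constantly $w_n$, so convergence of $y\upharpoonright T$ to some $\eta$ would require $w_n=\eta$ for each such $n$, impossible as the $w_n$ are distinct. With $\phi$ witnessing $\Fin^2\le_{\mathrm{K}}\I$, the observation above yields $\Lambda_{x}(\I)\subseteq\Lambda_{y}(\Fin^2)=\emptyset$.

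For \emph{(ii)}, given $x$ with $\Lambda_{x}(\I)=\emptyset$, the range of $x$ is infinite (otherwise some value repeats on an $\I$-positive set, giving a convergent $\I$-positive subsequence). Let $Y=\overline{\{x_n:n\in\omega\}}$ and fix a continuous surjection $g\colon\mathcal{C}\to Y$ from the middle-thirds Cantor set $\mathcal{C}\subseteq[0,1]$, whose rational points are dense. I then choose recursively distinct $\phi(n)\in\mathcal{C}\cap\Q$ with $d(g(\phi(n)),x_n)<2^{-n}$ (possible because $\{s\in\mathcal{C}:d(g(s),x_n)<2^{-n}\}$ is a nonempty open subset of the perfect set $\mathcal{C}$), so $\phi\colon\omega\to\Q\cap[0,1]$ is finite-to-one. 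Since $\conv$ is generated by convergent sequences, it suffices to show $\phi^{-1}[S]\in\I$ whenever $S\subseteq\Q\cap[0,1]$ converges to a point $p$. If instead $\phi^{-1}[S]\in\I^+$, then $S\cap\mathcal{C}$ is infinite with unique limit $p\in\mathcal{C}$; as $\phi$ is finite-to-one and $g$ continuous at $p$, all but finitely many $n\in\phi^{-1}[S]$ have $g(\phi(n))$ arbitrarily close to $g(p)$, and since $d(x_n,g(\phi(n)))\to0$ the subsequence $(x_n:n\in\phi^{-1}[S])$ converges to $g(p)$, forcing $g(p)\in\Lambda_{x}(\I)=\emptyset$, a contradiction. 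Hence $\phi$ witnesses $\conv\le_{\mathrm{K}}\I$. I expect this factorization step—realizing the sequence (up to vanishing error) as $g\circ\phi$ with $g$ continuous on a Cantor subset of $[0,1]$ and $\phi$ finite-to-one into the rationals—to be the main obstacle, as it is precisely what converts the topological hypothesis into the combinatorial conclusion.

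For \emph{(iii)}, the converse direction is exactly \emph{(ii)}, so only the implication $\conv\le_{\mathrm{K}}\I\Rightarrow\Lambda_{x}(\I)=\emptyset$ for some $x$ needs the uncountability of $X$. Here $X$ contains a copy $K$ of the Cantor space, which carries a continuous surjection $v\colon K\to[0,1]$; choosing a section $a\colon\Q\cap[0,1]\to K$ with $v\circ a=\mathrm{id}$ and a witness $\phi$ for $\conv\le_{\mathrm{K}}\I$, I set $x_n=a_{\phi(n)}\in K\subseteq X$. If $\eta\in\Lambda_{x}(\I)$ via $S\in\I^+$, then $\eta\in K$ and $\phi(n)=v(x_n)\to v(\eta)$ along $S$ by continuity of $v$, so $\phi[S]$ has a single limit point, i.e.\ $\phi[S]\in\conv$, whence $S\subseteq\phi^{-1}[\phi[S]]\in\I$, a contradiction. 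Finally, for \emph{(iv)} I apply the construction of \emph{(iii)} to the uncountable compact space $[0,1]$: from $\conv\le_{\mathrm{K}}\I$ I obtain a sequence $x$ in $[0,1]$ with $\Lambda_{x}(\I)=\emptyset$. Were $\I$ a $P^+$-ideal, Proposition~\ref{prop:Ppluscharacterization} would give $\Lambda_{x}(\I)=\Gamma_{x}(\I)$, while Proposition~\ref{prop:emptysetcluster} (compactness of $[0,1]$) gives $\Gamma_{x}(\I)\neq\emptyset$, contradicting $\Lambda_{x}(\I)=\emptyset$; hence $\I$ is not $P^+$.
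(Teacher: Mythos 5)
Your argument is correct, and each of the four items is proved by a route genuinely different from the paper's. For \ref{item:2emptyset} the paper does not pull back along the Kat\v{e}tov map at all: it invokes the known equivalence of $\Fin^2\le_{\mathrm{K}}\I$ with the failure of the $P^-(\omega)$-property (citing external results) and then builds a sequence from a witnessing decreasing chain of $\I$-positive sets; your direct pullback lemma $\Lambda_{y\circ\phi}(\I)\subseteq\Lambda_{y}(\J)$ applied to the constant-column array is a clean, self-contained substitute (and the lemma is verified correctly, including the point that $\phi[S]\in\J^+$ and that the ``bad'' indices of $\phi[S]$ lie in the image of the finitely many bad indices of $S$). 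For \ref{item:3emptyset} and \ref{item:4emptyset} the paper reduces to $X=2^\omega$ or $[0,1]$ via continuous surjections and then cites the literature for the equivalence of $\conv\le_{\mathrm{K}}\I$ with the existence of a sequence in $[0,1]$ without $\I$-limit points; you instead build the Kat\v{e}tov witness explicitly, by approximately factoring the given sequence as $g\circ\phi$ with $g$ a continuous surjection from the Cantor set onto $\overline{\{x_n:n\in\omega\}}$ and $\phi$ injective into the (dense) rationals of the Cantor set, and conversely you realize $\conv\le_{\mathrm{K}}\I$ concretely via a section of a surjection of a Cantor copy onto $[0,1]$. Finally, for \ref{item:5emptyset} the paper cites an external result on $P^+$-ideals and the Bolzano--Weierstrass property, whereas you derive it internally from Propositions \ref{prop:Ppluscharacterization} and \ref{prop:emptysetcluster}. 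The trade-off is the usual one: the paper's proof is shorter because it leans on known characterizations ($P^-(\omega)$, FinBW), while yours is longer but essentially self-contained and makes the Kat\v{e}tov-order mechanics explicit; all the small points you rely on (density of the rational endpoints in the Cantor set, reduction of $\conv$ to convergent generators, admissibility of the ideals to rule out finite images) do hold.
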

\begin{proof}
\ref{item:2emptyset} Suppose that $\Fin^2\leq_K\I$. Then $\I$ is not $P^-(\omega)$ (by \cite[Lemma 2]{MR2491780} and \cite[Example 4.1]{MR3034318}), so there is a decreasing sequence $(A_n: n\in\omega)$ of $\I$-positive sets such that $A_0=\omega$, $A_n\setminus A_{n+1}\in \I$ for every $n\in\omega$, and for every $A\in \I^+$ there is $n\in \omega$ such that $A\setminus A_n$ is infinite.
Since $X$ is nondiscrete, there is an injective convergent sequence $ y =(y_n:n \in \omega)$ with values in $X$. 
Set $\eta=\lim_n y_n\in X$ and 
define the sequence $  x =(x_i: i \in \omega)$ as it follows: 
$x_i=y_n$ for all $i\in A_{n}\setminus A_{n+1}$ and $n\in \omega$, 
and
$x_i=\eta$ for all $i\in \bigcap_n A_n$. 
It follows that $A\in \I$ for every convergent subsequence $(x_i: i \in A)$. Therefore $\Lambda_{  x }(\I)=\emptyset$.

Before we proceed to the proof of the next items, we show that the following conditions are equivalent: 
\begin{enumerate}[label=(\alph*)]
\item \label{item:ACantor} $\mathrm{conv}\le_{\mathrm{K}}\I$;
\item \label{item:BCantor} There exists a sequence $  x $ with values in $[0,1]$ such that $\Lambda_{  x }(\I)=\emptyset$;
\item \label{item:CCantor} There exists a sequence $  x $ with values in $2^\omega$ such that $\Lambda_{  x }(\I)=\emptyset$. 
\end{enumerate}
(We remark that, in the literature, an ideal $\I$ which does \emph{not} satisfy condition \ref{item:BCantor} is said to have the $\mathrm{FinBW}$-property, see e.g. \cite{MR3034318, MR2320288} for a discussion and applications of this property.)

\ref{item:ACantor} $\Longleftrightarrow$ \ref{item:BCantor}. This has been proved in \cite[Section 2.7]{alcantara-phd-thesis}, see also \cite[Proposition~6.4]{MR3034318}.

\ref{item:BCantor} $\implies$ \ref{item:CCantor}. Let us suppose that there exists a sequence $  x $ taking values in $[0,1]$ such that $\Lambda_{  x }(\I)=\emptyset$. Since there exists a continuous surjection $g: 2^\omega\to [0,1]$, see e.g. \cite[Theorem~4.18]{MR1321597}, we conclude that $\Lambda_{ y }(\I)=\emptyset$, where $ y $ is an arbitrary sequence with values in $2^\omega$ such that $y_n \in g^{-1}[\{x_n\}]$ for every $n\in \omega$.

\ref{item:CCantor} $\implies$ \ref{item:BCantor}. Suppose that there exists a sequence $  x $ taking values in $2^\omega$ such that $\Lambda_{  x }(\I)=\emptyset$. Since the Cantor space $2^\omega$ and the Cantor ternary set $C$ are homeomorphic and $C$ is a closed subset of $[0,1]$, there exists an embedding $f:2^\omega\to [0,1]$. In other words, $[0,1]$ contains a homeomorphic copy of $2^\omega$. It follows that $ y =(f(x_n): n \in \omega)$ is a sequence taking values in the closed set $C$ such that $\Lambda_{ y }(\I)=\emptyset$.

\ref{item:3emptyset} Let $X$ be a nonempty compact metric space and suppose that there exists a sequence $  x $ taking values in $X$ such that $\Lambda_{  x }(\I)=\emptyset$. Since $X$ is a continuous image of the Cantor space $2^\omega$, see again \cite[Theorem~4.18]{MR1321597}, it follows as in the proof \ref{item:BCantor} $\implies$ \ref{item:CCantor} above that there exists a sequence $ y $ taking values in $2^\omega$ such that $\Lambda_{ y }(\I)=\emptyset$. Hence condition \ref{item:CCantor} holds. The claim follows by the equivalence \ref{item:ACantor} $\Longleftrightarrow$ \ref{item:CCantor}.

\ref{item:4emptyset} Let $X$ be an uncountable compact metric space. The \textsc{If part} follows by item \ref{item:3emptyset}. Conversely, for the \textsc{Only If part}, again by the equivalence \ref{item:ACantor} $\Longleftrightarrow$ \ref{item:CCantor}, there exists a sequence $  x $ taking values in $2^\omega$ such that $\Lambda_{  x }(\I)=\emptyset$. Since $X$ contains a homeomorphic copy of the Cantor space $2^\omega$, see e.g. \cite[Corollary~6.5]{MR1321597}, it follows as in the proof \ref{item:CCantor} $\implies$ \ref{item:BCantor} above that there exists a sequence $ y $ taking values in $X$ such that $\Lambda_{ y }(\I)=\emptyset$.

\ref{item:5emptyset} Suppose that $\I$ is a $P^+$-ideal. It follows by \cite[Corollary 5.6]{MR2961261} that condition \ref{item:BCantor} fails, cf. also \cite[Proposition 2.8]{MR2905404}. Thanks to the equivalence \ref{item:ACantor} $\Longleftrightarrow$ \ref{item:BCantor}, we conclude that $\conv\not\le_{\mathrm{K}} \I$. 
\end{proof}

A graphical representation of the implications stated in Proposition \ref{thm:Gamma-Lambda-vs-emptyset-X} 
is given in Figure \ref{fig:P-emptysetcase} below.

\begin{figure}
\centering
  \begin{tikzpicture}[scale=0.9,->, >=stealth',shorten >=1pt,auto,inner sep=2pt,semithick,bend angle=20]
    \tikzstyle{every state}=[draw=none]

    \node[state] (A1) at (-5.5,2.5){\small $\I$ \textsc{ is not } $P^-(\omega)$};
    
    \node[state] (A2) at (-5.5,0){\small $\mathrm{Fin}^2 \le_{\mathrm{K}} \I$};

    \node[state] (A3) at (-5.5,-2){\small $\I$ \textsc{ is not } $P^-$};

    \node[state] (B) at (0,1.5){};
 \node[state] (B1) at (0,1.5+0.2){\small $\Lambda_{  x }(\I)=\emptyset$ \textsc{ for}};
    \node[state] (B2) at (0,1.5-0.2){\small \textsc{ some sequence }$  x $};

    \node[state] (C1) at (5.5,2.5){\small $\I$ \textsc{ is not }$\mathrm{FinBW}$};

    \node[state] (C2) at (5.5,0){\small $\mathrm{conv} \le_{\mathrm{K}} \I$};

    \node[state] (C3) at (5.5,-2){\small $\I$ \textsc{is not} $P^+$};

    \path    (A2) edge [shorten <=0mm, shorten >=19mm]       node[above] {} (0,1.7);

    \path    (0,1.7+0.1) edge [shorten <=19mm, shorten >=13mm]       node[above] {} (5.5,0.1);
    \path    (5.5,-0.1) edge [shorten <=15mm, shorten >=19mm, dotted]       node[above] {} (0,1.7-0.1);

    \node[state] (Z1) at (3,.4){\tiny \rotatebox{343}{($X$ uncount.)}};
    
     \node[state] (Z2) at (2.9,.05){\tiny \rotatebox{343}{Prop. \ref{thm:Gamma-Lambda-vs-emptyset-X}\ref{item:4emptyset}}};

      \node[state] (Z2) at (3.1,1.1){\tiny \rotatebox{343}{Prop. \ref{thm:Gamma-Lambda-vs-emptyset-X}\ref{item:3emptyset}}};

         \node[state] (Z3) at (-3.1,1){\tiny \rotatebox{17}{Prop. \ref{thm:Gamma-Lambda-vs-emptyset-X}\ref{item:2emptyset}}};

\node[state] (Z4) at (4.6,1.6){\tiny \rotatebox{0}{\cite[Sect. 2.7]{alcantara-phd-thesis}}};

        \node[state] (Z4) at (4.55,-1){\tiny \rotatebox{0}{Prop. \ref{thm:Gamma-Lambda-vs-emptyset-X}\ref{item:5emptyset}}}; 

     \path    (5.5,2.5) edge [shorten <=4mm, shorten >=4mm, <->]       node[above] {} (5.5,0);

    \path    (-5.5,2.5) edge [shorten <=4mm, shorten >=4mm, <->]       node[above] {} (-5.5,0);
    \path    (-5.5,0) edge [shorten <=4mm, shorten >=4mm]       node[above] {} (-5.5,-2);

     \path    (5.5,0) edge [shorten <=4mm, shorten >=4mm]       node[above] {} (5.5,-2);

     \path    (-5.5,-2) edge [shorten <=14mm, shorten >=14mm]       node[above] {} (5.5,-2);

  \end{tikzpicture}
  \caption{Relationships between $P^+$, $P^-$, $P^-(\omega)$-ideals, and sequences with no $\I$-limit points in the case of a \emph{compact} metric space $X$.}
    \label{fig:P-emptysetcase}
\end{figure}
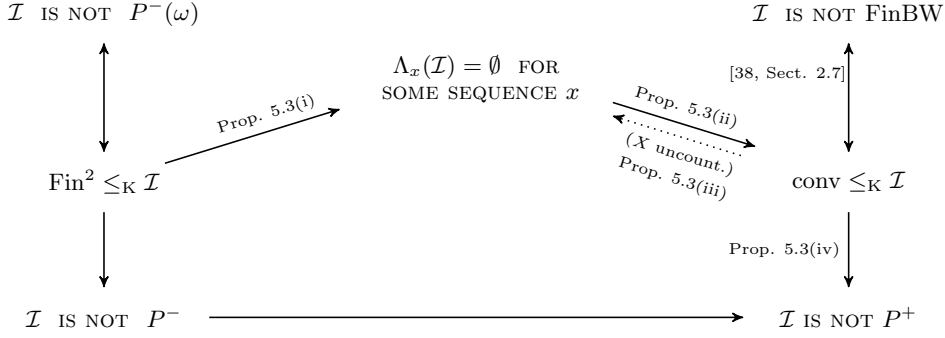


\section{Case: Closed sets}\label{sec:closedsets}

In this section we characterize the inclusions $\Pi^0_1\subseteq \mathscr{C}(\I)$ and $\Pi^0_1\subseteq \mathscr{L}(\I)$, namely, the cases where all closed sets can be realized as sets of $\I$-cluster points and $\I$-limit points, respectively.


\subsection{Closed sets Vs \texorpdfstring{$\I$}{I}-cluster points} 

Let us recall that all sets of $\I$-cluster points are closed, so that $\mathscr{C}(\I)\subseteq \Pi^0_1$ by Lemma \ref{lem:clusterclosed}. Of course, the converse inclusion does not always hold, as we already know by Proposition \ref{prop:fubinimaximal}. The following example shows that the same happens in the nonseparable case.
\begin{example}\label{example:Iclusters}
    Suppose that $X$ is a nonseparable topological space. Then, for every sequence $  x $ taking values in $X$ and for every ideal $\I$ on $\omega$, we have 
    \begin{displaymath}
    \Gamma_{  x }(\I)\subseteq \Gamma_{  x }(\mathrm{Fin})\subseteq \overline{\{x_n: n \in \omega\}}\neq X.
    \end{displaymath}
    In particular, $X\notin \mathscr{C}(\I)$. 
\end{example}

The above example shows that the equality $\mathscr{C}(\I)=\Pi^0_1$ can be realized only in separable spaces. In the following result, we characterize such property:
\begin{theorem}
\label{prop:Gamma-vs-closed}
Let $X$ be 
an infinite 
separable metric space and let $\I$ be an ideal on $\omega$. Then the following are equivalent:
\begin{enumerate}[label={\rm (\roman{*})}] 
\item \label{item:1IclusterCLOSED} There exists an $\I$-scheme;
\item \label{item:1IclusterCLOSEDB} $\Pi^0_1\subseteq \mathscr{C}(\I)$;
\item \label{item:2IclusterCLOSED} $\mathscr{C}(\I)=\Pi^0_1$;
\item \label{item:3IclusterCLOSED} $X \in \mathscr{C}(\I)$;
\item \label{item:4IclusterCLOSED} $\mathscr{C}(\I)$ contains an infinite subset of $X$;
\item \label{item:5IclusterCLOSED} $\mathscr{C}(\I)\neq [X]^{\le n}$ for all $n \in \omega$.
\end{enumerate}
\end{theorem}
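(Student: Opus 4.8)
The plan is to establish the cycle of implications
\[
\ref{item:1IclusterCLOSED}\Rightarrow\ref{item:2IclusterCLOSED}\Rightarrow\ref{item:1IclusterCLOSEDB}\Rightarrow\ref{item:3IclusterCLOSED}\Rightarrow\ref{item:4IclusterCLOSED}\Rightarrow\ref{item:5IclusterCLOSED}\Rightarrow\ref{item:1IclusterCLOSED},
\]
which yields all six equivalences at once. Most of these links are either immediate or follow from results already available; the only one demanding a genuine construction is $\ref{item:1IclusterCLOSED}\Rightarrow\ref{item:2IclusterCLOSED}$.

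For that main step I would first recall that $\mathscr{C}(\I)\subseteq\Pi^0_1$ holds unconditionally by Lemma \ref{lem:clusterclosed}, so it suffices to prove the reverse inclusion $\Pi^0_1\subseteq\mathscr{C}(\I)$. Fix a closed set $F\subseteq X$. If $F=\emptyset$ there is nothing to check, since $\emptyset\in\mathscr{C}(\I)$ by Definition \ref{defi:familyIclusterpoints}; so assume $F\neq\emptyset$. As $X$ is separable metric it is second countable, hence every subspace is separable, and in particular $F$ admits a countable dense subset which I enumerate (with repetitions if $F$ is finite) as a sequence $(d_k)_{k\in\omega}$. By hypothesis \ref{item:1IclusterCLOSED} together with Proposition \ref{prop:Ischeme-characterizations}, the existence of an $\I$-scheme provides an infinite partition $\{B_k:k\in\omega\}$ of $\omega$ into $\I$-positive sets. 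I then define $x=(x_n:n\in\omega)$ by $x_n=d_k$ for every $n\in B_k$, and claim $\Gamma_{x}(\I)=F$. If $\eta\in F$ and $U$ is an open neighborhood of $\eta$, density of $\{d_k\}$ in $F$ gives some $d_k\in U$, whence $\{n:x_n\in U\}\supseteq B_k\notin\I$, so $\eta$ is an $\I$-cluster point; conversely, if $\eta\notin F$ then, $F$ being closed, there is a neighborhood $U$ of $\eta$ disjoint from $F\supseteq\{d_k:k\in\omega\}$, so $\{n:x_n\in U\}=\emptyset\in\I$ and $\eta\notin\Gamma_{x}(\I)$. This establishes \ref{item:2IclusterCLOSED}.

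The remaining links are quick. The implication $\ref{item:2IclusterCLOSED}\Rightarrow\ref{item:1IclusterCLOSEDB}$ is trivial; $\ref{item:1IclusterCLOSEDB}\Rightarrow\ref{item:3IclusterCLOSED}$ holds because $X$ is closed in itself, so $X\in\Pi^0_1\subseteq\mathscr{C}(\I)$; $\ref{item:3IclusterCLOSED}\Rightarrow\ref{item:4IclusterCLOSED}$ holds since $X$ is infinite, exhibiting an infinite member of $\mathscr{C}(\I)$; and $\ref{item:4IclusterCLOSED}\Rightarrow\ref{item:5IclusterCLOSED}$ is clear because no infinite set belongs to any $[X]^{\le n}$. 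For the closing link $\ref{item:5IclusterCLOSED}\Rightarrow\ref{item:1IclusterCLOSED}$ I would argue by contraposition: if no $\I$-scheme exists, then by Proposition \ref{prop:Ischeme-characterizations} the ideal $\I$ is a Fubini sum $\I_0\oplus\cdots\oplus\I_{n-1}$ of finitely many maximal ideals, and Proposition \ref{prop:fubinimaximal} (applicable since $X$ is an infinite Hausdorff space, so $|X|\ge n+1$) then forces $\mathscr{C}(\I)=[X]^{\le n}$, contradicting \ref{item:5IclusterCLOSED}.

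I do not expect a serious obstacle. The only substantive step is the construction in $\ref{item:1IclusterCLOSED}\Rightarrow\ref{item:2IclusterCLOSED}$, and even there the verification that $\Gamma_{x}(\I)=F$ is routine once the infinite partition into $\I$-positive sets is in hand. The points requiring care are the degenerate cases (empty or finite $F$, handled by allowing repetitions in the enumeration of a dense subset) and citing Proposition \ref{prop:fubinimaximal} with the correct cardinality hypothesis, which is automatic from $X$ being infinite.
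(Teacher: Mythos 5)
Your proposal is correct and follows essentially the same route as the paper: the cycle of implications is the same up to swapping the order of items (ii) and (iii), the closing link (vi)$\Rightarrow$(i) uses Propositions \ref{prop:Ischeme-characterizations} and \ref{prop:fubinimaximal} exactly as the paper does, and the middle links are handled identically. The only difference is that for the key step you write out the explicit dense-enumeration construction showing $\Gamma_{x}(\I)=F$, whereas the paper delegates this to a citation of an external result; your construction is the standard one and is verified correctly.
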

\begin{proof}
\ref{item:1IclusterCLOSED} $\implies$ \ref{item:1IclusterCLOSEDB}. Thanks to Proposition~\ref{prop:Ischeme-characterizations}, there exists an infinite partition of $\omega$ into $\I$-positive sets. This implies, thanks to \cite[Theorems~4.1(ii)]{MR1844385}, that every nonempty closed subset can be realized as a set of $\I$-cluster points, i.e., $\Pi^0_1\subseteq \mathscr{C}(\I)$.

\ref{item:1IclusterCLOSEDB} $\implies$ \ref{item:2IclusterCLOSED}. It follows by Lemma \ref{lem:clusterclosed}.

\ref{item:2IclusterCLOSED} $\implies$ \ref{item:3IclusterCLOSED} $\implies$ \ref{item:4IclusterCLOSED} $\implies$ \ref{item:5IclusterCLOSED}. They are obvious.

\ref{item:5IclusterCLOSED} $\implies$ \ref{item:1IclusterCLOSED}. Let us suppose that $\mathscr{C}(\I)\neq [X]^{\le n}$ for all $n \in \omega$. Since $X$ is infinite, it follows by Proposition \ref{prop:fubinimaximal} that $\I\neq \I_0\oplus \cdots \oplus \I_{n-1}$ for all $n \in \omega$ and maximal ideals $\I_0,\ldots,\I_{n-1}$. Hence there is some $\I$-scheme, thanks to Proposition \ref{prop:Ischeme-characterizations}. 
\end{proof}


\subsection{Closed sets Vs \texorpdfstring{$\I$}{I}-limit points} 

Next, we characterize ideals $\I$ for which $\Pi^0_1 \subseteq \mathscr{L}(\I)$. Among several additional equivalent conditions, it turns out that the inclusion holds if and only if $\I \in P(\Pi^0_1)$. 

\begin{theorem}
\label{thm:closed-is-Lambda}
Let $X$ be an uncountable  Polish space and  $\I$ be an ideal on $\omega$. Then the following are equivalent:
\begin{enumerate}[label={\rm (\roman{*})}] 
    \item $\I\in P(\Pi^0_1)$;\label{thm:closed:ideal}\label{thm:closed-is-Lambda:ideal}

    \item $\Pi^0_1\subseteq \mathscr{L}(\I)$ ;\label{thm:closed:lambda}\label{thm:closed-is-Lambda:lambda}

    \item $\Delta^0_1\subseteq \mathscr{L}(\I)$;\label{thm:closed-is-Lambda:delta}

    \item $X\in \mathscr{L}(\I)$; \label{thm:closed-is-Lambda:X}
    
    \item $\mathscr{L}(\I)$ contains an uncountable analytic set;\label{thm:closed:analytic}\label{thm:closed-is-Lambda:analytic}

\item  \label{item:6pi01ilimitpoints}
There exists $P\in \mathscr{L}(\I)$ containing a homeomorphic copy of the Cantor space.
\end{enumerate}
\end{theorem}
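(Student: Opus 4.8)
The plan is to single out the one substantive implication \ref{thm:closed-is-Lambda:ideal}~$\implies$~\ref{thm:closed-is-Lambda:lambda} and then close the loop through the cheap chain \ref{thm:closed-is-Lambda:lambda}~$\implies$~\ref{thm:closed-is-Lambda:delta}~$\implies$~\ref{thm:closed-is-Lambda:X}~$\implies$~\ref{thm:closed-is-Lambda:analytic}~$\implies$~\ref{item:6pi01ilimitpoints}~$\implies$~\ref{thm:closed-is-Lambda:ideal}. The first three of these steps are immediate: $\Delta^0_1\subseteq\Pi^0_1$; the whole space $X$ is clopen, hence $X\in\Delta^0_1$; and $X$, being an uncountable Polish space, is an uncountable analytic set, so $X\in\mathscr{L}(\I)$ already yields an uncountable analytic member of $\mathscr{L}(\I)$. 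For \ref{thm:closed-is-Lambda:analytic}~$\implies$~\ref{item:6pi01ilimitpoints} I would invoke the perfect set property for analytic sets \cite{MR1321597}: the uncountable analytic $P\in\mathscr{L}(\I)$ contains a nonempty perfect set and therefore a homeomorphic copy of the Cantor space.

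The step \ref{item:6pi01ilimitpoints}~$\implies$~\ref{thm:closed-is-Lambda:ideal} is a one-line application of Lemma~\ref{lem:I-scheme-with-a-given-B}\ref{item:3Ischeme}. If $C\subseteq P$ is a copy of $2^\omega$ and $P\in\mathscr{L}(\I)$, then $C\cap P=C$ is trivially dense in $C$, so the lemma produces an $\I$-scheme $\cA$ with $B_\I(\cA)$ homeomorphic to $C\setminus P=\emptyset$; that is, $B_\I(\cA)=\emptyset$ and $\I\in P(\Pi^0_1)$. Thus all the decorative equivalences cost essentially nothing, and the entire weight of the theorem sits on \ref{thm:closed-is-Lambda:ideal}~$\implies$~\ref{thm:closed-is-Lambda:lambda}.

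For the main implication, fix a nonempty closed $F\subseteq X$ (the empty set lies in $\mathscr{L}(\I)$ by convention). Using Proposition~\ref{prop:properties-of-P-like}\ref{prop:properties-of-B-empty-for-Ischeme:full} I would fix a \emph{full} $\I$-scheme $\cA=\{A_s:s\in 2^{<\omega}\}$ with $B_\I(\cA)=\emptyset$; fullness gives for each $i\in\omega$ a unique branch $b(i)\in 2^\omega$ with $i\in A_s\iff s\subseteq b(i)$, and $\{b(i):i\in\omega\}$ is dense in $2^\omega$ since every $A_s$ is nonempty. Since $F$ is a nonempty Polish space it is a continuous image of the Baire space \cite{MR1321597}; concretely I would fix a Lusin-type scheme $\{d_t:t\in\omega^{<\omega}\}$ of points of $F$ with shrinking diameters representing a continuous surjection $\phi:\omega^\omega\to F$, so that $d_{t}\to\phi(\sigma)$ whenever $t\supseteq\sigma\restriction m$ and $m\to\infty$. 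After fixing a standard encoding of $\omega^{<\omega}$ into $2^{<\omega}$, I would read off from an initial segment of $b(i)$ its finite tuple $\tau(i)\in\omega^{<\omega}$ of fully decoded blocks (arranged so that $|\tau(i)|\to\infty$), and define the sequence by $x_i=d_{\tau(i)}\in F$. Because all values lie in the closed set $F$, every $\I$-limit point lies in $F$, giving $\Lambda_{  x }(\I)\subseteq F$. Conversely, given $\eta\in F$ pick $\sigma\in\omega^\omega$ with $\phi(\sigma)=\eta$ and let $\xi^\star\in 2^\omega$ be its encoding; since $B_\I(\cA)=\emptyset$ there is $C\in\I^+$ with $C\setminus A_{\xi^\star\restriction n}$ finite for all $n$, so along $C$ one has $b(i)\to\xi^\star$, hence $\tau(i)$ extends $\sigma\restriction m$ for larger and larger $m$, forcing $x_i=d_{\tau(i)}\to\phi(\sigma)=\eta$. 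Thus $\eta\in\Lambda_{  x }(\I)$ and $F\subseteq\Lambda_{  x }(\I)$.

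The hard part is exactly this construction when $F$ is \emph{not} compact — equivalently, not a continuous image of $2^\omega$, as happens already for $X=\omega^\omega$. A naive labelling $x_i=c(b(i))$ by a continuous $c\colon2^\omega\to X$ forces the values into the compact set $c[2^\omega]$ and can never capture a non-$\sigma$-compact closed set. Routing the labelling instead through a continuous surjection from the Baire space and the block-decoding $\tau$ is what defeats the compactness barrier, and the delicate point I would verify with care is that $\tau$ transports the branchwise convergence $b(i)\to\xi^\star$ supplied by the pseudo-intersection $C$ into genuine convergence $x_i\to\eta$ in $X$, using that prefix agreement of $\tau(i)$ with $\sigma$ (not full agreement) already pins $x_i$ into a small ball around $\phi(\sigma)$ thanks to the shrinking diameters of the scheme $\{d_t\}$.
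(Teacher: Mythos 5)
Your proposal is correct and follows essentially the same route as the paper: the identical cycle of implications, the same appeal to the perfect set property for \ref{thm:closed-is-Lambda:analytic}~$\implies$~\ref{item:6pi01ilimitpoints} and to Lemma~\ref{lem:I-scheme-with-a-given-B}\ref{item:3Ischeme} for \ref{item:6pi01ilimitpoints}~$\implies$~\ref{thm:closed-is-Lambda:ideal}, and, for the main implication \ref{thm:closed-is-Lambda:ideal}~$\implies$~\ref{thm:closed-is-Lambda:lambda}, the same re-indexing of a full $\I$-scheme by $\omega^{<\omega}$ via the block encoding together with a continuous surjection $\omega^\omega\to F$ with a shrinking-diameter scheme (the paper packages exactly this as Lemma~\ref{lem:I-scheme-on-Baire-tree} and Lemma~\ref{lem:closed}). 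The one imprecision is the parenthetical ``$|\tau(i)|\to\infty$'': the number of fully decoded blocks cannot be made to grow along branches $b(i)$ with only finitely many $1$'s, and what is actually needed --- and trivially arrangeable, e.g.\ by decoding the first $i$ digits of $b(i)$ --- is that the \emph{length of the initial segment read} tends to infinity, so that for each $n$ all but finitely many $i\in A_{\xi^\star\restriction n}$ have $\tau(i)\supseteq\sigma\restriction m$ whenever $\xi^\star\restriction n$ fully encodes $\sigma\restriction m$.
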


Before we proceed to the proof of Theorem \ref{thm:closed-is-Lambda}, we prove the next two intermediate results. The proof of the first one is straightforward (hence, we omit it).
\begin{lemma}
\label{lem:I-scheme-on-Baire-tree}
    Let $\I$ be an ideal on $\omega$ which admits an $\I$-scheme $\cA=\{A_s:s\in 2^{<\omega}\}$ such that $B_{\I}(\cA)=\emptyset$. Define $\mathcal{B}=\{B_t: t\in \omega^{<\omega}\}$ by $B_\emptyset = A_\emptyset$ and    
    $B_t=A_{(0^{t_0},1,0^{t_1},1,\dots0^{t_{k}},1)}$
    for every nonempty $t=(t_0,\ldots,t_k)\in \omega^{<\omega}$. 
    Then the following hold:
    \begin{enumerate}[label={\rm (\roman{*})}] 
        \item $B_t\in \I^+$ for all $t\in \omega^{<\omega}$;
        \item $B_{t^\frown n}\subseteq B_t$ for all $t\in \omega^{<\omega}$ and $n\in \omega$;
        \item $B_{t^\frown n}\cap B_{t^\frown m}=\emptyset$ for all $t\in \omega^{<\omega}$ and distinct $n,m\in \omega$;
\item $\forall y\in \omega^\omega \, \exists x\in 2^\omega \, \forall n\in \omega \, \exists k_n\in \omega \, (B_{y\restriction n} = A_{x\restriction k_n})$;
    
    \item  $\forall y\in \omega^\omega \, \exists B\in \I^+ \, \forall n\in \omega \,\, (B\setminus B_{y\restriction n}\in \mathrm{Fin})$.
    \end{enumerate}
\end{lemma}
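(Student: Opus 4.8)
The plan is to exploit the prefix-preserving embedding $\sigma\colon\omega^{<\omega}\to 2^{<\omega}$ that underlies the definition of $\cB$, namely $\sigma(\emptyset)=\emptyset$ and $\sigma(t)=(0^{t_0},1,0^{t_1},1,\dots,0^{t_k},1)$ for $t=(t_0,\dots,t_k)$, so that $B_t=A_{\sigma(t)}$. The single computation on which everything rests is that $\sigma(t^\frown n)=\sigma(t)^\frown(0^n,1)$ for all $t\in\omega^{<\omega}$ and $n\in\omega$; in particular $\sigma(t)$ is a proper initial segment of $\sigma(t^\frown n)$, and more generally $\sigma(t)\sqsubseteq\sigma(t')$ whenever $t\sqsubseteq t'$. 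I would also record at the outset the elementary monotonicity of $\I$-schemes: iterating the defining inclusion $A_{s^\frown i}\subseteq A_s$ yields $A_{s'}\subseteq A_s$ whenever $s\sqsubseteq s'$.

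With this in hand, items (i)--(iii) are immediate. For (i), $B_t=A_{\sigma(t)}\in\I^+$ since every node of an $\I$-scheme is $\I$-positive. For (ii), as $\sigma(t)\sqsubseteq\sigma(t^\frown n)$, monotonicity gives $B_{t^\frown n}=A_{\sigma(t^\frown n)}\subseteq A_{\sigma(t)}=B_t$. For (iii), fix $n<m$ and set $u=\sigma(t)^\frown 0^n$; then $\sigma(t^\frown n)=u^\frown 1$, while $\sigma(t^\frown m)=u^\frown(0^{m-n},1)$ extends $u^\frown 0$. Hence $B_{t^\frown n}\subseteq A_{u^\frown 1}$ and $B_{t^\frown m}\subseteq A_{u^\frown 0}$, and these two sets are disjoint by the $\I$-scheme axiom $A_{u^\frown 0}\cap A_{u^\frown 1}=\emptyset$.

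The last two items are where the hypothesis $B_\I(\cA)=\emptyset$ enters. Given $y\in\omega^\omega$, the finite sequences $\sigma(y\restriction n)$ form a $\sqsubseteq$-increasing chain whose lengths $|\sigma(y\restriction n)|=\sum_{i<n}(y_i+1)$ tend to infinity, so their union is a genuine branch $x\in 2^\omega$. Setting $k_n=|\sigma(y\restriction n)|$ gives $x\restriction k_n=\sigma(y\restriction n)$, whence $A_{x\restriction k_n}=B_{y\restriction n}$, which is (iv). For (v), I would unwind the definition of $B_\I(\cA)$: since $x\notin B_\I(\cA)=\emptyset$, there is $C\in\I^+$ with $C\setminus A_{x\restriction k}\in\mathrm{Fin}$ for every $k\in\omega$. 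Taking $k=k_n$ and recalling $A_{x\restriction k_n}=B_{y\restriction n}$, the set $B=C$ satisfies $B\setminus B_{y\restriction n}\in\mathrm{Fin}$ for all $n$, as required.

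I do not expect any genuine obstacle: the only points demanding a little care are checking that the branch $x$ in (iv) is truly infinite (guaranteed by $y_i+1\ge 1$) and correctly locating, in (iii), the coordinate at which $\sigma(t^\frown n)$ and $\sigma(t^\frown m)$ first disagree. Everything else is a direct unwinding of the $\I$-scheme axioms and of the definition of $B_\I(\cA)$, which is presumably why the authors call the proof straightforward.
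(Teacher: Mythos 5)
Your proof is correct and complete; the paper in fact omits the proof of this lemma entirely (declaring it ``straightforward''), and your argument via the prefix-preserving embedding $\sigma(t)=(0^{t_0},1,\dots,0^{t_k},1)$, the identity $\sigma(t^\frown n)=\sigma(t)^\frown(0^n,1)$, and the unwinding of $B_\I(\cA)=\emptyset$ along the branch $x=\bigcup_n\sigma(y\restriction n)$ is exactly the intended one. All five items check out, including the disjointness in (iii) at the node $u=\sigma(t)^\frown 0^n$ and the reduction of (v) to the indices $k_n=|\sigma(y\restriction n)|$.
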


\begin{lemma}
\label{lem:closed}
Let $X$ be an uncountable Polish space and let $\I$ be an ideal on $\omega$ which admits an $\I$-scheme $\cA=\{A_s:s\in 2^{<\omega}\}$ such that $B_{\I}(\cA)=\emptyset$. 

Then, for every nonempty closed set $F\subseteq X$, there exists 
a sequence $  x =(x_n: n \in A_\emptyset)$ with values in $F$ such that $ \Lambda_{  x }(\I\restriction A_\emptyset)=F$.
\end{lemma}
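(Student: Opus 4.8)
The plan is to transfer the given binary $\I$-scheme into the $\omega$-branching scheme $\mathcal{B}=\{B_t:t\in\omega^{<\omega}\}$ supplied by Lemma \ref{lem:I-scheme-on-Baire-tree}, and then to paint the target set $F$ along the branches of this tree. First I would fix a complete metric $d$ on $X$, normalized so that $\diam(F)\le 1$ (replace $d$ by $\min(d,1)$); since $F$ is closed in the Polish space $X$, it is itself Polish. The combinatorial heart is the construction of a point assignment $p:\omega^{<\omega}\to F$ such that $d(p_{t^\frown n},p_t)\le 2^{-|t|}$ for all $t$ and $n$, and such that $\{p_{t^\frown n}:n\in\omega\}$ is dense in $\overline{B}(p_t,2^{-|t|})\cap F$. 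The uniform closeness condition guarantees that each branch $y\in\omega^\omega$ has a well-defined limit $\varphi(y)=\lim_n p_{y\restriction n}\in F$ (a Cauchy argument using completeness and closedness of $F$), while a reachability induction — starting from $\diam(F)\le 1$ and repeatedly using density of the children — shows that $\varphi:\omega^\omega\to F$ is surjective. This is the standard Lusin-scheme idea, and it is one of the two places deserving care.

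Next I would define the sequence $  x =(x_i:i\in A_\emptyset)$, recalling $A_\emptyset=B_\emptyset$. For $i\in B_\emptyset$ the set of nodes $t$ with $i\in B_t$ is a chain, because distinct children $B_{t^\frown n}$ are disjoint; hence either it has a longest element $t$, in which case I set $x_i=p_t$, or it is infinite and determines a unique branch $y\in\omega^\omega$ with $i\in\bigcap_n B_{y\restriction n}$, in which case I set $x_i=\varphi(y)$. In every case $x_i\in F$. This already yields the inclusion $\Lambda_{  x }(\I\restriction A_\emptyset)\subseteq F$: since $\I\supseteq\Fin$, every $\I$-positive witness is infinite, so each $\I$-limit point is a genuine limit of an $F$-valued subsequence and thus lies in $\overline{F}=F$.

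For the reverse inclusion, fix $\eta\in F$ and choose, by surjectivity, a branch $y$ with $\varphi(y)=\eta$. Lemma \ref{lem:I-scheme-on-Baire-tree}(v) provides $B\in\I^+$ with $B\setminus B_{y\restriction n}$ finite for all $n$; deleting the finitely many elements of $B\setminus A_\emptyset$ (which preserves positivity since $\Fin\subseteq\I$) we may assume $B\subseteq A_\emptyset$, so $B\in(\I\restriction A_\emptyset)^+$. The key estimate is that the all-children-close condition forces every index below a node $s$ to carry an $x$-value near $p_s$: summing the geometric bound gives $d(p_r,p_s)\le 2^{-|s|+1}$ for all $r\supseteq s$, and likewise $d(\varphi(y'),p_s)\le 2^{-|s|+1}$ whenever $y'\supseteq s$. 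Since cofinitely many $i\in B$ lie in $B_{y\restriction n}$, and every node containing such an $i$ is comparable to $y\restriction n$ (again by disjointness of children), one obtains $d(x_i,p_{y\restriction n})\le 2^{-n+1}$ for cofinitely many $i\in B$; combined with $p_{y\restriction n}\to\eta$ this shows $  x \restriction B\to\eta$, so $\eta\in\Lambda_{  x }(\I\restriction A_\emptyset)$.

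I expect the main obstacle to be bookkeeping rather than a single hard idea: specifically, verifying that the two ways an index can receive a value (a terminal node $t$, or an infinite branch $y'$) both satisfy the uniform estimate toward $p_{y\restriction n}$, using only the disjointness of the children and the comparability of all nodes through a common index. The surjectivity of $\varphi$ is the other delicate point, but it reduces to the classical scheme argument once the children are made dense in the appropriate closed balls.
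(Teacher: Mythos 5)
Your proposal is correct and follows essentially the same route as the paper's proof: both pass to the $\omega$-branching family $\{B_t\}$ of Lemma \ref{lem:I-scheme-on-Baire-tree}, attach to each node a representative point of $F$ coming from a parametrization of $F$ by $\omega^\omega$, and then use part (v) of that lemma together with the fact that the nodes through a fixed index form a chain to get the reverse inclusion. The only difference is cosmetic: the paper invokes the standard fact that a nonempty closed subset of a Polish space is a continuous surjective image of $\omega^\omega$ and lets continuity do the convergence work, whereas you build that surjection by hand as a Lusin scheme with the bound $d(p_{t^\frown n},p_t)\le 2^{-|t|}$ and replace continuity by the explicit geometric-series estimate -- a self-contained substitute that also handles the indices lying on an infinite branch a bit more explicitly than the paper does.
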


\begin{proof}
Fix a nonempty closed set $F\subseteq X$. Then $F$ is an analytic subset of a Polish space, hence there exists a continuous surjection $f:\omega^\omega\to F$, see e.g.~\cite[p.~85]{MR1321597}. 
Let $\cB=\{B_t:t\in \omega^{<\omega}\}$ be the family defined as in Lemma~\ref{lem:I-scheme-on-Baire-tree} (recall that $A_\emptyset=B_\emptyset$). 

Proceeding by the induction  on the length of $t\in \omega^{<\omega}$, define 
the sequence $  x =(x_i: i\in B_\emptyset)$ with values in $X$ as it follows:
if 
$i = \min(B_t\setminus \{\min(B_{t\restriction k}): k<|t|\})$
or 
$i\in B_t\setminus (\{\min(B_{t\restriction k}): k<|t|\}\cup \bigcup\{B_{t^\frown k}:k<\omega\})$, set
$$x_i = f(t^\frown 0^\infty).$$
Note that, since for a fixed $t\in\omega^{<\omega}$ there are only finitely many sets $B_{t\restriction k}$ with $k<|t|$, then $x_i \in f[[t]]$ for all but finitely many $i\in B_t$. Hence, it follows by construction that
\begin{equation}\label{eq:conditionlemmaclosedsets}
    \forall t \in \omega^{<\omega} \, 
    \exists F_t \in \mathrm{Fin} \, 
    \forall i \in B_t\setminus F_t \, 
    \exists s_i \in \omega^{<\omega} \, 
    (x_i=f(t^\frown s_i^\frown 0^\infty)).
\end{equation}
To conclude the proof, we claim that $\Lambda_{  x }(\I\restriction A_\emptyset)=F$. 

Since the inclusion $\Lambda_{  x }(\I\restriction A_\emptyset)\subseteq F$ is obvious (as $x_i\in F$ for each $i\in B_\emptyset$ and $F$ is closed), we only need to show the converse inclusion. For, pick some $\eta \in F$. 
Since $f$ is surjective, there is $y\in \omega^\omega$ such that $f(y)=\eta$.
Thanks to Lemma~\ref{lem:I-scheme-on-Baire-tree}, 
there is $B\in \I^+$ 
such that $B\setminus B_{y\restriction n}$ is finite for every $n\in \omega$. Since $B\setminus B_\emptyset$ is finite, we can assume without loss of generality that $B\subseteq B_\emptyset$. 
We claim that $\eta$ is the limit of the subsequence $(x_i: i\in B)$. For, pick an open neighborhood $U$ of $\eta$. 
Since $f$ is continuous and $f(y)=\eta\in U$, there exists $n_U\in \omega$ such that $f[[t]]\subseteq U$, where $t=y\restriction n_U$. Pick $i \in B_{t}\setminus F_{t}$. 
Taking into account \eqref{eq:conditionlemmaclosedsets} and since $t^\frown s_i^\frown 0^\infty\in [t]$, we obtain $x_i = f(t^\frown s_i^\frown 0^\infty) \in f[[t]] \subseteq U$.
Since $B\setminus B_{t}$ is finite, it follows that $\{i\in B: x_i\notin U\}$ is finite, so that $\eta=\lim_{i \in B}{x_i}$. Therefore the opposite inclusion $F\subseteq \Lambda_{  x }(\I\restriction A_\emptyset)$ holds. 
\end{proof}

We proceed now to the proof of Theorem \ref{thm:closed-is-Lambda}. 
\begin{proof}
    [Proof of Theorem \ref{thm:closed-is-Lambda}] 
    \ref{thm:closed:ideal} $\implies$ \ref{thm:closed:lambda}.
Let $F$ be a nonempty closed subset of $X$ and let $\mathcal{A}=\{A_s: s \in 2^{<\omega}\}$ be an $\I$-scheme such that $B_\I(\mathcal{A})=\emptyset$. 
Thanks to Proposition \ref{prop:properties-of-P-like}\ref{prop:properties-of-B-empty-for-Ischeme:full}, we can assume without loss of generality that $\mathcal{A}$ is a full $\I$-scheme, hence $A_\emptyset=\omega$. 
It follows by Lemma~\ref{lem:closed} that there exists a sequence $  x =(x_n: n \in \omega)$ with values in $F$ such that $\Lambda_{  x }(\I)=F$. Therefore $\Pi^0_1\subseteq \mathscr{L}(\I)$.

\ref{thm:closed:lambda}
$\implies$
 \ref{thm:closed-is-Lambda:delta} $\implies$ \ref{thm:closed-is-Lambda:X} $\implies$ \ref{thm:closed:analytic}. 
They are obvious.

\ref{thm:closed:analytic} $\implies$ \ref{item:6pi01ilimitpoints}. 
It follows by the fact that every uncountable analytic set
 contains a homeomorphic copy of the Cantor space, see e.g.~\cite[Exercise~14.13]{MR1321597}.

\ref{item:6pi01ilimitpoints} $\implies$ \ref{thm:closed:ideal}. 
It follows by Lemma~\ref{lem:I-scheme-with-a-given-B}\ref{item:3Ischeme}.
\end{proof}

\begin{remark}
\label{rem:hBP->P(closed)}
Recall that ideals with the Baire property necessarily belong to $P(\Pi^0_1)$, thanks to Proposition \ref{prop:P-for-closed2}\ref{item:1Ppi01}. It follows by Theorem \ref{thm:closed-is-Lambda} that, if $\I$ has the Baire property, then $\Pi^0_1\subseteq \mathscr{L}(\I)$, recovering \cite[Corollary~3.9]{MR4393937} (see Theorem \ref{thm:xi}\ref{item:4xi}). 

However, our result is stronger since, under CH, there exists an ideal $\I\in P(\Pi^0_1)$ which does not have the Baire property by Proposition \ref{prop:P-for-closed2}\ref{item:1Ppi06}: hence, there exists an ideal $\I$ without the Baire property for which the inclusion $\Pi^0_1 \subseteq \mathscr{L}(\I)$ is still valid. 
\end{remark}

\begin{corollary}\label{cor:inclusionCeI}
    Let $X$ be an uncountable Polish space and let $\I$ be an ideal on $\omega$. Then the following hold\text{:}
    \begin{enumerate}[label={\rm (\roman{*})}] 

\item \label{item:1corCeI} If $\I \in P(\Pi^0_1)$, then $\Pi^0_1=\mathscr{C}(\I)\subseteq \mathscr{L}(\I)$;

\item \label{item:2corCeI} If $\I$ does not admit an $\I$-scheme, then $\mathscr{C}(\I)=\mathscr{L}(\I)=[X]^{\le n}$ for some $n \in \omega$. 
    \end{enumerate}
\end{corollary}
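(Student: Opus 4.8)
The plan is to deduce both items directly from the equivalences established earlier, so that the proof reduces to chaining together the right results and verifying that the hypotheses on $X$ are satisfied.

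For item \ref{item:1corCeI}, I would first invoke Proposition \ref{prop:P-for-closed2}\ref{item:1Ppi07} to extract an $\I$-scheme from the assumption $\I \in P(\Pi^0_1)$. Since an uncountable Polish space is in particular an infinite separable metric space, Theorem \ref{prop:Gamma-vs-closed} (equivalence of \ref{item:1IclusterCLOSED} and \ref{item:2IclusterCLOSED}) then yields $\mathscr{C}(\I)=\Pi^0_1$. Independently, the equivalence of \ref{thm:closed-is-Lambda:ideal} and \ref{thm:closed-is-Lambda:lambda} in Theorem \ref{thm:closed-is-Lambda} gives $\Pi^0_1\subseteq \mathscr{L}(\I)$ straight from $\I\in P(\Pi^0_1)$. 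Combining the two conclusions produces $\Pi^0_1=\mathscr{C}(\I)\subseteq \mathscr{L}(\I)$, as required.

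For item \ref{item:2corCeI}, the absence of an $\I$-scheme is, by Proposition \ref{prop:Ischeme-characterizations} (equivalence of \ref{prop:Ischeme-characterizations:Ischeme} and \ref{prop:Ischeme-characterizations:maximal-ideal}), precisely the assertion that $\I=\I_0\oplus\cdots\oplus \I_{n-1}$ for some integer $n\ge 1$ and maximal ideals $\I_0,\ldots,\I_{n-1}$. Because $X$ is uncountable we have $|X|\ge n+1$, so Proposition \ref{prop:fubinimaximal} (equivalence of \ref{item:max1BB} and \ref{item:max2BB}) applies and delivers $\mathscr{C}(\I)=\mathscr{L}(\I)=[X]^{\le n}$.

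There is no substantive obstacle in this argument, since it is a pure corollary of the machinery already developed; the only points requiring attention are bookkeeping ones. Specifically, I would make sure that an uncountable Polish space meets the standing hypotheses of the cited results — namely that it is infinite, separable, and metrizable for Theorem \ref{prop:Gamma-vs-closed}, and that $|X|\ge n+1$ holds automatically for every finite $n$ in Proposition \ref{prop:fubinimaximal} — and that the implication direction of Proposition \ref{prop:Ischeme-characterizations} is read off correctly, so that \emph{non-existence} of an $\I$-scheme is translated into the concrete finite Fubini-sum decomposition of $\I$.
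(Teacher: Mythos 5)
Your proof is correct and follows essentially the same route as the paper: extract an $\I$-scheme via Proposition \ref{prop:P-for-closed2}\ref{item:1Ppi07} and combine Theorem \ref{prop:Gamma-vs-closed} with Theorem \ref{thm:closed-is-Lambda} for item (i), and use Proposition \ref{prop:Ischeme-characterizations} together with Proposition \ref{prop:fubinimaximal} for item (ii). The bookkeeping checks you flag (infinite separable metric, $|X|\ge n+1$) are exactly the right ones and all hold for an uncountable Polish space.
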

\begin{proof}
    \ref{item:1corCeI} If $\I \in P(\Pi^0_1)$ then there exists an $\I$-scheme by Proposition \ref{prop:P-for-closed2}\ref{item:1Ppi07}. 
    The claim follows by Theorem \ref{prop:Gamma-vs-closed} and Theorem \ref{thm:closed-is-Lambda}. 

    \ref{item:2corCeI} Since there are no $\I$-schemes, then $\I$ is a Fubini sum of finitely many maximal ideals, thanks to Proposition \ref{prop:Ischeme-characterizations}.    
    The conclusion follows by Proposition \ref{prop:fubinimaximal}.
\end{proof}

\begin{remark}
Due to Corollary \ref{cor:inclusionCeI}, one may be tempted to conjecture that $\mathscr{C}(\I)\subseteq \mathscr{L}(\I)$ for every ideal $\I$. However, in general, there is no inclusion between the families $\mathscr{C}(\I)$ and $\mathscr{L}(\I)$: indeed, set $X=[0,1]$ and let $\J$ be a maximal ideal on $\omega$. Then $\I=\fin\otimes \J$ is a nowhere maximal ideal, hence it admits an $\I$-scheme, cf. Remark \ref{label:Finxmaximal}. It follows by Theorem \ref{prop:Gamma-vs-closed} and Corollary \ref{cor:Lemptysetmaximal}\ref{item:2emptysetIfinI} that 
$$
\mathscr{C}(\I)=\Pi^0_1 \quad \text{ and }\quad \mathscr{L}(\I)=[X]^{\le \omega}. 
$$
In particular, $\mathscr{C}(\I)\not\subseteq \mathscr{L}(\I)$ and $\mathscr{L}(\I)\not\subseteq \mathscr{C}(\I)$. 

The same example shows, in addition, that we cannot replace ``uncountable'' by ``nonempty'' or even by ``infinite'' in item \ref{thm:closed-is-Lambda:analytic} of Theorem~\ref{thm:closed-is-Lambda}. 
\end{remark}

\begin{corollary}\label{cor:equalityLIwithclosedsets}
Let $X$ be an uncountable Polish space and 
$\I$ be an ideal on $\omega$. Then 
$$
\mathscr{L}(\I)=\Pi^0_1
$$ 
if and only if $\I$ is a $P^{\,|}$-ideal and $\I \in P(\Pi^0_1)$. 
\end{corollary}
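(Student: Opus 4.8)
The plan is to observe that the set equality $\mathscr{L}(\I)=\Pi^0_1$ is nothing more than the conjunction of the two inclusions $\mathscr{L}(\I)\subseteq\Pi^0_1$ and $\Pi^0_1\subseteq\mathscr{L}(\I)$, each of which has already received a purely combinatorial characterization in the preceding sections. So the whole argument should reduce to citing two earlier results and intersecting their hypotheses.

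First I would verify that the two ambient-space hypotheses are simultaneously met. An uncountable Polish space is separable and completely metrizable, and it cannot be discrete (a discrete metrizable space is separable only when it is countable); hence it is, in particular, a nondiscrete metric space. This is exactly the setting required by Proposition \ref{prop:Pprimecharacterization}, while the uncountable-Polish hypothesis is exactly the setting required by Theorem \ref{thm:closed-is-Lambda}. Thus both results apply verbatim to our space $X$.

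Next I would invoke the two characterizations. By the equivalence \ref{prop:P-like_for_spaces:Pbar-nonisolated-in-Gamma-are-in-Lambda:Pbar} $\Longleftrightarrow$ \ref{item:3pprime} of Proposition \ref{prop:Pprimecharacterization}, the inclusion $\mathscr{L}(\I)\subseteq\Pi^0_1$ holds if and only if $\I$ is a $P^{\,|}$-ideal. By the equivalence \ref{thm:closed-is-Lambda:ideal} $\Longleftrightarrow$ \ref{thm:closed-is-Lambda:lambda} of Theorem \ref{thm:closed-is-Lambda}, the reverse inclusion $\Pi^0_1\subseteq\mathscr{L}(\I)$ holds if and only if $\I\in P(\Pi^0_1)$.

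Finally, the conclusion follows by conjunction: the equality $\mathscr{L}(\I)=\Pi^0_1$ holds precisely when both inclusions hold, that is, precisely when $\I$ is $P^{\,|}$ and $\I\in P(\Pi^0_1)$. I do not anticipate any genuine obstacle here, since all of the substantive work is already packaged inside the two cited statements; the only point deserving a line of care is confirming that an uncountable Polish space indeed satisfies the mild hypotheses (nondiscrete, metric) needed to apply Proposition \ref{prop:Pprimecharacterization}.
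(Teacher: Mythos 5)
Your proposal is correct and is exactly the paper's argument: the paper's proof consists precisely of citing Proposition \ref{prop:Pprimecharacterization} (for $\mathscr{L}(\I)\subseteq\Pi^0_1$ iff $P^{\,|}$) and Theorem \ref{thm:closed-is-Lambda} (for $\Pi^0_1\subseteq\mathscr{L}(\I)$ iff $\I\in P(\Pi^0_1)$) and intersecting the two characterizations. Your extra remark that an uncountable Polish space is a nondiscrete metric space, so both cited results apply, is a worthwhile point of care that the paper leaves implicit.
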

\begin{proof}
It follows by Proposition \ref{prop:Pprimecharacterization} and Theorem \ref{thm:closed-is-Lambda}. 
\end{proof}

In Corollary \ref{cor:inclusionclosedsets}, in the case of ideals with the hereditary Baire property, we will give another characterization of $\mathscr{L}(\I)=\Pi^0_1$, which instead of $P^{\,|}$ uses the property $P(\Sigma^0_2)$ (which is more similar to $P(\Pi^0_1)$ and allows to extend this approach to the case $\mathscr{L}(\I)=\Sigma^0_2$).


\section{Case: \texorpdfstring{$F_\sigma$}{Fsigma}-sets}\label{sec:Fsigmasets}

In this section, we characterize ideals $\I$ for which $\mathscr{L}(\I)$ contains all $F_\sigma$-sets and their relationship with the family $P(\Sigma^0_2)$. 
\begin{theorem}
\label{thm:Fsigma-Lambda}\label{thm:F_sigma-are-Lambda}
Let $X$ be an uncountable  Polish space and let $\I$ be an ideal on $\omega$. Then the following are equivalent:
 \begin{enumerate}[label={\rm (\roman{*})}] 
    \item $\I\in P(\Sigma^0_2)$;\label{thm:Fsigma-Lambda:ideal}
   
    \item $\Sigma^0_2 \subseteq \mathscr{L}(\I)$;\label{thm:Fsigma-Lambda:lambda}
    
    \item $\Sigma^0_1 \subseteq \mathscr{L}(\I)$;\label{thm:Fsigma-Lambda:lambda-open}

    \item $\mathscr{L}(\I)$ contains an open set which is not the union of a closed set and a countable set;\label{thm:Fsigma-Lambda:opennotunion}
    
    \item $\mathscr{L}(\I)$ contains an analytic set which is not the union of a closed set and a countable set;\label{thm:Fsigma-Lambda:analytic}

\item There exists $P\in \mathscr{L}(\I)$ and a homeomorphic copy of the Cantor space $C\subseteq X$ such that $|C\setminus P|=1$.\label{thm:Fsigma-Lambda:injection}

\end{enumerate}

If, in addition, $\I$ has  the hereditary Baire property 
\textup{(}e.g.,~if $\I$ is Borel, analytic, or coanalytic\textup{)}, 
then the above conditions are also equivalent to:

 \begin{enumerate}[label={\rm (\roman{*})}] 
\setcounter{enumi}{6}
    \item $\mathscr{L}(\I)\not\subseteq \Pi^0_1$, that is, 
 $\mathscr{L}(\I)$ contains a set which is not closed;\label{thm:Fsigma-Lambda:for-hBP-ideals}

\item $\mathscr{L}(\I)\neq \Pi^0_1$;\label{thm:lastFsigma}
   
    \item \label{item:7HPB} $\I$ is not $P^{\,|}$;
    
    \item \label{item:8HPB} $\I$ is not $P^+$;
    
    \item \label{item:9HPB} $\Lambda_{  x }(\I)\neq\Gamma_{  x }(\I)$ for some sequence $  x $;

     \item \label{item:1HPB} $\emptyset\otimes\Fin\leq_{\mathrm{RB}}\I$.
    
\end{enumerate}
\end{theorem}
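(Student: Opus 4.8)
The plan is to prove the equivalence of the first six items through the cycle (i)$\Rightarrow$(ii)$\Rightarrow$(iii)$\Rightarrow$(iv)$\Rightarrow$(v)$\Rightarrow$(vi)$\Rightarrow$(i), and then to graft the remaining items onto it under the hereditary Baire assumption. The implications (ii)$\Rightarrow$(iii) and (iv)$\Rightarrow$(v) are immediate (open sets are $F_\sigma$, open sets are analytic), and (vi)$\Rightarrow$(i) follows at once: if $C$ is a Cantor copy with $C\setminus P=\{\eta\}$ then $C\cap P$ is dense in $C$, so Lemma~\ref{lem:I-scheme-with-a-given-B}\ref{item:3Ischeme} yields an $\I$-scheme $\cA$ with $B_\I(\cA)$ a singleton, whence $\I\in P(\Sigma^0_2)$ by Proposition~\ref{prop:properties-of-P-like}\ref{prop:properties-of-P-like:Sigma-2}. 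The two steps carrying real content are (i)$\Rightarrow$(ii) and (v)$\Rightarrow$(vi).

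For (i)$\Rightarrow$(ii), fix an $\I$-scheme $\cA=\{A_s\}$ with $B_\I(\cA)=\{0^\infty\}$ and an $F_\sigma$-set $S=\bigcup_n F_n$ with $F_n$ closed, nonempty and increasing (the case $S=\emptyset$ is trivial). Working inside $A_\emptyset$, I would first record two facts: the tail $T=\bigcap_n A_{0^n}$ lies in $\I$ (otherwise $0^\infty\notin B_\I(\cA)$), and for each $n$ the subtree rooted at $0^n1$ is an $(\I\restriction A_{0^n 1})$-scheme whose $B$-set is empty, since any branch through $0^n1$ differs from $0^\infty$ and is therefore reachable. Using Lemma~\ref{lem:closed} I realize $F_n$ as the set of $\I\restriction A_{0^n1}$-limit points of a sequence supported on $A_{0^n1}$ with values in $F_n$, and I fill the ``gap'' indices of the level $A_{0^n}\setminus A_{0^{n+1}}$ with a constant in $F_n$, so that every value used at level $n$ lies in $F_n$. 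Let $x$ be the resulting sequence on $A_\emptyset$. That $S\subseteq\Lambda_x(\I)$ is clear, since each $F_n$ is realized along a positive subset of $A_{0^n1}$. For the reverse inclusion, suppose $C\in\I^+$ and $x\restriction C\to\eta$; writing $D_n=C\cap(A_{0^n}\setminus A_{0^{n+1}})$, either some $D_n$ is infinite --- and then, its values lying in the closed set $F_n$, we get $\eta\in F_n\subseteq S$ --- or every $D_n$ is finite, in which case $C\setminus A_{0^n}=\bigsqcup_{m<n}D_m$ is finite for all $n$, contradicting $0^\infty\in B_\I(\cA)$. Hence $\Lambda_x(\I)=S$, and extending $x$ by a constant off $A_\emptyset$ gives $S\in\mathscr{L}(\I)$ via Proposition~\ref{prop:basic-prop-about-Lambda-Gamma}\ref{prop:RB-for-Lambda:Lambda-sets}. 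This diagonal-control argument, pivoting on the single excluded branch, is the crux of the whole theorem.

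For (v)$\Rightarrow$(vi) (and the auxiliary (iii)$\Rightarrow$(iv)) I would use Cantor--Bendixson together with the perfect set property of analytic sets. Let $Q$ be the set of condensation points of $P$; it is closed and $P\setminus Q$ is countable. If $P\cap Q$ were closed then $P=(P\cap Q)\cup(P\setminus Q)$ would be the union of a closed and a countable set, contrary to hypothesis; so there is $\eta\in\overline{P\cap Q}\setminus(P\cap Q)\subseteq Q\setminus P$. Thus $\eta\notin P$ while every neighbourhood of $\eta$ meets $P$ in an uncountable analytic set; choosing disjoint spherical annuli around $\eta$ with radii tending to $0$ in which $P$ is uncountable, the perfect set property produces Cantor sets $C_n\subseteq P$ inside them, and $C=\{\eta\}\cup\bigcup_n C_n$ is a Cantor copy with $C\setminus P=\{\eta\}$. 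The same condensation idea gives (iii)$\Rightarrow$(iv): pick $\eta$ in the nonempty perfect kernel of $X$; then by (iii) the open set $X\setminus\{\eta\}\in\mathscr{L}(\I)$, and it is not the union of a closed and a countable set.

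Finally, under the hereditary Baire property I would attach the last items as follows. Proposition~\ref{prop:Pprimecharacterization} gives (vii)$\iff$(ix) and Proposition~\ref{prop:Ppluscharacterization} gives (x)$\iff$(xi), both without extra hypotheses. Since item (ii) puts a non-closed set into $\mathscr{L}(\I)$, items (i)--(vi) imply (vii); and (xii)$\Rightarrow$(ii) because $\emptyset\otimes\Fin\le_{\mathrm{RB}}\I$ yields $\Sigma^0_2=\mathscr{L}(\emptyset\otimes\Fin)\subseteq\mathscr{L}(\I)$ by Proposition~\ref{prop:basic-prop-about-Lambda-Gamma}\ref{prop:RB-for-Lambda:Lambda-sets} and Theorem~\ref{thm:highBorelMSP}. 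Under the Baire assumption, (ix)$\Rightarrow$(xii) is Proposition~\ref{prop:Pplus-vs-Pminus+Pvert}\ref{prop:Pplus-vs-Pminus+Pvert:hBaire-Pvert}, the equivalence (ix)$\iff$(x) is Proposition~\ref{prop:Pplus-vs-Pminus+Pvert}\ref{prop:Pplus-vs-Pminus+Pvert:hBaire}, and (vii)$\iff$(viii) follows because $\Pi^0_1\subseteq\mathscr{L}(\I)$ by Theorem~\ref{thm:xi}\ref{item:4xi}. Chaining (i)--(vi)$\Rightarrow$(vii)$\iff$(ix)$\Rightarrow$(xii)$\Rightarrow$(ii) closes the loop and carries (viii), (x), (xi) along. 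I expect the main obstacle to be the construction in (i)$\Rightarrow$(ii): extracting \emph{every} $F_\sigma$-set, and \emph{no} spurious limit points, from a scheme whose only defect is a single unreachable branch.
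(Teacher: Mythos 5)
Your proposal is correct, and for the equivalence of items (i)--(vi) it follows essentially the same route as the paper: the same cycle, the same use of Lemma~\ref{lem:closed} on the subtrees rooted at $(0^n,1)$ for (i)$\Rightarrow$(ii) (your level-by-level bookkeeping with increasing $F_n$, the observation that $\bigcap_n A_{0^n}\in\I$, and the dichotomy ``some $D_n$ infinite vs.\ all finite'' match the paper's three-case analysis), the same Cantor--Bendixson argument for (iii)$\Rightarrow$(iv) and (v)$\Rightarrow$(vi) (the paper places Cantor sets in disjoint balls around an injective sequence of condensation points converging to $\eta$ rather than in shrinking annuli, but this is cosmetic), and the same appeal to Lemma~\ref{lem:I-scheme-with-a-given-B} and Proposition~\ref{prop:properties-of-P-like} for (vi)$\Rightarrow$(i). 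The only genuine divergence is in the hereditary-Baire block: the paper closes the loop with a direct construction of an $\I$-scheme witnessing (vii)$\Rightarrow$(i) (building full $(\I\restriction A^n)$-schemes on the preimages of disjoint balls around a non-closed limit set), whereas you bypass this entirely by chaining (vii)$\Leftrightarrow$(ix)$\Rightarrow$(xii)$\Rightarrow$(ii) through Propositions~\ref{prop:Pprimecharacterization} and~\ref{prop:Pplus-vs-Pminus+Pvert} and Theorem~\ref{thm:highBorelMSP}; since the paper proves all of those implications anyway, your route is logically sufficient and slightly leaner, at the cost of losing the explicit scheme produced from a non-closed set. The one point you should make explicit is the value of the sequence on the tail $\bigcap_n A_{0^n}$ in (i)$\Rightarrow$(ii) (any constant in $F_0$ works, and your case analysis already shows it cannot contribute spurious limit points).
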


Before we proceed to the proof of Theorem \ref{thm:Fsigma-Lambda}, we need to show an auxiliary lemma. For, recall that a point $p$ in a topological space $X$ is called a \emph{condensation point} of a set $A \subseteq X$ if $U\cap (A\setminus \{p\})$ is uncountable for every neighborhood $U$ of $p$. The set of all condensation points of $A$ is denoted by $A^\star$. 

\begin{lemma}
\label{lem:notclosedunioncountable}
Let $X$ be an uncountable Polish space. Then there exists an open subset of $X$ which is not the union of a closed set and a countable set. 
\end{lemma}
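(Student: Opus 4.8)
The plan is to reduce the statement to producing an open set that possesses a boundary point which is a condensation point of it. Concretely, I first observe the following: if an open set $U\subseteq X$ can be written as $U=F\cup C$ with $F$ closed and $C$ countable, then $U^\star\subseteq U$, where $U^\star$ denotes the set of condensation points of $U$. Indeed, since $F\subseteq U$, any point $p\notin U$ lies outside the closed set $F$, so some neighborhood $V$ of $p$ is disjoint from $F$; then $U\cap V=C\cap V$ is countable, whence $p\notin U^\star$. Therefore it suffices to construct an open set $U$ together with a point $p\in U^\star\setminus U$, since such a $U$ cannot be expressed as the union of a closed and a countable set.

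To build such a $U$, I would invoke the perfect set property: as $X$ is an uncountable Polish space, its perfect kernel is nonempty, so $X$ contains a nonempty perfect set $P$. The reason for working inside $P$ is that every nonempty relatively open subset of $P$ is itself a nonempty Polish space with no isolated points, hence uncountable; in particular $B(x,r)\cap P$ is uncountable for every $x\in P$ and every $r>0$. Now fix $p\in P$. Since $P$ has no isolated points, I choose a sequence $(x_n)$ in $P\setminus\{p\}$ with $d(x_n,p)\to 0$ (where $d$ is a compatible complete metric) and set $r_n=\tfrac12 d(x_n,p)>0$. Define $U=\bigcup_{n} B(x_n,r_n)$.

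It then remains to verify the required properties. The set $U$ is open as a union of open balls. For each $n$ one has $p\notin \overline{B}(x_n,r_n)$, because $d(x_n,p)=2r_n>r_n$; consequently $p\notin U$, while $x_n\in U$ together with $x_n\to p$ yields $p\in\overline U$. Finally, given any neighborhood $V$ of $p$, the triangle inequality shows that $B(x_n,r_n)\subseteq V$ for all sufficiently large $n$ (the centers converge to $p$ and the radii tend to $0$), and each such ball contains the uncountable set $B(x_n,r_n)\cap P$; hence $U\cap V$ is uncountable, so $p\in U^\star$. Thus $p\in U^\star\setminus U$, and by the reduction of the first paragraph $U$ is not the union of a closed set and a countable set.

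The one genuinely delicate point, and precisely the motivation for passing to the perfect kernel, is guaranteeing that the balls accumulating at $p$ are individually uncountable: in an arbitrary uncountable Polish space the scattered (countable) part could in principle make small balls countable, which would break the argument, and restricting the centers $x_n$ to a perfect subset $P$ is exactly what rules this out.
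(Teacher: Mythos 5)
Your proof is correct, but it takes a different route from the paper's. The paper simply takes a condensation point $\eta$ of $X$ (nonempty by Cantor--Bendixson) and shows directly that $U=X\setminus\{\eta\}$ works: if $U=F\cup C$, one picks points $x_n\in B(\eta,2^{-n})\setminus(C\cup\{\eta\})$ (possible because $\eta$ is a condensation point), forcing $\eta\in F\subseteq U$, a contradiction. You instead isolate the clean necessary condition that any set of the form $F\cup C$ with $F$ closed and $C$ countable must contain all of its own condensation points, and then build an open set violating it, namely a union of shrinking balls centered at points of the perfect kernel converging to an excluded point $p$. Both arguments hinge on Cantor--Bendixson (condensation points for the paper, the perfect kernel for you), and all your steps check out: the reduction, the uncountability of $B(x_n,r_n)\cap P$ as a nonempty relatively open subset of a perfect Polish space, the exclusion $p\notin U$ from $d(x_n,p)=2r_n$, and the inclusion $B(x_n,r_n)\subseteq V$ for large $n$. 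The paper's construction is shorter and its witness $X\setminus\{\eta\}$ is simpler; your version buys a reusable observation (the criterion $U^\star\subseteq U$ applies to arbitrary sets, not just open ones) at the cost of a slightly more elaborate construction.
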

\begin{proof}
Since $X$ is an uncountable Polish space, then the set $X^\star$ of all condensation points of $X$ is nonempty. 
In addition, thanks to Cantor-Bendixson theorem, see e.g.~\cite[Theorem~6.4]{MR1321597}, its complement $X\setminus X^\star$ is countable and open. 
Pick a point $\eta \in X^\star$ and define $U=X\setminus \{\eta\}$. In particular, $U$ is an uncountable open 
set and it does not contain $\eta$. 
Let us suppose for the sake of contradiction that there are a closed set $F$ and a countable set $C$ such that $U = F\cup C$. 
Since $\eta$ is a condensation point of $X$, we can pick $x_n\in B(\eta,2^{-n})\setminus(C\cup\{\eta\})$ for every $n\in \omega$. Then $x_n \in F$ for each $n\in \omega$ and $F$ is closed. Hence  
$\eta\in F\subseteq U$, which is the claimed contradiction. 
\end{proof}

\begin{remark}\label{rmk:idealemptysettimesmaximalplusfin}
Let $X$ be an uncountable Polish space. As it follows by Corollary \ref{cor:emptysettimesmaximalplusfin}, if $\I$ is a maximal ideal and $\J=(\emptyset\otimes \I)\oplus \mathrm{Fin}$, then 
$$
\mathscr{L}(\J)=\{A\cup B: A \in [X]^{\le \omega}, B \in \Pi^0_1\}. 
$$
Thanks to Lemma \ref{lem:notclosedunioncountable}, there exists an (uncountable) open set which is not the union of a closed set and a countable set. And, of course, there is a countable nonclosed set. 
Therefore $\Pi^0_1 \subsetneq \mathscr{L}(\J)\subsetneq \Sigma^0_2$. 
It follows by Theorem \ref{thm:closed-is-Lambda} and Theorem \ref{thm:Fsigma-Lambda} that $\J\in P(\Pi^0_1)\setminus P(\Sigma^0_2)$. 
Notice that $\J$ does not have the hereditary Baire property, 
see Proposition \ref{prop:P-for-closed2}\ref{item:1Ppi01}-\ref{item:1Ppi05}.

The above example shows that item \ref{thm:Fsigma-Lambda:opennotunion} cannot be weakened to the existence of an uncountable open set in $\mathscr{L}(\I)$ (in fact, $X \in \mathscr{L}(\I)$). It also shows that the equivalence with item \ref{thm:Fsigma-Lambda:for-hBP-ideals} 
fails without the additional hypothesis that the ideal is hereditary Baire. 
\end{remark}

We proceed now to the proof of Theorem \ref{thm:Fsigma-Lambda}.
\begin{proof}
    [Proof of Theorem \ref{thm:Fsigma-Lambda}]
    \ref{thm:Fsigma-Lambda:ideal} $\implies$ \ref{thm:Fsigma-Lambda:lambda}. 
Let $F$ be a nonempty $F_\sigma$-set and let $\{F_n: n \in \omega\}$ be nonempty closed sets such that $F = \bigcup_nF_n$. 

Since $\I \in P(\Sigma^0_2)$ there exists an $\I$-scheme $\cA=\{A_s:s\in 2^{<\omega}\}$ such that $B_{\I}(\cA)=\{0^\infty\}$. 
For every $n\in \omega$, the family $\cA_n = \{A^n_s:s\in 2^{<\omega}\}$ with  $A^n_s=A_{(0^n,1)^\frown s}$
is an $\I$-scheme with $B_{\I}(\cA_n)=\emptyset$. 
Thus, for every $n\in \omega$, it follows by Lemma~\ref{lem:closed} that there exists a sequence $ y ^n=(y^n_i: i\in A^n_\emptyset)$ with values in $F_n$ such that $\Lambda_{ y ^n}(\I\restriction A^n_\emptyset)=F_n$. At this point, pick an element $\eta_0 \in F$ and define the sequence $  x =(x_i: i \in \omega)$ by $x_i=y^n_i$ for all $i \in A^n_\emptyset$ and $n \in \omega$, and $x_n=\eta_0$ for all 
$i\in \omega \setminus \bigcup_n A^n_\emptyset$. 

To conclude the proof, it will be enough to show that $\Lambda_{  x }(\I)=F$. 
Since $A^n_\emptyset\in\I^+$, we have $F_n = \Lambda_{  x ^n}(\I\restriction A^n_\emptyset)\subseteq \Lambda_{\I}(x)$ for every $n\in \omega$, hence $F\subseteq \Lambda_{  x }(\I)$. Conversely, pick $\eta \in \Lambda_{  x }(\I)$ and $D\in \I^+$ such that $\eta$ is the limit of the subsequence $(x_n: n \in D)$. 
We have three cases:
\begin{enumerate}[label=(\alph*)]
    \item If $D\setminus \bigcup_n A^n_\emptyset$ is infinite, then $\eta=\eta_0\in F$;
\item If $D\cap A^n_\emptyset$ is infinite for some $n\in\omega$, then $\eta \in F_n\subseteq F$ since $F_n$ is closed;  
\item If $D \setminus \bigcup_n A^n_\emptyset$ is finite 
and 
$D\cap A^n_\emptyset$ is finite for every $n$,
then $D\setminus A_{0^n}$ is finite for every $n\in\omega$. However, this case is impossible by hypothesis $0^\infty\in B_{\I}(\cA)$. 
\end{enumerate}
Therefore also the opposite inclusion $\Lambda_{  x }(\I)\subseteq F$ holds.

\ref{thm:Fsigma-Lambda:lambda} $\implies$ \ref{thm:Fsigma-Lambda:lambda-open}. It is obvious.

\ref{thm:Fsigma-Lambda:lambda-open}
$\implies$
\ref{thm:Fsigma-Lambda:opennotunion}. 
It follows by Lemma \ref{lem:notclosedunioncountable}.

\ref{thm:Fsigma-Lambda:opennotunion}
$\implies$ 
\ref{thm:Fsigma-Lambda:analytic}. It is obvious.

\ref{thm:Fsigma-Lambda:analytic} $\implies$ \ref{thm:Fsigma-Lambda:injection}. 
Let $P\in \mathscr{L}(\I)$ be an analytic set which is not the union of a closed set and a countable set. 
Recall that $P^\star$ is perfect and $P\setminus P^\star$ is countable. It follows that 
$P\cap P^\star = P\setminus (P\setminus P^\star)$ is not closed. 
Hence, there exists an injective sequence $(p_n: n \in\omega)$ with values in $P\cap P^*$ which is convergent to some limit $\eta \notin P^\star\setminus P$. In particular, $\eta\notin P$.  
Also, let $(r_n: n\in\omega)$ be a sequence of positive reals which is convergent to zero such that the balls $B_n = B(p_n,r_n)$ are pairwise disjoint.
Since $p_n$ are condensation points, sets $P\cap B_n$ are uncountable, and consequently we can find a homeomorphic copy of the Cantor space $C_n\subseteq P\cap B_n$. Thus we can find a homeomorphism $f_n: [({0^{n}}, 1)]\to C_n$ for each $n$.
Define the map $f:2^\omega\to X$ by $f(0^\infty)=\eta$ and $f(x) = f_n(x)$ for each $x\in [({0^{n}},1)]$ and $n\in \omega$.
Then $f$ is a continuous injection 
such that $f(0^\infty)\notin P$ and $f[2^\omega\setminus\{0^\infty\}]\subseteq P$, 
so that $C=f[2^\omega]$. In addition, the inverse of $f$ is continuous thanks to \cite[Proposition 4.1(v)]{MR1617463}.

\ref{thm:Fsigma-Lambda:injection} $\implies$ \ref{thm:Fsigma-Lambda:ideal}. 
It follows from Lemma~\ref{lem:I-scheme-with-a-given-B}\ref{item:3Ischeme} and Proposition~\ref{prop:properties-of-P-like}\ref{prop:properties-of-P-like:Sigma-2}.

At this point, we know that items \ref{thm:Fsigma-Lambda:ideal}--\ref{thm:Fsigma-Lambda:injection} are equivalent. To conclude the proof, let us suppose hereafter that $\I$ has the hereditary Baire property.

\ref{thm:Fsigma-Lambda:analytic} $\implies$ \ref{thm:Fsigma-Lambda:for-hBP-ideals} $\implies$ \ref{thm:lastFsigma}. They are obvious.

\ref{thm:lastFsigma} $\implies$ \ref{thm:Fsigma-Lambda:for-hBP-ideals}. 
Since $\I$ has the hereditary Baire property, then $\I\in P(\Pi^0_1)$ by Proposition~\ref{prop:P-for-closed2}\ref{item:1Ppi01}. It follows by Theorem~\ref{thm:closed-is-Lambda} that $\Pi^0_1\subseteq \mathscr{L}(\I)$. By hypothesis $\mathscr{L}(\I)\neq \Pi^0_1$, therefore $\mathscr{L}(\I)\not\subseteq \Pi^0_1$.

\ref{thm:Fsigma-Lambda:for-hBP-ideals} $\implies$ \ref{thm:Fsigma-Lambda:ideal}. 
Assume that $\I$ has the hereditary Baire property and fix a set $F\in \mathscr{L}(\I)$ which is not closed. Let also $ y =(y_n: n\in\omega)$ be a sequence with values in  $X$ such that $F = \Lambda_{ y }(\I)$.

Since $F$ is not closed, there is an injective sequence $(p_n: n\in \omega)$ with values in $F$ which is convergent to some limit $\eta\notin F$. Let $(r_n: n\in\omega)$ be a sequence of positive reals which is convergent to zero such that the open balls $B_n = B(p_n,r_n)$ are pairwise disjoint.
For each $n\in \omega$, define
$A^n = \{i\in \omega: y_i\in B_n\}$.
Then the sets $A^n$ are pairwise disjoint since $B_n$ are so. In addition, each $A_n$ is $\I$-positive because $p_n$ is an $\I$-limit point of $ y $, hence also an $\I$-cluster point.  
Since $\I$ has the hereditary Baire property,  $\I\restriction A^n$ has the Baire property, hence $\I\restriction A^n \in P(\Pi^0_1)$ by Proposition~\ref{prop:P-for-closed2}\ref{item:1Ppi01}.    
Thanks to Proposition \ref{prop:properties-of-P-like}\ref{prop:properties-of-B-empty-for-Ischeme:full}, there exists a full $(\I\restriction A^n)$-scheme $\cA^n = \{A^n_s:s\in 2^{<\omega}\}$ such that $B_{\I\restriction A^n}(\cA^n)=\emptyset$. In particular, $A^n_\emptyset = A^n$ for each $n \in \omega$. 

Now we define a family $\cA=\{A_s:s\in 2^{<\omega}\}$ as it follows: 
\begin{enumerate}[label=(\alph*)]
\item $A_\emptyset = \bigcup_n A^n_\emptyset$;
\item $A_{0^k} = \bigcup_{n\ge k} A^n_\emptyset$ for every $k\geq 1$;
\item $A_{(0^k,1)^\frown s} = A^k_s$ for every $k\geq 0$ and $s\in 2^{<\omega}$.
\end{enumerate}
It follows by construction that $\cA$ is an $\I$-scheme. To complete the proof, it is sufficient to show that $B_\I(\cA)=\{0^\infty\}$. 
For, fix $x\in 2^\omega$. 
If $x\neq 0^\infty$, then there is the smallest $n\in\omega$ with $x_n=1$. Since $B_{\I\restriction A^n}(\cA^n)=\emptyset$, we obtain $x\notin B_\I(\cA)$.
Lastly, assume that $x=0^\infty$ and suppose for the sake of contradiction that 
$x\notin B_\I(\cA)$. Then 
there exists $D\in \I^+$ such that $D\setminus A_{0^n}$ is finite for every $n\in \omega$. 
Then the subsequence $(y_i: i\in D)$ has only finitely many values in each $B_n$, so 
it is convergent to $\eta$. 
Since $D\in\I^+$, we obtain $\eta \in \Lambda_{ y }(\I)=F$, which is the claimed contradiction.

\ref{thm:Fsigma-Lambda:for-hBP-ideals} 
$\iff$ \ref{item:7HPB}. 
See Proposition~\ref{prop:Pprimecharacterization}.

\ref{item:7HPB} $\iff$ \ref{item:8HPB}.
See Proposition~\ref{prop:Pplus-vs-Pminus+Pvert}\ref{prop:Pplus-vs-Pminus+Pvert:hBaire}.

\ref{item:8HPB} $\iff$ \ref{item:9HPB}.
See Proposition \ref{prop:Ppluscharacterization}.

\ref{item:7HPB} $\implies$ \ref{item:1HPB}. See Proposition~\ref{prop:Pplus-vs-Pminus+Pvert}\ref{prop:Pplus-vs-Pminus+Pvert:hBaire-Pvert}.

\ref{item:1HPB} $\implies$ \ref{thm:Fsigma-Lambda:lambda}. 
Thanks to Theorem \ref{thm:highBorelMSP}, we have $\mathscr{L}(\emptyset\otimes \mathrm{Fin})=\Sigma^0_2$. We conclude by Proposition \ref{prop:basic-prop-about-Lambda-Gamma}\ref{prop:RB-for-Lambda:Lambda-sets} that $\mathscr{L}(\emptyset\otimes \mathrm{Fin})\subseteq \mathscr{L}(\I)$. Cf. also \cite[Corollary 2.9]{MSP24}.
\end{proof}

\begin{corollary}\label{ref:cor1Fsigma}
$\I\in P(\Sigma^0_2)$ for every ideal with the hereditary Baire property which is not $P^+$. In particular, 
$\I\in P(\Sigma^0_2)$ for every analytic P-ideal $\I$ which is not $F_\sigma$. 
\end{corollary}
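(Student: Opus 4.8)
The plan is to deduce both assertions directly from Theorem~\ref{thm:Fsigma-Lambda}, exploiting that membership in $P(\Sigma^0_2)$ is a purely combinatorial property of $\I$: by Definition~\ref{def:propertiesIschemesclasses} it asks only for an $\I$-scheme $\cA$ with $B_\I(\cA)=\{0^\infty\}$, with no reference to any space. Consequently it suffices to check the hypotheses of Theorem~\ref{thm:Fsigma-Lambda} for one convenient uncountable Polish space, and $X=2^\omega$ (or $X=\R$) will serve simultaneously for all the results invoked below.

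For the general statement, I would fix such an $X$ and assume that $\I$ has the hereditary Baire property and is not $P^+$. Then the full list of equivalences of Theorem~\ref{thm:Fsigma-Lambda} is in force, and ``not $P^+$'' is precisely item~\ref{item:8HPB}; this is equivalent to item~\ref{thm:Fsigma-Lambda:ideal}, that is, $\I\in P(\Sigma^0_2)$. Since the conclusion does not involve $X$, this settles the first sentence.

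For the ``in particular'', let $\I$ be an analytic P-ideal that is not $F_\sigma$. First, $\I$ has the hereditary Baire property, since every analytic ideal does (Section~\ref{subsec:baire}). It then remains to verify that $\I$ is not $P^+$, after which the first part applies. I would obtain this by combining the characterization of $P^+$ with the behaviour of analytic P-ideals: working in $X=\R$ (a nondiscrete metric space all of whose closed subsets are separable), Proposition~\ref{prop:Ppluscharacterization} identifies being $P^+$ with the equality $\Gamma_{x}(\I)=\Lambda_{x}(\I)$ holding for every sequence $x$, while Theorem~\ref{thm:oldthmballeo}\ref{item:BL3} identifies the latter, for analytic P-ideals, with $\I$ being $F_\sigma$. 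Hence $\I$ not $F_\sigma$ forces $\I$ not $P^+$. (A shorter route is also available: Theorem~\ref{thm:oldthmballeo}\ref{item:BL5} yields $\mathscr{L}(\I)=\Sigma^0_2$ directly for such $\I$ on $X=2^\omega$, so item~\ref{thm:Fsigma-Lambda:lambda} holds and again gives item~\ref{thm:Fsigma-Lambda:ideal}.)

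There is essentially no hard step here, as the corollary merely repackages Theorem~\ref{thm:Fsigma-Lambda}. The only points demanding care, which I expect to be the closest thing to an obstacle, are \emph{(a)} checking the implication ``analytic P-ideal and not $F_\sigma$ $\Rightarrow$ not $P^+$'', handled by pairing Proposition~\ref{prop:Ppluscharacterization} with Theorem~\ref{thm:oldthmballeo}\ref{item:BL3}, and \emph{(b)} the bookkeeping of selecting a single space $X$ (e.g.~$\R$ or $2^\omega$) that simultaneously meets the hypotheses of all the cited results, which is legitimate precisely because $P(\Sigma^0_2)$ is independent of $X$.
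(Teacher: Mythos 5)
Your proposal is correct and follows essentially the same route as the paper: the paper's proof is a one-line citation of Theorem~\ref{thm:Fsigma-Lambda} together with Theorem~\ref{thm:oldthmballeo}\ref{item:BL5} and Theorem~\ref{thm:xi}\ref{item:5xi}, and your argument likewise reduces everything to the equivalences of Theorem~\ref{thm:Fsigma-Lambda} (your ``shorter route'' for the second sentence is exactly the paper's). The only cosmetic difference is that you invoke the internal equivalence \ref{thm:Fsigma-Lambda:ideal}~$\iff$~\ref{item:8HPB} and, for the ``in particular'' part, optionally route through Proposition~\ref{prop:Ppluscharacterization} and Theorem~\ref{thm:oldthmballeo}\ref{item:BL3} instead of \ref{item:BL5}; both variants are valid and rest on the same results.
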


\begin{proof}
    It follows by Theorem~\ref{thm:Fsigma-Lambda}, 
    Theorem \ref{thm:oldthmballeo}\ref{item:BL5}, and Theorem \ref{thm:xi}\ref{item:5xi}.
\end{proof}


\section{Case: \texorpdfstring{$F_{\sigma\delta}$}{Fsigmadelta}-sets}\label{sec:Fsigmadeltasets}

On the same lines of Section \ref{sec:Fsigmasets}, here we characterize ideals $\I$ for which $\mathscr{L}(\I)$ contains all $F_{\sigma\delta}$-sets and their relationship with the family $P(\Pi^0_3)$. 

\begin{theorem}
\label{thm:F_sigmadelta-are-Lambda}
Let $X$ be an uncountable Polish space and let $\I$ be an ideal on $\omega$.
Then following are equivalent:
\begin{enumerate}[label={\rm (\roman{*})}] 
    \item $\I\in P(\Pi^0_3)$;\label{thm:F_sigmadelta-are-Lambda:P}
    \item $\Pi^0_3\subseteq\mathscr{L}(\I)$;\label{thm:F_sigmadelta-are-Lambda:Pi03}
    \item $\Pi^0_2\subseteq\mathscr{L}(\I)$;\label{thm:F_sigmadelta-are-Lambda:Pi02}

  \item $\mathscr{L}(\I)$ contains an analytic set which is not $\Sigma^0_2$;\label{thm:F_sigmadelta-are-Lambda:analytic-set}

\item There exists $P\in \mathscr{L}(\I)$ and a homeomorphic copy of the Cantor space $C\subseteq X$ such that $C\setminus P$ is countable dense in $C$.\label{thm:F_sigmadelta-are-Lambda:copy-of-Cantor}

\end{enumerate}
\end{theorem}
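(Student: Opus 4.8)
The plan is to prove the five conditions equivalent by running the cycle $\ref{thm:F_sigmadelta-are-Lambda:P}\Rightarrow\ref{thm:F_sigmadelta-are-Lambda:Pi03}\Rightarrow\ref{thm:F_sigmadelta-are-Lambda:Pi02}\Rightarrow\ref{thm:F_sigmadelta-are-Lambda:analytic-set}\Rightarrow\ref{thm:F_sigmadelta-are-Lambda:copy-of-Cantor}\Rightarrow\ref{thm:F_sigmadelta-are-Lambda:P}$, in close parallel with the treatments of $\Pi^0_1$ (Theorem~\ref{thm:closed-is-Lambda}) and of $\Sigma^0_2$ (Theorem~\ref{thm:Fsigma-Lambda}). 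The step $\ref{thm:F_sigmadelta-are-Lambda:Pi03}\Rightarrow\ref{thm:F_sigmadelta-are-Lambda:Pi02}$ is immediate from $\Pi^0_2\subseteq\Pi^0_3$, so the genuine content lies in the construction $\ref{thm:F_sigmadelta-are-Lambda:P}\Rightarrow\ref{thm:F_sigmadelta-are-Lambda:Pi03}$ of a realizing sequence and in the two closing steps, which use the scheme machinery of Section~\ref{sec:Ischemes} together with a Hurewicz-type dichotomy.

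For the middle implications I would argue as follows. For $\ref{thm:F_sigmadelta-are-Lambda:Pi02}\Rightarrow\ref{thm:F_sigmadelta-are-Lambda:analytic-set}$: since $X$ is uncountable Polish it contains a homeomorphic copy $C$ of the Cantor space (\cite[Corollary~6.5]{MR1321597}); fixing a countable dense $Q\subseteq C$, the set $C\setminus Q$ is a $G_\delta$ subset of $X$, hence lies in $\mathscr{L}(\I)$ by $\ref{thm:F_sigmadelta-are-Lambda:Pi02}$, is Borel (so analytic), and fails to be $\Sigma^0_2$ by a Baire-category argument inside the perfect space $C$ (were it $F_\sigma$, then $Q$ would be a countable comeager subset of $C$, which is absurd). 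For $\ref{thm:F_sigmadelta-are-Lambda:analytic-set}\Rightarrow\ref{thm:F_sigmadelta-are-Lambda:copy-of-Cantor}$ I would invoke Hurewicz's theorem characterizing $\Sigma^0_2$ sets (see \cite[Theorem~21.18]{MR1321597} and its dual for $F_\sigma$ sets): an analytic set $P$ which is not $\Sigma^0_2$ admits a homeomorphic copy $C$ of the Cantor space on which $C\setminus P$ is countable and dense in $C$, so one simply keeps the same $P$. Finally, for $\ref{thm:F_sigmadelta-are-Lambda:copy-of-Cantor}\Rightarrow\ref{thm:F_sigmadelta-are-Lambda:P}$: as $C\setminus P$ is countable dense in the perfect set $C$, its complement $C\cap P$ is dense in $C$, so Lemma~\ref{lem:I-scheme-with-a-given-B}\ref{item:3Ischeme} produces an $\I$-scheme $\cA$ with $B_\I(\cA)=f^{-1}[C\setminus P]$ for a homeomorphism $f\colon 2^\omega\to C$; this set is countable and dense in $2^\omega$, whence $\I\in P(\Pi^0_3)$ by Proposition~\ref{prop:properties-of-P-like}\ref{prop:properties-of-P-like:Pi-3}.

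The crux is $\ref{thm:F_sigmadelta-are-Lambda:P}\Rightarrow\ref{thm:F_sigmadelta-are-Lambda:Pi03}$, the $F_{\sigma\delta}$-analogue of Lemma~\ref{lem:closed} and of the implication $\ref{thm:Fsigma-Lambda:ideal}\Rightarrow\ref{thm:Fsigma-Lambda:lambda}$ in Theorem~\ref{thm:Fsigma-Lambda}. Fixing an $\I$-scheme $\cA$ with $B_\I(\cA)=\Q(2^\omega)$ and a nonempty $F\in\Pi^0_3$, I would first use $\mathscr{L}(\Fin^2)=\Pi^0_3$ (Theorem~\ref{thm:highBorelMSP}) together with Proposition~\ref{prop:basic-prop-about-Lambda-Gamma}\ref{item:3RB} to write $F=\limsup_k F_k$ for suitable nonempty closed sets $F_k$. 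The idea is then to distribute along the tree $2^{<\omega}$ the closed realizers of Lemma~\ref{lem:closed}, attaching to the $k$-th layer of nodes that have just added a digit $1$ a block of the sequence whose values approximate $F_k$; here one exploits the self-similarity of a $\Q(2^\omega)$-scheme (every subscheme is again of the same kind, and the re-indexing of Proposition~\ref{prop:properties-of-P-like}\ref{prop:properties-of-P-like:implications} extracts $\emptyset$- and $\{0^\infty\}$-subschemes realizing closed and $F_\sigma$ pieces). The eventually-zero branches, which constitute $B_\I(\cA)=\Q(2^\omega)$, act as barriers: an $\I$-positive set yielding a convergent subsequence is forced to descend through infinitely many blocks, so its limit lies in $\limsup_k F_k=F$, while convergence localized along an eventually-zero branch is excluded precisely because such branches belong to $B_\I(\cA)$; conversely every $\eta\in F$ lies in infinitely many $F_k$ and is captured along a non-eventually-zero branch with a witnessing $\I$-positive set. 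I expect the main obstacle to be the bookkeeping that secures both inclusions $\Lambda_{x}(\I)\subseteq F$ and $F\subseteq\Lambda_{x}(\I)$ simultaneously, and in particular the exclusion of spurious $\I$-limit points arising at the interfaces between consecutive blocks, which is exactly where the density of $\Q(2^\omega)$ (in contrast with the single barrier $\{0^\infty\}$ of the $\Sigma^0_2$ case) is indispensable.
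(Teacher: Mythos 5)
Your overall architecture matches the paper's: the same five-step cycle, Hurewicz's theorem \cite[Theorem~21.18]{MR1321597} for \ref{thm:F_sigmadelta-are-Lambda:analytic-set}$\Rightarrow$\ref{thm:F_sigmadelta-are-Lambda:copy-of-Cantor}, and the scheme machinery (Lemma~\ref{lem:I-scheme-with-a-given-B} combined with Proposition~\ref{prop:properties-of-P-like}\ref{prop:properties-of-P-like:Pi-3}) for \ref{thm:F_sigmadelta-are-Lambda:copy-of-Cantor}$\Rightarrow$\ref{thm:F_sigmadelta-are-Lambda:P}; your Baire-category argument for \ref{thm:F_sigmadelta-are-Lambda:Pi02}$\Rightarrow$\ref{thm:F_sigmadelta-are-Lambda:analytic-set} is a harmless variant of the paper's appeal to the existence of a set in $\Pi^0_2\setminus\Sigma^0_2$. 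These steps are all sound.

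The gap is in \ref{thm:F_sigmadelta-are-Lambda:P}$\Rightarrow$\ref{thm:F_sigmadelta-are-Lambda:Pi03}, which you leave at the level of a heuristic, and the heuristic as stated does not work. Two concrete problems. First, the bare representation $F=\limsup_k F_k$ with $F_k$ closed is not enough: to prove $F\subseteq\Lambda_{x}(\I)$ you must produce, for a given $p\in F$, one $\I$-positive set along which the sequence \emph{converges to} $p$, and knowing only that the subsequence eventually enters each $F_{n_k}$ (for the infinitely many $k$ with $p\in F_{n_k}$) yields clustering, not convergence. The paper first refines the representation (Lemma~\ref{lem:decreasing-limsup}) so that $\mathrm{diam}(C_{n_0}\cap\cdots\cap C_{n_k})\to 0$ along every increasing index sequence with nonempty intersection; this is what upgrades ``eventually in every $C_{n_k}$'' to ``convergent to $p$,'' and it is obtained by intersecting the closed pieces with locally finite closed covers of small mesh --- an ingredient entirely absent from your sketch. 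Second, your plan of attaching ``the closed realizers of Lemma~\ref{lem:closed}'' for $F_k$ to the $k$-th layer of the tree overshoots: a sub-scheme realizing $F_k$ in the sense of Lemma~\ref{lem:closed} makes \emph{every} point of $F_k$ an $\I$-limit point, so you would get $\bigcup_k F_k\subseteq\Lambda_{x}(\I)$, while $\limsup_k F_k$ is in general a proper subset of $\bigcup_k F_k$. (This is precisely the difference from the $\Sigma^0_2$ case, where $F=\bigcup_n F_n$ and the layer-by-layer realizer construction of Theorem~\ref{thm:Fsigma-Lambda} is correct.) The paper instead places a \emph{single} point $r_s\in\bigcap\{C_k\cap P: k\le |s|-1,\ s_k=1\}$ at each node $s$ ending in $1$, so that values deep along a branch are forced into many $C_k$'s simultaneously; moreover, the sub-case of an $\I$-positive set concentrating along an eventually-zero branch is handled not by ``exclusion'' but by using $x\in B_\I(\cA)$ to find infinitely many levels at which the set leaks into a $1$-successor, which forces the limit into infinitely many $C_m$'s and hence into $F$ after all.
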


Before we proceed with the proof of Theorem \ref{thm:F_sigmadelta-are-Lambda}, we need to prove a representation of $F_{\sigma\delta}$-sets as superior limits of certain sequences. 
For, recall that a family $\mathscr{F}$ of subsets of a topological space  $X$ is 
said to be \emph{locally finite} if for every $x\in X$ there exists a neighborhood $U$ of $x$ such that the set $\{F\in \mathcal{F}: U\cap F\neq \emptyset\}$ is finite.

\begin{lemma}
\label{lem:decreasing-limsup}
Let $X$ be a separable metric space. 
Then for every $F_{\sigma\delta}$-set $P\subseteq X$ there exists a family $\{C_k:k\in \omega\}$ of closed sets such that:
\begin{enumerate}[label={\rm (\roman{*})}] 
\item \label{item1representationFsigmadelta} $P=\limsup_k C_k$;
\item \label{item2representationFsigmadelta} $\lim_k \mathrm{diam}(C_{n_0} \cap \cdots \cap C_{n_k})=0$ for every strictly increasing sequence of integers $(n_i)$ such that $\bigcap_i C_{n_i}\neq \emptyset$;
\item \label{item3representationFsigmadelta} $P \cap C_k \neq \emptyset$ for every $k \in \omega$.
\end{enumerate}
\end{lemma}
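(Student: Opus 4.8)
The plan is to start from a convenient normal form. Since $P$ is $F_{\sigma\delta}$ in the separable metric space $X$, I would write $P=\bigcap_m U_m$ with each $U_m=\bigcup_n F_{m,n}$ an $F_\sigma$-set; replacing $U_m$ by $\bigcap_{i\le m}U_i$ and $F_{m,n}$ by $\bigcap_{i\le m}F_{i,n}$, I may assume $(U_m)$ is decreasing, each family $(F_{m,n})_n$ is increasing in $n$ with $\bigcup_n F_{m,n}=U_m$, and $(F_{m,n})_m$ is decreasing in $m$. For $x\in U_m$ set $a_m(x)=\min\{n:x\in F_{m,n}\}$, so that $x\in P$ iff $a_m(x)<\infty$ for all $m$. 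The essential difficulty of the lemma is already visible here: as soon as $P$ is not closed, $a_m$ is unbounded on $P$, so a point of $P$ can enter the filtration arbitrarily late.

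Next I would bring in the metric, aiming to get conditions \ref{item2representationFsigmadelta} and \ref{item3representationFsigmadelta}. Because $X$ is metric it is paracompact, so for each level $\ell$ I can fix a \emph{locally finite} closed cover $\mathcal{V}_\ell$ of $X$ by sets of diameter at most $2^{-\ell}$, refining as $\ell$ grows. The sets $C_k$ will be built from pieces of the form $F_{\ell,n}\cap V$ with $V\in\mathcal{V}_\ell$: the first factor measures how deep the set sits in the tower $(U_m)$, the second controls its diameter, and I discard every piece disjoint from $P$ (which secures \ref{item3representationFsigmadelta}). The point of local finiteness is twofold. First, it makes the family used at level $\ell$ point-finite, so each $x$ lies in only finitely many sets of a given level. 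Second, together with $\diam(V)\le 2^{-\ell}$ it yields \ref{item2representationFsigmadelta} essentially for free: if $\bigcap_i C_{n_i}\neq\emptyset$, a witnessing point lies in infinitely many $C_{n_i}$, hence—by point-finiteness per level—in sets of unboundedly many levels, whence $\diam(C_{n_0}\cap\cdots\cap C_{n_k})\le\min_{j\le k}\diam(C_{n_j})\to 0$.

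It remains to arrange $P=\limsup_k C_k$. The inclusion $\limsup_k C_k\subseteq P$ is the easy half: a point $y\notin P$ has a least failing coordinate $m_0$, so $y$ lies in no set of depth $\ge m_0$ (these are contained in $U_{m_0}$), and by point-finiteness it meets only finitely many sets of each of the finitely many lower levels, hence only finitely many $C_k$ in total. The reverse inclusion $P\subseteq\limsup_k C_k$ is the crux and the main obstacle: I must guarantee that \emph{every} $x\in P$ lies in infinitely many $C_k$, even though $a_\ell(x)$ may grow faster than any prescribed bound. One cannot capture all points of a small region at a fixed level with finitely many filtration pieces, while indiscriminately inserting all pieces $F_{\ell,n}$ destroys the overshoot control just proved. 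My intended remedy is to \emph{adapt} the covers to the filtration and to let them refine with $\ell$: since $x\in P\subseteq U_\ell$ for every $\ell$, $x$ lies in $F_{\ell,a_\ell(x)}$, so if the level-$\ell$ family is a locally finite cover of $U_\ell$ whose members are chosen to sit inside single sets $F_{\ell,n}$ reaching exactly as deep as the local demand requires, then $x$ is caught at cofinally many levels. The technical heart of the proof is to perform this refinement recursively, reconciling "deep points of $P$ need deep pieces" with "point-finiteness at each level", while keeping the diameters shrinking and the families locally finite; I would organize this as an induction on $\ell$ that simultaneously constructs $\mathcal{V}_\ell$, selects the admissible pieces, and verifies \ref{item1representationFsigmadelta}–\ref{item3representationFsigmadelta}. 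This bookkeeping is exactly where I expect the real work to lie.
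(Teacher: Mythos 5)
Your proposal correctly identifies the two ingredients that also drive the paper's proof (locally finite closed covers of small diameter to get conditions \ref{item2representationFsigmadelta} and \ref{item3representationFsigmadelta}, and point-finiteness per level to control which indices a point can occupy), but it leaves the central step unproved. You say yourself that the inclusion $P\subseteq\limsup_k C_k$ is ``the crux and the main obstacle'' and that the recursive construction reconciling ``deep points of $P$ need deep pieces'' with ``point-finiteness at each level'' is ``exactly where I expect the real work to lie'' --- and then you do not carry it out. The difficulty is real: starting from $P=\bigcap_m\bigcup_n F_{m,n}$, a point $x\notin P$ with least failing coordinate $m_0$ can still lie in \emph{infinitely} many sets $F_{m,n}$ with $m<m_0$ (the filtration is increasing in $n$), so the naive enumeration of filtration pieces does not give $\limsup = P$; and your proposed fix --- a locally finite cover of $U_\ell$ by closed sets each contained in a single $F_{\ell,n}$ ``reaching exactly as deep as the local demand requires'' --- is not a construction. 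Indeed, such a cover generally cannot be locally finite in $X$ near boundary points of $U_\ell$, so one has to work in the subspace $U_\ell$ and then re-verify all the point-finiteness claims there; none of this is done.

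The paper sidesteps this entire issue by taking as its starting point the known representation of an $F_{\sigma\delta}$ set as $P=\limsup_n P_n$ for closed sets $P_n$ (\cite[Exercise~23.5(i)]{MR1321597}); this is precisely the statement your missing construction would have to reprove. Once that representation is in hand, the remaining work is exactly the part of your argument that is sound: intersect each $P_n$ with a locally finite closed cover of mesh $r_n\to 0$ to get pieces $D^n_i=P_n\cap B^{r_n}_i$, use local finiteness to show that a point lying in infinitely many pieces must meet pieces from infinitely many levels $n$ (which preserves $\limsup_{n,i}D^n_i=P$ in both directions and gives \ref{item2representationFsigmadelta}), and finally discard the pieces disjoint from $P$ to get \ref{item3representationFsigmadelta}. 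So either cite the $\limsup$ representation as the paper does, or actually supply the omitted construction; as written, the proof is incomplete at its decisive point.
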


\begin{proof}
Let $P$ be an $F_{\sigma\delta}$-subset of $X$. 
Thanks to \cite[Exercise~23.5(i)]{MR1321597}, there is a family $\{P_n:n\in \omega\}$ of closed sets such that $P=\limsup_n P_n$. 

Since every metric space is paracompact, see e.g.~\cite[Theorem~5.1.3]{MR1039321}, 
and every metric separable space is Lindel\"{o}f, see e.g.~\cite[Theorem~3.8.1]{MR1039321}, for every $\varepsilon>0$ there is a locally finite open cover $\{A^\varepsilon_i:i\in \omega\}$ of $X$ such that $\mathrm{diam}(A^\varepsilon_i)<\varepsilon$ for every $i\in \omega$. Let $B^\varepsilon_i$ be the closure of $A^\varepsilon_i$ for every $\varepsilon$ and $i$. 
It follows that $\{B^\varepsilon_i:i\in \omega\}$ is a locally finite closed cover of $X$ such that $\mathrm{diam}(B^\varepsilon_i)<\varepsilon$ for every $i\in \omega$, see e.g.~\cite[Theorem~1.1.13]{MR1039321}. 

Let $(r_n)$ be a decreasing sequence of positive reals which is convergent to $0$. For each $n,i \in \omega$, define $D^n_i=P_n\cap B^{r_n}_i$.
Then the sets $D^n_i$ are closed, $P_n=\bigcup_i D^n_i$, and the family $\{D^n_i:i\in \omega\}$ is locally finite for every $n\in \omega$.
We claim that $\{D^n_i:n,i\in \omega\}$ is a family which satisfies conditions \ref{item1representationFsigmadelta} and \ref{item2representationFsigmadelta}.

\textsc{Condition \ref{item1representationFsigmadelta}.} Fix $x \in X$. We need to show that $x \in P$ if and only if there are infinitely many pairs $(n,i)\in \omega^2$ such that $x\in D^n_i$. First, suppose that $x\in P$. Then there are $n_0<n_1<\cdots$ such that $x\in P_{n_k}$ for every $k\in \omega$. Then for every $k$ there is $i_k\in \omega$ with $x\in D^{n_k}_{i_k}$. Conversely, suppose that there exist infinitely many pairs $(n_k,i_k)$ such that $x \in D^{n_k}_{i_k}$ for every $k \in \omega$, and define $N=\{n_k: k \in \omega\}$. If $N$ is finite, then there exists $n_\star \in N$ such that $x \in D^{n_\star}_i$ for infinitely many $i\in \omega$, but this would contradict the fact that $\{D^{n_\star}_i: i \in \omega\}$ is locally finite. Hence $N$ has to be infinite, so that there exists a sequence $((n_{k_j},i_{k_j}): j \in \omega)$ with $n_{k_j}<n_{k_{j+1}}$ for all $j \in \omega$ and $x \in D^{n_{k_j}}_{i_{k_j}}$
for all $j \in \omega$. Therefore $x \in P_n$ for infinitely many $n\in\omega$.

\textsc{Condition \ref{item2representationFsigmadelta}.} 
It is enough to show that for every injective sequence $((n_k,i_k): k \in \omega)$ such that $\bigcap_k D^{n_k}_{i_k}\neq \emptyset$, we have 
$$
\lim_{k\to \infty}\, \mathrm{diam}\left(D^{n_0}_{i_0}\cap \cdots \cap D^{n_k}_{i_k}\right)=0.
$$
We proceed similarly as in the previous step. If $N=\{n_k: k \in \omega\}$ is finite, then there exist $n_\star \in N$ and a strictly increasing sequence $({k_j}: 
j\in \omega)$ such that $(n_\star, i_{k_j})=(n_{k_j},i_{k_j})$. In such case, $\emptyset \neq \bigcap_k D^{n_k}_{i_k}\subseteq \bigcap_j D^{n_\star}_{i_{k_j}}$, which contradicts again the fact that $\{D^{n_\star}_i: i \in \omega\}$ is locally finite. Thus $N$ is infinite, and there exists a subsequence $((n_{k_j}, i_{k_j}): j \in \omega)$ such that $n_{k_j}<n_{k_{j+1}}$ for all $j\in\omega$. Therefore
$$
\mathrm{diam}\left(D^{n_{k_0}}_{i_{k_0}}\cap \cdots \cap D^{n_{k_m}}_{i_{k_m}}\right)\subseteq \mathrm{diam}\left(B^{r_{n_{k_0}}}_{i_{k_0}}\cap \cdots \cap B^{r_{n_{k_m}}}_{i_{k_m}}\right)\le r_{n_{k_m}}
$$
for all $m \in \omega$, which implies that $\{D^n_i:n,i\in \omega\}$ satisfies condition \ref{item2representationFsigmadelta}.

To complete the proof it is enough to let $(C_k: k \in \omega)$ be an enumeration of the family $\{D^n_i: P\cap D^n_i \neq \emptyset\}$. Indeed, it is straightforward to check that $\limsup_k C_k$ is precisely the set of all $x \in X$ such that $x \in D^n_i$ for infinitely many pairs $(n,i)$, so that $\limsup_k C_k=P$. Hence the sequence $(C_k: k \in \omega)$ satisfies condition \ref{item1representationFsigmadelta} holds. Lastly, it is clear that it satisfies also conditions \ref{item2representationFsigmadelta} and \ref{item3representationFsigmadelta}.
\end{proof}

Let us proceed now to the proof of Theorem \ref{thm:F_sigmadelta-are-Lambda}.
\begin{proof}
[Proof of Theorem \ref{thm:F_sigmadelta-are-Lambda}]
\ref{thm:F_sigmadelta-are-Lambda:P} $\implies$ \ref{thm:F_sigmadelta-are-Lambda:Pi03}. 
Let $\cA=\{A_s:s\in 2^{<\omega}\}$ be an $\I$-scheme with $B_\I(\cA)=\Q(2^\omega)$.
Fix a nonempty set $P\in \Pi^0_3$ and some $p_0\in P$.
Thanks to Lemma~\ref{lem:decreasing-limsup}, it is possible to pick a sequence $\{C_n:n\in \omega\}$ of closed subsets of $X$ which satisfies conditions \ref{item1representationFsigmadelta}--\ref{item3representationFsigmadelta} of Lemma~\ref{lem:decreasing-limsup}. In addition, for each finite sequence $s=(s_0,\ldots,s_m)\in 2^{<\omega}$ define the (possibly empty) set 
$$
R_s=\bigcap \,\{(C_k \cap P): k \in \{0,1,\ldots,m\} \text{ such that }s_k=1\}. 
$$
(here, the intersection of the empty family is defined to be the empty set). For every $s\in 2^{<\omega}$ with $R_s\neq\emptyset$ pick any $r_s\in R_s$.

Now, define $i_s\in\omega$ for all $s\in 2^{<\omega}$ proceeding by induction on the length of $s$: First, set $i_\emptyset = \min(A_\emptyset)$. Then, assume that $n\geq 1$ and $i_s$ have been defined for all $s$ with length smaller than $n$.
Then for each sequence $s\in 2^{<\omega}$ of length $n$, define 
$i_s = \min (A_s\setminus\{i_{s\restriction k}:k<n\})$. This completes the construction of $i_s$ for all $s\in 2^{<\omega}$. 
Note that each integer $j\in\omega$ either is equal to $i_s$ for some $s\in 2^{<\omega}$ or belongs to 
$$
S=(\omega\setminus A_\emptyset)\cup\bigcup \{D_s: s\in 2^{<\omega}\}, \quad \text{ where }\quad D_s=A_{s}\setminus (A_{s^\frown 0} \cup A_{s^\frown 1}).
$$
Indeed, if $j\notin S$, then there is $x\in 2^\omega$ such that $j\in A_{x\restriction n}$ for all $n\in\omega$. Hence, in such case, we would have $j= i_{x\restriction n}$ for some nonnegative integer $n\leq j$.

Now, let us define the sequence $ y =(y_i: i \in \omega)$ with values in $X$ as it follows: 
\begin{enumerate}[label=(\alph*)]
    \item $y_{i} = p_0$ if $i=i_\emptyset$ or $i \in \omega\setminus A_\emptyset$;
    \item $y_i=r_s$ if $i=i_s$ for some nonempty $s=(s_0,\ldots,s_m)\in 2^{<\omega}$ such that $s_m=1$ and $R_s\neq \emptyset$; 
    \item $y_i=y_{i_{s\restriction m}}$ if $i=i_s$ for some nonempty $s=(s_0,\ldots,s_m)\in 2^{<\omega}$ such that $s_m=0$ or $R_s= \emptyset$; 
    \item $y_i=y_{i_s}$ for all $i \in D_s\setminus \{i_t: |t|<|s|\}$ and $s \in 2^{<\omega}$.
\end{enumerate}
Note that $y_i \in P$ for all $i \in \omega$. It follows also by construction that if $s=(s_0,\ldots,s_m)\in 2^{<\omega}$ is nonempty and $s_m=1$ then $y_i \in C_m$ for all but finitely many $i\in A_s$.

\textsc{Claim: $P\subseteq \Lambda_{ y }(\I)$.} Fix $p\in P$. 
Then there exists a strictly increasing sequence $(n_k: k\in \omega)$ of positive integers such that $p \in \bigcap_{j} C_{n_j}$. 
In particular, the latter intersection is nonempty, hence the diameter of $\bigcap_{j\leq k}C_{n_j}$ goes to zero as $k\to \infty$ by Lemma~\ref{lem:decreasing-limsup}.
Define $x\in 2^\omega$ by 
$x_n=1$ if and only if $n=n_k$ for some $k\in \omega$. 
Since $x\notin \Q(2^\omega)=B_\I(\mathcal{A})$, there exists $A\in \I^+$ such that $A\setminus A_{x\restriction n}$ is finite for all $n\in \omega$.
By the above observations on the sequence $ y $, we obtain that  
$$
\{i \in A: y_i \notin C_{n_0} \cap \cdots \cap C_{n_k}\} \in \fin.
$$
for every $k \in \omega$.
Since the sets $C_{n_j}$ are closed and $\bigcap_{j} C_{n_j}=\{p\}$, it follows 
that the subsequence $(y_i: i\in A)$ is convergent to $p$. Therefore $p \in \Lambda_{ y }(\I)$

\textsc{Claim: $\Lambda_{ y }(\I)\subseteq P$.} Fix $p\in \Lambda_{ y }(\I)$. Hence there is $A\in \I^+$ such that the subsequence $(y_i: i\in A)$ is convergent to $p$. We need to show that $p \in P$. To this aim, we split the remaining proof into three cases.

First, suppose that $A\setminus A_\emptyset$ is infinite. Then $y_i=p_0$ for infinitely many $i\in A$. Since $(y_i: i\in A)$ is convergent to $p$, we obtain $p=p_0\in P$.

Second, suppose that there is $s\in 2^{<\omega}$ such that 
$A\cap D_s$ is infinite. Then $y_i = y_{i_{s}}$ for infinitely many $i\in A$. Since $(y_i: i\in A)$ is convergent to $p$, it follows that $p=y_{i_{s}}\in P$. 

Third, suppose that the above cases fail, that is, $A\setminus A_\emptyset$ is finite and $A\cap D_s$ is finite for all $s \in 2^{<\omega}$. Then there exists $x\in 2^\omega$ such that $B_n=A\cap A_{x\restriction n}\in \I^+$ for all $n\in \omega$, and define $N=\{n \in\omega: x_n=1\}$. 

If $N$ is infinite, or equivalently $x \notin \mathbb{Q}(2^\omega)$, it follows by the observations above on the sequence $ y $ that
$$
\{i \in B_{n+1}: y_i\notin C_n\} \in \mathrm{Fin}
$$
for every $ n \in N$.
Since each $C_n$ is closed and $(y_i: i\in B_{n+1})$ is a subsequence of the sequence $(y_i: i \in A)$ (which is convergent to $p$), we obtain that $p\in C_{n}$ for all $n\in N$. This implies that $p\in \limsup_n C_n=P$. 

Lastly, suppose that $N$ is finite, that is, $x \in \mathbb{Q}(2^\omega)$. Set $k=\max N$. 
Recall that $B_n\in \I^+$ and $B_n\setminus A_{x\restriction j}$ is empty for every $j\leq n$ and $n \in \omega$.
Since  $x\in B_\I(\cA)$, for each $n>k$ there exists an integer $m_n>n$ such that $B_n\cap A_{{x\restriction m_n}^\frown 1}$ is infinite. 
Without loss of generality we can assume that the sequence $(m_n)_{n>k}$ is strictly increasing. Similarly as above, we get 
$$
\{i \in B_n\cap A_{{x\restriction m_n}^\frown 1}: y_i\notin C_{m_n}\} \in \mathrm{Fin}
$$
for every $n>k$.
Since each $C_{m_n}$ is closed and $(y_i: i\in B_n\cap A_{{x\restriction m_n}^\frown 1})$ is a subsequence of the sequence $(y_i: i \in A)$, $p\in C_{m_n}$ for all $n>k$. It follows that $p\in \limsup_n C_n=P$.

Therefore $P=\Lambda_{ y }(\I)$, completing the proof of the implication.

\ref{thm:F_sigmadelta-are-Lambda:Pi03} 
$\implies$
\ref{thm:F_sigmadelta-are-Lambda:Pi02}. 
It is obvious.

\ref{thm:F_sigmadelta-are-Lambda:Pi02} $\implies$ \ref{thm:F_sigmadelta-are-Lambda:analytic-set}. 
This follows by the fact that 
there exists a set in 
$\Pi^0_2\setminus \Sigma^0_2
$ 
(which is, of course, analytic), 
see e.g. \cite[Theorem 22.4]{MR1321597}.

\ref{thm:F_sigmadelta-are-Lambda:analytic-set} $\implies$ \ref{thm:F_sigmadelta-are-Lambda:copy-of-Cantor}. 
Pick an analytic set  $P\in \mathscr{L}(\I)$ which is not $\Sigma^0_2$.
Thanks to
\cite[Theorem~21.18]{MR1321597}, there exists a homeomorphic copy of the Cantor space $C\subseteq X$ such that $C\setminus P$ is countable dense in $C$.

\ref{thm:F_sigmadelta-are-Lambda:copy-of-Cantor} $\implies$ \ref{thm:F_sigmadelta-are-Lambda:P}. 
Fix $P\in \mathscr{L}(\I)$ and let $C\subseteq X$ be a homeomorphic copy of the Cantor space such that $C\setminus P$ is countable dense in $C$. 
Since the Cantor space is countable dense homogeneous, we can find a homeomorphism $f:2^\omega\to C$ such that $f[\Q(2^\omega)]=C\setminus P$. 
Observe that the latter implies that $f^{-1}[C\cap P]$ is dense in $2^\omega$. 
We conclude by Lemma~\ref{lem:I-scheme-with-a-given-B}\ref{item:2Ischeme} that there exists an $\I$-scheme $\mathcal{A}$ such that $B_\I(\mathcal{A})=f^{-1}[C\setminus P]=\Q(2^\omega)$. 
\end{proof}

\begin{corollary}
Let $\I$ be an analytic $P$-ideal on $\omega$. Then $\I\notin P(\Pi^0_3)$. 
\end{corollary}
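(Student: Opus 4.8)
The plan is to reduce the claim to the characterization of $P(\Pi^0_3)$ already established in Theorem~\ref{thm:F_sigmadelta-are-Lambda}. Membership in $P(\Pi^0_3)$ is a purely combinatorial property of $\I$ (it asserts the existence of an $\I$-scheme $\cA$ with $B_\I(\cA)=\Q(2^\omega)$, and makes no reference to any space), so I am free to fix a convenient uncountable Polish space, say $X=2^\omega$, and apply the equivalence \ref{thm:F_sigmadelta-are-Lambda:P} $\iff$ \ref{thm:F_sigmadelta-are-Lambda:Pi03} of that theorem. This reduces the task to showing that $\Pi^0_3 \not\subseteq \mathscr{L}(\I)$.

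The key input is an a priori upper bound on $\mathscr{L}(\I)$ for analytic $P$-ideals. As recalled in the introduction, every analytic $P$-ideal is a Farah ideal; hence Theorem~\ref{thm:xi}\ref{item:3xi} yields $\mathscr{L}(\I)\subseteq \Sigma^0_2$. (Alternatively, one may appeal directly to Theorem~\ref{thm:oldthmballeo}: if $\I$ is $F_\sigma$ then $\mathscr{L}(\I)=\Pi^0_1$ by item~\ref{item:BL2}, and if $\I$ is an analytic $P$-ideal which is not $F_\sigma$ then $\mathscr{L}(\I)=\Sigma^0_2$ by item~\ref{item:BL5}; in either case $\mathscr{L}(\I)\subseteq \Sigma^0_2$.)

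Finally, since $2^\omega$ contains a $\Pi^0_2$-set which is not $\Sigma^0_2$, the class $\Pi^0_3$ is not contained in $\Sigma^0_2$, and therefore $\Pi^0_3\not\subseteq \mathscr{L}(\I)$. By the reduction above this gives $\I\notin P(\Pi^0_3)$, completing the argument. There is no genuine obstacle here beyond the bookkeeping observation that $P(\Pi^0_3)$ is space-independent, so that the already-proven Theorem~\ref{thm:F_sigmadelta-are-Lambda} and the Farah-ideal bound of Theorem~\ref{thm:xi}\ref{item:3xi} can be combined; the only point requiring care is to invoke Theorem~\ref{thm:F_sigmadelta-are-Lambda} for a fixed uncountable Polish $X$ (which one may choose freely), rather than for an unspecified space.
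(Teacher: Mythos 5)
Your proposal is correct and follows essentially the same route as the paper: establish the a priori bound $\mathscr{L}(\I)\subseteq\Sigma^0_2$ for analytic $P$-ideals (the paper cites the literature result underlying Theorem~\ref{thm:oldthmballeo} directly, you go through the Farah-ideal bound of Theorem~\ref{thm:xi}\ref{item:3xi}, which amounts to the same thing), note that $\Pi^0_2\setminus\Sigma^0_2\neq\emptyset$, and conclude via the equivalence in Theorem~\ref{thm:F_sigmadelta-are-Lambda}. Your explicit remark that $P(\Pi^0_3)$ is space-independent, so one may fix $X=2^\omega$, is a sound (and slightly more careful) version of the paper's opening "Let $X$ be an uncountable Polish space."
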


\begin{proof}
    Let $X$ be an uncountable Polish space. Thanks to \cite[Theorem 2.2]{MR3883171}, we have $\mathscr{L}(\I)\subseteq \Sigma^0_2$ (see also \cite[Theorem 1]{MR1838788} for the case $\I=\Z$). 
    Since there exists a set in $\Pi^0_2\setminus \Sigma^0_2$, see e.g. \cite[Theorem 22.4]{MR1321597}, the claim follows by Theorem \ref{thm:F_sigmadelta-are-Lambda}. 
\end{proof}

For the next remark, recall that an ideal $\I$ on $\omega$ is said to be \emph{tall} if every infinite set $S\subseteq \omega$ contains an infinite subset which belongs to $\I$. 
\begin{remark}
Let $X$ be an uncountable Polish space and let $\I$ be an ideal on $\omega$ with the hereditary Baire property.
Thanks to Theorem \ref{thm:baire}, Proposition \ref{prop:P-for-closed2}\ref{item:1Ppi01}, and Theorem \ref{thm:closed-is-Lambda},
we obtain that $\mathrm{Fin}\le_{\mathrm{RB}} \I$ and $\Pi^0_1\subseteq \mathscr{L}(\I)$. In particular, trivially, 
$$
\mathrm{Fin}\le_{\mathrm{RB}} \I \quad \text{ if and only if }\quad \Pi^0_1\subseteq \mathscr{L}(\I).
$$
Moreover, it follows by Theorem \ref{thm:Fsigma-Lambda} that 
$$
\emptyset \otimes \mathrm{Fin}\le_{\mathrm{RB}} \I \quad \text{ if and only if }\quad \Sigma^0_2\subseteq \mathscr{L}(\I).
$$
At this point, one may be tempted to conjecture that 
\begin{equation}\label{eq:failconjecture}
\fin^2\le_{\mathrm{RB}}\I \quad \text{ if and only if } \quad \Pi^0_3\subseteq \mathscr{L}(\I).
\end{equation}
Then the \textsc{Only If part} holds, thanks to Theorem \ref{thm:highBorelMSP} and Proposition \ref{prop:basic-prop-about-Lambda-Gamma}\ref{prop:RB-for-Lambda:Lambda-sets}. However, the \textsc{If part} of \eqref{eq:failconjecture} does \emph{not} hold: for, note that $\mathscr{L}(\fin^2\oplus \fin)=\Pi^0_3$ by Theorem \ref{thm:highBorelMSP} and Proposition \ref{prop:basic-prop-about-Lambda-Gamma}\ref{prop:basic-prop-about-Lambda-Gamma:direct-sum}. On the other hand, $\fin^2$ is tall and $\fin^2\oplus \fin$ is not tall. This implies that $\fin^2\not\le_{\mathrm{RB}} \fin^2\oplus \fin$, as it is easy to check that if $\I \le_{\mathrm{RB}} \J$ with $\I$ tall then also $\J$ has to be tall.
\end{remark}


\section{Case: Analytic sets}\label{sec:analyticsets}

In this section, we show that the condition $\I\in P^?(\Sigma^1_1)$ implies that all (nonempty) analytic sets can be realized as sets of $\I$-limit points. 
\begin{theorem}
\label{thm:analytic}
    Let  $X$ be an uncountable Polish space and let $\I\in P^?(\Sigma^1_1)$ be an ideal on $\omega$.
Then $\Sigma^1_1\subseteq\mathscr{L}(\I)$.
\end{theorem}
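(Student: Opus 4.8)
The plan is to fix an arbitrary nonempty analytic set $P\subseteq X$ (the empty set lies in $\mathscr{L}(\I)$ by definition) and to construct a sequence $z\in X^\omega$ with $\Lambda_z(\I)=P$, using the $\I$-scheme $\mathcal{A}=\{A_s:s\in 2^{<\omega}\}$ witnessing $\I\in P^?(\Sigma^1_1)$, i.e.\ with $B_\I(\mathcal{A})=C_\I(\mathcal{A})=\Q(2^\omega)$. First I would fix a regular Suslin representation of $P$: since $P$ is a nonempty analytic subset of a Polish space, there is a family $\{F_s:s\in\omega^{<\omega}\}$ of nonempty closed sets with $F_{s^\frown n}\subseteq F_s$, $\diam(F_s)\le 2^{-|s|}$, and $P=\bigcup_{y\in\omega^\omega}\bigcap_n F_{y\restriction n}$, where each $\bigcap_n F_{y\restriction n}$ is a singleton contained in $P$ (see e.g.~the theory of the Suslin operation in \cite[\S 25]{MR1321597}). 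As $s^\frown 0^\infty$ is a branch for every $s$, this forces $F_s\cap P\neq\emptyset$, so I may choose $p_s\in F_s\cap P$ for each $s$.

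Next I would reindex $\mathcal{A}$ along the Baire tree exactly as in Lemma~\ref{lem:I-scheme-on-Baire-tree}, setting $B_\emptyset=A_\emptyset$ and $B_t=A_{\rho(t)}$ with $\rho(t)=(0^{t_0},1,\dots,0^{t_k},1)$ for $t=(t_0,\dots,t_k)$; the structural clauses (positivity, nesting, disjointness of children) are independent of the value of $B_\I(\mathcal{A})$. Writing $\mathrm{dead}_s=A_s\setminus(A_{s^\frown 0}\cup A_{s^\frown 1})$, iterating the splitting along the $0$-spine gives the partition
\[
B_t=\Big(\bigsqcup_{k}B_{t^\frown k}\Big)\sqcup L_t\sqcup W_t,\qquad B_{t^\frown k}=A_{\rho(t)^\frown 0^k1},\ \ L_t=\bigsqcup_{j}\mathrm{dead}_{\rho(t)^\frown 0^j},\ \ W_t=\bigcap_j A_{\rho(t)^\frown 0^j}.
\]
Two facts drive the proof. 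From $B_\I(\mathcal{A})=\Q(2^\omega)$: for every $y\in\omega^\omega$ we have $\rho(y)\notin\Q(2^\omega)$, so there is $B\in\I^+$ with $B\setminus B_{y\restriction n}$ finite for all $n$. From $C_\I(\mathcal{A})=\Q(2^\omega)$ (the crucial input): for every $t$ the sequence $\rho(t)^\frown 0^\infty\in\Q(2^\omega)$ lies in $C_\I(\mathcal{A})$, so no $C\in\I^+$ can have $C\setminus A_{\rho(t)^\frown 0^j}\in\I$ for all $j$. I would use the latter to show that if $D\in\I^+$ satisfies $D\cap B_t\in\I^+$ but $D\cap B_{t^\frown k}\in\I$ for all $k$, then $D\cap(B_t\setminus L_t)\in\I$, hence $D\cap L_t\in\I^+$: indeed $\big(\bigsqcup_k(D\cap B_{t^\frown k})\big)\sqcup(D\cap W_t)$ differs from $A_{\rho(t)^\frown 0^j}$ by the finite union $\bigsqcup_{k<j}(D\cap B_{t^\frown k})\in\I$, so its $\I$-positivity would contradict $\rho(t)^\frown 0^\infty\in C_\I(\mathcal{A})$.

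I would then define $z$ precisely as in the proof of Lemma~\ref{lem:closed}, replacing the value $f(t^\frown 0^\infty)$ used there by $p_t$ and setting $z_i=p_\emptyset$ for $i\notin A_\emptyset$. This gives $z_i\in P$ for every $i$, makes $z$ constantly equal to $p_t$ on $L_t\cup W_t$ up to finitely many reserved indices, and yields the analogue of the key property \eqref{eq:conditionlemmaclosedsets}: for each $t$, $z_i\in F_t$ for all but finitely many $i\in B_t$ (since $z_i=p_{t'}\in F_{t'}\subseteq F_t$ whenever $i$ belongs to $B_{t'}$ with $t'\supseteq t$). The inclusion $P\subseteq\Lambda_z(\I)$ is then immediate: given $p$ with $\{p\}=\bigcap_n F_{y\restriction n}$, the set $B\in\I^+$ from the first fact satisfies $z_i\in F_{y\restriction n}$ for cofinitely many $i\in B$ and every $n$, so $(z_i:i\in B)$ converges to $p$ because $\diam(F_{y\restriction n})\to 0$.

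The reverse inclusion $\Lambda_z(\I)\subseteq P$ is the heart of the matter. Given $\eta=\lim_{i\in D}z_i$ with $D\in\I^+$, I may assume $D\subseteq A_\emptyset$ (replacing $D$ by $D\cap A_\emptyset$, since otherwise $\eta=p_\emptyset\in P$). I would descend the Baire tree from $\emptyset$: while $D\cap B_t\in\I^+$ I pass to the least child with $D\cap B_{t^\frown k}\in\I^+$, and stop if no such child exists. If the descent stops at $t$, the displayed consequence gives $D\cap L_t\in\I^+$; as $z\equiv p_t$ on $L_t$ off finitely many indices while $z_i\to\eta$, we get $\eta=p_t\in P$. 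If the descent never stops, it yields $y\in\omega^\omega$ with $D\cap B_{y\restriction n}\in\I^+$ for all $n$, whence (by the \eqref{eq:conditionlemmaclosedsets}-analogue and closedness of $F_{y\restriction n}$) $\eta\in F_{y\restriction n}$ for every $n$, so $\eta\in\bigcap_n F_{y\restriction n}\subseteq P$. This establishes $\Lambda_z(\I)=P$ and hence $P\in\mathscr{L}(\I)$. The main obstacle is exactly the stopping case: a priori the mass of $D$ could disperse over infinitely many children $B_{t^\frown k}$, each $\I$-null but with $\I$-positive union, leaving no positive child to follow and no branch to continue; the hypothesis $C_\I(\mathcal{A})=\Q(2^\omega)$ is what forbids this dispersion and channels the positive mass into the constant-value set $L_t$. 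A secondary point is to secure the regular Suslin scheme whose branch-intersections are nonempty singletons in $P$, which is what legitimizes the choice $p_s\in F_s\cap P$.
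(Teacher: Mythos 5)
Your proof is correct and follows essentially the same route as the paper's: the same regular Souslin scheme for $P$, the same sequence built from the $\I$-scheme via reserved minima and dead sets (transplanted from Lemma~\ref{lem:closed}), with $B_\I(\cA)=\Q(2^\omega)$ yielding $P\subseteq\Lambda_{z}(\I)$ and $C_\I(\cA)=\Q(2^\omega)$ yielding the reverse inclusion. The only (harmless) difference is organizational: where the paper first disposes of the case that the witnessing set meets a single dead set infinitely and then shows the descent always finds an $\I$-positive child, you let the descent stop and use $C_\I(\cA)=\Q(2^\omega)$ to force the positive mass onto $L_t$, where the sequence is constantly $p_t$ --- the hypothesis is used in exactly the same place and way.
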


\begin{proof}
    Let $\cA=\{A_s:s\in 2^{<\omega}\}$ be an $\I$-scheme such that $B_\I(\cA)=C_\I(\cA)=\Q(2^\omega)$.
Let $P\subseteq X$ a nonempty analytic set and fix 
$p_\star\in P$. 
Thanks to \cite[Theorem~25.7]{MR1321597}, there exists a Souslin scheme $\{P_t:t\in \omega^{<\omega}\}$ of nonempty closed sets such that 
$$
P = \bigcup_{x\in \omega^\omega}\bigcap_{n\in \omega}P_{x\restriction n}.
$$
In addition, it can be assumed that $P_s\supseteq P_t$ whenever $s\subseteq t$ for all $t,s\in \omega^{<\omega}$ and $\lim_n \mathrm{diam}(P_{x\restriction n})=0$ for all $x \in \omega^\omega$. 
Note that, since $X$ is complete, for each $x\in \omega^\omega$ there exists $p_x\in X$ such that 
$\bigcap_{n} P_{x\restriction n} = \{p_x\}$. 
Hence $P=\{p_x:x\in \omega^\omega\}$.

Define the map $\phi:2^{<\omega}\to \omega^{<\omega}$ by 
$$
\phi(0^n)=\emptyset
\quad \text{ and }\quad 
\phi(0^{t_0},1,0^{t_1},\dots,0^{t_{k}},1,0^n)
=t
$$
for every $n\in \omega$ and nonempty $t=(t_0,\ldots,t_k)\in \omega^{<\omega}$. 
It follows by construction that $\phi(s)\subseteq \phi(s^\prime)$ and $P_{\phi(s)}\supseteq P_{\phi(s^\prime)}$ 
for every $s,s^\prime \in 2^{<\omega}$ with $s\subseteq s^\prime$.

At this point, for each $s \in 2^{<\omega}$ define $i_s \in \omega$ and $D_s$ exactly as in the proof of the implication \ref{thm:F_sigmadelta-are-Lambda:P} $\implies$ \ref{thm:F_sigmadelta-are-Lambda:Pi03} of Theorem \ref{thm:F_sigmadelta-are-Lambda}. 
Let us define the sequence $ y =(y_i: i \in \omega)$ with values in $X$ as it follows: 
\begin{enumerate}[label=(\alph*)]
     \item $y_i=p_{0^\infty}$ if $i=i_\emptyset$; 
    \item $y_{i} = p_\star$ for all $i \in \omega\setminus A_\emptyset$;
    \item $y_i=p_{\phi(s)^\frown 0^\infty}$ if $i=i_s$ for some nonempty $s=(s_0,\ldots,s_m)\in 2^{<\omega}$ with $s_m=1$; 
    \item $y_i=y_{i_{s\restriction m}}$ if $i=i_s$ for some nonempty $s=(s_0,\ldots,s_m)\in 2^{<\omega}$ with $s_m=0$; 
    \item $y_i=y_{i_s}$ for all $i \in D_s\setminus \{i_t: |t|<|s|\}$ and $s \in 2^{<\omega}$.
\end{enumerate}
Note that $y_i \in P$ for all $i \in \omega$. It follows also by construction that if $s=(s_0,\ldots,s_m)\in 2^{<\omega}$ is nonempty and $s_m=1$ then $y_i \in P_{\phi(s)}$ for all but finitely many $i\in A_s$.

\textsc{Claim: $P\subseteq \Lambda_{ y }(\I)$.} Fix $p\in P$. Let $x\in \omega^\omega$ be such that $p=p_x$ and define $t = (0^{x_0},1,0^{x_1},1,\ldots)$. Since $t\in 2^\omega\setminus \Q(2^\omega)$, there is $A\in \I^+$ such that $A\setminus A_{t\restriction n}$ is finite for every $n\in \omega$. It will be enough to show that the subsequence $(y_i: i \in A)$ is convergent to $p$. For, fix $\varepsilon>0$. Since $\lim_k \mathrm{diam}(P_{x\restriction k})=0$, there exists $k_\star \in \omega$ such that $p\in P_{x\restriction k}\subseteq B(p,\varepsilon)$ for every $k\geq k_\star$. 
In addition, for every $n\in \omega$ there exists $k_n\in \omega$ such that $\phi(t\restriction (n-1))=x\restriction k_n$. It is clear that $(k_n)$ is non-decreasing and divergent to $+\infty$ as $n\to\infty$. 
Take $n$ such that $k_n \geq k_\star$ and $t\restriction (n-1)$ ends with one.
By the observations above, the set  
$A\setminus A_{t\restriction n}$ is finite
and 
$y_i\in P_{\phi(t\restriction (n-1))}$ for all but finitely many $i\in A_{t\restriction n}$, hence $y_i\in P_{\phi(t\restriction (n-1))} = P_{x\restriction k_n} \subseteq B(p,\varepsilon)$ for all but finitely many $i\in A$. Therefore $\lim_{i \in A}y_i=p$, so that $p \in \Lambda_{ y }(\I)$.

\textsc{Claim: $\Lambda_{ y }(\I)\subseteq P$.} 
Fix $p\in \Lambda_{ y }(\I)$. Hence there is $A\in \I^+$ such that the subsequence $(y_i: i\in A)$ is convergent to $p$. We need to show that $p \in P$. If $A\setminus A_\emptyset$ is infinite or if $A \cap D_s$ is infinite for some $s \in 2^{<\omega}$, we proceed as in the proof of Theorem \ref{thm:F_sigmadelta-are-Lambda}. Otherwise, let us suppose hereafter that $A\setminus A_\emptyset$ is finite and $A\cap D_s$ is finite for all $s \in 2^{<\omega}$. Let us define a sequence $(B_n: n \in \omega)$ of $\I$-positive sets and a sequence $(k_n) \in \omega^\omega$ by induction as it follows: 
  \begin{enumerate}[label={\rm (\roman{*})}] 
  \item Set $B_0=A\cap A_\emptyset$, which belongs to $\I^+$ since $A\notin \I$ and $A\setminus A_\emptyset$ is finite; 
  \item Since $0^\infty \in C_\I(\cA)$, it is possible to fix $k_0\in \omega$ such that $B_1=B_0\cap A_{(0^{k_0},1)}\in \I^+$;
   \item Suppose that $B_0,\ldots,B_{n} \in \I^+$ and $k_0,\ldots,k_{n-1}\in \omega$ have been defined for some $n\in\omega$ and satisfy $B_{i+1}=B_{i}\cap A_{(0^{k_0},1,0^{k_1},1,\ldots,0^{k_{i}},1)}\in \I^+$ for all $i<n$. Since $(0^{k_0},1,0^{k_1},1,\ldots,0^{k_{n-1}},1,0^\infty)\in C_\I(\cA)$, it is possible to fix $k_{n} \in \omega$ such that 
   $$
   B_{n+1}=B_{n}\cap A_{(0^{k_0},1,0^{k_1},1,\ldots,0^{k_{n}},1)} \in \I^+.
   $$
  \end{enumerate}
Now, define $x= (0^{k_0},1,0^{k_1},1,\dots,0^{k_{n}},1,\ldots)$ and $m_n=k_0+\ldots+k_n+n+1\in \omega$ for all $n \in \omega$, so that $x\restriction m_n=(0^{k_0},1,0^{k_1},1,\dots,0^{k_{n}},1)$. Then $x\in 2^\omega\setminus \Q(2^\omega)$ and $A\cap A_{x\restriction m_n}\in \I^+$ for every $n\in \omega$.
It follows by construction that, for each $n \in \omega$, there are infinitely many $i \in A$ such that 
$$
y_i\in P_{\phi(x\restriction m_n)}=P_{(k_0,k_1,\ldots,k_n)}.
$$
Since the sets $P_n$ are closed and $p$ is the limit of the subsequence $(y_i: i\in A)$, we obtain that $p \in \bigcap_n P_{(k_0,k_1,\ldots,k_n)}= \{p_{(k_0,k_1,\ldots)}\}$, hence $p=p_{(k_0,k_1,\dots)}\in P$.

Therefore $P=\Lambda_{ y }(\I)$, which finishes the proof. 
\end{proof}

We end this section with our first open question: 
\begin{question}
Let  $X$ be  an uncountable Polish space and let $\I$ be an ideal on $\omega$.
Is it true that $\I\in P^?(\Sigma^1_1)$ if and only if $\Sigma^1_1\subseteq\mathscr{L}(\I)$?
\end{question}


\section{Applications and Open Questions}\label{sec:applications}

In this section, we collect some consequences of our results, and we conclude with some open questions. 

\begin{remark}\label{rem:alltopspaces}
Note that the properties $P(\Pi^0_1)$, $P(\Sigma^0_2)$, $P(\Pi^0_3)$ and $P^?(\Sigma^1_1)$ are purely combinatorial. Hence, for instance:
\begin{enumerate}[label={\rm (\roman{*})}]    
   \item it follows from Theorem \ref{thm:closed-is-Lambda} that if there is some uncountable Polish space $X$ such that $\mathscr{L}_X(\I)$ contains an uncountable analytic set, then $\Pi^0_1(Y)\subseteq\mathscr{L}_Y(\I)$ for every uncountable Polish space $Y$;
   \item it follows from Theorem \ref{thm:F_sigma-are-Lambda} that if there is some uncountable Polish space $X$ such that $\mathscr{L}_X(\I)$ contains an analytic set which is not the union of a closed set and a countable set, then $\Sigma^0_2(Y)\subseteq\mathscr{L}_Y(\I)$ for every uncountable Polish space $Y$;
   \item it follows from Theorem \ref{thm:F_sigma-are-Lambda} that if $\I$ has the hereditary Baire property and there is some uncountable Polish space $X$ such that $\mathscr{L}_X(\I)$ contains a set which is not closed, then $\Sigma^0_2(Y)\subseteq\mathscr{L}_Y(\I)$ for every uncountable Polish space $Y$;
   \item it follows from Theorem \ref{thm:F_sigmadelta-are-Lambda} that if  there is some uncountable Polish space $X$ such that $\mathscr{L}_X(\I)$ contains an analytic set which is not $\Sigma^0_2(X)$, then $\Pi^0_3(Y)\subseteq\mathscr{L}_Y(\I)$ for every uncountable Polish space $Y$.
   \end{enumerate}
\end{remark}


\subsection{Which families can be obtained as \texorpdfstring{$\mathscr{L}(\I)$}{L(I)}?}

\begin{corollary}\label{cor:nonopenLI}\label{cor:LInonGdeltasigma}\label{cor:LInonGdelta}

Let $X$ be an uncountable Polish space and let $\I$ be an ideal on $\omega$. Then the following hold:
   \begin{enumerate}[label={\rm (\roman{*})}]    
   \item \label{item:nonopenLI} $\mathscr{L}(\I)\not\subseteq \Sigma^0_1$. In particular, $\mathscr{L}(\I)\neq \Sigma^0_1$;
   \item \label{item::LInonGdelta} $\Sigma^0_1\subseteq \mathscr{L}(\I)$ if and only if $\Sigma^0_2\subseteq \mathscr{L}(\I)$. In particular, $\mathscr{L}(\I)\neq \Pi^0_2$;
   \item \label{item::LInonGdeltasigma} $\Pi^0_2\subseteq \mathscr{L}(\I)$ if and only if $\Pi^0_3\subseteq \mathscr{L}(\I)$. In particular, $\mathscr{L}(\I)\neq \Sigma^0_3$.
   \end{enumerate}
\end{corollary}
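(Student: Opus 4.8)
The plan is to reduce each of the three parts to the characterization theorems already established, since the three ``in particular'' statements all follow a single template: combine one equivalence from Theorem~\ref{thm:Fsigma-Lambda} or Theorem~\ref{thm:F_sigmadelta-are-Lambda} with the non-collapse of the Borel hierarchy on uncountable Polish spaces. Thus almost no new work is needed beyond assembling these ingredients correctly.

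For item~\ref{item:nonopenLI}, I would first record that $\mathscr{L}(\I)$ always contains every singleton: taking $n=1$ in Lemma~\ref{lem:Xlen} (with the trivial one-element partition $\{\omega\}$ of $\omega$ into $\I$-positive sets) yields $[X]^{\le 1}\subseteq \mathscr{L}(\I)$. Since $X$ is an uncountable Polish space it is not discrete (a discrete Polish space is countable), so $X$ has a non-isolated point $\eta$; then $\{\eta\}\in \mathscr{L}(\I)$ is closed but not open, whence $\mathscr{L}(\I)\not\subseteq \Sigma^0_1$. The displayed consequence $\mathscr{L}(\I)\neq \Sigma^0_1$ is then immediate.

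For item~\ref{item::LInonGdelta}, the equivalence $\Sigma^0_1\subseteq \mathscr{L}(\I)\iff \Sigma^0_2\subseteq \mathscr{L}(\I)$ is nothing but the equivalence of conditions~\ref{thm:Fsigma-Lambda:lambda-open} and~\ref{thm:Fsigma-Lambda:lambda} of Theorem~\ref{thm:Fsigma-Lambda}, so there is nothing to prove there. For the consequence $\mathscr{L}(\I)\neq \Pi^0_2$, I would argue by contradiction: if $\mathscr{L}(\I)=\Pi^0_2$, then from $\Sigma^0_1\subseteq \Delta^0_2\subseteq \Pi^0_2$ we get $\Sigma^0_1\subseteq \mathscr{L}(\I)$, hence $\Sigma^0_2\subseteq \mathscr{L}(\I)=\Pi^0_2$ by the equivalence just recalled; but on an uncountable Polish space there is a set in $\Sigma^0_2\setminus \Pi^0_2$ by \cite[Theorem~22.4]{MR1321597}, a contradiction. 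Item~\ref{item::LInonGdeltasigma} is handled identically with the hierarchy shifted by one level: the equivalence $\Pi^0_2\subseteq \mathscr{L}(\I)\iff \Pi^0_3\subseteq \mathscr{L}(\I)$ is exactly the equivalence of conditions~\ref{thm:F_sigmadelta-are-Lambda:Pi02} and~\ref{thm:F_sigmadelta-are-Lambda:Pi03} of Theorem~\ref{thm:F_sigmadelta-are-Lambda}, and for $\mathscr{L}(\I)\neq \Sigma^0_3$ one assumes $\mathscr{L}(\I)=\Sigma^0_3$, uses $\Pi^0_2\subseteq \Delta^0_3\subseteq \Sigma^0_3$ to force $\Pi^0_3\subseteq \mathscr{L}(\I)=\Sigma^0_3$, and contradicts the existence of a set in $\Pi^0_3\setminus \Sigma^0_3$ from \cite[Theorem~22.4]{MR1321597}.

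No genuine obstacle is expected here, as all the difficulty is contained in Theorems~\ref{thm:Fsigma-Lambda} and~\ref{thm:F_sigmadelta-are-Lambda}; the remaining steps use only the textbook inclusions $\Sigma^0_\alpha\cup\Pi^0_\alpha\subseteq \Delta^0_{\alpha+1}$ and the strictness of the Borel hierarchy. The sole points demanding a moment of care are pinning down the correct item numbers in the two characterization theorems and verifying the one-level inclusions $\Sigma^0_1\subseteq \Pi^0_2$ and $\Pi^0_2\subseteq \Sigma^0_3$ that drive the contradiction arguments.
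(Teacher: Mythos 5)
Your proposal is correct and follows essentially the same route as the paper: item (i) via Lemma~\ref{lem:Xlen} applied to a singleton at a non-isolated point, and items (ii)--(iii) by reading off the equivalences $\Sigma^0_1\subseteq\mathscr{L}(\I)\iff\Sigma^0_2\subseteq\mathscr{L}(\I)$ and $\Pi^0_2\subseteq\mathscr{L}(\I)\iff\Pi^0_3\subseteq\mathscr{L}(\I)$ from Theorems~\ref{thm:Fsigma-Lambda} and~\ref{thm:F_sigmadelta-are-Lambda}, then invoking the one-level inclusions and the strictness of the Borel hierarchy. The only cosmetic difference is that you phrase the ``in particular'' steps as explicit contradictions while the paper states them directly; the content is identical.
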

\begin{proof}
\ref{item:nonopenLI} Pick an accumulation point $\eta \in X^{\,|}$. Thus, $\{\eta\}$ is not open. It follows by Lemma \ref{lem:Xlen} that $\{\eta\} \in \mathscr{L}(\I)$. Therefore $\mathscr{L}(\I)\not\subseteq \Sigma^0_1$.

\ref{item::LInonGdelta} The \textsc{If part} is clear. To show the \textsc{Only If part}, let us suppose that $\Sigma^0_1\subseteq \mathscr{L}(\I)$.  Thanks to Theorem \ref{thm:Fsigma-Lambda}, we get also $\Sigma^0_2\subseteq \mathscr{L}(\I)$. 
The last sentence follows by the facts that $\Sigma^0_1\subseteq \Pi^0_2$ and $\Sigma^0_2\setminus \Pi^0_2\neq \emptyset$, see e.g. \cite[Theorem 22.4]{MR1321597}.

\ref{item::LInonGdeltasigma} The proof goes 
as in item \ref{item::LInonGdelta}, replacing Theorem \ref{thm:Fsigma-Lambda} with Theorem \ref{thm:F_sigmadelta-are-Lambda}. 
\end{proof}

\begin{remark}
We remark that the proof of Corollary \ref{cor:nonopenLI}\ref{item:nonopenLI} actually works in every nondiscrete Hausdorff space.
\end{remark}

On the one hand, thanks to Theorem \ref{thm:highBorelMSP}, 
all families of the type $\Pi^0_{2\alpha-1}$ and $\Sigma^0_{2\alpha}$, with $\alpha$ positive integer, can be realized as some $\mathscr{L}(\I)$. On the other hand, it follows by Corollary \ref{cor:nonopenLI} that there are no ideals $\I$ on $\omega$ such that $\mathscr{L}(\I)=\Sigma^0_1$ or $\mathscr{L}(\I)=\Pi^0_2$ or $\mathscr{L}(\I)=\Sigma^0_3$. This leads to the next open question:
\begin{question}
    Let $X$ be an uncountable Polish space. Do there exist an ideal $\I$ on $\omega$ and a positive integer $\alpha$ such that 
    $$
    \mathscr{L}(\I)=\Sigma^0_{2\alpha-1}
    \quad \text{ or }\quad 
    \mathscr{L}(\I)=\Pi^0_{2\alpha}\,\,?
    $$
\end{question}


\subsection{Coanalytic ideals}

Recall that by Corollary \ref{cor:equalityLIwithclosedsets}, $\mathscr{L}(\I)=\Pi^0_1$ is equivalent to $\I\in P^{\,|}$ and $\I \in P(\Pi^0_1)$. However, in the case of ideals with the hereditary Baire property, $\I\in P^{\,|}$ can be replaced with $\I \notin P(\Sigma^0_2)$, which is a more similar property to $\I\in P(\Pi^0_1)$ and allows to extend this approach to the case $\mathscr{L}(\I)=\Sigma^0_2$:

\begin{corollary}\label{cor:inclusionfsigmasets}\label{cor:inclusionclosedsets}
Let $X$ be an uncountable Polish space and let $\I$ be an ideal on $\omega$. 
Then the following hold:
   \begin{enumerate}[label={\rm (\roman{*})}] 
   \item \label{item:1corLIinclusion} Assume that $\I$ has the hereditary Baire property. Then $\mathscr{L}(\I)\subseteq \Pi^0_1$ if and only if $\I\notin P(\Sigma^0_2)$. In particular, $\mathscr{L}(\I)= \Pi^0_1$ if and only if $\I \in P(\Pi^0_1)\setminus P(\Sigma^0_2)$;
   
   \item \label{item:1corLIinclusion22} Assume that $\I$ is coanalytic. Then $\mathscr{L}(\I)\subseteq \Sigma^0_2$ if and only if $\I\notin P(\Pi^0_3)$. In particular, $\mathscr{L}(\I)= \Sigma^0_2$ if and only if $\I \in P(\Sigma^0_2)\setminus P(\Pi^0_3)$.
   \end{enumerate}
\end{corollary}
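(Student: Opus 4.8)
The plan is to assemble the two parts directly from the characterization theorems of Sections \ref{sec:Fsigmasets} and \ref{sec:Fsigmadeltasets}, taking contrapositives and then combining with the lower-bound theorems for the ``in particular'' clauses.

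For item \ref{item:1corLIinclusion}, I would invoke Theorem \ref{thm:Fsigma-Lambda}: under the hereditary Baire property the conditions \ref{thm:Fsigma-Lambda:ideal} and \ref{thm:Fsigma-Lambda:for-hBP-ideals} are equivalent, that is, $\I\in P(\Sigma^0_2)$ holds if and only if $\mathscr{L}(\I)\not\subseteq \Pi^0_1$. Negating both sides gives exactly $\mathscr{L}(\I)\subseteq \Pi^0_1$ if and only if $\I\notin P(\Sigma^0_2)$, which is the first assertion. For the ``in particular'' clause, I would add the equivalence $\Pi^0_1\subseteq \mathscr{L}(\I)\iff \I\in P(\Pi^0_1)$ from Theorem \ref{thm:closed-is-Lambda} (items \ref{thm:closed-is-Lambda:ideal} $\iff$ \ref{thm:closed-is-Lambda:lambda}). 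Since $\mathscr{L}(\I)=\Pi^0_1$ means precisely $\Pi^0_1\subseteq\mathscr{L}(\I)$ together with $\mathscr{L}(\I)\subseteq\Pi^0_1$, conjoining the two equivalences yields $\I\in P(\Pi^0_1)$ and $\I\notin P(\Sigma^0_2)$, i.e. $\I\in P(\Pi^0_1)\setminus P(\Sigma^0_2)$.

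For item \ref{item:1corLIinclusion22}, the key preliminary observation is that a coanalytic ideal $\I$ has $\mathscr{L}(\I)\subseteq \Sigma^1_1$ by Theorem \ref{thm:MSP24}\ref{item:3MSP}, so \emph{every} member of $\mathscr{L}(\I)$ is analytic. Consequently, the statement ``$\mathscr{L}(\I)$ contains a set which is not $\Sigma^0_2$'' is equivalent to ``$\mathscr{L}(\I)$ contains an analytic set which is not $\Sigma^0_2$,'' which is condition \ref{thm:F_sigmadelta-are-Lambda:analytic-set} of Theorem \ref{thm:F_sigmadelta-are-Lambda}. That theorem equates this with $\I\in P(\Pi^0_3)$ (item \ref{thm:F_sigmadelta-are-Lambda:P}); hence for coanalytic $\I$ we get $\I\in P(\Pi^0_3)\iff \mathscr{L}(\I)\not\subseteq\Sigma^0_2$, and the contrapositive is the first assertion. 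For the ``in particular'' clause I would combine this with the equivalence $\Sigma^0_2\subseteq\mathscr{L}(\I)\iff \I\in P(\Sigma^0_2)$ (Theorem \ref{thm:Fsigma-Lambda}, items \ref{thm:Fsigma-Lambda:ideal} $\iff$ \ref{thm:Fsigma-Lambda:lambda}), noting that coanalytic ideals have the hereditary Baire property so the theorem applies; as above, $\mathscr{L}(\I)=\Sigma^0_2$ then translates to $\I\in P(\Sigma^0_2)\setminus P(\Pi^0_3)$.

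The only genuinely substantive step is the reduction in item \ref{item:1corLIinclusion22}: one must use coanalyticity (through $\mathscr{L}(\I)\subseteq\Sigma^1_1$) to drop the word ``analytic'' from condition \ref{thm:F_sigmadelta-are-Lambda:analytic-set}, turning the intrinsic complexity bound into the clean dichotomy $\mathscr{L}(\I)\subseteq\Sigma^0_2$ versus $\I\in P(\Pi^0_3)$. Everything else is a formal matching of equivalent conditions and intersection of the ``$\subseteq$'' and ``$\supseteq$'' characterizations, so I expect no computational difficulties; the care needed is purely in citing the correct sub-items of Theorems \ref{thm:Fsigma-Lambda}, \ref{thm:closed-is-Lambda}, \ref{thm:F_sigmadelta-are-Lambda}, and \ref{thm:MSP24}.
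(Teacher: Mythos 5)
Your proposal is correct and follows essentially the same route as the paper: item (i) is the equivalence \ref{thm:Fsigma-Lambda:ideal}$\iff$\ref{thm:Fsigma-Lambda:for-hBP-ideals} of Theorem \ref{thm:Fsigma-Lambda} combined with Theorem \ref{thm:closed-is-Lambda}, and item (ii) uses Theorem \ref{thm:MSP24}\ref{item:3MSP} to upgrade a non-$\Sigma^0_2$ member of $\mathscr{L}(\I)$ to an analytic one so that the equivalence \ref{thm:F_sigmadelta-are-Lambda:P}$\iff$\ref{thm:F_sigmadelta-are-Lambda:analytic-set} of Theorem \ref{thm:F_sigmadelta-are-Lambda} applies, exactly as in the paper. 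The only cosmetic difference is that the paper's converse direction in (ii) passes through $\Pi^0_3\subseteq\mathscr{L}(\I)$ and a $\Pi^0_3\setminus\Sigma^0_3$ set, whereas you cite the same theorem's equivalence directly; both are valid.
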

\begin{proof}
\ref{item:1corLIinclusion} The first part is just a rewriting of the equivalence \ref{thm:Fsigma-Lambda:ideal} $\Longleftrightarrow$ \ref{thm:Fsigma-Lambda:for-hBP-ideals} in Theorem \ref{thm:Fsigma-Lambda}. The second part follows by Theorem \ref{thm:closed-is-Lambda}.

\ref{item:1corLIinclusion22} 
\textsc{If part}. Suppose that $\mathscr{L}(\I)\not\subseteq \Sigma^0_2$ and pick a set $P \in \mathscr{L}(\I)\setminus \Sigma^0_2$. Thanks to Theorem \ref{thm:MSP24}\ref{item:3MSP}, we have $\mathscr{L}(\I)\subseteq \Sigma^1_1$, hence $P$ is analytic. The claim follows from the equivalence \ref{thm:F_sigmadelta-are-Lambda:P} $\Longleftrightarrow$ \ref{thm:F_sigmadelta-are-Lambda:analytic-set} in Theorem \ref{thm:F_sigmadelta-are-Lambda}

\textsc{Only If part}. Suppose that $\I \in P(\Pi^0_3)$. It follows by Theorem \ref{thm:F_sigmadelta-are-Lambda} that $\Pi^0_3\subseteq \mathscr{L}(\I)$. Since $\Pi^0_3\setminus \Sigma^0_3$ is nonempty, see e.g. \cite[Theorem 22.4]{MR1321597}, we get $\mathscr{L}(\I)\not\subseteq \Sigma^0_3$. In particular, 
$\mathscr{L}(\I)\not\subseteq \Sigma^0_2$. 

The second part follows by Theorem \ref{thm:Fsigma-Lambda}.
\end{proof}

\begin{corollary}\label{cor:coanalytic}
Let $X$ be an uncountable Polish space and let $\I$ be a coanalytic ideal on $\omega$. 
Then exactly one of the following cases occurs:
   \begin{enumerate}[label={\rm (\roman{*})}] 
   \item \label{item:1coanalytic} $\I \in P(\Pi^0_1)\setminus P(\Sigma^0_2)$ and $\mathscr{L}(\I)= \Pi^0_1$;
   
   \item \label{item:2coanalytic} $\I \in P(\Sigma^0_2)\setminus P(\Pi^0_3)$ and $\mathscr{L}(\I)= \Sigma^0_2$;

   \item \label{item:3coanalytic} $\I \in P(\Pi^0_3)$ and $\Pi^0_3\subseteq\mathscr{L}(\I)\subseteq\Sigma^1_1$.
   \end{enumerate}
\end{corollary}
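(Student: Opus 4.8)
The plan is to reduce everything to the chain of combinatorial classes already established and then read off $\mathscr{L}(\I)$ from the structural theorems of the previous sections. The starting observation is that a coanalytic ideal has the hereditary Baire property, so by Proposition~\ref{prop:P-for-closed2}\ref{item:1Ppi01} we always have $\I\in P(\Pi^0_1)$; in particular, by Theorem~\ref{thm:closed-is-Lambda}, $\Pi^0_1\subseteq\mathscr{L}(\I)$. At the other end, since $X$ is Polish and $\I$ is coanalytic, Theorem~\ref{thm:MSP24}\ref{item:3MSP} gives $\mathscr{L}(\I)\subseteq\Sigma^1_1$. These two bounds hold unconditionally and frame all three cases.

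Next I would invoke Proposition~\ref{prop:properties-of-P-like}\ref{prop:properties-of-P-like:implications}, namely the inclusions $P(\Pi^0_3)\subseteq P(\Sigma^0_2)\subseteq P(\Pi^0_1)$, to set up a clean trichotomy. Since $\I\in P(\Pi^0_1)$ is automatic, exactly one of the following holds: either $\I\notin P(\Sigma^0_2)$, or $\I\in P(\Sigma^0_2)\setminus P(\Pi^0_3)$, or $\I\in P(\Pi^0_3)$. Mutual exclusivity and exhaustiveness are immediate from the inclusion chain: membership in $P(\Pi^0_3)$ forces membership in $P(\Sigma^0_2)$, so the three alternatives partition $P(\Pi^0_1)$. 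This is the only genuinely ``logical'' step, and it is where one must be careful to phrase the trichotomy as \emph{exactly one} rather than merely \emph{at least one}.

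I would then match each alternative to its conclusion. If $\I\notin P(\Sigma^0_2)$, then $\I\in P(\Pi^0_1)\setminus P(\Sigma^0_2)$, and Corollary~\ref{cor:inclusionclosedsets}\ref{item:1corLIinclusion} (applicable since $\I$ is coanalytic, hence hereditary Baire) yields $\mathscr{L}(\I)=\Pi^0_1$, giving case~\ref{item:1coanalytic}. If $\I\in P(\Sigma^0_2)\setminus P(\Pi^0_3)$, then Corollary~\ref{cor:inclusionclosedsets}\ref{item:1corLIinclusion22} (which uses coanalyticity through the bound $\mathscr{L}(\I)\subseteq\Sigma^1_1$) gives $\mathscr{L}(\I)=\Sigma^0_2$, giving case~\ref{item:2coanalytic}. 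Finally, if $\I\in P(\Pi^0_3)$, then Theorem~\ref{thm:F_sigmadelta-are-Lambda} gives $\Pi^0_3\subseteq\mathscr{L}(\I)$, which combined with the unconditional upper bound $\mathscr{L}(\I)\subseteq\Sigma^1_1$ yields $\Pi^0_3\subseteq\mathscr{L}(\I)\subseteq\Sigma^1_1$, giving case~\ref{item:3coanalytic}.

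There is no real analytic obstacle here: all the substantive work lives in the equivalences of Theorems~\ref{thm:closed-is-Lambda}, \ref{thm:Fsigma-Lambda}, \ref{thm:F_sigmadelta-are-Lambda} and the summarizing Corollary~\ref{cor:inclusionclosedsets}, together with the coanalytic upper bound from Theorem~\ref{thm:MSP24}. The proof is essentially a dispatch over the three positions $\I$ can occupy in the hierarchy $P(\Pi^0_3)\subseteq P(\Sigma^0_2)\subseteq P(\Pi^0_1)$, and the main point to check carefully is simply that each hypothesis required by the cited corollaries (hereditary Baire property for \ref{item:1corLIinclusion}, coanalyticity for \ref{item:1corLIinclusion22}) is indeed guaranteed by coanalyticity of $\I$.
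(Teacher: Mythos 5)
Your proposal is correct and follows essentially the same route as the paper: deduce $\I\in P(\Pi^0_1)$ from the hereditary Baire property, use the chain $P(\Pi^0_3)\subseteq P(\Sigma^0_2)\subseteq P(\Pi^0_1)$ to split into the three mutually exclusive cases, and dispatch each one to Corollary~\ref{cor:inclusionclosedsets}, Corollary~\ref{cor:inclusionfsigmasets}, and Theorem~\ref{thm:F_sigmadelta-are-Lambda} with Theorem~\ref{thm:MSP24}, respectively. No gaps.
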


\begin{proof}
Let $\I$ be a coanalytic ideal. 
Then $\I$ has the hereditary Baire property, so $\I\in P(\Pi^0_1)$ by Proposition \ref{prop:P-for-closed2}\ref{item:1Ppi01}. Recall also that 
$
P(\Pi^0_3)\subseteq P(\Sigma^0_2)\subseteq P(\Pi^0_1),
$ 
thanks to Proposition \ref{prop:properties-of-P-like}\ref{prop:properties-of-P-like:implications}. 
Hence:
\begin{enumerate}[label={\rm (\roman{*})}] 
\item If $\I \in P(\Pi^0_1)\setminus P(\Sigma^0_2)$ then $\mathscr{L}(\I)=\Pi^0_1$, see Corollary \ref{cor:inclusionclosedsets}\ref{item:1corLIinclusion};

\item If $\I \in P(\Sigma^0_2)\setminus P(\Pi^0_3)$ then $\mathscr{L}(\I)=\Sigma^0_2$, see Corollary \ref{cor:inclusionfsigmasets}\ref{item:1corLIinclusion22};

\item If $\I \in P(\Pi^0_3)$ then  $\Pi^0_3\subseteq\mathscr{L}(\I)\subseteq\Sigma^1_1$, see Theorem \ref{thm:F_sigmadelta-are-Lambda} and Theorem \ref{thm:MSP24}\ref{item:3MSP}.
\end{enumerate}
\end{proof}

In the case of $P^-$-ideals we can do even better: there are only three possibilities for the family $\mathscr{L}(\I)$. 
\begin{theorem}
\label{thm:coanalytic-Pminus}
Let $X$ be an uncountable Polish space and let $\I$ be a coanalytic $P^-$-ideal on $\omega$. Then exactly one of the following cases occurs:
$$
\mathscr{L}(\I)=\Pi^0_1 
\quad \text{ or }\quad 
\mathscr{L}(\I)=\Sigma^0_2 
\quad \text{ or }\quad 
\mathscr{L}(\I)=\Sigma^1_1. 
$$

In particular, this applies to all $\Pi^0_4$ ideals. 
\end{theorem}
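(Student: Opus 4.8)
The plan is to reduce everything to Corollary \ref{cor:coanalytic}, which already treats an arbitrary coanalytic ideal and produces \emph{exactly one} of the three nested cases \ref{item:1coanalytic}--\ref{item:3coanalytic}. The first two cases give directly $\mathscr{L}(\I)=\Pi^0_1$ and $\mathscr{L}(\I)=\Sigma^0_2$, so the only additional work is to determine $\mathscr{L}(\I)$ precisely in case \ref{item:3coanalytic}, where $\I\in P(\Pi^0_3)$, using the extra hypothesis that $\I$ is a $P^-$-ideal.

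First I would invoke Proposition \ref{prop:P^-}\ref{item:4pminus}: since $\I$ is $P^-$, the membership $\I\in P(\Pi^0_3)$ is equivalent to $\I\in P^?(\Sigma^1_1)$. Hence in case \ref{item:3coanalytic} we in fact have $\I\in P^?(\Sigma^1_1)$. Then Theorem \ref{thm:analytic} yields the inclusion $\Sigma^1_1\subseteq\mathscr{L}(\I)$. For the reverse inclusion, since $\I$ is coanalytic and $X$ is Polish, Theorem \ref{thm:MSP24}\ref{item:3MSP} gives $\mathscr{L}(\I)\subseteq\Sigma^1_1$. Combining the two inclusions, $\mathscr{L}(\I)=\Sigma^1_1$, which completes the description of case \ref{item:3coanalytic} and thus establishes the trichotomy. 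The three conclusions are mutually exclusive because the pointclasses $\Pi^0_1$, $\Sigma^0_2$, $\Sigma^1_1$ are pairwise distinct on an uncountable Polish space (there are $F_\sigma$ sets that are not closed, and analytic---indeed $\Pi^0_3$---sets that are not $\Sigma^0_2$, see e.g.~\cite[Theorem 22.4]{MR1321597}), which matches the mutual exclusivity of the cases in Corollary \ref{cor:coanalytic}.

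For the final assertion, I would observe that every $\Pi^0_4$ ideal is Borel, hence coanalytic, and is a $P^-$-ideal by Proposition \ref{prop:Pplus-vs-Pminus+Pvert}\ref{thm:Plike-properties-for-definable-ideals:Pi-zero-four-is-Pminus}; therefore it satisfies the hypotheses of the theorem and the trichotomy applies to it verbatim.

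I expect the main (indeed, the only nontrivial) obstacle to be the use of Proposition \ref{prop:P^-}\ref{item:4pminus}: this is exactly what the $P^-$-assumption buys us, upgrading $\I\in P(\Pi^0_3)$ to $\I\in P^?(\Sigma^1_1)$ and thereby allowing the ever-present upper bound $\mathscr{L}(\I)\subseteq\Sigma^1_1$ for coanalytic ideals to be matched by the lower bound $\Sigma^1_1\subseteq\mathscr{L}(\I)$. Everything else in the argument is a direct assembly of the previously established Corollary \ref{cor:coanalytic}, Theorem \ref{thm:analytic}, and Theorem \ref{thm:MSP24}\ref{item:3MSP}.
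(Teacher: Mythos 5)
Your proposal is correct and follows essentially the same route as the paper: the paper's proof is exactly the combination of Corollary \ref{cor:coanalytic} with Proposition \ref{prop:P^-}\ref{item:4pminus} (to upgrade $P(\Pi^0_3)$ to $P^?(\Sigma^1_1)$ under the $P^-$ hypothesis) and Theorem \ref{thm:analytic}, the upper bound $\mathscr{L}(\I)\subseteq\Sigma^1_1$ being already built into case \ref{item:3coanalytic} of the corollary via Theorem \ref{thm:MSP24}\ref{item:3MSP}. Your handling of the $\Pi^0_4$ case (Borel, hence coanalytic, and $P^-$ by Proposition \ref{prop:Pplus-vs-Pminus+Pvert}\ref{thm:Plike-properties-for-definable-ideals:Pi-zero-four-is-Pminus}) also matches the paper.
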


\begin{proof}
Let $\I$ be a coanalytic $P^-$-ideal. Then $\I\in P(\Pi^0_3)$ is equivalent to $\I\in P^?(\Sigma^1_1)$ by Proposition \ref{prop:P^-}\ref{item:4pminus}, so Corollary \ref{cor:coanalytic} and Theorem \ref{thm:analytic} finish the proof.

The last part follows by the fact that every $\Pi^0_4$ ideal is $P^-$ (see Proposition~\ref{prop:Pplus-vs-Pminus+Pvert}\ref{thm:Plike-properties-for-definable-ideals:Pi-zero-four-is-Pminus}) and, of course, analytic.
\end{proof}

As an application, we prove that the case $\mathscr{L}(\I)=\Sigma^1_1$ is indeed possible: 
\begin{theorem}\label{thm:IWanalytic}
    There exists a coanalytic $P^-$-ideal $\I$ such that 
$$
\mathscr{L}(\I)=\Sigma^1_1
$$
in every uncountable Polish space $X$.
\end{theorem}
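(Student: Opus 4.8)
The plan is to reduce the statement to a purely combinatorial construction and then feed it into the machinery already developed. By the proof of Theorem~\ref{thm:coanalytic-Pminus}, if $\I$ is a coanalytic $P^-$-ideal, then $\mathscr{L}(\I)=\Sigma^1_1$ holds in \emph{every} uncountable Polish space precisely when $\I\in P(\Pi^0_3)$: indeed, for $P^-$-ideals $\I\in P(\Pi^0_3)$ is equivalent to $\I\in P^?(\Sigma^1_1)$ by Proposition~\ref{prop:P^-}\ref{item:4pminus}, so Theorem~\ref{thm:analytic} gives $\Sigma^1_1\subseteq\mathscr{L}(\I)$ while Theorem~\ref{thm:MSP24}\ref{item:3MSP} gives $\mathscr{L}(\I)\subseteq\Sigma^1_1$. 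Since coanalyticity, $P^-$, and membership in $P(\Pi^0_3)$ are properties of $\I$ alone, it suffices to exhibit a single coanalytic $P^-$-ideal lying in $P(\Pi^0_3)$.

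I would build this ideal on the countable set $2^{<\omega}$. Write $N_s=\{t\in 2^{<\omega}: s\subseteq t\}$ for the cone above $s$, put $\Omega=\{x\in 2^\omega: x_n=1 \text{ for infinitely many }n\}$ (so $2^\omega\setminus\Omega=\Q(2^\omega)$), and say that an infinite $C\subseteq 2^{<\omega}$ \emph{converges to} $x\in 2^\omega$ if $C\setminus N_{x\restriction n}$ is finite for every $n$. Define
$$
\I=\{A\subseteq 2^{<\omega}: \text{no infinite subset of }A\text{ converges to a point of }\Omega\};
$$
equivalently, $A\in\I^+$ iff $A\cap N_{x\restriction n}$ is infinite for all $n$, for some $x\in\Omega$. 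The only nontrivial closure property is finite unions: if $D\subseteq A\cup B$ converges to $x\in\Omega$, then $D\cap A$ or $D\cap B$ is an infinite subset still converging to $x$; together with closure under subsets this makes $\I$ an ideal, and it is admissible since it contains $\mathrm{Fin}$ while $\{x\restriction n:n\in\omega\}\notin\I$ for any $x\in\Omega$.

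Next I would verify $\I\in P(\Pi^0_3)$ using the cone scheme $\mathcal{A}=\{N_s:s\in 2^{<\omega}\}$, which is plainly an $\I$-scheme (each $N_s$ is positive because some branch of $\Omega$ passes through $s$), and compute $B_\I(\mathcal{A})=\Q(2^\omega)$. If $x\in\Omega$, the set $\{x\restriction n:n\in\omega\}$ is positive and witnesses $x\notin B_\I(\mathcal{A})$. If $x\in\Q(2^\omega)$ and some $C\in\I^+$ had $C\setminus N_{x\restriction n}$ finite for all $n$, then $C$ would converge to $x$, so every infinite subset of $C$ would converge to $x\notin\Omega$ and to no other branch, contradicting $C\in\I^+$; hence $x\in B_\I(\mathcal{A})$. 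Thus $B_\I(\mathcal{A})=\Q(2^\omega)$ and $\I\in P(\Pi^0_3)$. For the complexity I would write
$$
A\in\I \iff \forall x\in 2^\omega\ \big[\, x\in\Q(2^\omega)\ \vee\ \exists n\,(A\cap N_{x\restriction n}\text{ is finite})\,\big],
$$
whose matrix is $\Sigma^0_2$ in $(A,x)$; being a universal real quantification of a Borel set, $\I$ is coanalytic (and hence has the hereditary Baire property).

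The main work—and the crux, since $P(\Pi^0_3)$ pushes the complexity up while $P^-$ caps it—is checking that $\I$ is $P^-$. Given a decreasing sequence $(A_n)$ of positive sets with $A_n\setminus A_{n+1}\in\I$, I would study the traces $\tilde K(A)=\{y\in 2^\omega: A\cap N_{y\restriction k}\text{ infinite for all }k\}$ and $K(A)=\tilde K(A)\cap\Omega$. The map $\tilde K$ is finitely additive: if $y\notin\tilde K(B)$ then $B\cap N_{y\restriction k}$ is finite for some, hence all large, $k$, so $y\in\tilde K(C)$ whenever $y\in\tilde K(B\cup C)$; combined with monotonicity this gives $\tilde K(B\cup C)=\tilde K(B)\cup\tilde K(C)$. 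Intersecting with $\Omega$ and using $K(A_n\setminus A_{n+1})=\emptyset$ yields $K(A_n)=K(A_{n+1})$ for all $n$, so all $A_n$ share one nonempty trace; fixing $y$ in it and choosing distinct $a_k\in A_k\cap N_{y\restriction k}$ produces $A=\{a_k:k\in\omega\}$, which converges to $y\in\Omega$ (hence $A\in\I^+$) and satisfies $A\setminus A_n\subseteq\{a_0,\dots,a_{n-1}\}$ for every $n$. This is exactly the witness required for $P^-$, completing the construction and therefore the proof.
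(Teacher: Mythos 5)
Your proof is correct, and while the overall architecture coincides with the paper's, the verifications are genuinely different. The reduction in your first paragraph is exactly the one used in the paper (Corollary \ref{cor:coanalytic}, Proposition \ref{prop:P^-}\ref{item:4pminus}, Theorem \ref{thm:analytic}, and Theorem \ref{thm:MSP24}\ref{item:3MSP}), and your ideal is, up to isomorphism, the paper's $\I_{W_{\mathrm{irr}}}$: there one enumerates $\Q(2^\omega)$ as a sequence $  x $ in $2^\omega$ and declares $S$ small when no subsequence $  x \restriction A$ with $A\subseteq S$ infinite converges to a point of $2^\omega\setminus\Q(2^\omega)$, which is your tree ideal transported along the obvious identification of $2^{<\omega}$ with a countable dense subset of $2^\omega$. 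Where you diverge is in how the three properties are checked, and your route is more self-contained. For $\I\in P(\Pi^0_3)$ you exhibit the cone scheme and compute $B_\I(\mathcal{A})=\Q(2^\omega)$ by hand (the tacit step that tree-limits are unique, so a set converging to a point of $\Q(2^\omega)$ is genuinely in $\I$, is correct since distinct branches have eventually disjoint cones); the paper instead quotes an external result to get $W_{\mathrm{irr}}\in\mathscr{L}_{2^\omega}(\I)$ and then invokes the equivalence \ref{thm:F_sigmadelta-are-Lambda:P}$\iff$\ref{thm:F_sigmadelta-are-Lambda:analytic-set} of Theorem \ref{thm:F_sigmadelta-are-Lambda}, since $W_{\mathrm{irr}}$ is analytic but not $\Sigma^0_2$. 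Likewise you compute the $\Pi^1_1$ upper bound on the complexity directly, where the paper cites the literature. For $P^-$, your argument via finite additivity of the trace map $\tilde K$ (so that $K(A_n)$ is constant and a common limit branch $y$ exists, from which you build a pseudo-intersection) is valid but longer than the paper's: there one fixes any $A\subseteq A_0$ converging to some $\eta\in W_{\mathrm{irr}}$ and notes that $A\setminus A_n\subseteq A_0\setminus A_n\in\I$ cannot be infinite, since an infinite subset of $A$ still converges to $\eta$ and would be positive. Your version buys a reusable structural fact (the trace map is a homomorphism on finite unions); the paper's is shorter. Either way, the proof stands.
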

\begin{proof}
Let $  x =(x_n: n \in \omega)$ be an enumeration of $\mathbb{Q}(2^\omega)$. 
    For each nonempty subset $W\subseteq 2^\omega$, 
    define the ideal
    $$
    \I_W=\left\{S\subseteq \omega: \Lambda_{  x \restriction A}(\fin)\cap W=\emptyset \text{ for all infinite }A\subseteq S\right\}.
    $$
    Thanks to \cite[Theorem 2.2(i)]{MR4393937}, we have $W \in \mathscr{L}(\I_W)$, cf. also \cite[Theorem 4.1]{MSP24}. 
    
    In addition, let us show that every ideal of the form $\I_W$ is $P^-$. Indeed, suppose that $(A_n: n \in \omega)$ is a decreasing sequence of $\I_W$-positive sets such that $A_n\setminus A_{n+1}\in \I_W$ for all $n \in \omega$. Since $A_0 \notin \I_W$, we can fix a subset $A \subseteq A_0$ such that the subsequence $(x_n: n \in A)$ is convergent to some limit $\eta \in W$. 
    Then $A\in\I_W^+$ and for each $n \in \omega$ we have 
    $$
    B_n=A\setminus A_n\subseteq A_0\setminus A_n=\bigcup_{i<n}A_i\setminus A_{i+1} \in \I_W.
    $$
    Suppose that $B_n$ is infinite for some $n \in \omega$. Then the subsequence $(x_n: n \in B_n)$ would be convergent to $\eta \in W$, so that $A_0\setminus A_n \notin \I_W$. It follows that $B_n$ is finite for every $n$, which proves the claim. 

    At this point, set $\I=\I_{W_{\mathrm{irr}}}$, where $W_{\mathrm{irr}}=2^\omega\setminus \mathbb{Q}(2^\omega)$. By the previous observations, $\I$ is a $P^-$-ideal such that $\mathscr{L}_{2^\omega}(\I)$ contains the set $W_{\mathrm{irr}}$, which is analytic and not $F_\sigma$. By Theorem \ref{thm:F_sigmadelta-are-Lambda}, $\I$ is $P(\Pi^0_3)$. Moreover, it has been shown in \cite[Section 4]{MSP24} that the ideal $\I$ is coanalytic (and not analytic). Thanks to Corollary \ref{cor:coanalytic} and Theorem \ref{thm:coanalytic-Pminus}, we conclude that $\mathscr{L}_X(\I)=\Sigma^1_1$ for every uncountable Polish space $X$. 
\end{proof}


\subsection{Main open question}

As a rather informal heuristic from our results, one might notice that the topological complexity of the family $\mathscr{L}(\I)$ is smaller than the complexity of the ideal $\I$ itself. 
For instance, it has been already asked in \cite[Question 2.11]{MR4393937} whether $\mathscr{L}(\I)$ is a Borel class for every Borel ideal $\I$; cf. also \cite[Question 3.11]{MR4393937}. Thus, one may expect that there are no $\Pi^0_4$ ideals $\I$ for which $\mathscr{L}(\I)=\Sigma^1_1$, which would imply that either $\mathscr{L}(\I)=\Pi^0_1$ or $\mathscr{L}(\I)=\Sigma^0_2$ for $\Pi^0_4$ ideals $\I$, see Theorem \ref{thm:coanalytic-Pminus}.

Hence, we formulate the following conjecture: 
\begin{question}\label{main-conjecture}
Let $X$ be an uncountable Polish space and let $\I$ be a $\Pi^0_4$ ideal on $\omega$. Is it true that either $\mathscr{L}(\I)=\Pi^0_1$ or $\mathscr{L}(\I)=\Sigma^0_2$? 
\end{question}

To substantiate the above hypothesis, recall that an ideal $\I$ on $\omega$ is a \emph{Farah ideal} (\cite[p.~199]{MR2849045}) if there is a family of hereditary compact sets $\{K_n : n \in \omega\}$ such that 
$$\I = \{S \subseteq \omega : \forall n \in \omega \, \exists k \in \omega \, (S \setminus k \in  K_n)\}.$$ 
It is known (\cite[Theorem 3.8]{MR2849045}) that every Farah ideal is $\Pi^0_3$. Farah conjectured (\cite[p.~199]{MR2849045}) that every $\Pi^0_3$ ideal is a Farah ideal.

\begin{remark}
\label{rem:Borel-comlexity-of-ideal-implies-Lambda-complexity}
Let $X$ be an uncountable Polish space and let $\I$ be an ideal on $\omega$. Then the following hold:
\begin{enumerate}[label={\rm (\roman{*})}] 
\item \label{item:2corfinal} If $\I$ is $\Sigma^0_3$ then $\mathscr{L}(\I)=\Pi^0_1$, see Theorem \ref{thm:xi}\ref{item:4xi} and Theorem \ref{thm:MSP24}\ref{item:1MSP};
\item \label{item:1corfinal} If $\I$ is Farah then either $\mathscr{L}(\I)=\Pi^0_1$ or $\mathscr{L}(\I)=\Sigma^0_2$, see Theorem \ref{thm:xi}\ref{item:3xi} and Theorem \ref{thm:MSP24}\ref{item:2MSP}.
\end{enumerate}
\end{remark}

Thanks to Remark \ref{rem:Borel-comlexity-of-ideal-implies-Lambda-complexity}\ref{item:1corfinal}, a positive answer to Farah's conjecture would imply a positive solution of Question \ref{main-conjecture} in the case of $\Pi^0_3$ ideals. 
In addition, a positive answer to Question \ref{main-conjecture} would solve in the affirmative \cite[Question 4.9]{MR4393937}, which asks whether $\mathscr{L}(\I)\subseteq \Sigma^0_2$ for every $\Pi^0_3$ ideal.

Another way to make precise the above heuristic is the next open question: 
\begin{question}\label{conjecture:I-not-in-Lambda_I}
    Consider $X=2^\omega$ and let $\I$ be an ideal on $\omega$. Is it true that 
    $$
    \I\notin \mathscr{L}_{2^\omega}(\I)\,?
    $$
\end{question}
Even weaker, we do not know whether there exists an ideal $\I$ such that $\mathcal{L}(\I)=\mathcal{P}(X)$. 

Note that a positive solution of Question \ref{conjecture:I-not-in-Lambda_I} implies a positive solution of Question \ref{main-conjecture} (by Remark \ref{rem:alltopspaces}).


\subsection{Higher Borel classes}

As we are going to show below, the family of ideals $\I$ for which $\mathscr{L}(\I)$ attains a given Borel class may have arbitrarily high Borel complexity. In particular, ideals $\I$ for which $\mathscr{L}(\I)=\Pi^0_1$ may attain arbitrarily high Borel complexity.

\begin{theorem}\label{thm:highdimensionalhighBorel}
    Let $X$ be a Polish space and let $\alpha$ be a positive integer and $\beta$ be an ordinal such that 
    $\beta\ge \alpha+2$. 
    Then the following hold: 
\begin{enumerate}[label={\rm (\roman{*})}]
\item \label{item:1mainlimitfamilieshighdimension} 
If $\alpha$ is odd, there exist ideals $\I \in \Sigma^0_{\beta}\setminus \Pi^0_{\beta}$ and $\J \in \Pi^0_{\beta}\setminus \Sigma^0_{\beta}$ such that 
$$\mathscr{L}(\I)=\mathscr{L}(\J)=\Pi^0_{\alpha};$$

\item \label{item:2mainlimitfamilieshighdimension} 
If $\alpha$ is even, there exist ideals $\I \in \Sigma^0_{\beta}\setminus \Pi^0_{\beta}$ and $\J \in \Pi^0_{\beta}\setminus \Sigma^0_{\beta}$ such that 
$$\mathscr{L}(\I)=\mathscr{L}(\J)=\Sigma^0_{\alpha}.$$
\end{enumerate}
\end{theorem}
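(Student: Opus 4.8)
The plan is to fix the value of $\mathscr{L}(\I)$ by means of a small ``base'' ideal and then to inflate the Borel complexity by taking a Fubini sum with a high-complexity ideal whose family of limit sets is as small as possible, namely $\Pi^0_1$. The engine is Proposition~\ref{prop:basic-prop-about-Lambda-Gamma}\ref{prop:basic-prop-about-Lambda-Gamma:direct-sum}, which gives $\mathscr{L}(\I_0\oplus \J)=\{A\cup B: A\in \mathscr{L}(\I_0),\, B\in \mathscr{L}(\J)\}$. First I would isolate the following key fact: if $\J$ is a \emph{Borel} $P^+$-ideal, then $\mathscr{L}(\J)=\Pi^0_1$. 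Indeed, every Borel ideal has the hereditary Baire property, hence $\J\in P(\Pi^0_1)$ by Proposition~\ref{prop:P-for-closed2}\ref{item:1Ppi01} and so $\Pi^0_1\subseteq \mathscr{L}(\J)$ by Theorem~\ref{thm:closed-is-Lambda}; on the other hand $P^+$ implies $P^{\,|}$ by Proposition~\ref{prop:Pplus-vs-Pminus+Pvert}\ref{prop:Pplus-vs-Pminus+Pvert:item}, whence $\mathscr{L}(\J)\subseteq \Pi^0_1$ by Proposition~\ref{prop:Pprimecharacterization}. (Equivalently, this is Corollary~\ref{cor:equalityLIwithclosedsets}.)

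For the base ideal, Theorem~\ref{thm:highBorelMSP} provides exactly what is needed. When $\alpha=2k-1$ is odd, set $\I_0=\mathrm{Fin}^{k}$, so that $\mathscr{L}(\I_0)=\Pi^0_{2k-1}=\Pi^0_\alpha$ and $\I_0\in \Sigma^0_{\alpha+1}$; when $\alpha=2k$ is even, set $\I_0=\emptyset\otimes \mathrm{Fin}^{k}$, so that $\mathscr{L}(\I_0)=\Sigma^0_{2k}=\Sigma^0_\alpha$ and $\I_0\in \Pi^0_{\alpha+1}$. In both cases $\I_0$ has Borel complexity at most $\alpha+1\le \beta-1$, so $\I_0\in\Delta^0_\beta$. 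Now, given any Borel $P^+$-ideal $\J$, consider $\I_0\oplus \J$. Since $\mathscr{L}(\J)=\Pi^0_1$ and the class $\mathscr{L}(\I_0)$ (either $\Pi^0_\alpha$ or $\Sigma^0_\alpha$, with $\alpha\ge 1$, and in the even case $\alpha\ge 2$) contains $\Pi^0_1$ and is closed under finite unions, the displayed formula yields $\mathscr{L}(\I_0\oplus \J)=\mathscr{L}(\I_0)$, that is, exactly $\Pi^0_\alpha$ (odd) or $\Sigma^0_\alpha$ (even). For the complexity, write $\I_0\oplus \J$ as the intersection of the two continuous section-preimages $\{S: S^{(0)}\in \I_0\}$ and $\{S: S^{(1)}\in \J\}$; since $\I_0\in \Delta^0_\beta$, this intersection sits in the same class at level $\beta$ as $\J$. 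Moreover the map $T\mapsto S$ with $S^{(0)}=\emptyset$ and $S^{(1)}=T$ is continuous and pulls $\I_0\oplus\J$ back to $\J$ (as $\emptyset\in\I_0$), so the sum cannot have strictly lower complexity than $\J$. Hence $\I_0\oplus \J\in \Sigma^0_\beta\setminus\Pi^0_\beta$ whenever $\J\in\Sigma^0_\beta\setminus\Pi^0_\beta$, and $\I_0\oplus\J\in \Pi^0_\beta\setminus\Sigma^0_\beta$ whenever $\J\in\Pi^0_\beta\setminus\Sigma^0_\beta$.

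It remains to produce, for each $\beta\ge 3$, Borel $P^+$-ideals $\J^{\Sigma}\in \Sigma^0_\beta\setminus\Pi^0_\beta$ and $\J^{\Pi}\in \Pi^0_\beta\setminus\Sigma^0_\beta$, and this is where the real content lies. The lowest case is free: every $\Sigma^0_3$ ideal is automatically $P^+$ by Proposition~\ref{prop:Pplus-vs-Pminus+Pvert}\ref{thm:Plike-properties-for-definable-ideals:Fsigma}, so any properly $\Sigma^0_3$ ideal serves as $\J^{\Sigma}$ for $\beta=3$. For higher levels one cannot rely on this, since $P^+$ is \emph{not} implied by membership in $\Pi^0_\beta$; here I would invoke (or reproduce) the construction of $P^+$-ideals of arbitrarily high Borel complexity indicated at \cite[p.~2031]{MR3692233}, arranged through a Borel parameter so that a properly $\Sigma^0_\beta$ (resp.\ properly $\Pi^0_\beta$) parameter yields a properly $\Sigma^0_\beta$ (resp.\ properly $\Pi^0_\beta$) $P^+$-ideal. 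The exact pointclass can be fine-tuned by a further Fubini sum with a $P^+$ ``gadget'' complete for the desired class, using that Fubini sums preserve $P^+$ and realize the maximal complexity of their summands.

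The main obstacle is exactly this last step. The Fubini operations native to the paper, $\emptyset\otimes(\cdot)$ and $\Fin\otimes(\cdot)$, raise the complexity of $\mathscr{L}(\I)$ in lockstep with that of $\I$ (as Proposition~\ref{prop:basic-prop-about-Lambda-Gamma} makes explicit), so they cannot decouple the two; and Fubini \emph{products} destroy $P^+$ (cf.\ $\Fin^2$ not being $P^-$). Consequently the whole task of separating the complexity of $\I$ from the complexity of $\mathscr{L}(\I)$ rests on the availability of genuinely high-complexity $P^+$-ideals in both additive and multiplicative Borel forms, and the delicate part is verifying that the $P^+$ property survives at every level while the complexity is pushed up to an \emph{exactly} prescribed pointclass.
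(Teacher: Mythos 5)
Your architecture coincides with the paper's: fix $\mathscr{L}$ with a base ideal ($\mathrm{Fin}^{\gamma}$ in the odd case, $\emptyset\otimes\mathrm{Fin}^{\delta}$ in the even case, via Theorem~\ref{thm:highBorelMSP}), then take a Fubini sum with a Borel $P^+$-ideal $\J$ of exact complexity $\beta$, using that Borel $P^+$-ideals satisfy $\mathscr{L}(\J)=\Pi^0_1$ and Proposition~\ref{prop:basic-prop-about-Lambda-Gamma}\ref{prop:basic-prop-about-Lambda-Gamma:direct-sum} to compute $\mathscr{L}$ of the sum; your complexity bookkeeping for the sum (upper bound from the two section-preimages, lower bound from the section embedding) is also the paper's argument. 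However, there is a genuine gap exactly where you flag it: you never produce, for each $\beta\ge 3$, Borel $P^+$-ideals lying in $\Sigma^0_\beta\setminus\Pi^0_\beta$ and in $\Pi^0_\beta\setminus\Sigma^0_\beta$. You defer this to ``the construction indicated at \cite[p.~2031]{MR3692233}, arranged through a Borel parameter,'' and you yourself call the verification that $P^+$ survives while the complexity is pinned to an exactly prescribed pointclass ``the delicate part.'' That is the entire content of the theorem --- without it nothing is proved --- and note that even your ``free'' case $\beta=3$ only yields the $\Sigma^0_3$ witness, not the $\Pi^0_3\setminus\Sigma^0_3$ one.

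The paper closes this gap with a concrete and short construction you could have assembled from standard facts: for $A\subseteq 2^\omega$, let $\I_A$ be the smallest ideal on $2^{<\omega}$ containing the branches $\{x\restriction n:n\in\omega\}$ for $x\in A$. This ideal is generated by an almost disjoint family, and ideals generated by almost disjoint families are $P^+$ (\cite[Example~2 and Lemma~1 in Section 9]{MR1442262}); moreover the map $A\mapsto\I_A$ transfers the exact pointclass of $A$ to $\I_A$ (\cite[Exercise 23.4]{MR1321597}), so choosing $A$ properly $\Sigma^0_\beta$ or properly $\Pi^0_\beta$ gives precisely the witnesses $\J^{\Sigma}$ and $\J^{\Pi}$ you need. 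With that ingredient supplied, the rest of your argument goes through as written.
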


\begin{proof}
   \ref{item:1mainlimitfamilieshighdimension} 
   First, let us prove the claim for $\alpha=1$. For, suppose that $\beta \ge 3$ and pick sets $S_\beta,P_\beta\subseteq 2^\omega$ such that $S_\beta\in \Sigma^0_{\beta}\setminus \Pi^0_{\beta}$ and $P_\beta \in \Pi^0_\beta\setminus \Sigma^0_\beta$, see e.g. \cite[Theorem 22.4]{MR1321597}. Let $\I_{S_\beta}$ be the smallest ideal on $2^{<\omega}$ containing $\mathcal{A}_{S_\beta}=\{\{x\restriction n: n \in\omega\}: x \in S_\beta\}$, and define $\I_{P_\beta}$ and $\mathcal{A}_{P_\beta}$ analogously. Hence 
   $$
   \I_{S_\beta} \in \Sigma^0_{\beta}\setminus \Pi^0_{\beta}\quad \text{ and }\quad 
   \I_{P_\beta} \in \Pi^0_\beta\setminus \Sigma^0_\beta,
   $$
   see \cite[Exercise 23.4 with its solution at p. 362]{MR1321597}. 
   Since $\mathcal{A}_{S_\beta}$ and $\mathcal{A}_{P_\beta}$ are almost disjoint families (that is, a family of infinite subsets of $2^{<\omega}$ with finite pairwise intersections), it follows by \cite[Example~2 and Lemma~1 in Section 9]{MR1442262} that both ideals $\mathcal{I}_{S_\beta}$ and $\mathcal{I}_{P_\beta}$ are $P^+$. In addition, since they are Borel, they have the hereditary Baire property. It follows by Theorem \ref{thm:Fsigma-Lambda} that 
   \begin{equation}\label{eq:firstclaimlastthm2}
   \mathscr{L}(\I_{S_\beta})=\mathscr{L}(\I_{P_\beta})=\Pi^0_1.
   \end{equation}

   Hence, let us suppose hereafter that $\alpha \ge 3$ is odd and $\beta \ge \alpha+2$. 
   Note that $\gamma=\frac{1}{2}(\alpha+1)$ is an integer such that $2\le \gamma$ and $2\gamma=\alpha+1< \beta$. 
   Set $\I=\fin^{\gamma}\oplus \I_{S_\beta}$ and $\J=\fin^\gamma \oplus \I_{P_\beta}$. Regarding $\fin^\gamma$ as a $\Sigma^0_{\alpha+1}$ (see Theorem \ref{thm:highBorelMSP}) ideal on $\omega$, it easily follows that $\I$ is a $\Sigma^0_\beta$ ideal. 
   In addition, notice that the map $\pi: \mathcal{P}(\{0,1\}\times \omega) \to \mathcal{P}(\omega)$ defined by $\pi(A)=\{k \in \omega: (1,k) \in A\}$ is open. Considering that $\I_{S_\beta} \notin \Pi^0_\beta$ and that $\pi[\I]=\I_{S_\beta}$, we obtain that $\I\notin \Pi^0_\beta$. With an analogous reasoning, we get $\J\in\Pi^0_\beta\setminus \Sigma^0_\beta$. Lastly, it follows by Theorem \ref{thm:highBorelMSP}, and Proposition \ref{prop:basic-prop-about-Lambda-Gamma}\ref{prop:basic-prop-about-Lambda-Gamma:direct-sum}, and Equation \eqref{eq:firstclaimlastthm2} that
   $$
   \mathscr{L}(\I)=\mathscr{L}(\J)=\{A\cup B: A \in \Pi^0_\alpha, B \in \Pi^0_1\}=\Pi^0_\alpha. 
   $$

   \ref{item:2mainlimitfamilieshighdimension} Fix an even integer $\alpha\ge 2$, so that $\delta=\alpha/2$ is an integer and $3\le 2\delta+1=\alpha+1< \beta$. 
   The proof proceeds verbatim as in item \ref{item:1mainlimitfamilieshighdimension}, by setting $\I=(\emptyset\otimes \fin^\delta)\oplus \I_{S_\beta}$ and $\J=(\emptyset\otimes \fin^\delta)\oplus \I_{P_\beta}$. 
\end{proof}

\begin{remark}\label{rmk:betaequalsalphaplisone}
In the case $\beta=\alpha+1$, with the same proof as above, we can show that:
\begin{enumerate}[label=(\alph*)]
\item 
If $\alpha$ is odd, there exists an ideal $\I \in \Sigma^0_{\alpha+1}\setminus \Pi^0_{\alpha+1}$ such that 
$\mathscr{L}(\I)=\Pi^0_{\alpha}$;

\item 
If $\alpha$ is even, there exists an ideal $\J \in \Pi^0_{\alpha+1}\setminus \Sigma^0_{\alpha+1}$ such that 
$\mathscr{L}(\J)=\Sigma^0_{\alpha}$.
\end{enumerate}
Alternatively, both items are also a consequence of Theorem \ref{thm:highBorelMSP} (choosing $\I=\fin^\gamma$ and $\J=\emptyset\otimes \fin^\delta$, with the above notations): indeed, it is known that $\fin^\alpha\notin \Pi^0_{2\alpha}$ and $\emptyset\otimes \fin^\alpha \notin \Sigma^0_{2\alpha+1}$ for all $\alpha \ge 1$, see \cite[Corollary 1.2]{MR2902755} and \cite[Exercise 23.3]{MR1321597}. 
\end{remark}

To complete the picture of Remark \ref{rmk:betaequalsalphaplisone} in the case $\beta=\alpha+1$ as in the statement of Theorem \ref{thm:highdimensionalhighBorel}, we consider also the following: 
\begin{enumerate}[label=(\alph*$^\prime$)]
\item \label{item:aprimelastremark}
If $\alpha$ is odd, there exists an ideal $\I \in \Pi^0_{\alpha+1}\setminus \Sigma^0_{\alpha+1}$ such that 
$\mathscr{L}(\I)=\Pi^0_{\alpha}$;

\item \label{item:bprimelastremark}
If $\alpha$ is even, there exists an ideal $\J \in \Sigma^0_{\alpha+1}\setminus \Pi^0_{\alpha+1}$ such that 
$\mathscr{L}(\J)=\Sigma^0_{\alpha}$.
\end{enumerate}
However, both items \ref{item:aprimelastremark} and \ref{item:bprimelastremark} \emph{fail} in general. Indeed, if $\alpha=1$, it is folklore that $\Pi^0_2$ ideals $\I$ do not exist, see e.g. \cite[Proposition 1.2.2]{alcantara-phd-thesis}. In addition, if $\alpha=2$, then every $\Sigma^0_3$ ideal $\J$ satisfies $\mathscr{L}(\J)=\Pi^0_1$, thanks to Remark \ref{rem:Borel-comlexity-of-ideal-implies-Lambda-complexity}\ref{item:2corfinal}. However, we do not know what happens in the case $\alpha \ge 3$. Hence, we state it as an open question:
\begin{question}\label{question:completenessreachedfamilies}
 Let $X$ be an uncountable Polish space and let $\I$ be an ideal on $\omega$. 
 Is it true that, if $\alpha$ is 
 odd 
 and $\I \in \Pi^0_{\alpha+1}\setminus \Sigma^0_{\alpha+1}$, then $\mathscr{L}(\I)\neq \Pi^0_{\alpha}$? 
 Is it true that, if $\alpha$ is 
 even 
 and $\J \in \Sigma^0_{\alpha+1}\setminus \Pi^0_{\alpha+1}$, then $\mathscr{L}(\J)\neq \Sigma^0_{\alpha}$?
\end{question}

A positive answer to Question \ref{main-conjecture} would lead to a positive answer to Question \ref{question:completenessreachedfamilies} in the case $\alpha=3$.

\begin{remark}
    We remark that the ideal $\I_{P_3}$, as defined in the proof of Theorem \ref{thm:highdimensionalhighBorel}, is Farah and it is not an analytic $P$-ideal. Indeed, it is not a $P$-ideal as witnessed by the sequence of sets $(\{x_k\restriction n: n \in \omega\}: k \in \omega)$, where $(x_k: k \in \omega)$ is any injective sequence with values in $P_3$. 
    Next, let us show that $\I_{P_3}$ is Farah. For, let $(C_n: n \in \omega)$ be a sequence of $\Sigma^0_2$-sets such that $P_3=\bigcap_n C_n$. Moreover, for each $B\subseteq 2^{<\omega}$, we write 
    $$
    \widehat{B}=\{x\in 2^\omega: |B \cap \{x\restriction n: n \in \omega\}|=\infty\}.
    $$
    Note that $\mathcal{I}_{2^\omega}$ is equal to the set of all $B\subseteq 2^{<\omega}$ which can be covered by finitely many branches. In particular, $B \in \I_{2^\omega}$ if and only if $\widehat{B}$ is finite. Since $P_3=\bigcap_n C_n$, it follows that
    \begin{displaymath}
        \begin{split}
            \mathcal{I}_{P_3}&=\I_{2^\omega} \cap \{B\subseteq 2^{<\omega}: \widehat{B}\subseteq P_3\}\\
            &=\bigcap_{n\in \omega} \left(\I_{2^\omega} \cap \{B\subseteq 2^{<\omega}: \widehat{B}\subseteq C_n\}\right)=\bigcap_{n\in \omega} \, \I_{C_n}. 
        \end{split}
    \end{displaymath}
    Observe that each $\I_{C_n}$ is an ideal (hence, hereditary and closed under finite changes) which is $\Sigma^0_2$ because $C_n\in \Sigma^0_2$, see again \cite[Exercise 23.4 with its solution at p. 362]{MR1321597}. 
    We conclude by \cite[Proposition 3.6]{MR2849045} 
    that $\I_{P_3}$ is Farah. 
    \end{remark}

We conclude with Figure \ref{fig:Borel-ideals-vs-Lambda}, which shows the relationships between classes of Borel ideals and their families of $\I$-limit points. Here, \textsc{Exh} stands for the family of analytic $P$-ideals, while $\I_{S_\beta}$ and $\I_{P_\beta}$ for the ideals defined in the proof of Theorem \ref{thm:highdimensionalhighBorel}.

\begin{figure}
\centering
\begin{tikzpicture}[even odd rule]

\shade[top color=yellow!5, bottom color=yellow!40]
(-1.75,-3) rectangle (4.25,3.75) 
(-5,-4) rectangle (4.5,4.5);

\shade[top color=blue!50, bottom color=blue!4]
(-5,-4) rectangle (-4,4.5)
(-5.5,-5.5) rectangle (-4,5);

\draw (-5.75,-6) rectangle (5.75,5.5);

\draw[very thick,dashed] (-4,-6) -- (-4,-4);
\draw[dashed, ultra thin] (-4,-4) -- (-4,4.5);
\draw[very thick,dashed] (-4,4.5) -- (-4,5.5); 
\draw[dotted](-4,5.5) -- (-4,6);

\draw[very thick,dashed] (-2,-6) -- (-2,-4);
\draw[dashed, ultra thin] (-2,-4) -- (-2,4.5);
\draw[very thick,dashed] (-2,4.5) -- (-2,5.5); 
\draw[dotted](-2,5.5) -- (-2,6);

\draw[very thick,dashed] (1,-6) -- (1,-4);
\draw[dashed, ultra thin] (1,-4) -- (1,-3);
\draw[very thick,dashed](1,-3) -- (1,3.75);
\draw[dashed, ultra thin] (1,3.75) -- (1,4.5);
\draw[very thick,dashed] (1,4.5) -- (1,5.5); 
\draw[dotted](1,5.5) -- (1,6);

\draw[dotted] (-5.75,-6) -- (-6.1,-6);
\draw[dotted] (-5.75,5.5) -- (-6.1,5.5);
\draw[dashed,-latex] (-6.1,-6) -- (-6.1,5.5);
\draw[dashed,-latex] (-6.1,5.5)--(-6.1,-6);
\node[rotate=90] at (-6.4,0) {{\small \textsc{Borel ideals}}};

\node at (-5,5.75) {\ldots};
\node at (-3,5.75) {{\footnotesize $\mathscr{L}(\I) = \Pi^0_3$}};
\node at (-0.5,5.75) {{\footnotesize $\mathscr{L}(\I) = \Sigma^0_2$}};
\node at (3.5,5.75) {{\footnotesize $\mathscr{L}(\I) = \Pi^0_1$}};

\draw[dotted] (5.75,5.5) -- (5.75,6.1);
\draw[dotted] (-5.75,5.5) -- (-5.75,6.1);
\draw[dashed,-latex] (-5.75,6.1) -- (5.75,6.1);
\draw[dashed,-latex] (5.75,6.1) -- (-5.75,6.1);
\node at (-1,6.4) {{\small $\mathscr{L}(\I) \subseteq  \Sigma^1_1$}};

\draw (-5.5,-5.5) rectangle (5.5,5);
\node at (-5.25+.1,-5.25+.08) {\large $\Sigma^0_4$};
\node at (-3,-4.4-.3) {{\footnotesize $\fin^2$}};
\draw[fill] (-3.5,-4.4-.3) circle (0.06cm);
\node at (-.5,-5.2+.4) {{\footnotesize $(\emptyset\otimes \fin) \oplus \I_{S_4}$}};
\draw[fill] (-.4,-4.9+.4) circle (0.06cm);
\node at (3.75,-5.25) {\footnotesize $\I_{S_{4}}$};
\node at (3.75,-5.25) {\footnotesize $\I_{S_{4}}$};
\draw[fill] (3.3,-5.25) circle (0.06cm);
\node at (-4.75, -4.6){\tiny \emph{Ques.\ref{conjecture:I-not-in-Lambda_I}}};

\draw (-5,-4) rectangle (4.5,4.5);
\node at (-4.75+.1,-3.75+.08) {\large $\Pi^0_3$};
\node at (-3.3,0){\footnotesize \emph{Farah conjecture}};

\node at (2.75,3.4) {\footnotesize $\I_{P_{3}}$};
\draw[fill] (2.3,3.4) circle (0.06cm);

\draw (1.5,-4.75) rectangle (5,3);
\node at (1.75+.1,-4.5+.08) {\large $\Sigma^0_3$};
\node at (3.75,-4.5) {\footnotesize $\I_{S_{3}}$};
\draw[fill] (3.3,-4.5) circle (0.06cm);

\draw (2,-2.75) rectangle (3.5,1.75);
\node at (2.25+.1,-2.5+.08) {\large $\Sigma^0_2$};
\node at (3,1) {\footnotesize $\fin$};
\draw[fill] (2.5,1) circle (0.06cm);
\node at (3,0) {\footnotesize $\I_{1/n}$};
\draw[fill] (2.38,0) circle (0.06cm);

\draw(-0.5,-1) rectangle (4,2.5);
\node at (-.1+.04,-0.75) {{\small \textsc{Exh}}};
\node at (0.3,0.5) {\footnotesize $\mathcal{Z}$};
\draw[fill] (0,0.5) circle (0.06cm);
\node at (0.3,2) {\footnotesize $\emptyset \otimes \fin$};
\draw[fill] (0.1,1.7) circle (0.06cm);

\node at (0.3-1.2,1.2) {\footnotesize $\mathcal{B}$};
\draw[fill] (0.1-1.3,1.2) circle (0.06cm);

\draw (-1.75,-3) rectangle (4.25,3.75);
\node at (-1.15+.04,-2.75) {{\small \textsc{Farah}}};
\node at (-0.2,-2) {{\footnotesize $\mathrm{nwd}$}};
\draw[fill] (-.7,-2) circle (0.06cm);

\end{tikzpicture}

\caption{Relationships between the topological complexity of Borel ideals $\I$ and the families $\mathscr{L}(\I)$.}
    \label{fig:Borel-ideals-vs-Lambda}
\end{figure}
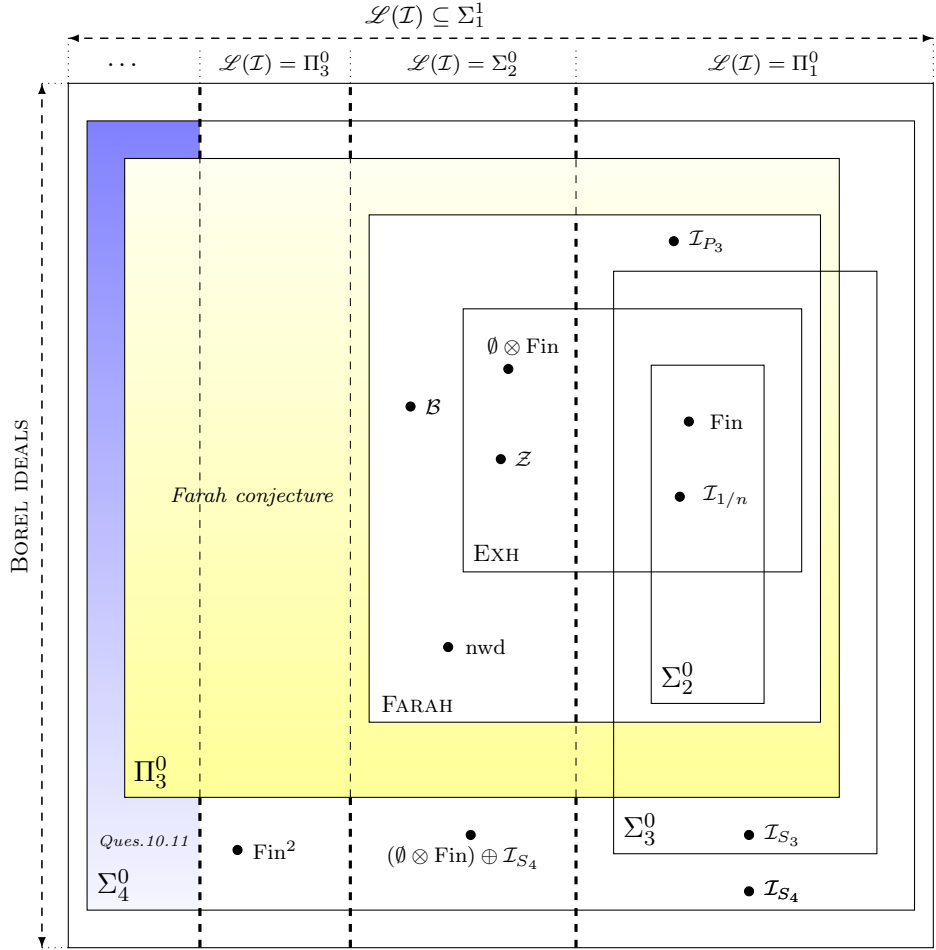


\bibliographystyle{amsplain}
\bibliography{ideals}

\end{document}